\documentclass[12pt, a4paper]{amsart}
\voffset=-.5cm
\textwidth=15cm
\textheight=23cm
\oddsidemargin=1cm
\evensidemargin=1cm
\usepackage{amsmath}
\usepackage{amsxtra}
\usepackage{amscd}
\usepackage{amsthm}
\usepackage{amsfonts}
\usepackage{amssymb}
\usepackage {pstricks}
\usepackage{pstricks,pst-node}


\newtheorem{theorem}{Theorem}[section]

\newtheorem{lemma}[theorem]{Lemma}

\newtheorem{corollary}[theorem]{Corollary}
\newtheorem{proposition}[theorem]{Proposition}

\theoremstyle{definition}
\newtheorem{definition}[theorem]{Definition}

\theoremstyle{remark}
\newtheorem{remark}[theorem]{Remark}

\numberwithin{equation}{section}


\newcommand{\lr}{{\longrightarrow}}
\newcommand{\be}{\begin{equation}}
\newcommand{\ee}{\end{equation}}
\newcommand{\inv}{^{-1}}

\newcommand{\wt}{{\rm{wt}}}

\newcommand{\C}{{\mathbb C}}
\newcommand{\Z}{{\mathbb Z}}
\newcommand{\bbS}{{\mathbb S}}

\newcommand{\1}{{\bf 1}}

\newcommand{\id}{{\text{id}}}
\newcommand{\rank}{{\text{rank}}}
\newcommand{\End}{{\text{End}}}
\newcommand{\Hom}{{\text{Hom}}}
\newcommand{\sgn}{{\text{sgn}}}

\newcommand{\cA}{{\mathcal A}}
\newcommand{\cB}{{\mathcal B}}

\newcommand{\cI}{{\mathcal I}}
\newcommand{\cK}{{\mathcal K}}
\newcommand{\cL}{{\mathcal L}}
\newcommand{\cM}{{\mathcal M}}

\newcommand{\cR}{{\mathcal R}}
\newcommand{\cT}{{\mathcal T}}

\newcommand{\cV}{{\mathcal V}}

\newcommand{\fg}{{\mathfrak g}}

\newcommand{\fp}{{\mathfrak p}}
\newcommand{\fl}{{\mathfrak l}}

\newcommand{\gl}{{\mathfrak{gl}}}
\newcommand{\fsl}{{\mathfrak{sl}}}
\newcommand{\so}{{\mathfrak{so}}}
\newcommand{\fsp}{{\mathfrak{sp}}}
\newcommand{\fo}{{\mathfrak{o}}}

\newcommand{\Uq}{{{\rm U}_q}}
\newcommand{\ep}{{\epsilon}}
\newcommand{\ve}{{\varepsilon}}

\begin{document}
\title[Noncommutative classical invariant theory]{A quantum
analogue of the first fundamental theorem of classical
invariant theory}
\author{G. I. Lehrer}
\address{School of Mathematics and Statistics, University of Sydney,
Sydney, Australia} \email{gusl@maths.usyd.edu.au}
\author{Hechun Zhang}
\address{Department of Mathematical Sciences, Tsinghua
University, Beijing, China} \email{hzhang@maths.usyd.edu.au}
\author{R. B. Zhang}
\address{School of Mathematics and Statistics, University of Sydney,
Sydney, Australia} \email{rzhang@maths.usyd.edu.au}

\begin{abstract}
We establish a noncommutative analogue of the first fundamental
theorem of classical invariant theory. For each quantum group
associated with a classical Lie algebra, we construct a
noncommutative associative algebra whose underlying vector
space forms a module for the quantum group and whose algebraic
structure is preserved by the quantum group action. The subspace of
invariants is shown to form a subalgebra, which is finitely
generated. We determine generators of this subalgebra of
invariants and determine their commutation relations. In each case
considered, the noncommutative modules we construct are flat deformations
of their classical commutative analogues. Thus by taking the limit as
$q\rightarrow 1$, our results imply the first fundamental theorem
of classical invariant theory, and therefore generalise them to the
noncommutative case.
\end{abstract}

\maketitle

\tableofcontents

\section{Introduction}

The fundamental theorems of classical invariant theory may be formulated
in three equivalent ways. Given a module $V$ for a reductive group $G$
over a field $k$,
the first formulation provides a complete description of $\End_{kG}(V^{\otimes r})$;
the second does the same thing for the linear space of $G$-invariant multilinear
functions $f:W\to k$, where $W=\oplus^r V \oplus^s V^*$,
i.e. multilinear functions which are constant on $G$-orbits of $W$. Thirdly,
the commutative algebra $S(W^*)$
of polynomial functions on $W$, is naturally a $G$-module, and
one describes the subalgebra of invariant functions. In each case
the `first fundamental theorem' (FFT) provides generators
(suitably defined) for the space of invariants,
while the `second fundamental theorem' (SFT) describes all relations among the
generators. Although equivalent in principle, the statements of the
fundamental theorems in their three formulations are not equally straightforward.
For example, the SFT for orthogonal and symplectic groups in
the first context would require
a hitherto unknown description of an ideal in the Brauer algebras (cf. \cite{LZ2}).

A typical case addressed by the fundamental theorems is when
$G$ is one of the classical groups over the complex numbers $\C$, and
the $G$-module $V$ is the natural module. In these cases, the
standard polynomial (third) form of the first fundamental theorem of
classical invariant theory \cite{W} yields a finite set of algebra
generators for the subalgebra of invariants. In the first formulation
given above, the fundamental theorems are often referred to as
(generalised) Schur-Weyl-Brauer duality.

In this work we shall consider quantum analogues of the FFT in its third formulation.
That is, we shall consider the action of the quantum groups corresponding
to the classical Lie algebras on
non-commutative analogues of the $S(W^*)$ above, and provide a finite set of
generators of the (also generally non-commutative--see below) subalgebra
of invariants. Note that the concept of ``generators'' of course depends 
on the structure of the algebra on which the quantum group acts, 
and that is a crucial aspect of this work.
There is some work in this direction in the
literature. A type of Howe duality between quantum general linear groups was
constructed in \cite{Z03}, which implies the Jimbo-Schur-Weyl
duality between the quantum general linear group and the Hecke
algebra. Also the paper \cite{St} investigated quantum analogues of
polynomial invariants for the symplectic Lie algebra. The works
\cite{GL1,GL2} have elements in common with our work, as does
\cite{BG}. The work \cite{Mo} provides a general context for Hopf algebra
actions on non-commutative algebras. However, to our knowledge, there is no
systematic treatment of the invariant theoretic aspects
of the subject.

There is a considerable literature on endomorphism algebras of tensor powers
of quantum group modules. In particular, when
$\fg$ is a classical Lie algebra and $V$ is the natural module for
the quantum group $\Uq(\fg)$ (a slight enlargement of $\Uq(\fg)$
is more convenient when $\fg=\so_{2n}$), the algebra $\End_{\Uq(\fg)}(V^{\otimes
r})$ at generic $q$ is known to be a quotient of the Hecke algebra
of type $A$ if $\fg=\gl_n$ and a quotient of a Birman-Wenzl-Murakami algebra
in the other cases \cite{J, RW, LR, DPS, LZ1}. The duality between
the quantum general linear group and Hecke algebra is known to be
valid even when $q$ is a root of unity \cite{DPS}. In \cite{LZ1},
the algebras $\End_{\Uq(\fg)}(V^{\otimes r})$,where $V$ is the
$7$-dimensional irreducible module for $\Uq(G_2)$ or any finite
dimensional irreducible module for $\Uq(\fsl_2)$ were described in terms
of representations of the Artin braid group. The endomorphism
algebras for tensor powers of certain irreducible representations of
the $E$ series of quantum groups were studied by Wenzl \cite{Wen}.

The general framework for developing a noncommutative analogue of
classical invariant theory for quantum groups is well established in
the setting of arbitrary Hopf algebras \cite{Mo} (see also \cite{BG}). Let $\Uq(\fg)$ be
a quantum group, and let $A$ be a module algebra over $\Uq(\fg)$ in
the sense of \cite[\S 4.1]{Mo}; that is, $A$ is an associative
algebra whose underlying vector space is a
$\Uq(\fg)$-module, and whose algebraic structure is preserved by the
quantum group action. Then the subspace
$A^{\Uq(\fg)}$ of invariants in $A$ is a subalgebra. Our aim is
to describe the algebraic structure of $A^{\Uq(\fg)}$.

Corresponding to each classical Lie algebra $\fg$, we construct an
associative algebra, which is a module algebra for the quantum group
$\Uq(\fg)$ and reduces in the limit $q\rightarrow 1$ to the
polynomial algebra over a direct sum of copies of the
natural $\fg$-module (and its dual if $\fg$ is type $A$).
Since the usual tensor product of $\Uq(\fg)$ module algebras is
not generally a module algebra, this
construction makes essential use  of the braiding of the quantum
group given by the universal $R$-matrix, which does not appear in the
setting of general Hopf algebras \cite{Mo}. The question of
a module algebra structure on tensor products has also been studied
in \cite{BZ, Zw} and will be discussed below.

We then study the subspace of quantum group invariants in the module
algebra. We shall show that the subspace of invariants always forms a
subalgebra of the module algebra, and find a set of generators
for the subalgebra of invariants together with the commutation relations
obeyed by these generators. This result may be regarded as the FFT of
the noncommutative analogue of classical invariant theory for
quantum groups associated with the classical Lie algebras, in the third of
the three formulations described above.

In all the cases studied, we find
that the subalgebras of invariants are finitely generated. We prove
this by explicit analysis of the subalgebras of invariants using the
diagrammatical method of \cite{RT1, RT2}. We should point out that
since a quantum group $\Uq(\fg)$ is a non-cocommutative Hopf
algebra, the relevant module algebras are noncommutative, and so
also are their subalgebras of invariants. For this reason, most of the
techniques in classical invariant theory,  based on commutative
algebra, do not apply here. In particular the general proofs of finite
generation by Hilbert, Weyl and Nagata in the commutative context do
not generalise to the quantum group setting.

Returning to the problem of constructing appropriate quantum
analogues of polynomial algebras over quantum group modules, we note
that it has long been known that there is no
good quantum analogue of the coordinate ring
of the $4$-dimensional irreducible
$\Uq(\mathfrak{sl}_2)$-module  \cite{Ro}. Recently Berenstein and Zwicknagl
\cite{BZ, Zw} have carried out a systematic study of the braided
symmetric algebras, which might be construed as quantum analogues of
coordinate rings. They found that almost all the braided
symmetric algebras are `smaller' than the corresponding polynomial
algebras thus are not suitable quantum analogues (e.g. in the sense
of \cite[Definition 2]{Ro}) of the latter. That is, they are not flat
deformations of the coordinate rings. The same is true even of
the braided symmetric algebras on direct sums of copies
of the natural module of quantum groups of type $B$, $C$ and $D$.
This may be a partial explanation of the lack of results
available on quantum analogues of classical invariant theory
in the third (polynomial function) formulation of the three described above.
Our construction of quantum analogues of
polynomial algebras will follow an approach suggested in \cite{B},
and the resulting algebras are not braided symmetric algebras in the
sense of \cite{BZ, Zw}.

There is a connection between the present work and
noncommutative geometry \cite{C}. The relationship between 
quantum groups and noncommutative geometry is well described in \cite{M}. 
This suggests that quantum
groups play much the same role in noncommutative geometry as that
played by Lie groups in ordinary differential geometry. Indeed a
noncommutative space $X$ is specified by its associative algebra
$\cA(X)$ of functions  \cite{C}. If $\cA(X)$ is a module algebra
over some quantum group $\Uq(\fg)$, then we may regard $X$ as having
a $\Uq(\fg)$ symmetry algebra. Typical examples of such noncommutative
geometries are the quantum homogeneous spaces studied in \cite{GZ}.
In order to understand the noncommutative space $X$, it is useful to
determine the $\Uq(\fg)$-modules structure of $\cA(X)$, in particular, the
subalgebra of invariants. We shall realise this in Lemma
\ref{odd-dim-sphere} and Corollary \ref{even-dim-sphere}, for 
the quantum sphere.

This paper is organised as follows. Section \ref{general}
discusses generalities concerning module algebras for quantum groups. A
crucial result is Theorem \ref{modulealgebra}, which defines a
twisted multiplication on the tensor product of any two module
algebras by using the universal $R$-matrix of the quantum group to
endow the tensor product with the structure of a module algebra. Sections
\ref{D} and \ref{B} treat the invariant theory of the orthogonal
quantum groups. The main results are summarised in Theorems
\ref{FFT-even} and \ref{FFT-odd}, which provide a noncommutative FFT's
of invariant theory for the quantum orthogonal groups. The
subalgebra of invariants is noncommutative in the quantum setting, and
we describe the commutation relations among the (finite set of)
generators of the subalgebra of invariants in Lemmas
\ref{relations:generators-even} and \ref{relations:generators-odd}.
Section \ref{CA} treats the invariant theory of the symplectic
quantum groups. The noncommutative FFT for the
quantum symplectic groups is Theorem \ref{FFT-sp}, and
the commutation relations among the generators of the noncommutative
subalgebra of invariants are given in Proposition
\ref{relations:generators-C}. In Section \ref{GL}, we study the
invariant theory of the quantum general linear group. 
The techniques used in this section are similar to those of
\cite{GZ} but rather different from those in the previous sections.
Theorem \ref{FFT-gl} provides a noncommutative FFT, and Lemma
\ref{relations:generators-gl} describes the commutation relations
among the generators of the subalgebra of invariants. In
Theorem \ref{skewduality} we prove a quantum analogue of the skew
$(\text{GL}_m, \text{GL}_n)$ duality \cite[Theorem 4.1.1]{H} of
Howe.

Finally, we note that an alternative approach to the questions addressed
here may be to start with the study of endomorphisms of tensor powers
(the first formulation), and translate versions of the first and second fundamental theorems
to the other two settings. One might even hope to apply methods such
as those in the appendix of \cite{ABP} to move from type $A$ to the other classical
types. This strategy seems to provide a route towards a SFT in the quantum setting,
and we intend to return to this theme in a future work.

\section{Generalities on module algebras over quantum
groups}\label{general}
\subsection{Module algebras}
For a simple complex Lie algebra $\fg$, we denote by $\Uq(\fg)$ the
associated quantum group over the field of rational functions
$\cK=\C(q^{\frac{1}{2}})$ where $q$ is an indeterminate. The algebra $\Uq(\fg)$ has a
standard presentation with generators $e_i, f_i, k_i^{\pm 1}$
($1\le i\le \rank(\fg)$) and the usual relations (see, e.g. \cite[Ch. 4]{Ja},
whose conventions we do not follow precisely, but which
is a good general reference). It is well known that
$\Uq(\fg)$ is a Hopf algebra. Write $\Delta$,
$\epsilon$ and $S$ for the co-multiplication, co-unit
and the antipode respectively. We
use the following convention in defining co-multiplication:
\[
\begin{aligned}
\Delta(e_i)&=e_i\otimes k_i + 1\otimes e_i, \\
\Delta(f_i)&=f_i\otimes 1 + k_i^{-1}\otimes f_i, \\
\Delta(k_i)&=k_i\otimes k_i.
\end{aligned}
\]
If $\fg$ is a semi-simple Lie algebra, or a direct sum of general linear algebras, we denote by
$\Uq(\fg)$ the tensor product of the quantum groups of the components of $\fg$.
The definition of the quantum general
linear group $\Uq(\gl_n)$ will be given in Section  \ref{sect-Uqgl}.

An important structural property of a quantum group is its
braiding, namely, the existence of a universal $R$-matrix.
We may think of $R$ as an invertible element in
some appropriate completion of $\Uq(\fg)\otimes \Uq(\fg)$, whose action
on the tensor product of any two finite dimensional $\Uq$-modules
of type $(1, \dots, 1)$ is well defined. It
satisfies the following relations
\begin{eqnarray}
R \Delta(x) = \Delta'(x) R, &\forall x\in \Uq(\fg), \label{R1}\\
(\Delta\otimes\id)R = R_{1 3} R_{2 3}, & (\id\otimes\Delta)R= R_{1
3} R_{1 2}, \label{R2}
\end{eqnarray}
where $\Delta'$ is the opposite co-multiplication. Here the
subscripts of $R_{1 3}$ etc. have the usual meaning as in \cite{D}.
It follows from \eqref{R2} that $R$ satisfies the celebrated
Yang-Baxter equation
\be
R_{1 2} R_{1 3} R_{2 3} = R_{2 3} R_{1 3} R_{1 2}.\label{YB}
\ee

\begin{remark}\label{Rem:R-matrix}
Note that if $V_1,V_2$ are $\Uq$-modules and
$P:V_1\otimes V_2\to V_2\otimes V_1$ interchanges the factors of a tensor,
then in terms of the operator $\check{R}:=PR$, \eqref{R1} reads 
$\check{R}\Delta(x)=\Delta(x) \check{R}$, and
\eqref{YB} becomes the braid relation
$\check{R}_1\check{R}_2\check{R}_1=\check{R}_2\check{R}_1\check{R}_2$.
\end{remark}

Because of the Hopf algebra structure, the tensor product
$V_1\otimes_{\cK} V_2$ of any two $\Uq(\fg)$-modules $V_1$ and $V_2$
becomes a $\Uq(\fg)$-module. In view of \ref{Rem:R-matrix}, the
$R$-matrix (which is invertible) defines an isomorphism of
$\Uq(\fg)$-modules, $V_1\otimes V_2\overset{\check{R}}{\lr}V_2\otimes V_1$.

We shall make extensive use of the notion of  module algebras over a
Hopf algebra \cite[\S 4.1]{Mo}. An
associative algebra $(A, \mu)$ with multiplication $\mu$ and
identity element $1$ is a $\Uq(\fg)$-module algebra if $A$ is a
$\Uq(\fg)$-module such that the $\Uq(\fg)$-action preserves the
algebraic structure of $A$, that is,
\[
x\mu(a\otimes b) = \sum_{(x)} \mu(x_{(1)}(a)\otimes x_{(2)}(b)),
\quad  x(1) = \epsilon(x) 1,
\]
for all $a, b\in A$ and $x\in\Uq(\fg)$.

We shall sometimes use the term $\Uq(\fg)$-algebra as a synonym for
$\Uq(\fg)$-module algebra.

Let $A^{\Uq(\fg)}=\{t\in A \mid x(t)=\epsilon(x) t, \forall
x\in\Uq(\fg) \}$ be the subspace of $\Uq(\fg)$-invariants in the
module algebra $A$. The following result is well known.
\begin{lemma}\label{subalgebra}
Let $A$ be a $\Uq(\fg)$-algebra. Then the subspace
$A^{\Uq(\fg)}$ of invariants is a subalgebra of
$A$.
\end{lemma}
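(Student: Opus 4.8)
The plan is to verify directly that $A^{\Uq(\fg)}$ is closed under the multiplication $\mu$ of $A$ and contains the identity, using the module algebra axiom together with the fact that the counit $\epsilon$ is an algebra homomorphism. First I would recall that $1 \in A^{\Uq(\fg)}$: the axiom $x(1) = \epsilon(x) 1$ is precisely the statement that the identity element is an invariant, so there is nothing to prove there. The substance is closure under products.

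Next, let $s, t \in A^{\Uq(\fg)}$, so that $x(s) = \epsilon(x) s$ and $x(t) = \epsilon(x) t$ for all $x \in \Uq(\fg)$. Using Sweedler notation $\Delta(x) = \sum_{(x)} x_{(1)} \otimes x_{(2)}$, the module algebra axiom gives
\[
x\,\mu(s \otimes t) = \sum_{(x)} \mu\bigl(x_{(1)}(s) \otimes x_{(2)}(t)\bigr) = \sum_{(x)} \epsilon(x_{(1)})\,\epsilon(x_{(2)})\, \mu(s \otimes t).
\]
Now I would invoke the counit axiom of the Hopf algebra in the form $\sum_{(x)} \epsilon(x_{(1)})\, x_{(2)} = x$ (equivalently $(\epsilon \otimes \id)\Delta = \id$), combined with $\epsilon$ being multiplicative, to collapse $\sum_{(x)} \epsilon(x_{(1)})\epsilon(x_{(2)})$ to $\epsilon(x)$. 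Concretely, $\sum_{(x)} \epsilon(x_{(1)})\epsilon(x_{(2)}) = \epsilon\bigl(\sum_{(x)} \epsilon(x_{(1)}) x_{(2)}\bigr) = \epsilon(x)$. Hence $x\,\mu(s \otimes t) = \epsilon(x)\, \mu(s \otimes t)$, i.e. $\mu(s \otimes t) = st \in A^{\Uq(\fg)}$. Together with $1 \in A^{\Uq(\fg)}$ and the obvious fact that $A^{\Uq(\fg)}$ is a $\cK$-subspace (the invariance conditions are linear), this shows $A^{\Uq(\fg)}$ is a subalgebra.

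There is no real obstacle here; the only point requiring a modicum of care is the correct bookkeeping with the coproduct and counit — specifically making sure one uses the counit identity on the right tensor leg (or left, by symmetry) and the multiplicativity of $\epsilon$ in the right place. One could alternatively phrase the whole argument without Sweedler notation by noting that $A^{\Uq(\fg)}$ is the image of the (categorical) $\Uq(\fg)$-module map $A \otimes A \to A$ restricted to the invariants $A^{\Uq(\fg)} \otimes A^{\Uq(\fg)} = (A \otimes A)^{\Uq(\fg)}$, but the direct Sweedler computation above is the shortest self-contained route.
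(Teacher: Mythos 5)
Your proof is correct and follows essentially the same route as the paper's: apply the module algebra axiom in Sweedler notation, use multiplicativity of $\epsilon$ together with the counit identity to collapse $\sum_{(x)}\epsilon(x_{(1)})\epsilon(x_{(2)})$ to $\epsilon(x)$. You add the (correct and worth stating) observations that $1\in A^{\Uq(\fg)}$ by the second module-algebra axiom and that invariance is a linear condition, which the paper leaves implicit.
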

\begin{proof}
Using Sweedler's notation, write the
co-multiplication in $\Uq(\fg)$ as
$\Delta(x)=\sum_{(x)}x_{(1)}\otimes x_{(2)}.$ Then for all $t, t'\in A^{\Uq(\fg)}$,
\[
x(t t') =\sum x_{(1)}(t) x_{(2)}(t') =\sum \epsilon(x_{(1)})
\epsilon(x_{(2)}) t t' = \epsilon(x) t t', \quad \forall
x\in\Uq(\fg).
\]
Thus $tt'\in A^{\Uq(\fg)}$.
\end{proof}
We remark that the above result is valid for arbitrary Hopf algebras.  

A module algebra $A$ is called {\em locally finite} if $\Uq(\fg)a$ is
finite dimensional for any $a\in A$, and is said to be of type $(1,
1, \dots, 1)$ if $A$ is of type $(1, 1,\dots, 1)$ as a 
$\Uq(\fg)$-module. Recall that locally finite $\Uq(\fg)$-modules are
semi-simple (see e.g., \cite{APW}). 
\begin{theorem}\label{modulealgebra}
Let $(A, \mu_A)$ and $(B, \mu_B)$ be locally finite
$\Uq(\fg)$-module algebras of type $(1, 1, \dots, 1)$. Then
$A\otimes B$ acquires a $\Uq(\fg)$-module algebra structure when
endowed with the following multiplication
\begin{eqnarray} \label{multiplication}
\mu_{A, B} = (\mu_A\otimes\mu_B)(\id_A\otimes P R\otimes\id_B),
\end{eqnarray}
where $R$ is the universal $R$-matrix of $\Uq(\fg)$, and $P:
B\otimes A \longrightarrow A\otimes B$ is the permutation map defined by
$a\otimes b\mapsto b\otimes a$.
\end{theorem}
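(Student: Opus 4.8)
The plan is to verify directly the two defining axioms of a module algebra for $(A\otimes B,\mu_{A,B})$: first, associativity of $\mu_{A,B}$; second, compatibility of the multiplication with the $\Uq(\fg)$-action (together with the trivial fact that $1\otimes 1$ is the identity and is annihilated by the augmentation ideal). I would open by noting that, since $A$ and $B$ are locally finite of type $(1,\dots,1)$, every element lies in a finite-dimensional submodule on which the $R$-matrix acts as a well-defined operator, so all the expressions below make sense; this lets one argue on finite-dimensional pieces and avoid completions.

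For the module-algebra compatibility, I would use the intertwining property \eqref{R1} in the form $\check R\,\Delta(x)=\Delta(x)\,\check R$ (Remark \ref{Rem:R-matrix}), i.e. $PR$ commutes with the coproduct action. Writing $\mu_{A,B}=(\mu_A\otimes\mu_B)(\id_A\otimes PR\otimes\id_B)$ and applying $x\in\Uq(\fg)$, one pushes $\Delta^{(3)}(x)$ through the middle factor using that $PR$ on the two inner tensorands intertwines $\Delta$ with $\Delta'$; combined with the hypothesis that $\mu_A$ and $\mu_B$ are each $\Uq(\fg)$-equivariant, this should yield $x\,\mu_{A,B}(\xi\otimes\eta)=\sum_{(x)}\mu_{A,B}\bigl(x_{(1)}(\xi)\otimes x_{(2)}(\eta)\bigr)$. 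Here a small bookkeeping point is that "$\xi,\eta$" live in $A\otimes B$, so one must be careful with which tensor slots the various copies of $\Delta$ act on; coassociativity of $\Delta$ makes this consistent.

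The substantive step is associativity, and this is where the $R$-matrix axiom \eqref{R2} (equivalently the hexagon/Yang--Baxter relation \eqref{YB}) is essential — indeed this is the step I expect to be the main obstacle, since it is the only place the full structure of $R$, rather than just its intertwining property, is used. Expanding both $\mu_{A,B}\bigl(\mu_{A,B}\otimes\id\bigr)$ and $\mu_{A,B}\bigl(\id\otimes\mu_{A,B}\bigr)$ on $A\otimes B\otimes A\otimes B\otimes A\otimes B$, one gets two composites of $\mu_A$'s, $\mu_B$'s and several copies of $PR$ acting on various pairs of slots. Using associativity of $\mu_A$ and of $\mu_B$ to merge the outer products, the difference between the two sides is governed purely by how the $PR$-factors are ordered on the three "$A$-type" tensorands interleaved with the three "$B$-type" tensorands; the relations $(\Delta\otimes\id)R=R_{13}R_{23}$ and $(\id\otimes\Delta)R=R_{13}R_{12}$ — which say precisely that $\check R$ is compatible with the products $\mu_A$, $\mu_B$ in the appropriate braided sense — are exactly what is needed to move a single $\check R$ past a product and thereby identify the two composites. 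I would organise this computation diagrammatically in the style of \cite{RT1, RT2}: represent $PR$ by a crossing and $\mu_A,\mu_B$ by trivalent vertices, so that associativity becomes an isotopy of the resulting tangle-like diagram, with \eqref{R2} appearing as the standard "slide a strand over/under a vertex" move and \eqref{YB} as the Reidemeister-III move. Presenting it this way makes the otherwise lengthy index manipulation transparent and essentially forced.
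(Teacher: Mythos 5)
Your proposal is correct and follows essentially the same route as the paper: associativity is established via the hexagon relations \eqref{R2}, compatibility of the $\Uq(\fg)$-action with the twisted multiplication via the intertwining property \eqref{R1}, and the identity element is $1_A\otimes 1_B$. The paper carries out the associativity step as a direct Sweedler-type computation with $R=\sum\alpha_t\otimes\beta_t$ rather than diagrammatically, but the mathematical content — which axiom of $R$ is used where, and how — is the same.
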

\begin{proof}
This result was stated in \cite{B} and is probably well known.
We give a proof here because it is central to this work. Write the
universal $R$-matrix of $\Uq(\fg)$ as $R=\sum \alpha_t\otimes
\beta_t$, where $\alpha_t, \beta_t$ may be taken to be elements of
$\Uq(\fg)$. Then for $a_i\in A$ and $b_i\in B$ ($i=1, 2, 3$), we
have
\begin{eqnarray*}
&& \left((a_1\otimes b_1) (a_2\otimes b_2)\right) (a_3\otimes b_3)\\
&&= \left(\sum_t a_1 \beta_t(a_2)\otimes \alpha_t(b_1) b_2\right)
(a_3\otimes
b_3)\\
&&=\sum_{t, s} a_1 \beta_t(a_2) \beta_s(a_3) \otimes
\alpha_s(\alpha_t(b_1) b_2)) b_3.
\end{eqnarray*}
Using the first relation of \eqref{R2}, we can re-write the last
expression as
\[
\sum_{t, s, r} a_1 \beta_t(a_2) (\beta_s\beta_r(a_3)) \otimes
(\alpha_s\alpha_t(b_1))  \alpha_r(b_2) b_3.
\]
Then using the second relation of \eqref{R2}, we can re-write
this as
\begin{eqnarray*}
&&\sum_{r, s} a_1 \beta_s(a_2 \beta_r(a_3)) \otimes \alpha_s(b_1)
\alpha_r(b_2) b_3\\
&&= (a_1\otimes b_1) \sum_r a_2 \beta_r(a_3) \otimes \alpha_r(b_2)
b_3\\
&&= (a_1\otimes b_1) \left((a_2\otimes b_2) (a_3\otimes b_3)\right).
\end{eqnarray*}
This proves the associativity of the multiplication.

The action of $\Uq(\fg)$ on $A\otimes B$ is defined in the obvious way:
for all $a\otimes b\in A\otimes B$ and $x\in\Uq(\fg)$,
$x(a\otimes b) := \sum_{(x)} x_{(1)}(a)\otimes x_{(2)}(b)$. Now
\begin{eqnarray*}
x\left((a_1\otimes b_1)(a_2\otimes b_2) \right)&=& x(\sum_t a_1
\beta_t(a_2)\otimes \alpha_t(b_1) b_2)\\
&=& \sum_t \sum_{(x)} x_{(1)}(a_1 \beta_t(a_2))\otimes
x_{(2)}(\alpha_t(b_1) b_2)\\
&=& \sum_t \sum_{(x)} x_{(1)}(a_1) x_{(2)}(\beta_t(a_2))\otimes
x_{(3)}(\alpha_t(b_1)) x_{(4)}(b_2).
\end{eqnarray*}
Using \eqref{R1}, we can re-write this last expression as
\begin{eqnarray*}
&& \sum_t \sum_{(x)} x_{(1)}(a_1) \beta_t(x_{(3)}(a_2))\otimes
\alpha_t(x_{(2)}(b_1)) x_{(4)}(b_2) \\
&=& \sum_{(x)} (x_{(1)}(a_1)\otimes  x_{(2)}(b_1))
(x_{(3)}(a_2)\otimes x_{(4)}(b_2))\\
&=& \sum_{(x)} x_{(1)}(a_1\otimes  b_1) x_{(2)}(a_2\otimes b_2).
\end{eqnarray*}
This proves that the $\Uq(\fg)$ action on $A\otimes B$ preserves the
multiplication.

Next, write $1_A,1_B$ for the identity elements of $A,B$ respectively.
A short calculation using the equations $\sum_t\epsilon(\alpha_t)\beta_t=1$
and $\sum_t\epsilon(\beta_t)\alpha_t=1$ shows that $1_A\otimes 1_B$ is
the identity element of $A\otimes B$. Moreover we have
$x(1\otimes 1) = \epsilon(x) 1\otimes 1$ for all
$x\in\Uq(\fg)$. This completes the proof of the theorem.
\end{proof}

\begin{remark}
Henceforth all $\Uq(\fg)$-modules will be assumed to be locally
finite of type $(1, 1, \dots, 1)$, as will all
$\Uq(\fg)$-module algebras.
\end{remark}

The next two results prove the basic properties of the tensor
product construction.
The first proves the associativity of the tensor product on
the category of $\Uq(\fg)$-algebras.
\begin{lemma}\label{invariants}\label{hyper-associativity}
Let $A$, $B$ and $C$ be three $\Uq(\fg)$-algebras.
The module algebras $(A\otimes B\otimes C, \mu_{A\otimes B, C})$ and
$(A\otimes B\otimes C, \mu_{A, B\otimes C})$ are equal.
\end{lemma}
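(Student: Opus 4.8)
The plan is to prove the equality of the two module-algebra structures by showing that the two multiplications $\mu_{A\otimes B, C}$ and $\mu_{A, B\otimes C}$ coincide as maps on $(A\otimes B\otimes C)^{\otimes 2}$; since both structures are built on the same underlying vector space $A\otimes B\otimes C$ and carry the same $\Uq(\fg)$-action (the standard one coming from the iterated coproduct), equality of the multiplications is all that is required. I would write the universal $R$-matrix as $R=\sum_t \alpha_t\otimes\beta_t$ and unwind both sides on a pair of elementary tensors $(a_1\otimes b_1\otimes c_1)$ and $(a_2\otimes b_2\otimes c_2)$.

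First I would compute $\mu_{A\otimes B,C}$: here one regards $A\otimes B$ (with its twisted product $\mu_{A,B}$) as a single module algebra and tensors it with $C$, so the $R$-matrix is applied to move $c_1$ past $a_2\otimes b_2$, using $\Delta$ on the $A\otimes B$ slot. Explicitly this brings in $(\id\otimes\Delta)R$ acting appropriately. Then I would compute $\mu_{A,B\otimes C}$: now $B\otimes C$ is the single module algebra, and $R$ is applied to move $b_1\otimes c_1$ past $a_2$, bringing in $(\Delta\otimes\id)R$. The key step is to invoke the two hexagon-type identities \eqref{R2}, namely $(\Delta\otimes\id)R=R_{13}R_{23}$ and $(\id\otimes\Delta)R=R_{13}R_{12}$, to rewrite both iterated expressions as the same triple sum $\sum a_1\beta_s(a_2)\otimes\alpha_s(b_1)\beta_r(b_2)\otimes\cdots$ — the computation is essentially the one already carried out in the associativity part of the proof of Theorem~\ref{modulealgebra}, with the roles of the factors redistributed.

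Concretely, after expanding one finds that both $\mu_{A\otimes B,C}$ and $\mu_{A,B\otimes C}$ send $(a_1\otimes b_1\otimes c_1)\otimes(a_2\otimes b_2\otimes c_2)$ to
\[
\sum_{r,s,t} a_1\,\beta_s(a_2)\otimes \alpha_s(b_1)\,\beta_t(b_2)\otimes \alpha_t(\alpha_?(c_1))\,c_2,
\]
where the apparent discrepancy in how the $R$-matrices are nested is resolved precisely by \eqref{R2}. I would organise this as: (i) expand $\mu_{A\otimes B,C}$ using that $\mu_{A,B}$ itself contains a $PR$, so the nested $R$'s are $R$ (in the $B,A$-type slots at the inner level) composed with $R$ applied with a coproduct on the $A\otimes B$ tensorand; (ii) apply $(\id\otimes\Delta)R=R_{13}R_{12}$ to split that outer $R$; (iii) do the mirror-image expansion for $\mu_{A,B\otimes C}$ using $(\Delta\otimes\id)R=R_{13}R_{23}$; (iv) observe the two resulting expressions are literally the same sum.

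The main obstacle is purely bookkeeping: keeping track of which tensor leg each $\alpha_t$, $\beta_t$ acts on through the permutation maps $P$, since in the definition \eqref{multiplication} the $R$-matrix is sandwiched between $\id$'s and a flip, and when one of the factors is itself a twisted tensor product the indices proliferate. There is no conceptual difficulty — it is a diagrammatic identity reflecting the coherence of the braiding — so I would recommend either a careful index computation as above, or, more cleanly, a string-diagram argument: represent $\mu_{A,B}$ and the iterated products by strands with crossings given by $\check R=PR$, and note that both $\mu_{A\otimes B,C}$ and $\mu_{A,B\otimes C}$ yield the same braided diagram (the generators $a_i$ on the left strands crossing over/under the $c_j$, $b_j$ in the unique braided way dictated by \eqref{R2} and \eqref{YB}), so the two composites agree. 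Either way the verification of the unit is immediate since $1_A\otimes 1_B\otimes 1_C$ is the identity for both products.
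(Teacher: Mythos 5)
Your plan is correct and coincides with the paper's proof, which expands both $\mu_{A\otimes B,C}$ and $\mu_{A,B\otimes C}$ on elementary tensors and uses the hexagon relations \eqref{R2} to reach a common triple sum. Note only that the displayed intermediate expression in your sketch is not quite right: each of the three ordered pairs of factors contributes one $R$-matrix, so $a_2$ should carry two $\beta$'s and $c_1$ two $\alpha$'s, the common value being $\sum_{r,s,t} a_1\,\beta_r\beta_t(a_2)\otimes \alpha_r(b_1)\,\beta_s(b_2)\otimes \alpha_s\alpha_t(c_1)\,c_2$.
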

\begin{proof}
We need to show that the multiplication maps $\mu_{A\otimes B, C}$ and
$\mu_{A, B\otimes  C}$ from $(A\otimes B\otimes
C)^{\otimes 2}$ to $A\otimes B\otimes C$ are identical. For all $a\otimes b\otimes
c$ and $a'\otimes b'\otimes c'$ in $A\otimes B\otimes C$, we have
\[
\begin{aligned}
&\mu_{A\otimes B, C}( (a\otimes b\otimes c)\otimes (a'\otimes
b'\otimes c))\\
&= \sum_t \mu_{A, B}((a\otimes b)\otimes \beta_t(a'\otimes
b'))\otimes \alpha_t(c) c'\\
&= \sum_{r, s, t} a (\beta_r\beta_t(a'))\otimes
\alpha_r(b)\beta_s(b')
\otimes \alpha_s\alpha_t(c) c';\\
&\mu_{A, B\otimes  C}( (a\otimes b\otimes c)\otimes (a'\otimes
b'\otimes c))\\
&= \sum_t a\beta_t(a')\otimes \mu_{B, C}(\alpha_t(b\otimes
c)\otimes (b'\otimes c'))\\
&= \sum_{r, s, t} a (\beta_r\beta_t(a'))\otimes \alpha_r(b)
\beta_s(b') \otimes \alpha_s\alpha_t(c) c'.
\end{aligned}
\]
Hence $\mu_{A\otimes B, C}=\mu_{A, B\otimes  C}$.
\end{proof}

\begin{remark}
Lemma \ref{hyper-associativity} shows that
given $\Uq(\fg)$-algebras $A_i$ ($i=1, 2, \dots, k$), one can
iterate the construction of Theorem \ref{modulealgebra} to obtain an
unambiguous $\Uq(\fg)$-module algebra structure on
$A_1\otimes A_2\otimes\dots\otimes A_k$.
\end{remark}

The next statement sets out the key properties of homomorphisms
of $\Uq(\fg)$-algebras in the tensor category.
\begin{proposition}\label{prop:hom-tensor}
Let $\alpha:A\to A'$ and $\beta:B\to B'$ be homomorphisms of
$\Uq(\fg)$-algebras. Then
\begin{enumerate}
\item The kernel and image of $\alpha$ (and $\beta$) are $\Uq(\fg)$-algebras.
\item The map $\alpha\otimes\beta:A\otimes B\lr A'\otimes B'$ is a homomorphism
of $\Uq(\fg)$-algebras.
\end{enumerate}
\end{proposition}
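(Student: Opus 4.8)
The plan is to verify each assertion directly from the definitions, using that a homomorphism of $\Uq(\fg)$-algebras is by definition a $\Uq(\fg)$-module map which is also an algebra homomorphism (preserving multiplication and identity), together with the explicit form of the twisted multiplication $\mu_{A,B}$ from Theorem \ref{modulealgebra}. For part (1): since $\alpha$ is a $\Uq(\fg)$-module map, $\ker\alpha$ and $\operatorname{im}\alpha$ are $\Uq(\fg)$-submodules in the usual way; since $\alpha$ is an algebra homomorphism, $\ker\alpha$ is a two-sided ideal and $\operatorname{im}\alpha$ is a subalgebra closed under $\mu_{A'}$. The only point needing a remark is that these subspaces, being $\Uq(\fg)$-submodules of locally finite modules of type $(1,\dots,1)$, remain locally finite of type $(1,\dots,1)$, so they are $\Uq(\fg)$-algebras in the sense fixed by the standing assumption.

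For part (2): I first recall that $\alpha\otimes\beta$ is automatically a $\Uq(\fg)$-module map, since the $\Uq(\fg)$-action on a tensor product is defined through $\Delta$ and each factor is equivariant. It is also clear that $(\alpha\otimes\beta)(1_A\otimes 1_B)=1_{A'}\otimes 1_{B'}$. The substantive step is to check that $\alpha\otimes\beta$ intertwines the twisted multiplications $\mu_{A,B}$ and $\mu_{A',B'}$. Writing $R=\sum_t \alpha_t\otimes\beta_t$ as in the proof of Theorem \ref{modulealgebra}, we compute
\[
(\alpha\otimes\beta)\bigl((a\otimes b)(a'\otimes b')\bigr)
=(\alpha\otimes\beta)\Bigl(\sum_t a\,\beta_t(a')\otimes \alpha_t(b)\,b'\Bigr)
=\sum_t \alpha(a)\,\alpha(\beta_t(a'))\otimes \beta(\alpha_t(b))\,\beta(b').
\]
Now $\alpha$ and $\beta$ are module maps, so $\alpha(\beta_t(a'))=\beta_t(\alpha(a'))$ and $\beta(\alpha_t(b))=\alpha_t(\beta(b))$; and they are algebra maps, so the products on the right are the products $\mu_{A'}$, $\mu_{B'}$ of the images. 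Hence the expression equals $\sum_t \alpha(a)\,\beta_t(\alpha(a'))\otimes \alpha_t(\beta(b))\,\beta(b')=(\alpha(a)\otimes\beta(b))(\alpha(a')\otimes\beta(b'))$, which is exactly $\mu_{A',B'}\bigl((\alpha\otimes\beta)(a\otimes b)\otimes(\alpha\otimes\beta)(a'\otimes b')\bigr)$.

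I do not expect any serious obstacle here: the only mild subtlety is the interplay in part (2) between the module-map property (moving $\alpha_t,\beta_t$ past $\alpha,\beta$) and the algebra-map property (replacing products in $A,B$ by products in $A',B'$), which must be invoked in the right order; once $R$ is written componentwise this is a one-line manipulation. The remarks on local finiteness and type in part (1) are routine given the standing assumptions. I would present the argument in the two short paragraphs sketched above, omitting the verification that $\ker\alpha$ is an ideal and $\operatorname{im}\alpha$ a subalgebra, as this is identical to the classical case.
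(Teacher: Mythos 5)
Your proof is correct and follows essentially the same route as the paper: part (1) by the classical module/algebra argument, and part (2) by writing out the componentwise form of the $R$-matrix, applying $\alpha\otimes\beta$ to the twisted product, and commuting the $R$-matrix components past the $\Uq(\fg)$-equivariant maps. One minor remark: the paper deliberately writes $R=\sum_t r_t\otimes s_t$ to avoid the notational clash between the $R$-matrix components $\alpha_t,\beta_t$ and the homomorphisms $\alpha,\beta$; your version is unambiguous in context but would benefit from the same relabelling.
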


\begin{proof}
The first assertion is clear.

To verify the second, we need to show that $\alpha\otimes\beta$ respects the
multiplication, and the $\Uq(\fg)$-action. Take elements $a_1,a_2\in A$ and $b_1,b_2\in B$,
and write the $R$-matrix $R$ as $R=\sum_t r_t\otimes s_t$, with $r_t,s_t\in \Uq(\fg)$.
Then $\alpha\otimes\beta(a_1\otimes b_1.a_2\otimes b_2)$
is equal to $\sum_t\alpha(a_1).s_t\alpha(a_2)\otimes r_t\beta(b_1).\beta(b_2)$.
It is straightforward to check that this is equal to the product
$\alpha(a_1)\otimes\beta(b_1).\alpha(a_2)\otimes\beta(b_2)$ in $A'\otimes B'$,
which shows that $\alpha\otimes\beta$ respects algebra multiplication.

Finally, if $u\in \Uq(\fg)$ and $\Delta(u)=\sum_i u_i\otimes u'_i$, then
for elements $a\in A$ and $b\in B$, we have
$\alpha\otimes\beta(u.a\otimes b)=\sum_i\alpha(u_ia)\otimes\beta(u'_ib)
=\sum_iu_i\alpha(a)\otimes u'_i\beta(b)=u.\alpha(a)\otimes\beta(b)$.
Hence $\alpha\otimes\beta$ respects the $\Uq$-action.
\end{proof}

\subsection{Braided symmetric algebras}\label{bsa}

The first step in our construction of the non-commutative 
analogues of coordinate rings with which we shall work is to define,
for each finite dimensional $\Uq(\fg)$-module $V$, a
braided symmetric algebra $S_q(V)$; these algebras have been
considered previously, e.g. in \cite[\S 2.1]{BZ}. 

Write $R_{V V}\in GL(V\otimes V,
\cK)$ for the $R$-matrix associated to $\Uq(\fg)$. Let $P:
V\otimes V\longrightarrow V\otimes V$, be the permutation
map $v\otimes w \mapsto w\otimes
v$, and define $\check R :=P R_{V, V}$. Then
$\check{R}\in \End_{\Uq(\fg)}(V\otimes V)$, and
\begin{eqnarray*}
(\check{R}\otimes \id_V)(\id_V\otimes \check{R})(\check{R}\otimes
\id_V) = (\id_V\otimes \check{R}) (\check{R}\otimes \id_V)
(\id_V\otimes \check{R}).
\end{eqnarray*}
It follows from \cite[Theorem 6.2]{LZ1} that $\check R^2$ acts 
on each isotypic component of $V\otimes V$ as a scalar
of the form $q^m$, where $m\in\Z$.
Hence $\check R$ is semisimple on $V\otimes V$, 
and has characteristic polynomial 
of the form (cf. \cite[(6.10)]{LZ1}) 
\[ \prod_{i=1}^{k_+} \left(t - q^{\chi_i^{(+)}}\right)
\prod_{j=1}^{k_-} \left(t + q^{\chi_i^{(-)}}\right)
\]
where $k_{\pm }$ are positive integers and $\chi^{(+)}_i$ and
$\chi^{(-)}_i$ are rational numbers (in fact $\in\frac{1}{2}\Z$). Define
submodules $a_2$ and $s_2$ of $V\otimes V$ by
\begin{eqnarray}
\begin{aligned}
a_2= \prod_{i=1}^{k_+} \left(\check{R} -
q^{\chi_i^{(+)}}\right)(V\otimes V), \quad  s_2= \prod_{j=1}^{k_-}
\left(\check{R} + q^{\chi_i^{(-)}}\right)(V\otimes V).
\end{aligned}
\end{eqnarray}

Let $T(V)$ be the tensor algebra of $V$. This is a
$\Uq(\fg)$-module in the obvious way, and the $\Uq(\fg)$
action preserves the algebraic structure of $T(V)$; that is, $T(V)$
is a $\Uq(\fg)$ module algebra. Let $\cI_q$ be the two-sided ideal
of $T(V)$ generated by $a_2$; since $\check R$ commutes with $\Uq(\fg)$,
this is stable under
$\Uq(\fg)$. Define the braided symmetric algebra $S_q(V)$ of the
$\Uq(\fg)$-module $V$ by
\[
S_q(V) = T(V)/\cI_q
\]
Let $\tau: T(V)\longrightarrow S_q(V)$ be the natural surjection.
Since the ideal $\cI_q$ is a $\Uq$-subalgebra of $T(V)$, it follows from
Proposition \ref{prop:hom-tensor} that $S_q(V)$ is a
module algebra over $\Uq(\fg)$.

Note that the two-sided ideal $\langle a_2 \rangle$ of $T(V)$
is a $\Z_+$-graded submodule of the $\Z_+$-graded module $T(V)$.
Hence the quotient $S_q(V)$ is $\Z_+$-graded, and we shall denote by
$S_q(V)_n$ its degree $n$ homogeneous component.

The symmetric algebra $S(V)$ of $V$ over $\cK$
is also $\Z_+$-graded. Following \cite{BZ},
$S_q(V)$ is called a flat deformation of $S(V)$ if $\dim S_q(V)_n
=\dim S(V)_n$ for all $n$. This happens if and only if $\dim
S_q(V)_3 =\dim S(V)_3$ by a result of Drinfeld's \cite{D}. In this
case, we shall also say that the $\Uq(\fg)$-module $V$ itself is
flat.

The results of \cite{BZ, Zw} show that flat modules of quantum groups
are extremely rare. Other than the natural modules for the quantum
groups associated with the classical Lie algebras, there are hardly
any modules which are flat. Even in the case of $\Uq(\fsl_2)$,
all the irreducible modules of dimensions greater than $3$ are not
flat!

To develop an analogue of classical invariant
theory for quantum groups, we require quantum analogues of
polynomial algebras on direct sums of copies of a given
module. However even when a module is flat, the direct sum of several
copies of it is usually not flat. For example the natural
module $V$ of $\Uq(\so_n)$, is known to be flat \cite{Zw} (see also
below). Let
$\oplus^m V$ denote the direct sum of $m$ copies of $V$. It is
easily verified that $S_q(\oplus^m V)$ is not flat if $m>1$, and thus is not a 
suitable quantum analogue of $S(\oplus^m V)$.

\subsection{A quantum analogue of the symmetric algebra}
To construct a quantum analogue of $S(\oplus^k V)$, we shall use
Theorem \ref{modulealgebra}.  Given a module $V$ for a quantum
group $\Uq(\fg)$, we have the braided symmetric algebra
$S_q(V)$ as above. Iterating the construction of Theorem
\ref{modulealgebra}, we obtain the $\Uq(\fg)$-module algebra
\begin{eqnarray}\label{Am}
\cA_m(V):=(S_q(V)^{\otimes m}, \mu_S).
\end{eqnarray}
Whenever there is no danger of confusion, we shall drop the $V$ from the
notation and simply write $\cA_m$ for $\cA_m(V)$. This is
a $\Z_+^m$-graded $\Uq$-algebra, with homogeneous component
$(\cA_m)_{\bf d}$ of multi-degree ${\bf d}=(d_1, d_2, \dots, d_m)$
given by
\[ (\cA_m)_{\bf d} = S_q(V)_{d_1}\otimes S_q(V)_{d_2} \otimes \cdots
\otimes S_q(V)_{d_m}.
\]
If $V$ is flat, then $\dim S_q(V)_k= \dim S(V)_k$ for all $k$. In
this case,
\[
\dim \sum_{|{\bf d}|=k} (\cA_m)_{\bf d} = \dim S(\oplus^m V)_k, \quad |{\bf
d}|=\sum_{i=1}^m d_i.
\]
This proves the following statement.
\begin{theorem}
If $V$ is a flat $\Uq(\fg)$-module, the $\Uq(\fg)$-module algebra
$\cA_m(V)$ defined by \eqref{Am} is a flat deformation (in the sense
of \cite{BZ}) of the symmetric algebra $S(\oplus^m V)$ of the direct sum of
$m$-copies of $V$.
\end{theorem}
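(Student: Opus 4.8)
The plan is to show that the dimension count carried out informally just before the statement actually proves flatness of $\cA_m(V)$, so the task is mostly to assemble the pieces supplied earlier in the section into a clean argument. First I would recall that $\cA_m(V)$ is $\Z_+^m$-graded with homogeneous component $(\cA_m)_{\bf d}=S_q(V)_{d_1}\otimes\cdots\otimes S_q(V)_{d_m}$; this is immediate from the iterated construction of Theorem \ref{modulealgebra}, since the twisted multiplication $\mu_S$ is built from $\mu_{S_q(V)}$ and the $R$-matrix, both of which preserve the tensor-degree grading (the $R$-matrix acts within each isotypic component of a fixed bidegree). Hence the total-degree-$k$ component of $\cA_m(V)$ is $\bigoplus_{|{\bf d}|=k}S_q(V)_{d_1}\otimes\cdots\otimes S_q(V)_{d_m}$, whose dimension is $\sum_{|{\bf d}|=k}\prod_{i=1}^m\dim S_q(V)_{d_i}$.

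Next I would invoke flatness of $V$: by hypothesis $\dim S_q(V)_d=\dim S(V)_d$ for every $d\ge 0$. Substituting this into the sum above gives $\dim\cA_m(V)_k=\sum_{|{\bf d}|=k}\prod_{i=1}^m\dim S(V)_{d_i}$. On the classical side, $S(\oplus^m V)=S(V)^{\otimes m}$ as graded algebras, so $\dim S(\oplus^m V)_k=\sum_{|{\bf d}|=k}\prod_{i=1}^m\dim S(V)_{d_i}$ by exactly the same combinatorial decomposition. Therefore $\dim\cA_m(V)_k=\dim S(\oplus^m V)_k$ for all $k$, which is precisely the definition (following \cite{BZ}) of $\cA_m(V)$ being a flat deformation of $S(\oplus^m V)$. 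Since $\cA_m(V)$ is a $\Uq(\fg)$-module algebra by the remark following Lemma \ref{hyper-associativity}, the statement follows.

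The only genuine point requiring care — and the place I would expect a referee to push back — is the claim that the grading on $\cA_m(V)$ is by tensor-degree and that dimensions multiply, i.e. that no collapsing occurs when passing from the graded vector space $S_q(V)^{\otimes m}$ to the algebra $\cA_m(V)$. This is where one must be slightly careful: the underlying vector space of $\cA_m(V)$ is by definition $S_q(V)^{\otimes m}$, so as a vector space nothing changes and the dimension identity is automatic; the twisted product $\mu_S$ only reorganises which products of basis elements equal which, but does not alter the space itself. So in fact there is no real obstacle — the subtlety is purely one of exposition, namely being explicit that ``flat deformation'' is a statement about graded dimensions of the underlying space and that the iterated $R$-matrix twist is degree-preserving, both of which are already implicit in the definition of $\cA_m(V)$ in \eqref{Am} and in the displayed formula for $(\cA_m)_{\bf d}$.
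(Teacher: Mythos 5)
Your proposal is correct and is essentially identical to the paper's own argument, which is exactly the displayed dimension count preceding the theorem: the underlying graded vector space of $\cA_m(V)$ is $S_q(V)^{\otimes m}$, flatness of $V$ gives $\dim S_q(V)_d=\dim S(V)_d$, and summing over multidegrees with $|{\bf d}|=k$ yields $\dim(\cA_m)_k=\dim S(\oplus^m V)_k$. Your extra remarks about the $R$-matrix twist being degree-preserving and the vector space being unchanged are a reasonable elaboration of the same observation, not a different route.
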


Let $\{v_s \mid s=1, 2, \dots, \dim V\}$ be a basis for the
$\Uq(\fg)$-module $V$. Since the natural map $\tau: T(V)\longrightarrow S_q(V)$
maps $T(V)_1=V$ injectively into $S_q(V)$, we also denote by $v_s$ the image
of this basis element in $S_q(V)$. We introduce the following
elements
\begin{eqnarray}\label{def:X}
X_{i a}:=\underbrace{1\otimes\dots\otimes 1}_{i-1} \otimes
v_a\otimes \underbrace{ 1\otimes\dots\otimes 1}_{m-i}
\end{eqnarray}
of $\cA_m$. Then for all $i<j$,
\begin{eqnarray}\label{Sm1}
X_{j a} X_{i b} =\sum_{a', b'=1}^{2n} \check{R}_{a' a, b' b}  X_{i
a'}X_{j b'}.
\end{eqnarray}
where $\check{R}_{a a', b b'}$ are the entries of the
$\check{R}$-matrix.

The same construction yields the $\Z_+^m$-graded $\Uq(\fg)$-module algebra
\begin{eqnarray}\label{Tm}
\cT_m(V):=(T(V)^{\otimes m}, \mu_T).
\end{eqnarray}

The next statement is an immediate consequence of Proposition \ref{prop:hom-tensor}.
\begin{lemma}\label{alg-hom}
The natural map $\tau^{\otimes m}: \cT_m\longrightarrow
\cA_m$  is a surjection of $\Z_+^m$-graded $\Uq(\fg)$-algebras.
\end{lemma}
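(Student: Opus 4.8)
The plan is to deduce this directly from Proposition \ref{prop:hom-tensor}(2) by induction on $m$. The base case $m=1$ is precisely the statement, already established in the text, that $\tau:T(V)\to S_q(V)$ is a surjective homomorphism of $\Uq(\fg)$-module algebras (this uses that $\cI_q$ is a $\Uq(\fg)$-stable two-sided ideal) which moreover preserves the $\Z_+$-grading, since $\cI_q$ is a graded submodule of $T(V)$.

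For the inductive step I would identify $\cT_m = \cT_{m-1}\otimes T(V)$ and $\cA_m = \cA_{m-1}\otimes S_q(V)$, both equipped with the iterated twisted multiplication of Theorem \ref{modulealgebra}; by Lemma \ref{hyper-associativity} this iterated structure is unambiguous, so it is exactly the tensor-product module-algebra structure for which Proposition \ref{prop:hom-tensor}(2) is stated. Under this identification $\tau^{\otimes m}=\tau^{\otimes(m-1)}\otimes\tau$. The inductive hypothesis gives that $\tau^{\otimes(m-1)}:\cT_{m-1}\to\cA_{m-1}$ is a homomorphism of $\Uq(\fg)$-algebras, and combining this with the base case, Proposition \ref{prop:hom-tensor}(2) immediately yields that $\tau^{\otimes m}$ is a homomorphism of $\Uq(\fg)$-algebras. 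Here I would remark that the standing assumption recorded in the Remark following Theorem \ref{modulealgebra} (all module algebras locally finite of type $(1,\dots,1)$) ensures the twisted multiplications on $\cT_m$ and $\cA_m$ are defined, so there is no regularity issue to check.

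It remains to note the two elementary properties. Surjectivity: $\tau$ is surjective and the tensor product of $\cK$-linear surjections is a surjection, so $\tau^{\otimes m}$ is surjective, hence a surjection of algebras. Grading: since $\tau$ is degree-preserving, $\tau^{\otimes m}$ maps $(\cT_m)_{\bf d}=T(V)_{d_1}\otimes\cdots\otimes T(V)_{d_m}$ into $(\cA_m)_{\bf d}=S_q(V)_{d_1}\otimes\cdots\otimes S_q(V)_{d_m}$ for every ${\bf d}\in\Z_+^m$, so it respects the $\Z_+^m$-grading. I do not expect any real obstacle; the only point that needs a moment's attention is verifying that the module-algebra structures used on $\cT_m$ and $\cA_m$ coincide with the ones for which Proposition \ref{prop:hom-tensor} was proved, which is exactly what Lemma \ref{hyper-associativity} provides.
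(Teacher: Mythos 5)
Your proof is correct and follows exactly the route the paper intends: the paper states the lemma as ``an immediate consequence of Proposition \ref{prop:hom-tensor}'' and gives no further argument, and your induction using Proposition \ref{prop:hom-tensor}(2) together with Lemma \ref{hyper-associativity} to justify the unambiguity of the iterated tensor-product structure is precisely the fleshed-out version of that remark. The observations on surjectivity and grading are likewise the standard ones implicit in the paper's assertion.
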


It is useful to describe the map $\tau^{\otimes m}$ explicitly. For
${\bf a}^{(i)}= (a^{(i)}_1, \dots, a^{(i)}_{d_{i}})$ ($i\in[1, m]$),
let $t[{\bf a}^{(i)}] = v_{a^{(i)}_1}\otimes \cdots \otimes
v_{a^{(i)}_{d_i}}\in T(V)_{d_i}$. Write $X[{\bf a}^{(i)}]=X_{i
a^{(i)}_1}\cdots X_{i a^{(i)}_{d_i}}\in S_q(V)_{d_i}$. Then
$\tau^{\otimes m}(t[{\bf a}^{(1)}]\cdots t[{\bf a}^{(m)}])
= X[{\bf a}^{(1)}]\cdots X[{\bf a}^{(m)}]$,
where the right hand side belongs to the homogeneous component of
$\cA_m$ of multi-degree $(d_1, d_2, \dots, d_m)$.

\subsection{The quantum trace and invariants}\label{subsec:canon}
It will be useful to keep
in mind the structural context of the computations which appear below.

Denote $\Uq(\fg)$ by $\Uq$. For any two $\Uq$-modules $N,M$, we have a canonical $\Uq$-module isomorphism
$\xi:N\otimes M^*\overset{\sim}{\lr} \Hom_\cK(M,N)$, where
$\xi(n\otimes \phi):m\mapsto \langle \phi,m\rangle n$, and
the action of $u\in\Uq$ on the right side
is given by $u.f(m)=\sum_{(u)}u_{(1)}.f(S(u_{(2)})m)$ for $f\in \Hom_\cK(M,N)$,
where $\Delta(u)=\sum_{(u)}u_{(1)}\otimes u_{(2)}$. The map $\xi$ induces an isomorphism
\begin{eqnarray}\label{homcanon}
(N\otimes M^*)^{\Uq}\overset{\sim}{\lr} \Hom_\cK(M,N)^{\Uq}=\Hom_{\Uq}(M,N).
\end{eqnarray}

Now taking $M=V^*$ and $N=V$ in \eqref{homcanon}, we obtain an isomorphism
$\xi_0:(V\otimes V^{**})^\Uq\overset{\sim}{\lr} \Hom_\Uq(V^*,V)$.
Write
$K=K_{2\rho}$, where $2\rho$ is the sum of the positive roots (in the
notation of \cite{Ja}). There is an isomorphism of $\Uq$-modules
$\ve_K:V\overset{\sim}{\lr} V^{**}$ where $\ve_K(v)=\ve(Kv)$, and $\ve(w)$ is
evaluation at $w\in V$. Composing these maps, we obtain
an isomorphism
\be\label{vstarv}
f:(V\otimes V)^\Uq\lr (V\otimes V^{**})^\Uq
\lr \Hom_\Uq(V^*,V).
\ee
If $V$ is simple,
the dimension of the right side of \eqref{vstarv} is $0$ or $1$;
in this case any non-zero element $T\in (V\otimes V)^\Uq$ thus gives rise
to an isomorphism of $\Uq$-modules $f_T:V^*\to V$.

Next, note that the map $\tau_1:V^*\otimes V\to \cK$ given by
$\phi\otimes v\mapsto\langle \phi,v  \rangle$ is a homomorphism
of $\Uq$-modules. Applying this with $V$ replaced by $V^*$, we see that
the quantum trace $\tau_q:\End(V)\cong V\otimes V^*\to \cK$
defined by $\tau_q(\xi(v\otimes \phi))= \langle \phi,Kv  \rangle$
is also a $\Uq$-module map.

Now given two $\Uq$-modules $M,N$, we have an isomorphism of $\Uq$-modules
$D:M^*\otimes N^*\overset{\sim}{\lr} (M\otimes N)^*$, defined by $D=\kappa\circ R$,
where $R$ is the $R$-matrix, and $\kappa$ is defined by
$\langle \kappa(\phi\otimes\psi),m\otimes n  \rangle=
\langle \phi,m \rangle\langle \psi, n  \rangle$. This is because
for $u\in\Uq$, $\Delta\circ S(u)=S\otimes S\circ\Delta'(u)$, where
$\Delta'$ is the opposite comultiplication to $\Delta$.

It follows that $\tau_1\in((V^*\otimes V)^*)^\Uq\simeq (V\otimes V^*)^\Uq$
and similarly that
$\tau_q\in((V\otimes V^*)^*)^\Uq\simeq (V^*\otimes V)^\Uq$







\noindent{\bf Invariants in terms of basis elements.}

We shall make explicit the above identifications in terms of bases of $V$ and $V^*$.

Let $V$ be a $\Uq$-module, and let $v_1,\dots,v_n$ be a basis of weight vectors,
with $\wt(v_i)=\lambda_i$; we assume the $\lambda_i$ are pairwise distinct.
Let $v_1^*,\dots,v_n^*$ be the dual basis of $V^*$; then $\wt(v_i^*)=-\lambda_i$.
Under the isomorphism \eqref{homcanon}, $\id_V$ corresponds to
$\gamma:=\sum_iv_i\otimes v_i^*\in V\otimes V^*$. Since the former
is $\Uq$-invariant, we have
\be\label{casimir}
\gamma =\sum_iv_i\otimes v_i^*\in (V\otimes V^*)^\Uq.
\ee

Now the map $\ep_K:V\to V^{**}$ takes $v_r$ to $q^{(2\rho,\lambda_r)}v_r^{**}$
where $(v_r^{**})$ is the basis of $V^{**}$ dual to the basis $(v_r^*)$ of $V^*$.
Applying $\ep_K\otimes\id_{V^*}$ to $\gamma$ and reinterpreting in terms
of $V$ (rather than $V^*$), we see (taking into account that $v_r^*$
has weight $-\lambda_r$) that

\be\label{qcasimir}
\gamma_q =\sum_iq^{-(2\rho,\lambda_i)}v_i^*\otimes v_i\in (V^*\otimes V)^\Uq.
\ee

If $V$ is simple then $(V\otimes V^*)^\Uq\simeq(V^*\otimes V)^\Uq$ is one-dimensional.
Hence the element $\gamma_q$ of \eqref{qcasimir} is a non-zero scalar multiple
of the quantum trace $\tau_q$.

Now assume that $V$ is simple and self dual. Then whenever $\lambda_r$ is a weight
of $V$, so is $-\lambda_r$, and we write $v_{-r}$ for the basis element with weight
$-\lambda_r$. Since any element $T\in (V\otimes V)^\Uq$ has weight zero, it must be
of the form
\be\label{invvxv}
T=\sum_{r=1}^n c_rv_r\otimes v_{-r},
\ee
for elements $c_r\in \cK$. The corresponding $\Uq$-isomorphism $f_T:V^*\to V$ \eqref{vstarv}
is readily seen to satisfy
\be\label{isovstarv}
f_{T}(v_i^*)= c_{-i}q^{(2\rho,\lambda_i)}v_{-i}.
\ee
Since $f_T$ is an isomorphism, it follows that $c_i\neq 0$ for all $i$, and the inverse
$f_T\inv:V\to V^*$ is described similarly.

We may now apply $\id_V\otimes f_T\inv$ and $f_T\inv\otimes \id_V$ to $T$, to obtain invariant
elements of $V\otimes V^*$ and $V^*\otimes V$ respectively. The latter yields
the element $\gamma_q$ of \eqref{qcasimir}, while the former yields
$\sum_ic_ic_{-i}\inv q^{-(2\rho,\lambda_i)}v_i\otimes v_i^*$. Comparing
with $\gamma$ \eqref{qcasimir} shows that
\be
c_ic_{-i}\inv q^{-(2\rho,\lambda_i)} \text{ is independent of } i.
\ee
It is evident that if zero is a weight of $V$, then the implied constant is $1$.

\smallskip

In the next three sections, we shall study the algebras $\cA_m$ associated with the
natural modules of the quantum orthogonal and symplectic groups.

\section{Invariant theory of the quantum even orthogonal groups
 $\Uq(\mathfrak{so}_{2n})$}\label{D}
Let $\epsilon_i$ ($i=1, 2, \dots, n$) be an orthonormal basis of
the weight lattice of the Lie algebra $\so_{2n}$;
a set of simple roots may then be taken to be
$\epsilon_i-\epsilon_{i+1}$  $1\leq i<n$ together with
$\epsilon_{n-1}+\epsilon_n$. In this section,
$\Uq$ will denote the quantum group $\Uq(\mathfrak{so}_{2n})$.

\subsection{The natural module for $\Uq(\mathfrak{so}_{2n})$}\label{sect:naturalmodule}

We realise the group $SO_{2n}(\C)$ as the subgroup of $SL_{2n}(\C)$
preserving the bilinear form defined by the matrix
$J:=\begin{bmatrix}0 & I_n\\ I_n & 0\end{bmatrix}$,
where
$I_n$ is the $n\times n$ identity matrix.
Then
$\fg=\so_{2n}$ is the subalgebra of $\mathfrak{sl}_{2n}(\C)$ consisting of
matrices satisfying $X^t=-JXJ$. Accordingly, there is a
Cartan subalgebra consisting of diagonal matrices. Now the
natural representation of $\so_{2n}$ is minuscule. Thus it lifts to
the natural representation of the quantum group $\Uq$ in such a way
that matrices for the Chevalley generators remain the same
(see, e.g. \cite{ZGB}); this is exploited in the description below.
The natural module $V$ for $\Uq$ has highest weight $\epsilon_1$
and weights $\pm\epsilon_i$, $i=1,2,\dots,n$. It therefore has
a basis $\{v_a \mid
a\in [1, n]\cup[-n, -1]\}$, where $[1, n]=\{1, 2, \dots, n\}$ and
$[-n, -1]$ $={-n},-(n-1),\dots,{-1}$, and $v_a$ has weight $\sgn(a)\epsilon_{|a|}$,
where $\sgn(a)=a/|a|$. Let $E_{a b}$ be the matrix
units in $\End(V)$ relative to this basis, defined by
\begin{eqnarray}\label{Eab}
E_{a b} v_c = \delta_{b c} v_a.
\end{eqnarray}
Then we have the following explicit formulae for the natural
representation  $\pi: \Uq\longrightarrow\End(V)$ of $\Uq$ relative
to the above basis:
\begin{eqnarray*}
\begin{aligned}
&\pi(e_i) = E_{i, i+1} - E_{-i-1, -i},  \qquad  \pi(f_i) =E_{i+1,
i} - E_{-i, -i-1},  \quad i<n, \\
&\pi(e_n) = E_{n-1, -n} - E_{n, -n+1}, \quad  \pi(f_n)=E_{-n, n-1} -
E_{-n+1, n}, \\
&\pi(k_i) = 1 + (q-1)(E_{i i} + E_{-i-1, -i-1}) +
(q^{-1}-1)(E_{i+1, i+1} + E_{-i, -i}),  \quad i<n, \\
&\pi(k_n) = 1 + (q-1)(E_{n-1, n-1} + E_{n n}) + (q^{-1}-1)(E_{-n+1,
-n+1} + E_{-n, -n}).
 \end{aligned}
\end{eqnarray*}

Note that the subalgebra of $\Uq$ generated by $e_i, f_i, k^{\pm
1}_i$ $(i<n)$ is isomorphic to $\Uq(\mathfrak{sl}_n)$,
and the vectors $v_i$ ($i\le n$) span its natural module.
The $v_{-i}$ ($i\le n$) span its dual.

The tensor product $V\otimes V$ is the direct sum of
three distinct irreducible submodules $L_0$, $L_{2\epsilon_1}$ and
$L_{\epsilon_1+\epsilon_2}$ with highest weights $0$, $2\epsilon_1$
and $\epsilon_1+\epsilon_2$ respectively. The following explicit
bases for these irreducible summands will be useful later.

\noindent (1) A basis for $L_0$ (note that this is an element of the form
of $T$ in \eqref{invvxv}):
\begin{eqnarray}\label{basis-trivial}
\sum_{i=1}^n\left(q^{n-i}v_i\otimes v_{-i} + q^{i-n} v_{-i}\otimes
v_i\right).
\end{eqnarray}
(2) A basis for $L_{2\epsilon_1}$:
\begin{eqnarray}\label{basis-Ls}
\begin{aligned}
&v_{i}\otimes v_{i}, \quad v_{-i}\otimes v_{-i}, \quad 1\le i\le n,
\\
&v_i \otimes v_j +q  v_j\otimes v_i, \quad v_{-j} \otimes v_{-i}
+q v_{-i}\otimes v_{-j}, \quad i<j\le n, \\
&v_i \otimes v_{-j} +q  v_{-j}\otimes v_i, \quad  i\ne j, \\
&q^{-1} v_i \otimes v_{-i} +q  v_{-i}\otimes v_i - (v_{i+1}\otimes
v_{-i-1} +v_{-i-1}\otimes v_{i+1}), \quad  i\le n-1.
\end{aligned}
\end{eqnarray}
(3) A basis for $L_{\epsilon_1+\epsilon_2}$:
\begin{eqnarray}\label{basis-La}
\begin{aligned}
&v_i \otimes v_j - q^{-1} v_j\otimes v_i, \quad
v_{-j} \otimes v_{-i} - q^{-1} v_{-i}\otimes v_{-j}, \quad i<j\le n, \\
&v_i \otimes v_{-j} - q^{-1} v_{-j}\otimes v_i, \quad  i\ne j, \\
&v_i \otimes v_{-i} -  v_{-i}\otimes v_i - (q v_{i+1}\otimes
v_{-i-1}
-q^{-1}v_{-i-1}\otimes v_{i+1}), \quad  i< n-1, \\
&v_{n-1} \otimes v_{1-n} -   v_{1-n} \otimes v_{n-1} - (q
v_n\otimes v_{-n} -q^{-1}v_{-n}\otimes v_n), \\
&v_{n-1} \otimes v_{1-n} -   v_{1-n} \otimes v_{n-1} + (q^{-1}
v_n\otimes v_{-n} -qv_{-n}\otimes v_n).
\end{aligned}
\end{eqnarray}

Let $P_s$, $P_a$ and $P_0$ be the idempotent
projections mapping  $V\otimes V$
on to the irreducible submodules with highest weights $2\epsilon_1$,
$\epsilon_1+\epsilon_2$ and $0$ respectively.  Then the $R$-matrix
of $\Uq$ acting on $V\otimes V$ is given by
\begin{eqnarray}
\check{R}= q P_s - q^{-1}
P_a + q^{1-2n}P_0.
\end{eqnarray}

In Section \ref{subsec:qcoordeven}, we shall need the $R$-matrix in the following
slightly more general situation for the proof of Lemma \ref{polynomials-even}.
Let $V_1$ and $V_2$ be two
isomorphic copies of the natural module. Denote by $P_\mu^{(1, 2)}$
the idempotent projection from $V_1\otimes V_2$ onto its irreducible submodule
with highest weight $\mu$, where $\mu = 2\epsilon_1$, $\epsilon_1+\epsilon_2$
or $0$. Similarly, define the idempotents $P_\mu^{(2,1)}$
in $\End(V_2\otimes V_1)$. Then the $R$-matrix
\begin{eqnarray}\label{checkR}
\check{R}: V_1\otimes V_2 \longrightarrow V_2\otimes V_1
\end{eqnarray}
is the (unique) $\Uq$-linear map satisfying the following relations:
\begin{eqnarray}\label{checkR2}
\check{R}P^{(1, 2)}_{2\epsilon_1} = q P^{(2, 1)}_{2\epsilon_1}, &
\check{R}P^{(1, 2)}_{\epsilon_1+\epsilon_1} =- q^{-1} P^{(2,
1)}_{\epsilon_1+\epsilon_2}, &
\check{R}P^{(1, 2)}_0 =q^{1-2n} P^{(2, 1)}_0.
\end{eqnarray}
In particular, if we denote by $\{v_b^{(\alpha)} \mid b\in[-n,
-1]\cup[1,n]\}$ the standard basis for $V_\alpha$ ($\alpha=1, 2$),
and let $T^{(\alpha, \beta)} = \sum_{i=1}^n\left(q^{n-i}v^{(\alpha)}_i\otimes v^{(\beta)}_{-i}
+ q^{i-n} v^{(\alpha)}_{-i}\otimes v^{(\beta)}_i\right)$, we have
\begin{eqnarray}\label{checkR-inv}
\check{R}T^{(1, 2)} = q^{1-2n} T^{(2, 1)}.
\end{eqnarray}

\subsection{The braided symmetric algebra of the natural $\Uq(\so_{2n})$-module}

Let $T(V)$ be the tensor algebra of the natural $\Uq$-module
$V$. Then $T(V)=\bigoplus_{k=0}^\infty V^{\otimes k}$.  The
submodule $P_a(V\otimes V)\cong L_{\epsilon_1+\epsilon_2}$ generates
a graded two-sided ideal $\cI_q$ of $T(V)$ which is also graded since it
is generated by homogeneous elements. Hence the braided symmetric
algebra  $ S_q(V)=T(V)/\cI_q $ of $V$ inherits a $\Z_+$-grading from
$T(V)$. It is evident, given the basis of $L_{\epsilon_1+\epsilon_2}$
constructed above, that the following is a presentation of
$S_q(V)$. Here we abuse notation by writing $v_j$ for the
image in $S_q(V)$ of $v_j\in T(V)$.

\begin{lemma}
The braided symmetric algebra $S_q(V)$ of the natural
$\Uq(\so_{2n})$-module $V$ is generated by $\{v_a\mid a\in[-n,
-1]\cup[1, n]\}$ subject to the following relations
\begin{eqnarray}\label{vv-relations}
\begin{aligned}
&v_i  v_j - q^{-1} v_j v_i=0, \quad 1\leq i<j\le n, \\
&v_{-j}  v_{-i} - q^{-1} v_{-i} v_{-j}=0, \quad 1\leq i<j\le n, \\
&v_i  v_{-j} - q^{-1} v_{-j} v_i=0, \quad  i\ne j, \\
&v_i  v_{-i} -  v_{-i} v_i = q v_{i+1} v_{-i-1}
-q^{-1}v_{-i-1} v_{i+1}, \quad  1\leq i\le n-1, \\
&v_n v_{-n} -v_{-n} v_n=0.
\end{aligned}
\end{eqnarray}
\end{lemma}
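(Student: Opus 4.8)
The plan is to read off the relations directly from the explicit basis of $L_{\epsilon_1+\epsilon_2}=P_a(V\otimes V)$ given in \eqref{basis-La}, since $S_q(V)=T(V)/\langle P_a(V\otimes V)\rangle$ and the defining ideal is generated in degree $2$. First I would note that a presentation of $S_q(V)$ is obtained by taking the generators $v_a$ ($a\in[-n,-1]\cup[1,n]$) subject to the relations $\theta=0$ for $\theta$ ranging over any spanning set of the degree-$2$ submodule $P_a(V\otimes V)$; here we identify $v_i\otimes v_j\in T(V)_2$ with the product $v_iv_j$ in $T(V)$. So the content of the lemma is purely that the relations \eqref{vv-relations} are an equivalent rewriting of the basis \eqref{basis-La}.

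The main step is therefore a linear-algebra bookkeeping argument. The first three families in \eqref{basis-La} are literally the first three families of \eqref{vv-relations} (for $i\neq j$, $i<j\le n$), so those match immediately. The subtlety is in the ``diagonal'' relations $v_iv_{-i}-v_{-i}v_i = qv_{i+1}v_{-i-1}-q^{-1}v_{-i-1}v_{i+1}$ for $1\le i\le n-1$ together with $v_nv_{-n}-v_{-n}v_n=0$. I would argue that the span of the last three families of \eqref{basis-La} — namely the vectors $v_i\otimes v_{-i}-v_{-i}\otimes v_i-(qv_{i+1}\otimes v_{-i-1}-q^{-1}v_{-i-1}\otimes v_{i+1})$ for $i<n-1$, and the two vectors involving $v_{n-1}\otimes v_{1-n}$ and $v_n\otimes v_{-n}$ — coincides with the span of $\{v_iv_{-i}-v_{-i}v_i-qv_{i+1}v_{-i-1}+q^{-1}v_{-i-1}v_{i+1}: 1\le i\le n-1\}\cup\{v_nv_{-n}-v_{-n}v_n\}$. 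Indeed, subtracting the two $v_{n-1}\otimes v_{1-n}$-vectors gives $(q+q^{-1})(v_n\otimes v_{-n}-v_{-n}\otimes v_n)$, hence the relation $v_nv_{-n}=v_{-n}v_n$; feeding this back into either of those two vectors yields the $i=n-1$ case of the diagonal relation, and the $i<n-1$ cases are already in the required form. Conversely the same manipulations run backwards, so the two spans agree and the two presentations are equivalent. A dimension count (the number of listed basis vectors in \eqref{basis-La} equals $\dim L_{\epsilon_1+\epsilon_2}$, and the relations \eqref{vv-relations} are correspondingly independent) confirms nothing is lost or gained.

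I expect the only real obstacle to be the combinatorial care needed in the diagonal sector: one must check that the $n+1$ vectors in the last three families of \eqref{basis-La} really span the same $n$-dimensional (weight-zero) space as the $n$ relations in the last two lines of \eqref{vv-relations}, i.e. that there is exactly one linear dependence among the former (the one exhibited above) and that eliminating it produces precisely the stated relations. Everything else is a direct transcription. Having matched the two sets of quadratic relations, the lemma follows since $S_q(V)$ is by definition the quotient of the free (tensor) algebra on the $v_a$ by the two-sided ideal generated by $P_a(V\otimes V)$, and a generating set for this ideal as a submodule gives a defining set of relations for the quotient algebra.
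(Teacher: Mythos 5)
Your approach is exactly the one the paper intends: the paper states the lemma immediately after the sentence ``It is evident, given the basis of $L_{\epsilon_1+\epsilon_2}$ constructed above, that the following is a presentation of $S_q(V)$,'' and gives no further argument. Your reading — $S_q(V)=T(V)/\langle P_a(V\otimes V)\rangle$ is presented by taking the $v_a$ as generators and any spanning set of the degree-$2$ relation module as relations, so one need only match spans — is the correct way to make that sentence precise, and your computation in the weight-zero sector (subtract the two $v_{n-1}\otimes v_{1-n}$-vectors to get a nonzero multiple of $v_n\otimes v_{-n}-v_{-n}\otimes v_n$, and note the first of them is literally the $i=n-1$ diagonal relation) establishes the required invertible change of basis.

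One small but real slip in the final paragraph: the weight-zero portion of \eqref{basis-La} consists of $n$ vectors, not $n+1$ (the third line runs over $i<n-1$, i.e.\ $i=1,\dots,n-2$, contributing $n-2$ vectors, plus the two lines involving $v_{n-1}\otimes v_{1-n}$). In particular there is \emph{no} linear dependence among them — the displayed list is a basis, so a dependence is impossible — and the difference of the two $v_{n-1}$-vectors is a \emph{nonzero} element $(q+q^{-1})(v_n\otimes v_{-n}-v_{-n}\otimes v_n)$, not a relation witnessing a dependence. What your manipulation actually shows is that these $n$ vectors and the $n$ vectors coming from the last two lines of \eqref{vv-relations} are related by an invertible transition matrix (block upper-triangular: identity on the $i<n-1$ part, and the $2\times 2$ change of basis you computed on the remaining two). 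That is the clean form of the argument; the ``$n+1$ vectors, one dependence'' framing is not needed and in fact false. With that correction the proof is complete and matches the paper's intent.
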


Note that the last two sets of relations may be written as
\begin{eqnarray}\label{v+v-}
\begin{aligned}
v_{-i} v_i &=  v_i v_{-i} -(q-q^{-1})q^{i+1-n}\phi^{(+)}_{i+1},
\quad i\in[1, n],
\end{aligned}
\end{eqnarray}
where $ \phi^{(+)}_i$ is the quadratic element
$ \sum_{k=i}^n q^{n-k}v_k v_{-k}$. Set
$\phi^{(-)}_i = \sum_{k=i}^n q^{k-n}v_{-k}v_k. $ An easy
computation using \eqref{v+v-} shows that $\phi^{(+)}_i =
q^{2n-2} \phi^{(-)}_i$ for all $i$.

Using results of \cite{Zw} or by direct calculation, one
sees that the ordered monomials in $v_i$ and $v_{-i}$ ($i\in[1, n]$)
form a basis of $S_q(V)$. That is, $S_q(V)$ is a flat deformation of
the symmetric algebra of $V$ in the sense of \cite{BZ}.

In summary, we have
\begin{theorem}
\begin{enumerate}
\item The braided symmetric algebra $S_q(V)$ of $V$ is a $\Z_+$-graded
      module algebra over $\Uq(\so_{2n})$.
\item The ordered monomials in $v_i$, $v_{-i}$ ($i\in[1, n]$) form a basis of $S_q(V)$.
\end{enumerate}
\end{theorem}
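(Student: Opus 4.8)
The plan is to prove part (1) and part (2) separately, with part (1) being essentially a bookkeeping consequence of earlier results and part (2) being the substantive claim. For part (1), I would observe that $T(V)$ is manifestly a $\Z_+$-graded $\Uq(\so_{2n})$-module algebra, that the defining ideal $\cI_q=\langle P_a(V\otimes V)\rangle$ is generated by the homogeneous (degree $2$) submodule $L_{\epsilon_1+\epsilon_2}$, hence is a $\Z_+$-graded $\Uq$-submodule and in fact a $\Uq$-subalgebra of $T(V)$ in the sense of Proposition \ref{prop:hom-tensor}. Therefore $S_q(V)=T(V)/\cI_q$ inherits both the grading and (by Proposition \ref{prop:hom-tensor}, applied to the natural surjection $\tau$) the $\Uq(\so_{2n})$-module algebra structure; this is exactly the construction already carried out in Section \ref{bsa}. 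So part (1) requires only assembling these observations.

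For part (2), the strategy is a standard Bergman diamond-lemma / PBW argument applied to the presentation \eqref{vv-relations}. First I would fix the total order on the generators, say $v_1<v_2<\dots<v_n<v_{-1}<v_{-2}<\dots<v_{-n}$ (or whichever order makes the rewriting rules in \eqref{vv-relations} and \eqref{v+v-} length-nonincreasing and terminating), and rewrite each relation as a reduction rule expressing a ``high'' product $v_bv_a$ ($b>a$) as a $\cK$-linear combination of ordered monomials of the same or lower degree — the relations $v_iv_{-i}-v_{-i}v_i = (q-q^{-1})\,q^{i+1-n}\phi^{(+)}_{i+1}$ are the only ones in which the right-hand side is not simply a scalar times a single ordered monomial, but even there the right side is a sum of ordered degree-$2$ monomials. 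The spanning half is then immediate: iterated application of the rules shows every monomial in the $v_a$'s reduces to a $\cK$-combination of ordered monomials, so these span $S_q(V)$. For linear independence I would invoke the diamond lemma: it suffices to check that every ambiguity (overlap) among the reduction rules is resolvable, i.e. that the two ways of reducing each degree-$3$ word $v_cv_bv_a$ with $c>b>a$ yield the same normal form. Equivalently — and this is the cleanest route — I would appeal directly to Drinfeld's criterion quoted in Section \ref{bsa}: $S_q(V)$ is a flat deformation of $S(V)$ as soon as $\dim S_q(V)_3=\dim S(V)_3$, so it is enough to verify that the degree-$3$ component has the expected dimension $\binom{2n+2}{3}$, equivalently that no unexpected relations are forced in degree $3$. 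Alternatively, as the text notes, the result is already contained in \cite{Zw}, and one may simply cite it; I would present the diamond-lemma verification as the self-contained argument and mention \cite{Zw} as the reference.

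The main obstacle is the degree-$3$ overlap check — specifically the ambiguities involving the ``non-homogeneous-looking'' commutation relations $v_{-i}v_i = v_iv_{-i}-(q-q^{-1})q^{i+1-n}\phi^{(+)}_{i+1}$, since $\phi^{(+)}_{i+1}$ couples the pair $(i,-i)$ to all pairs $(k,-k)$ with $k>i$. One must confirm that reducing, say, $v_{-j}v_{-i}v_i$ (for $i<j$) via the $v_{-j}v_{-i}$-rule first versus via the $v_{-i}v_i$-rule first produces matching results; this is where the identity $\phi^{(+)}_i=q^{2n-2}\phi^{(-)}_i$ and the precise powers of $q$ in \eqref{v+v-} get used, and it is the only genuinely computational part. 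I expect these checks to go through cleanly because the relations are exactly the image of the known $\check R$-eigenspace decomposition $\check R=qP_s-q^{-1}P_a+q^{1-2n}P_0$, whose braid-relation compatibility (stated in Section \ref{bsa}) is precisely what guarantees associativity/confluence; but the verification must still be done, and that is the step I would flag as the heart of the proof.
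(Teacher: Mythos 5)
Your proposal is correct and is, in fact, a fleshed-out version of what the paper does: the paper merely asserts part (1) as a consequence of the general construction of Section \ref{bsa}, and for part (2) says only that one may ``use results of \cite{Zw} or direct calculation.'' Your elaboration of the ``direct calculation'' as a Bergman diamond-lemma argument is sound: the order you choose makes every relation in \eqref{vv-relations} a length-preserving, descending-to-ascending rewrite; there are exactly $\binom{2n}{2}$ such rules, one for each descending pair, matching $\dim L_{\epsilon_1+\epsilon_2}$; and you correctly identify the overlaps involving the diagonal rule $v_{-i}v_i\to v_iv_{-i}-(q-q^{-1})q^{i+1-n}\phi^{(+)}_{i+1}$ as the only ones requiring genuine computation. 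Your alternative appeals — to Drinfeld's degree-$3$ flatness criterion, or simply citing \cite{Zw} as the paper does — are both legitimate and in fact closer to the paper's actual (minimal) argument. The only thing to note is that you explicitly flag the overlap verification as ``must still be done''; the paper offers no more detail than you do on this point, so this is not a gap relative to the source.
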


Let
$ \Phi:=\phi^{(+)}_1+\phi^{(-)}_1.$ Then
\begin{eqnarray}\label{Psijj}
\Phi = q^{1-n}(q^{n-1}+ q^{1-n}) \phi_1^{(+)}.
\end{eqnarray}
\begin{proposition}\label{invariants-1}
\begin{enumerate}
\item We have $\Phi\in S_q(V)^{\Uq(\so_{2n})}$.
\item The element $\Phi$ belongs to the centre of $S_q(V)$.
\end{enumerate}
\end{proposition}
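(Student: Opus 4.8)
The plan is to prove the two assertions in turn, using the explicit description of $\Phi$ in terms of $\phi^{(+)}_1=\sum_{k=1}^nq^{n-k}v_kv_{-k}$ together with the identity $\phi^{(+)}_i=q^{2n-2}\phi^{(-)}_i$ noted above. For part (1), the cleanest route is representation-theoretic: the element $T$ of \eqref{basis-trivial} spans the trivial submodule $L_0\subseteq V\otimes V$, hence is $\Uq$-invariant, and $\Phi$ is (up to the nonzero scalar in \eqref{Psijj}, or more directly) the image of a symmetrised version of $T$ under the multiplication map $S_q(V)\otimes S_q(V)\to S_q(V)$. Since multiplication in a $\Uq$-module algebra is a $\Uq$-module homomorphism, the image of an invariant is invariant. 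Concretely, I would observe that $\tau(v_i\otimes v_{-i})$ and $\tau(v_{-i}\otimes v_i)$ are the images in $S_q(V)_2$ of the corresponding tensors, so $\Phi=\sum_i(q^{n-i}v_iv_{-i}+q^{i-n}v_{-i}v_i)$ is exactly $\tau(T)$ where $\tau:T(V)\to S_q(V)$ is the natural surjection; invariance of $T$ (it lies in $L_0$) then gives invariance of $\Phi$. Alternatively one can check directly that $e_i(\Phi)=f_i(\Phi)=0$ and $k_i(\Phi)=\Phi$ for all $i$ using the explicit matrices $\pi(e_i),\pi(f_i),\pi(k_i)$ and the Leibniz rule for the $\Uq$-action on products; the telescoping in $\phi^{(+)}_1$ is precisely what makes the $e_i,f_i$ ($i<n$) annihilate it, and the $i=n$ generator is handled by the last relation of \eqref{vv-relations}.

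For part (2), I need to show $\Phi v_a=v_a\Phi$ for every basis element $v_a$, $a\in[-n,-1]\cup[1,n]$; since the $v_a$ generate $S_q(V)$ as an algebra and the centraliser of $\Phi$ is a subalgebra, this suffices. The tool is the commutation relations \eqref{vv-relations} (equivalently \eqref{v+v-}), which I would use to push $v_a$ past each quadratic term $v_kv_{-k}$ appearing in $\phi^{(+)}_1$. The key bookkeeping identity is \eqref{v+v-}, $v_{-i}v_i=v_iv_{-i}-(q-q^{-1})q^{i+1-n}\phi^{(+)}_{i+1}$, which lets one trade $v_{-i}v_i$ for $v_iv_{-i}$ at the cost of a lower "tail" $\phi^{(+)}_{i+1}$; combined with the $q^{\pm1}$-commutation of all the "mismatched" pairs, the corrections telescope. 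I expect that commuting $v_j$ (for $j>0$) past $\phi^{(+)}_1$ produces correction terms that, after summing over the relevant indices, cancel precisely because of the weighting $q^{n-k}$ in the definition of $\phi^{(+)}_1$ and the identity $\phi^{(+)}_i=q^{2n-2}\phi^{(-)}_i$; a symmetric computation handles $v_{-j}$.

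The main obstacle will be part (2): the direct verification that $v_a$ commutes with $\Phi$ is a genuine (though finite) computation in which the correction terms from \eqref{v+v-} must be tracked carefully across all the cases $a=\pm i$ with $i<j$, $i=j$, $i>j$, and the boundary index $i=n$. There is a real risk of sign/power-of-$q$ errors, so I would organise it by first proving the single clean identity that $v_j\phi^{(+)}_i = q^{?}\,\phi^{(+)}_i v_j$ plus an explicit lower-order remainder, then summing. An appealing shortcut that avoids most of the arithmetic: once part (1) is known, $\Phi$ generates a trivial $\Uq$-submodule, so for any $\Uq$-module map $m$ the product $\Phi\cdot x$ and $x\cdot\Phi$ transform the same way; if one can identify $S_q(V)_1=V$ as a simple module and show $\mathrm{ad}(\Phi)$ restricted to it is a $\Uq$-endomorphism of $V$ which is forced (by Schur, using that $\Phi$ has weight $0$ and degree-shift $+2$) to be a scalar, then evaluating on a single highest-weight vector $v_1$ pins that scalar to $0$. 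I would present the direct computation as the primary argument but mention this structural shortcut, since it is the cleaner explanation of why the cancellation in part (2) is not an accident.
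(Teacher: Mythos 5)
Your argument for part~(1) is exactly the paper's: $\Phi$ is (up to the scalar in \eqref{Psijj}) the image under the $\Uq$-equivariant surjection $\tau\colon T(V)\to S_q(V)$ of the $L_0$-basis element \eqref{basis-trivial}, hence invariant. For part~(2), your primary route --- pushing each generator $v_a$ past $\phi^{(+)}_1$ using \eqref{vv-relations} and \eqref{v+v-} and watching the corrections telescope --- is also the paper's proof, but you leave it as a plan rather than carrying it out. Two small points to note if you execute it: what you need is exact commutation (not a $q$-commutation with some $q^?$), and the precise telescoping identity is
\[
v_j\phi^{(+)}_1-\phi^{(+)}_1v_j=q^{n-j}v_j(v_jv_{-j}-v_{-j}v_j)+(q^{-2}-1)\phi^{(+)}_{j+1}v_j,
\]
where substituting \eqref{v+v-} and using $v_j\phi^{(+)}_{j+1}=q^{-2}\phi^{(+)}_{j+1}v_j$ makes the first term exactly cancel the second. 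Also, the identity $\phi^{(+)}_i=q^{2n-2}\phi^{(-)}_i$ plays no role in this cancellation, contrary to what you suggest.

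The ``structural shortcut'' is workable but misstated, and as written contains a genuine gap. The map $\mathrm{ad}(\Phi)\restriction_V$ is not a $\Uq$-endomorphism of $V$; it is a $\Uq$-homomorphism $V\to S_q(V)_3$, and $S_q(V)_3\cong L_{3\epsilon_1}\oplus \Phi\cdot V$ contains precisely one copy of $V$ (this is the decomposition used in the proof of Proposition~\ref{invariants-2}). So $\Hom_{\Uq}(V,S_q(V)_3)$ is one-dimensional, \emph{not} zero, and Schur's lemma does not kill $\mathrm{ad}(\Phi)\restriction_V$ outright --- it only forces it to be a scalar multiple $c$ of the unique embedding. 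You then still must compute $[\Phi,v_1]$ and observe it vanishes to conclude $c=0$; by irreducibility of $V$ that does finish the argument. This is a legitimate reduction (one case, $j=1$, of the direct computation instead of all of them), but it is not computation-free, and the phrase ``forced by Schur to be a scalar'' should be replaced by ``forced by Schur, together with $\dim\Hom_{\Uq}(V,S_q(V)_3)=1$, to be a scalar multiple of the embedding $v\mapsto\Phi v$, which a single evaluation at $v_1$ shows to be zero.''
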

\begin{proof}
The first statement is immediate because $\Phi$ is the image in
$S_q(V)$ of the basis element in \eqref{basis-trivial} of $L_0:=(V\otimes V)^\Uq$.

In view of \eqref{Psijj}, to prove (2),
it clearly suffices to show that $\phi^{(+)}_1$ is central.
Consider $v_j \phi^{(+)}_1$: we have
\[
\begin{aligned}
v_j \phi^{(+)}_1 &= \sum_{i=1}^{j-1} q^{n-i} v_i v_{-i} v_j
+ q^{n-j} v_j v_j v_{-j}+ q^{-2} \phi^{(+)}_{j+1}v_j\\
&=\phi^{(+)}_1 v_j + (q^{-2}-1) \phi^{(+)}_{j+1}v_j
+ q^{n-j}v_j(v_j v_{-j} - v_{-j} v_j).
\end{aligned}
\]
By \eqref{v+v-}, we have
\[
q^{n-j}v_j(v_j v_{-j} - v_{-j} v_j)=
(q^2-1) v_j \phi^{(+)}_{j+1} =-(q^{-2}-1) \phi^{(+)}_{j+1}v_j.
\]
It follows that $v_j$ commutes with  $\phi^{(+)}_1$
for all $j\geq 0$. Similarly one shows that $v_{-j}$ commutes with
$\phi^{(+)}_1$ for all $j$, and the proposition follows.
\end{proof}

Let $S_q(V)^{\Uq(\so_{2n})}$ be the space of $\Uq$-invariants in
$S_q(V)$; these form a subalgebra by Lemma \ref{subalgebra}.
\begin{proposition}\label{invariants-2}
The subalgebra $S_q(V)^{\Uq(\so_{2n})}$ of invariants is  generated
by $\Phi$ and is isomorphic to the polynomial algebra $\C[\Phi]$.
\end{proposition}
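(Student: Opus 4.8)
The plan is to show that $S_q(V)^{\Uq(\so_{2n})}$ consists precisely of polynomials in the central element $\Phi$, using the $\Z_+$-grading on $S_q(V)$ and the flatness (PBW-type basis) established above. Since $S_q(V)$ is a flat deformation of $S(V)$ with the same graded dimensions, and since $S_q(V)$ is a locally finite semisimple $\Uq$-module, each graded piece $S_q(V)_k$ decomposes into isotypic components, and the invariant subspace $(S_q(V)_k)^{\Uq}$ has the same dimension as the invariant subspace of the classical $S(V)_k$. By the classical FFT for $SO_{2n}$ acting on a single copy of the natural module, the ring of invariants is $\C[\Phi_{\mathrm{cl}}]$ where $\Phi_{\mathrm{cl}} = \sum_i x_i x_{-i}$ is the quadratic form; hence $\dim (S(V)_k)^{SO_{2n}}$ equals $1$ if $k$ is even and $0$ if $k$ is odd.

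First I would record that $\Phi$ has degree $2$ and lies in $S_q(V)^{\Uq}$ by Proposition \ref{invariants-1}, so $\C[\Phi]\subseteq S_q(V)^{\Uq}$, with $\Phi^j$ contributing to degree $2j$. It remains to show the powers $\Phi^j$ are linearly independent (so that $\dim (\C[\Phi])_{2j}=1$) and that there are no further invariants. Linear independence of the $\Phi^j$ follows because $\phi_1^{(+)}=\sum_{k=1}^n q^{n-k}v_kv_{-k}$ and, using the PBW basis of ordered monomials, the leading term (in some fixed monomial order, e.g. $(v_1 v_{-1})^j$ with a nonzero coefficient) of $(\phi_1^{(+)})^j$ is nonzero, so $(\phi_1^{(+)})^j\neq 0$ for all $j$; alternatively one can invoke the $q\to 1$ specialization where $\Phi$ becomes the nondegenerate quadratic form, whose powers are independent in $S(V)$. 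Combined with $(\Phi^j$ having degree $2j)$ this gives $\dim(\C[\Phi])_{2j}\geq 1$, and $\dim(\C[\Phi])_{2j+1}=0$.

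Then I would finish by the dimension count: for each $k$, $(\C[\Phi])_k \subseteq (S_q(V)_k)^{\Uq}$, and
\[
\dim (S_q(V)_k)^{\Uq} = \dim (S(V)_k)^{SO_{2n}} = \dim (\C[\Phi])_k,
\]
where the first equality uses flatness together with the fact that, for each fixed highest weight $\lambda$, the multiplicity of the irreducible $\Uq$-module $L_\lambda$ in $S_q(V)_k$ equals the multiplicity of the corresponding $\so_{2n}$-irreducible in $S(V)_k$ (this is standard: the $\check R$-symmetrizer defining $\cI_q$ deforms the classical one, and the graded character is $q$-independent), and the second equality is the classical FFT/SFT for one copy of the natural module of $SO_{2n}$. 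Since the inclusion $(\C[\Phi])_k\subseteq (S_q(V)_k)^{\Uq}$ is an equality of finite-dimensional spaces of equal dimension in each degree, we conclude $S_q(V)^{\Uq}=\C[\Phi]$, and the map $\C[\Phi_{\mathrm{cl}}]\to \C[\Phi]$ is an isomorphism of graded algebras since $\Phi$ is not nilpotent.

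The main obstacle is justifying the equality $\dim (S_q(V)_k)^{\Uq} = \dim (S(V)_k)^{SO_{2n}}$ rigorously, i.e. the claim that passing to $q=1$ preserves not just total graded dimension but the full isotypic decomposition (equivalently the graded $\Uq$-character). One clean way is to observe that $\cI_q$ is, in each degree, the image of an idempotent $P_a^{\otimes\text{-construction}}$ whose rank is locally constant in $q$ and hence equals its classical value; thus $S_q(V)_k$ and $S(V)_k$ have the same character, and taking $\Uq$-invariants (resp. $SO_{2n}$-invariants) extracts the multiplicity of the trivial module, which is therefore the same. Alternatively, one bypasses representation theory entirely and argues directly with the PBW basis: an invariant must have weight $0$, and one shows by an explicit (if somewhat involved) computation with the relations \eqref{vv-relations} that any weight-zero $\Uq$-invariant of degree $k$ is forced to be a scalar multiple of $\Phi^{k/2}$ — this is the harder, more hands-on route, and the diagrammatic method of \cite{RT1, RT2} referenced in the introduction is designed precisely to organize such computations.
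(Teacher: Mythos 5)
Your proposal is correct in outline, but it takes a genuinely different route from the paper. You import the classical FFT for $SO_{2n}$ acting on a single copy of the natural module, and then argue by a flatness/deformation comparison that $\dim (S_q(V)_k)^{\Uq}=\dim (S(V)_k)^{SO_{2n}}$. The paper instead runs an entirely self-contained inductive argument inside $S_q(V)$: it sets $S'_q(V)_k=\Uq\cdot(v_1)^k$, notes this is an irreducible of highest weight $k\epsilon_1$ with the familiar dimension, observes (by looking at weights) that $S'_q(V)_k\cap\Phi S_q(V)_{k-2}=0$, and then uses $\dim S_q(V)_k = \dim S'_q(V)_k + \dim S_q(V)_{k-2}$ to deduce the $\Uq$-module direct sum decomposition $S_q(V)_k = S'_q(V)_k\oplus\Phi S_q(V)_{k-2}$; taking invariants and inducting finishes the proof. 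The paper's route has the advantage of being purely internal (no classical limit, no classical FFT), and it also produces the full decomposition $S_q(V)_k\simeq L_{k\epsilon_1}\oplus L_{(k-2)\epsilon_1}\oplus\cdots$, which is used immediately afterwards for Lemma \ref{odd-dim-sphere}; your route only yields the invariant dimension.

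You correctly flag the one real gap in your argument, namely that flatness alone gives equal graded \emph{dimensions}, not equal graded \emph{characters}. Your proposed fixes are more elaborate than needed. The cleanest way to close the gap is to use the PBW-type basis the paper has already established: $S_q(V)_k$ has a basis of ordered monomials in $v_{\pm i}$, with the same weights as the corresponding classical monomials, so the formal characters of $S_q(V)_k$ and $S(V)_k$ coincide; since at generic $q$ the type $(1,\ldots,1)$ category is semisimple with the same characters of simples as in the classical case, equal characters imply equal multiplicities of every simple, and in particular equal multiplicity of the trivial module. Once you phrase it this way, your remarks about the rank of the $\check R$-symmetrizer being locally constant in $q$ become unnecessary, and the ``harder hands-on route'' is avoidable. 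Your argument that $\Phi^j\neq 0$ via the PBW basis (leading term $(v_1v_{-1})^j$) is sound and needed; the paper's decomposition makes this automatic.
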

\begin{proof}
Denote by $S_q(V)_k$ the homogeneous subspace of degree $k$ in
$S_q(V)$. Let $S'_q(V)_k$ be the $\Uq$-submodule of $S_q(V)_k$
generated by
$(v_1)^k$, that is  $S'_q(V)_k= \Uq\cdot (v_1)^k$. Then $S'_q(V)_k$ is
isomorphic to the irreducible $\Uq$-module with highest weight
$k\epsilon_1$, and thus has dimension
$\begin{bmatrix}2n+k-1\\ k\end{bmatrix}-\begin{bmatrix}2n+k-3\\
k-2\end{bmatrix}$.
Taking into account the weights of $\Uq$ occurring in $S_q(V)_{k-2}$, one
sees that $S'_q(V)_k\cap \Phi S_q(V)_{k-2}=0$.
Now $\dim S_q(V)_k = \begin{bmatrix}2n+k-1\\
k\end{bmatrix} = \dim S'_q(V)_k + \dim S_q(V)_{k-2}$, and
recalling that $\Phi$ is invariant, we have the $\Uq$-module decomposition
$S_q(V)_k=S'_q(V)_k\oplus \Phi S_q(V)_{k-2}$.

It follows that $S_q(V)_k^\Uq=S'_q(V)_k^\Uq\oplus (\Phi S_q(V)_{k-2})^\Uq$.
Since $S'_q(V)_k$ is a nontrivial
irreducible $\Uq$-module for all $k> 0$, $S'_q(V)_k^\Uq=0$, and
since $\Phi$ is invariant,
\[\left(S_q(V)_k\right)^{\Uq} = \Phi
\left(S_q(V)_{k-2}\right)^{\Uq}.\] It
follows that $\left(S_q(V)_{k}\right)^{\Uq(\so_{2n})}$ is spanned by
$\Phi^{\frac{k}{2}}$ for even $k$ and is $0$ for odd $k$.
\end{proof}

The results of this section may be applied to the
construction of a quantum sphere $\bbS_q^{2n-1}$ with manifest
quantum orthogonal group symmetry, that is, with an action of
$\Uq(\so_{2n})$. Since $\Phi$ is a central element
in $S_q(V)$ by Proposition \ref{invariants-1} (2), the left ideal
generated by $\Phi-1$ coincides with the right, and hence is a
two-sided ideal, which we
denote by $\langle \Phi-1\rangle$. Then the quantum sphere
$\bbS_q^{2n-1}$ is defined by
\begin{eqnarray}
\bbS_q^{2n-1} =S_q(V)/\langle \Phi-1\rangle.
\end{eqnarray}
{From} the proof of Proposition \ref{invariants-2}, we have the
following result, analogous to its classical ($q=1$) counterpart.
\begin{lemma}\label{odd-dim-sphere}
The quantum sphere $\bbS_q^{2n-1}$ is a $\Uq(\so_{2n})$-module
algebra whose decomposition as $\Uq(\so_{2n})$-module is
$\bbS_q^{2n-1}=\bigoplus_{k=0}^\infty L_{k\epsilon_1}. $
\end{lemma}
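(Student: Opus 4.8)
The plan is to read off everything from the proof of Proposition \ref{invariants-2}, where the module decomposition is essentially already established, together with the centrality of $\Phi$ from Proposition \ref{invariants-1}(2). First I would recall that $S_q(V)$ is a $\Uq(\so_{2n})$-module algebra (Theorem stated just before Proposition \ref{invariants-1}), and that $\langle\Phi-1\rangle$ is a two-sided ideal which is $\Uq$-stable: indeed $\Phi$ is invariant, so $x(\Phi-1)=\epsilon(x)(\Phi-1)$ for all $x$, whence the ideal it generates is a $\Uq$-submodule, and by Proposition \ref{prop:hom-tensor} the quotient $\bbS_q^{2n-1}=S_q(V)/\langle\Phi-1\rangle$ inherits the structure of a $\Uq(\so_{2n})$-module algebra. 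That takes care of the first assertion.

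For the module decomposition, the key input is the $\Uq$-module identity $S_q(V)_k = S'_q(V)_k \oplus \Phi\, S_q(V)_{k-2}$ derived in the proof of Proposition \ref{invariants-2}, where $S'_q(V)_k \cong L_{k\epsilon_1}$ is the irreducible with highest weight $k\epsilon_1$. Iterating this gives, as $\Uq$-modules,
\[
S_q(V)_k = \bigoplus_{j\ge 0,\ k-2j\ge 0} \Phi^{j}\, S'_q(V)_{k-2j},
\]
so that $S_q(V) = \bigoplus_{k\ge 0}\bigoplus_{j\ge 0} \Phi^j S'_q(V)_k$ as a $\Uq$-module, with $\Phi^j S'_q(V)_k \cong L_{k\epsilon_1}$ for every $j$ (multiplication by the central invariant $\Phi$ is a $\Uq$-module isomorphism onto its image, which is a direct summand by the weight-space argument already used). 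Passing to the quotient by $\langle\Phi-1\rangle = \bigoplus_{j\ge 1}(\Phi^j-\Phi^{j-1})S_q(V)$, one identifies the image of $\bigoplus_j \Phi^j S'_q(V)_k$ with a single copy of $L_{k\epsilon_1}$; summing over $k$ yields $\bbS_q^{2n-1} = \bigoplus_{k=0}^\infty L_{k\epsilon_1}$ as a $\Uq(\so_{2n})$-module.

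The one point requiring care — and the main obstacle — is showing that the natural surjection $\bigoplus_k S'_q(V)_k \to \bbS_q^{2n-1}$ induced by the inclusion $\bigoplus_k S'_q(V)_k \hookrightarrow S_q(V)$ followed by the quotient map is in fact a $\Uq$-module \emph{isomorphism}, i.e. that no further collapsing occurs. Surjectivity is clear since $\Phi\equiv 1$ in the quotient, so every $\Phi^j S'_q(V)_k$ maps into the image of $S'_q(V)_k$. For injectivity one filters $S_q(V)$ by the powers of the central element $\Phi$ and checks that $\langle\Phi-1\rangle \cap \bigoplus_k S'_q(V)_k = 0$; concretely, if $\sum_k s_k \in \langle\Phi-1\rangle$ with $s_k\in S'_q(V)_k$, writing the left side in the $\Phi$-graded form above and comparing homogeneous components (using that $\Phi$ is not a zero-divisor, which follows from flatness of $S_q(V)$) forces each $s_k=0$. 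This is the step where one genuinely uses that $S_q(V)$ is a flat deformation of $S(V)$, so that $\Phi$ behaves like the classical quadratic invariant. The remaining verifications — that the $\Uq$-action on $\bbS_q^{2n-1}$ is the stated one and that the algebra structure survives — are routine consequences of Proposition \ref{prop:hom-tensor} and require no further computation.
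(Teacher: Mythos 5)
Your proof is correct and takes precisely the route the paper intends: the Lemma is stated in the paper with no proof at all, preceded only by the remark that it follows ``from the proof of Proposition \ref{invariants-2},'' and you are supplying exactly the details of that deduction (the module-algebra structure on the quotient by the $\Uq$-stable central ideal $\langle\Phi-1\rangle$, and the identification of $\bigoplus_k S'_q(V)_k\hookrightarrow S_q(V)\twoheadrightarrow\bbS_q^{2n-1}$ as a $\Uq$-isomorphism via the decomposition $S_q(V)_k=S'_q(V)_k\oplus\Phi\,S_q(V)_{k-2}$). One small imprecision worth tightening: you attribute the non-zero-divisor property of $\Phi$ directly to ``flatness of $S_q(V)$,'' but flatness alone only controls Hilbert series; what actually forces multiplication by $\Phi$ to be injective in each degree is the decomposition $S_q(V)_k=S'_q(V)_k\oplus\Phi\,S_q(V)_{k-2}$ established in the proof of Proposition \ref{invariants-2}, whose dimension count (which does use flatness) shows $\dim\Phi\,S_q(V)_{k-2}=\dim S_q(V)_{k-2}$, and that is the fact your injectivity argument actually invokes.
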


\subsection{A $\Uq(\so_{2n})$ module algebra:  $S_q(V)^{\otimes
m}$ with twisted multiplication}\label{subsec:qcoordeven}

Let $\cT_m=(T(V)^{\otimes m}, \mu_T)$ be the $\Uq$-module algebra with
$\Uq=\Uq(\so_{2n})$ where multiplication $\mu_T$ is defined by
iterating \eqref{multiplication}. We shall construct the
quantum analogue of the coordinate ring of $\oplus^m V$ as a
module algebra $\cA_m=(S_q(V)^{\otimes m}, \mu_S)$ over $\Uq$ by
repeatedly using \eqref{multiplication}.

It will be convenient to relabel the standard basis elements of $V$ as
$v_a$ with $a\in[1,2n]$, where $v_a$ is identified with $v_{a-2n-1}$ if $a>n$. The
following result describes the algebraic structure of $\cA_m$.
For $i\in[1, m]$ and $a\in[1, 2n]$, let $X_{i a}$ be the image in
$\cA_m$ of $1\otimes\dots\otimes 1\otimes v_a\otimes 1\otimes\dots\otimes 1$,
where $v_a$ is the $i^{\text{th}}$ factor.

\begin{lemma}\label{polynomials-even}
The $\Uq(\so_{2n})$-module algebra $\cA_m$ is generated by $X_{i a}$
with $i\in[1, m]$ and $a\in[1, 2n]$, subject to the following
relations:

(1) for fixed $i$, the elements $X_{i a}$ obey the relations
\eqref{vv-relations} with $v_a=X_{i a}$;

(2) for $i<j$ in $[1, m]$ and $a, b\in[1, 2n]$:
\begin{eqnarray}
\begin{aligned}\label{Sm2}
X_{j a} X_{i a} &=q  X_{i a} X_{j a}, \quad \forall a,\\
X_{j b} X_{i a} &= X_{i a} X_{j b}+(q-q^{-1}) X_{i b} X_{j a}, \quad a<b\ne 2n+1-a,\\
X_{j a} X_{i b} &=  X_{i b} X_{j a}, \quad a<b\ne2n+1-a,
\end{aligned}\\
\begin{aligned}\label{Sm3}
X_{j t} X_{i, 2n+1-t} &=  q  X_{i, 2n+1-t} X_{j t} - (q-q^{-1}) q^{n-t} \psi_t^{(i, j)}, \\
X_{j, 2n+1-t} X_{i t}  &=  q X_{i t} X_{j, 2n+1-t}  -  (q-q^{-1}) X_{i, 2n+1-t} X_{j t}\\
&+(q-q^{-1})q^{t-n}\left(\bar\psi_{t+1}^{(i, j)}-\Psi^{(i,j)}\right), \quad t\in[1, n],
\end{aligned}
\end{eqnarray}
where (for all $i, j\in[1, m]$)
\[
\begin{aligned}
\psi_t^{(i, j)} &=\sum_{k=1}^t q^{k-n} X_{i, 2n+1-k} X_{j k},\\
\bar\psi_t^{(i, j)} &=\sum_{k=1}^t q^{n-k} X_{i k} X_{j, 2n+1-k},\\
\Psi^{(i, j)}&= \psi_n^{(i, j)}+\bar\psi_n^{(i, j)}.
\end{aligned}
\]
\end{lemma}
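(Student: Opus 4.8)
The plan is to establish the presentation of $\cA_m$ by deriving the stated relations directly from the module-algebra structure, and then to verify that they suffice to reduce every element to a normal form of the expected dimension. First I would recall, from Lemma \ref{alg-hom} and the explicit description of $\tau^{\otimes m}$, that $\cA_m$ is generated by the $X_{ia}$, and that the relations within a single tensor factor (part (1)) are exactly the relations \eqref{vv-relations} pulled back through the surjection $\tau$. The substance is part (2): the commutation relations between $X_{ia}$ and $X_{jb}$ for $i<j$. These are governed by \eqref{Sm1}, namely $X_{ja}X_{ib}=\sum_{a',b'}\check R_{a'a,b'b}X_{ia'}X_{jb'}$, so the task is to compute the matrix entries of $\check R$ acting on $V_i\otimes V_j$ in the relabelled basis $v_a$, $a\in[1,2n]$.

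The key computational step is therefore to expand $\check R = qP_s - q^{-1}P_a + q^{1-2n}P_0$ (equivalently, use the eigenvalue relations \eqref{checkR2} and \eqref{checkR-inv}) in terms of matrix units. For the ``generic'' pairs $(a,b)$ with $a<b$, $b\neq 2n+1-a$, the two basis vectors $v_a\otimes v_b$ and $v_b\otimes v_a$ span a two-dimensional space decomposing into the $q$-eigenspace (symmetric, spanned by $v_a\otimes v_b + qv_b\otimes v_a$) and the $(-q^{-1})$-eigenspace (anti-symmetric), from the bases \eqref{basis-Ls} and \eqref{basis-La}; solving the resulting $2\times2$ system yields the three relations in \eqref{Sm2}, while the case $a=b$ gives $\check R(v_a\otimes v_a)=qv_a\otimes v_a$ directly. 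For the ``diagonal'' pairs $(t,2n+1-t)$ the trivial submodule $L_0$ enters, and here I would use \eqref{checkR-inv}: $\check R T^{(1,2)} = q^{1-2n}T^{(2,1)}$, together with the eigenvalue data on the $L_{2\epsilon_1}$- and $L_{\epsilon_1+\epsilon_2}$-parts of the relevant vectors, to untangle the action of $\check R$ on $v_t\otimes v_{2n+1-t}$ and on $v_{2n+1-t}\otimes v_t$. Re-expressing the outcome in terms of the auxiliary sums $\psi_t^{(i,j)}$, $\bar\psi_t^{(i,j)}$, $\Psi^{(i,j)}$ (which are precisely the images of the relevant truncations of $T^{(i,j)}$, up to the labelling shift $v_{-k}\leftrightarrow v_{2n+1-k}$) produces \eqref{Sm3}. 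I expect this untangling of the $L_0$-contribution — keeping track of the $q$-powers $q^{n-t}$, $q^{t-n}$ and the telescoping of the $\psi$-sums — to be the main obstacle; it is the only place where genuine bookkeeping beyond a $2\times2$ linear solve is required.

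Finally I would argue that the listed relations are \emph{complete}, i.e. that $\cA_m$ has no further relations. For this I would exhibit a spanning set of ordered monomials: using part (1) one orders the generators within each factor (the $S_q(V)$ normal form of ordered monomials in $v_i,v_{-i}$, which is a basis by the flatness statement already recorded), and using \eqref{Sm2}–\eqref{Sm3} one moves all $X_{ia}$ with smaller $i$ to the left of those with larger $i$; the correction terms appearing on the right-hand sides of \eqref{Sm3} involve only products $X_{i,\cdot}X_{j,\cdot}$ with the factor index pattern unchanged or products within degree, so a suitable induction on multidegree and on a monomial ordering shows that the ordered monomials (factor $1$ part in normal form, then factor $2$, etc.) span $\cA_m$. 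Since by definition $\cA_m = (S_q(V)^{\otimes m},\mu_S)$ has underlying vector space $S_q(V)^{\otimes m}$, whose dimension in each multidegree is $\prod_i\dim S_q(V)_{d_i}$ — exactly the number of these ordered monomials — the spanning set is a basis, so the relations are complete and the presentation is proved. (Alternatively, one can invoke that $\cA_m$ is by construction a flat deformation and that the abstract algebra defined by the presentation specialises at $q=1$ to $S(\oplus^m V)$ with the same graded dimensions, forcing the natural surjection onto $\cA_m$ to be an isomorphism.)
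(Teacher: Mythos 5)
Your proposal is correct and follows essentially the same route as the paper: derive part (1) by restricting to a single tensor factor, derive part (2) by evaluating the $\check R$-commutation rule \eqref{Sm1} against the eigenspace decomposition $\check R = qP_s - q^{-1}P_a + q^{1-2n}P_0$ using the explicit bases of $L_{2\epsilon_1}$, $L_{\epsilon_1+\epsilon_2}$, $L_0$, and conclude completeness of the presentation by matching dimensions with the flat underlying space $S_q(V)^{\otimes m}$. The one place where you add something genuinely useful is the sufficiency step: the paper dispatches it in a single sentence (flatness gives a linear isomorphism $\cA_m\to S(V)^{\otimes m}$), while you make explicit the monomial-ordering/spanning argument — using that the right-hand sides of \eqref{Sm2}–\eqref{Sm3}, including the $\psi,\bar\psi,\Psi$ correction terms, always have the lower factor index on the left, so an induction on the number of inversions reduces any word to an ordered monomial, and the count of ordered monomials matches $\prod_i\dim S_q(V)_{d_i}$. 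That is the reasoning the paper implicitly relies on, and spelling it out is a genuine improvement. Your parenthetical alternative (specialise at $q=1$) is the weak point: without fixing an integral form, passage to $q=1$ only controls dimensions in one direction and would need the spanning argument anyway, so it is not an independent shortcut; but since you offer it only as an aside, the main proof stands.
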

\begin{proof}
For fixed $i$, the elements $X_{i a}$ ($i\in[1,2n]$) generate a
subalgebra isomorphic to $S_q(V)$ in $\cA_m$, whence (1).

Part (2) is obtained from \eqref{Sm1} by
straightforward but tedious calculation. We indicate the main
steps. First, using \eqref{checkR2}, we rewrite
relation \eqref{Sm1} for the present case in the following form (for all $i<j$):
\begin{eqnarray}
\begin{aligned}\label{symm}
&X_{j a} X_{i a} = q X_{i a} X_{j a}, \ \forall a, \\
&X_{j a} X_{i b} + q X_{j b} X_{i a} \\
&= q \left(X_{i a} X_{j b} + q
 X_{i b} X_{j a} \right), \quad  a<b\ne 2n+1-a,\\
&q^{-1} X_{j t} X_{i, 2n+1-t} + q X_{j, 2n+1-t} X_{i t} \\
& - \left(X_{j, t+1} X_{i, 2n-t} + X_{j, 2n-t} X_{i, t}\right)\\
&= q\left(q^{-1} X_{i t} X_{j, 2n+1-t} + q X_{i, 2n+1-t} X_{j t}
\right)\\
&-q\left(X_{i, t+1} X_{j, 2n-t} + X_{i, 2n-t} X_{j, t}\right), \quad
1\le t<n;
\end{aligned}
\end{eqnarray}
\begin{eqnarray}
\begin{aligned}\label{skew}
&X_{j a} X_{i b}- q^{-1} X_{j b} X_{i a} \\
&= -q^{-1} \left(X_{i a} X_{j b} -q^{-1}
 X_{i b} X_{j a} \right), \quad  a<b\ne 2n+1-a,\\
&X_{j t} X_{i, 2n+1-t} - X_{j, 2n+1-t} X_{i t} \\
& - \left(qX_{j, t+1} X_{i, 2n-t} -q^{-1} X_{j, 2n-t} X_{i, t}\right)\\
&= -q^{-1}\left(X_{i t} X_{j, 2n+1-t} - X_{i, 2n+1-t} X_{j t}
\right)\\
&+q^{-1}\left(qX_{i, t+1} X_{j, 2n-t} -q^{-1} X_{i, 2n-t} X_{j,
t}\right), \quad 1\le t<n,\\
&X_{j n} X_{i, n+1} - X_{j, n+1} X_{i n} \\
&=-q^{-1}\left(X_{i n} X_{j, n+1} - X_{i, n+1} X_{j n} \right);
\end{aligned}
\end{eqnarray}
\begin{eqnarray}
\Psi^{(j, i)} = q^{1-2n} \Psi^{(i, j)}.\label{trace}
\end{eqnarray}

The first relation of \eqref{Sm2} is just the first relation of
\eqref{symm}, and the other relations of \eqref{Sm2} are 
obtained by combining the second relation of \eqref{symm} with the
first relation in \eqref{skew} (that is, the relations with $a<b\ne
2n+1-a$).

The relations \eqref{Sm3} are obtained from the third relation of
\eqref{symm}, the second and third relations of \eqref{skew} and the
relation \eqref{trace} by a rather lengthy sequence of routine manipulations.

The fact that these relations suffice is a consequence of the flat
nature of $S_q(V)$, which implies that there is a linear isomorphism
$\cA_m\lr S(V)^{\otimes m}$.
\end{proof}

The elements $\Psi^{(i, j)}$ will play an important role in the
study of the subalgebra of $\Uq$-invariants in $\cA_m$, and we
now study their properties.
\begin{lemma}\label{PsiX}
\begin{enumerate}
\item
For all $i, j\in[1, m]$, the elements $\Psi^{(i, j)}$ are
$\Uq$-invariant.
\item The elements $\Psi^{(i, j)}$ satisfy \eqref{trace} as well as the following relations:
\begin{eqnarray}\label{XPsi}
\begin{aligned}
&X_{k a} \Psi^{(i, i)}-  \Psi^{(i, i)}X_{k a}=0, \quad \text{for all \ } i, k, \\
&X_{k a} \Psi^{(i, j)}-  \Psi^{(i, j)}X_{k a}=0, \quad k> i, j \ \ \text{or} \ k<i, j, \\
&X_{k a} \Psi^{(i, j)}-  \Psi^{(i, j)}X_{k a}\\
&=(q-q^{-1}) \left(X_{i a} \Psi^{(k, j)}- \Psi^{(i, k)}X_{j a}\right), \quad  i<k<j,\\
&\Psi^{(i, j)}X_{i a} - q^{-1}  X_{i a} \Psi^{(i, j)}
= (q-q^{-1})\bar\psi_n^{(i, i)} X_{j a}, \quad i<j, \\
&X_{j a} \Psi^{(i, j)} - q^{-1} \Psi^{(i, j)}X_{j a} = (q-q^{-1})
 X_{i a} \bar\psi_n^{(j, j)}, \quad i<j.
\end{aligned}
\end{eqnarray}
\end{enumerate}
\end{lemma}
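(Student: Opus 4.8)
The plan is to establish (1) first as an immediate structural fact, and then to derive (2) by exploiting the interaction between the elements $\Psi^{(i,j)}$ and the defining relations of $\cA_m$ from Lemma \ref{polynomials-even}. For (1), recall from \S\ref{subsec:canon} and \eqref{checkR-inv} that $T^{(\alpha,\beta)}$ (up to scalar the image of the invariant \eqref{basis-trivial}) spans $(V_\alpha\otimes V_\beta)^\Uq$; translating into the $X$-notation, $\Psi^{(i,j)}$ is precisely the image in $\cA_m$ of this invariant vector placed in the $i$-th and $j$-th tensor slots. Since the multiplication $\mu_S$ on $\cA_m$ respects the $\Uq$-action by Theorem \ref{modulealgebra}, and the tensor-slot embedding $S_q(V)\otimes S_q(V)\hookrightarrow\cA_m$ is a $\Uq$-module map on the relevant degree-$(\dots,1,\dots,1,\dots)$ component, $\Psi^{(i,j)}$ lies in $\cA_m^\Uq$. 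This handles (1).

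For (2), the relation \eqref{trace} is already recorded in the proof of Lemma \ref{polynomials-even}, so only the five families in \eqref{XPsi} remain. Each is a commutation-type identity between a generator $X_{k a}$ and the quadratic element $\Psi^{(i,j)}=\psi_n^{(i,j)}+\bar\psi_n^{(i,j)}=\sum_{k=1}^n\bigl(q^{k-n}X_{i,2n+1-k}X_{j k}+q^{n-k}X_{i k}X_{j,2n+1-k}\bigr)$, so the strategy is: expand $\Psi^{(i,j)}$, move $X_{k a}$ across each of the $2n$ quadratic terms using the relations \eqref{Sm2}--\eqref{Sm3}, and collect. The case analysis is organised by the position of $k$ relative to $i$ and $j$. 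When $k>i,j$ or $k<i,j$, the relations \eqref{Sm2}--\eqref{Sm3} used are those for a single pair of indices all on the same side, and one expects the "error terms" (the $\psi$'s and $\bar\psi$'s produced by \eqref{Sm3}) to telescope and cancel, giving commutativity; the case $i=k=j$ reduces to Proposition \ref{invariants-1}(2) applied inside the $i$-th slot, since there $\Psi^{(i,i)}$ is a scalar multiple of $\Phi$ sitting in that slot, hence central in the subalgebra $S_q(V)$, and it commutes with all other slots by the "same side" computation. The genuinely interesting cases are $i<k<j$ and $k\in\{i,j\}$ with $i<j$: here moving $X_{k a}$ past $X_{i,\bullet}$ and $X_{j,\bullet}$ picks up the $(q-q^{-1})$-terms from the second and third lines of \eqref{Sm2} and from \eqref{Sm3}, and one must recognise the resulting sums as $\Psi^{(k,j)}$, $\Psi^{(i,k)}$, or $\bar\psi_n^{(\cdot,\cdot)}X_{\cdot a}$ as appropriate. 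A useful bookkeeping device is to note that $X_{k a}\psi_n^{(i,j)}$ and $X_{k a}\bar\psi_n^{(i,j)}$ can be computed separately and their contributions paired off by the symmetry $k\leftrightarrow 2n+1-k$ in the summation index; the scalars $q^{\pm(n-k)}$ are arranged exactly so that the surviving terms reassemble into the claimed $q$-deformed expressions.

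The main obstacle is purely computational bookkeeping: in the cases $i<k<j$ and $k\in\{i,j\}$ one is commuting a generator past a sum of $2n$ products, each step generating correction terms, and the identities assert that after summation these collapse to a clean form. The difficulty is to carry the $q$-powers $q^{\pm(n-k)}$ and the index shifts $k\mapsto 2n+1-k$, $k\mapsto k+1$ consistently so that the telescoping in \eqref{Sm3} (which involves $\psi_{t+1}^{(i,j)}$ and $\bar\psi_{t+1}^{(i,j)}$) works out. There is no conceptual barrier — flatness of $S_q(V)$, established above, guarantees $\cA_m$ has the expected PBW basis, so any candidate identity is either an honest consequence of the relations or fails on a basis element, and one checks it by reduction to normal form. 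I would organise the verification by first proving the two "same side" cases (including $i=j=k$) in one stroke, then doing $i<k<j$, and finally the two boundary cases $k=i$ and $k=j$, in each using the previously established cases to shorten the manipulation.
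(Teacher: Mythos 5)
Your proof of part (1) matches the paper's: both identify $\Psi^{(i,j)}$ as the image of the canonical invariant in the appropriate tensor slots and appeal to the module-algebra structure.

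For part (2), however, you propose a genuinely different route from the paper, and the difference is worth noting. For the "same-side" commutation relations (the second line of \eqref{XPsi}), the paper does \emph{not} telescope through the defining relations; instead it uses a short structural argument: writing $R=\sum_t\alpha_t\otimes\beta_t$ and using the twisted multiplication of Theorem \ref{modulealgebra}, one has $X_{k a}\Psi^{(i,j)}=\sum_t\beta_t(\Psi^{(i,j)})\alpha_t(X_{k a})$ when $k>i,j$, and the invariance of $\Psi^{(i,j)}$ forces each $\beta_t$ to act by the counit, so the sum collapses to $\Psi^{(i,j)}X_{k a}$. This is a two-line argument with no bookkeeping. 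For the "crossing" relations (the third, fourth, and fifth lines), the paper invokes the Reshetikhin--Turaev diagram calculus: the middle relation comes from the Kauffman skein relation \eqref{skein} applied to a $(1,3)$-tangle mapping $V\to V^{\otimes 3}$, and the last two from an identity between the maps $F$, $B$, and $D_0'$ obtained by projecting with $P_s+P_0$. This turns the commutation identities into two- or three-term linear relations among tangle diagrams, which are then read off after applying $\tau^{\otimes m}\circ\iota$. Your plan — expand $\Psi^{(i,j)}$ as a sum of $2n$ quadratic monomials and push $X_{k a}$ across term by term using \eqref{Sm2}--\eqref{Sm3} — would in principle yield the same identities (flatness guarantees that a valid identity can be confirmed by normal-form reduction), but it is substantially more laborious, and crucially, you have only sketched the bookkeeping rather than carried it out. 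The claim that the correction terms "telescope and cancel" and that the scalars $q^{\pm(n-k)}$ are "arranged exactly so" is precisely where all the work lies; without executing it, the proposal is a plausible plan rather than a proof. The paper's diagrammatic route buys you precisely the avoidance of this computation: the skein relation does all the cancellation at once.

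One further caution: your reduction of the first relation to Proposition \ref{invariants-1}(2) handles only $k=i$; the case $k\ne i$ must come from the $i=j$ specialization of the same-side case, which as noted above is where the paper's $R$-matrix argument is doing real work, not a telescoping reduction.
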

\begin{proof}
Observe that the image of $V$ under the natural map
$T(V)\to S_q(V)$ is a copy of $V$ in $S(V)$. Hence
for $i<j$, $\Psi^{(i,j)}$ may be thought of as
the basis element \eqref{basis-trivial} of $(V\otimes V)^\Uq$
where the two copies of $V$ are in the $i^{\text th}$ and $j^{\text th}$
factors of $\otimes^mS_q(V)$.
Taking Proposition \ref{invariants-1} into account,
it follows that $\Psi^{(i, j)}$ is invariant if $i\le j$.
By equation \eqref{trace}, (1) follows.

The $k=i$ case of the first relation  of \eqref{XPsi}  follows from
Proposition \ref{invariants-1}.  Write the universal $R$-matrix of $\Uq$
as $R=\sum_t\alpha_t\otimes\beta_t$. If $k>i, j$, we have
\[
X_{k a} \Psi^{(i, j)} = \sum_t \beta_t(\Psi^{(i, j)}) \alpha_t(X_{k a}).
\]
Since $\Psi^{(i, j)}$ is $\Uq$-invariant by part (1), the right side
is equal to $$\sum_t \ep(\beta_t)(\Psi^{(i, j)}) \alpha_t(X_{k a})
=\Psi^{(i, j)}X_{k a},$$ proving the second
relation of \eqref{XPsi} for $k>i, j$. The case $k<i, j$
is similar. Taking $i=j$, we obtain the case $k\ne i$ of
the first relation of \eqref{XPsi}.

To prove the third relation of \eqref{XPsi}, it suffices to consider
the case $i=1$, $k=2$ and $j=3$.  Since $S_q(V)_1\cong T(V)_1\cong
V$, we have canonical $\Uq$-module isomorphisms \[ V\otimes V\otimes V
\longrightarrow\left(\cT_3(V)\right)_{(1, 1, 1)}\longrightarrow
\left(\cA_3\right)_{(1, 1, 1)},\] where the second map is the
restriction of $\tau^{\otimes 3}$ to $\left(\cT_3(V)\right)_{(1, 1,
1)}$. Denote the first map by $\iota_{(1, 1, 1)}$. Our strategy is
to deduce the third relation of \eqref{XPsi} from appropriate
relations in $V\otimes V\otimes V$.

This will be done using the diagrammatical method of
Reshetikhin-Turaev (see \cite{RT1, RT2}), which is equivalent
to working in the $BMW$-algebra, to describe homomorphisms between
tensor powers of $V$. Recall that their
functor sends tangle diagrams to $\Uq$-module homomorphisms. In our
case, we shall colour all the components of any tangle diagram with
the module $V$, so that the images of tangle diagrams under the
Reshetikhin-Turaev functor are $\Uq$-maps between tensor powers of
$V$. Because the module $V$ is self dual, there is no need to
orient the tangle diagrams.

We shall identify tangle diagrams with the
corresponding $\Uq$-module homomorphisms;
the relations we seek will
arise from relations among diagrams whose images lie in $V^{\otimes 3}$.
Write the basis
element \eqref{basis-trivial} of $L_0\subset V\otimes V$ defined
in Section \ref{sect:naturalmodule} as $\sum_{a, b} C_{a b}
v_a\otimes v_b$ with $C_{a b}\in\cK$; then in terms of diagrams,
we have \begin{picture}(15, 10) \qbezier(0,10)(5, -15)(10,10)
\end{picture}: $\cK\longrightarrow V\otimes V$, $1\mapsto
\sum_{a, b} C_{a b} v_a\otimes v_b$.  Furthermore, we have the
following skein relation:
\begin{eqnarray}\label{skein}
\text{\begin{picture}(15, 10) \qbezier(-2,0)(-10, 0)(-10,10)
\qbezier(2,0)(10, 0)(10,10) \put(0, -5){\line(0, 1){15}}
\end{picture}}-
\text{\hspace{.4cm}\begin{picture}(15, 10) \qbezier(0,0)(-10,
0)(-10,10) \qbezier(0,0)(10, 0)(10,10) \put(0, -5){\line(0, 1){4}}
\put(0, 1){\line(0, 1){9}}
\end{picture}}  = (q-q^{-1})\left( \text{\hspace{.2cm}\begin{picture}(10, 10)
\qbezier(0,10)(5, -15)(10,10) \put(-3, -5){\line(0, 1){15}}
\end{picture}}
- \text{\begin{picture}(10, 10) \qbezier(0,10)(5, -15)(10,10)
\put(13, -5){\line(0, 1){15}}
\end{picture} \hspace{.15cm}}  \right).
\end{eqnarray}
Let us denote
\begin{eqnarray*}
\begin{aligned}
D_+:=\text{\hspace{.2cm}
\begin{picture}(10, 10)
\qbezier(-2,0)(-10, 0)(-10,10) \qbezier(2,0)(10, 0)(10,10) \put(0,
-5){\line(0, 1){15}}
\end{picture}\ :  $V \longrightarrow V^{\otimes 3}$}, &\quad&
D_-:=\text{\hspace{.2cm}
\begin{picture}(10, 10)
\qbezier(0,0)(-10, 0)(-10,10) \qbezier(0,0)(10, 0)(10,10) \put(0,
-6){\line(0, 1){4}} \put(0, 2){\line(0, 1){9}}
\end{picture}\ : $V \longrightarrow V^{\otimes 3}$}, \\
D_0:= \text{\hspace{.2cm}
\begin{picture}(15, 10)
\qbezier(0,10)(5, -15)(10,10) \put(-3, -5){\line(0, 1){15}}
\end{picture}\ : $V \longrightarrow V^{\otimes 3}$}, &\quad&
D_0':=\text{\hspace{.2cm}
\begin{picture}(15, 10)
\qbezier(0,10)(5, -15)(10,10) \put(13, -5){\line(0, 1){15}}
\end{picture}\ : $ V \longrightarrow V^{\otimes 3}$}.
\end{aligned}
\end{eqnarray*}
Then 
\begin{eqnarray*}
\begin{aligned}
&D_0(v_a) = v_a\otimes \sum_{b, d} C_{b d} v_b\otimes v_d, &\quad&
D'_0(v_a) = \sum_{b, d} C_{b d} v_b\otimes v_d \otimes v_a,\\
&D_+(v_a) = \sum_{b, d} C_{b d} \check{R}(v_a\otimes  v_b)\otimes
v_d, &\quad& D_-(v_a) = \sum_{b, d} C_{b d} v_b\otimes \check{R}(v_d
\otimes v_a),
\end{aligned}
\end{eqnarray*}
from which we obtain
\begin{eqnarray*}
\tau^{\otimes 3}\circ \iota_{(1, 1, 1)}\circ D_+(v_a) = X_{2 a}
\Psi^{(1, 3)}, &\quad&
\tau^{\otimes 3}\circ \iota_{(1, 1, 1)}\circ D_-(v_a) = \Psi^{(1, 3)} X_{2 a}, \\
\tau^{\otimes 3}\circ \iota_{(1, 1, 1)}\circ D_0(v_a) = X_{1 a}
\Psi^{(2, 3)}, &\quad& \tau^{\otimes 3}\circ \iota_{(1, 1, 1)}\circ
D'_0(v_a) = \Psi^{(1, 2)}X_{3 a}.
\end{eqnarray*}
Evaluating both sides of \eqref{skein} at $v_a$, then applying the
map $\tau^{\otimes 3}\circ \iota_{(1, 1, 1)}$ to the resulting
elements of $V^{\otimes 3}$, we obtain the third relation of
\eqref{XPsi} for $i=1$, $k=2$ and $j=3$.

The fourth and fifth relations of \eqref{XPsi} can be proved in much
the same way, and we shall consider the fourth relation only. We
may assume that $i=1$ and $j=2$. Note that the map
$\tau^{\otimes 2}: \cT_2(V)\longrightarrow \cA_2$ restricts to an
isomorphism $(P_s+P_0)T(V)_2\otimes T(V)_1\cong S_q(V)_2\otimes
S_q(V)_1$. Denote by $\iota_{(2, 1)}$ the canonical $\Uq$-isomorphism from
$(V\otimes V)\otimes V$ to $T(V)_2\otimes T(V)_1$. Let
\begin{eqnarray*}
F:=((P_s+P_0)\otimes\id)D_0 &= (P_s\otimes\id)D_0 + \frac{1}{\dim_q V}D_0', \\
B:=((P_s+P_0)\otimes\id)D_- &= q^{-1}(P_s\otimes\id)D_0 +
\frac{q^{2n-1}}{\dim_qV}D_0',
\end{eqnarray*}
where $\dim_qV=[n]_q (q^{n-1}+q^{1-n})$ is the quantum dimension of
$V$. Then
\[
F - q B = -q^n \frac{q^{n}-q^{-n}}{\dim_q V} D'_0:  V^{\otimes 3}
\longrightarrow V^{\otimes 3}.
\]
Note that
\begin{eqnarray*}
\tau^{\otimes 2}\circ \iota_{(2, 1)}\circ F(v_a)=X_{1 a} \Psi^{(1,
2)}, &\quad& \tau^{\otimes 2}\circ \iota_{(2, 1)}\circ
B(v_a)=\Psi^{(1, 2)}X_{1 a}, \\
{\rm and }\;\tau^{\otimes 2}\circ \iota_{(2, 1)}\circ D'_0(v_a)=\Psi^{(1,
1)}X_{2 a}.
\end{eqnarray*}
Thus for $i=1$ and $j=2$, we have
\[
q \Psi^{(i, j)} X_{i a} - X_{i a} \Psi^{(i, j)}  = q^n
\frac{q^{n}-q^{-n}}{\dim_q V} \Psi^{(i, i)} X_{j a}.
\]
This leads to the desired result taking into account that
\begin{eqnarray}
\Psi^{(i, i)}=\frac{q^{1-n} \dim_q V}{[n]_q} \bar\psi_n^{(i, i)},
\end{eqnarray}
which is implied by \eqref{Psijj}. This completes the proof of the Lemma.
\end{proof}
The following relations are easy consequences of part (2) of Lemma
\ref{PsiX}.
\begin{lemma}\label{relations:generators-even}
\begin{eqnarray}
\begin{aligned}
&\Psi^{(i, i)}\Psi^{(j, k)}-\Psi^{(j, k)}\Psi^{(i, i)}=0,
\quad \text{for all \ } \ i, j, k,\\
&\Psi^{(i, k)}\Psi^{(i, j)}-q^{-1}\Psi^{(i, j)}\Psi^{(i, k)}=
(q-q^{-1})\bar\psi_n^{(i, i)}\Psi^{(j, k)}, \quad k\ne i, j; \ i<j, \\
&\Psi^{(j, k)}\Psi^{(i, j)}-q^{-1}\Psi^{(i, j)}\Psi^{(j, k)}=
(q-q^{-1})\bar\psi_n^{(j, j)}\Psi^{(i, k)}, \quad k\ne i, j; \ i<j,\\
&\Psi^{(i, j)}\Psi^{(k, l)}-\Psi^{(k, l)}\Psi^{(i, j)}=0,
\quad k< i<j< l,\\
&\Psi^{(i, j)}\Psi^{(k, l)}-\Psi^{(k, l)}\Psi^{(i, j)}\\
&=(q-q^{-1})
\left(\Psi^{(i, k)}\Psi^{(j, l)} -\Psi^{(i, l)}\Psi^{(k, j)} \right),
\quad i<k<j< l.
\end{aligned}
\end{eqnarray}
\end{lemma}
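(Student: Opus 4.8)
The plan is to derive each of the five commutation relations in Lemma~\ref{relations:generators-even} directly from the relations \eqref{XPsi} established in part (2) of Lemma~\ref{PsiX}, together with \eqref{trace}, by a sequence of short manipulations rather than any new structural argument. The point is that all the $\Psi^{(i,j)}$ (for $i\le j$) are $\Uq$-invariant by part~(1) of Lemma~\ref{PsiX}, and moreover $\bar\psi_n^{(i,i)}$ is, up to the scalar $\frac{[n]_q}{q^{1-n}\dim_q V}$, equal to $\Psi^{(i,i)}$; so each relation is obtained by specialising the generators $X_{k a}$ appearing in \eqref{XPsi} to the components of the invariant vectors defining the various $\Psi$'s.

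Concretely, I would argue as follows. For the first relation, $\Psi^{(i,i)}$ is central in $\cA_m$: this is immediate from the first relation of \eqref{XPsi} (the $i=k$ case), since $\Psi^{(i,i)}$ commutes with every generator $X_{k a}$ and hence with every element of $\cA_m$; in particular it commutes with $\Psi^{(j,k)}$. For the remaining four relations, recall from the proof of Lemma~\ref{PsiX} that $\Psi^{(j,k)}$, viewed via the embedding $S_q(V)_1\cong V$, is the image of the invariant vector $\sum_{a,b}C_{ab}v_a\otimes v_b$ placed in the $j^{\text{th}}$ and $k^{\text{th}}$ tensor slots, i.e.\ $\Psi^{(j,k)}=\sum_{a,b}C_{ab}X_{j a}X_{k b}$ with $C_{ab}$ as in \eqref{basis-trivial}. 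So to get, say, the second relation of Lemma~\ref{relations:generators-even}, I take the third relation of \eqref{XPsi} (with its indices $i<k<j$ matched to the pattern $i<j$ with a third index, or more precisely I take the fourth relation of \eqref{XPsi}, $\Psi^{(i,j)}X_{i a}-q^{-1}X_{i a}\Psi^{(i,j)}=(q-q^{-1})\bar\psi_n^{(i,i)}X_{j a}$), multiply through by $C_{ab}$, substitute $a\mapsto$ the appropriate basis index and sum against the second slot index $b$ with weight $C_{\cdot b}$ so that $\sum C_{ab}X_{i a}(\cdots)X_{k b}$ reassembles into $\Psi^{(i,k)}$ on the left and $\bar\psi_n^{(i,i)}\Psi^{(j,k)}$ on the right. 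The third relation of the Lemma comes symmetrically from the fifth relation of \eqref{XPsi} in the same manner. For the fourth and fifth relations of the Lemma (the disjoint and the nested cases $k<i<j<l$ and $i<k<j<l$), I contract the $X$-variables in the second and third relations of \eqref{XPsi} — the case $k>i,j$ or $k<i,j$ giving commutativity, and the case $i<k<j$ giving the $(q-q^{-1})$ correction term — against the coefficient matrix $C$ in the $k$ and $l$ slots, producing $\Psi^{(k,l)}$ and the cross terms $\Psi^{(i,k)}\Psi^{(j,l)}$, $\Psi^{(i,l)}\Psi^{(k,j)}$.

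The only genuinely delicate point, and the one I would expect to consume most of the write-up, is bookkeeping of the order of tensor factors and the resulting index ranges: the relations \eqref{XPsi} are stated with a single loose generator $X_{k a}$, but $\Psi^{(k,l)}=\sum C_{ab}X_{k a}X_{l b}$ involves two generators in two different slots, and one must be careful that when $k<i<j<l$ versus $i<k<j<l$ the generator $X_{k a}$ lies to the left or interleaves the slots $i,j$ correctly, so that exactly the right case of \eqref{XPsi} applies and the contraction closes up without leftover terms. One also has to verify that the scalar $q^{1-2n}$ appearing in \eqref{trace} cancels consistently when both $\Psi$'s are rewritten, and that the identification $\bar\psi_n^{(i,i)}=\frac{[n]_q}{q^{1-n}\dim_q V}\Psi^{(i,i)}$ is used in the same normalisation as in Lemma~\ref{PsiX}. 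None of this requires a new idea; it is the ``easy consequence'' advertised in the statement, and the proof will amount to recording these substitutions cleanly, perhaps illustrating one case (say the second relation) in detail and indicating that the others follow by the same device with the roles of the indices permuted.
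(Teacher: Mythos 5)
Your approach --- contracting the $X$-vs-$\Psi$ relations \eqref{XPsi} against the coefficient matrix $C_{ab}$ that defines $\Psi^{(j,k)}=\sum_{a,b}C_{ab}X_{ja}X_{kb}$, using centrality of $\Psi^{(i,i)}$ (hence of $\bar\psi_n^{(i,i)}$) and the normalisation \eqref{trace} --- is exactly what the paper's one-sentence proof (``easy consequences of part (2) of Lemma \ref{PsiX}'') intends, and you correctly identify which relation of \eqref{XPsi} feeds which relation of the lemma (fourth and fifth $\mapsto$ second and third; second and third $\mapsto$ fourth and fifth).

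One caveat worth making explicit, since your sketch treats it as pure bookkeeping: in the second and third relations of the lemma, when the ``spectator'' index $k$ falls strictly between $i$ and $j$, the generator $X_{kb}$ does \emph{not} simply commute past $\Psi^{(i,j)}$ --- you must use the third relation of \eqref{XPsi}, which introduces two correction terms $X_{ib}\Psi^{(k,j)}$ and $\Psi^{(i,k)}X_{jb}$. After contracting with $C_{ab}$ these produce a copy of $\Psi^{(i,i)}\Psi^{(k,j)}$ and a copy of the unknown product itself, and the resulting linear equation closes up only after invoking the proportionality $\Psi^{(i,i)}=(1+q^{2-2n})\bar\psi_n^{(i,i)}$ together with \eqref{trace}. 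So ``closes up without leftover terms'' is optimistic in that subcase, although for $k<i$ or $k>j$ the contraction is immediate and matches your description. The paper supplies no details at all, so your proposal is at the same level of rigor and follows the same route.
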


\subsubsection{The algebra $\Uq({\mathfrak{o}}_{N})$}\label{On}

Following \cite{LZ1} we introduce extra generators $\sigma^{\pm 1}$ to
augment $\Uq(\so_N)$, obtaining a new algebra, which we denote by
$\Uq({\mathfrak{o}}_N)$. Here $\sigma^{\pm 1}$ are mutual inverses,
and have the following properties: If $N$ is odd, $\sigma$
commutes with all the generators of $\Uq(\so_N)$. When $N=2n$, we
label the last two simple roots of $\fg$ as
$\alpha_{n-1}=\epsilon_{n-1}-\epsilon_n$ and
$\alpha_n=\epsilon_{n-1}+\epsilon_n$. Then
\begin{eqnarray*} \sigma e_{n-1}\sigma^{-1} = e_{n},&&
\sigma e_{n}\sigma^{-1} = e_{n-1},\\
\sigma f_{n-1}\sigma^{-1} = f_{n},&&
\sigma f_{n}\sigma^{-1} = f_{n-1},\\
\sigma k_{n-1}\sigma^{-1} = k_{n},&& \sigma k_{n}\sigma^{-1} =
k_{n-1},\end{eqnarray*} while all the other generators commute with
$\sigma$. Denote by $\Uq(\mathfrak{o}_N)$ the associative algebra
generated by $\Uq(\so_N)$ and $\sigma^{\pm 1}$. We extend the
co-multiplication and antipode of $\Uq(\so_N)$ to
$\Uq(\mathfrak{o}_N)$ by letting
$\Delta(\sigma)=\sigma\otimes\sigma$, and $S(\sigma)=\sigma^{-1}$.
Then $\Uq(\mathfrak{o}_N)$ acquires the structure of a Hopf algebra.
Now $\sigma$ acts as an automorphism on $\Uq(\so_N)$ by conjugation.
The $R$-matrix of $\Uq(\so_N)$ is invariant under the automorphism,
i.e., $\Delta(\sigma)$ commutes with the $R$-matrix.

We extend the natural $\Uq(\so_N)$-module $V$ to a
$\Uq(\mathfrak{o}_N)$-module by stipulating that
\begin{enumerate}
\item[(i)] For odd $n$, $\sigma$ acts on the highest weight vector of $V$ by $-1$;
\item[(ii)] For even $n$, $\sigma$ acts on the highest weight vector of $V$ by $1$.
\end{enumerate}
Then $\Uq(\mathfrak{o}_N)$ also acts on tensor powers of $V$ through
the co-multiplication. We shall consider the $\Uq(\mathfrak{o}_N)$-modules
which are direct sums of submodules of $V^{\otimes r}$, $r\ge 0$.

Define $\Uq$-endomorphisms $b_i = \id_V^{\otimes (i-1)}\otimes
\check{R}\otimes \id_V^{\otimes (r-1)}$  ($1\le i\le r-1$) of
$V^{\otimes r}$.  These define a representation of the
Birman-Wenzl-Murakami (BMW) algebra, and we denote by $\cB(r)$ the
image of the BMW algebra in this representation. Then we have the following
result (see, e.g., \cite{LZ1}).

\begin{theorem}
\label{FT-O} If $V$ is the natural module for the quantum orthogonal
group $\Uq(\mathfrak{o}_N)$, for any integer $r\ge 2$,
\[\End_{\Uq(\mathfrak{o}_N)}(V^{\otimes r})=\cB(r).\]
\end{theorem}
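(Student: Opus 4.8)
The plan is to establish the two inclusions $\cB(r)\subseteq\End_{\Uq(\fo_N)}(V^{\otimes r})$ and $\End_{\Uq(\fo_N)}(V^{\otimes r})\subseteq\cB(r)$ separately. The first is essentially formal: each $b_i=\id^{\otimes(i-1)}\otimes\check R\otimes\id^{\otimes(r-1-i)}$ is a $\Uq(\so_N)$-module endomorphism of $V^{\otimes r}$ because $\check R$ commutes with the $\Uq(\so_N)$-action (Remark \ref{Rem:R-matrix}), and it also commutes with the action of the extra generator $\sigma$, since we are given that $\Delta(\sigma)$ commutes with the $R$-matrix; hence every element of $\cB(r)$, being a polynomial in the $b_i$, lies in $\End_{\Uq(\fo_N)}(V^{\otimes r})$. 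So the content of the theorem is the reverse inclusion: every $\Uq(\fo_N)$-equivariant endomorphism of $V^{\otimes r}$ is a polynomial in the $b_i$ (with BMW-algebra relations).

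For the reverse inclusion I would argue by a dimension count combined with the classical ($q=1$) result. First, a standard semisimplicity argument: $V^{\otimes r}$ is locally finite of type $(1,\dots,1)$, hence semisimple as a $\Uq(\so_N)$-module, and adding $\sigma$ only refines the decomposition; thus $\End_{\Uq(\fo_N)}(V^{\otimes r})$ is a semisimple algebra whose dimension equals $\sum_\lambda m_\lambda^2$, where $m_\lambda$ is the multiplicity of the simple $\Uq(\fo_N)$-module labelled $\lambda$ in $V^{\otimes r}$. Second, the decomposition multiplicities of $V^{\otimes r}$ are the same for $\Uq(\fo_N)$ at generic $q$ as for the classical group $\mathrm O_N(\C)$, because the characters (weight multiplicities) of $V^{\otimes r}$ are independent of $q$ and the action of $\sigma$ mimics that of an element of $\mathrm O_N\setminus\mathrm{SO}_N$; hence $\dim\End_{\Uq(\fo_N)}(V^{\otimes r})=\dim\End_{\mathrm O_N(\C)}(V_{\C}^{\otimes r})$. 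By the classical first fundamental theorem for $\mathrm O_N$ (the Brauer-algebra version of Schur–Weyl–Brauer duality), the latter equals the dimension of the image of the Brauer algebra $B_r(N)$ in $\End(V_{\C}^{\otimes r})$. Finally, one invokes the known fact (references \cite{LZ1}, and the specialisation theory of the BMW algebra) that the BMW algebra $\cB(r)$ at generic $q$, specialised with the orthogonal parameters, has the same dimension as the image of $B_r(N)$ in the classical case — indeed the BMW algebra is a flat deformation of the Brauer algebra — so $\dim\cB(r)=\dim\End_{\Uq(\fo_N)}(V^{\otimes r})$. Combined with the inclusion $\cB(r)\subseteq\End_{\Uq(\fo_N)}(V^{\otimes r})$ already established, equality of dimensions forces $\cB(r)=\End_{\Uq(\fo_N)}(V^{\otimes r})$.

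The main obstacle is the dimension-matching step, and in particular justifying that passing from $\Uq(\so_N)$ to $\Uq(\fo_N)$ on the quantum side corresponds exactly to passing from $\mathrm{SO}_N$ to $\mathrm O_N$ on the classical side — i.e. that $\sigma$ cuts down $\End_{\Uq(\so_N)}(V^{\otimes r})$ in precisely the way the extra component of $\mathrm O_N$ does. When $N$ is odd this is vacuous since $\mathrm O_N=\mathrm{SO}_N\times\{\pm 1\}$ and $\sigma$ is central, but when $N=2n$ the $\so_{2n}$-modules $V_\lambda$ and $V_{\lambda'}$ (with $\lambda$ having a nonzero last coordinate, $\lambda'$ its flip) are distinct and get fused by the diagram automorphism implemented by $\sigma$; one must check this fusion matches the branching $\mathrm O_{2n}\downarrow\mathrm{SO}_{2n}$. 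This is where the detailed root-data bookkeeping lives, and where I would lean most heavily on the analysis in \cite{LZ1}; the rest of the argument is a routine assembly of semisimplicity, character-independence of $q$, and flatness of the BMW deformation.
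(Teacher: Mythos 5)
The paper does not prove this theorem: it is quoted as a known result with the citation ``(see, e.g., \cite{LZ1})'', so there is no proof in the paper itself to compare against. Your sketch takes a genuinely different route from the cited reference. The argument in \cite{LZ1} exploits the fact that $V$ is \emph{strongly multiplicity free} (each $V\otimes V_\lambda$ is multiplicity free for $V_\lambda$ appearing in tensor powers of $V$) to show, by induction on $r$, that the $b_i$ generate the full centraliser; it makes no appeal to the classical first fundamental theorem. Your route instead reduces the quantum statement to the classical one by a dimension count, which is legitimate but shifts the burden onto $q$-independence of multiplicities and the $\so_{2n}$-versus-$\fo_{2n}$ bookkeeping that you rightly flag.

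One step is imprecise as phrased: ``the BMW algebra is a flat deformation of the Brauer algebra, so $\dim\cB(r)=\dim\End_{\Uq(\fo_N)}(V^{\otimes r})$'' conflates the abstract algebra with its image in $\End(V^{\otimes r})$. Flatness of the abstract deformation gives $\dim\mathrm{BMW}_r=(2r-1)!!=\dim B_r(N)$, but the representation on $V^{\otimes r}$ is typically not faithful, and the rank of a matrix over $\cK=\C(q)$ can strictly exceed the rank of its specialisation at $q=1$. The gap is closed by a sandwich: rank semicontinuity under specialisation gives $\dim\cB(r)\ge\dim\bigl(\text{image of }B_r(N)\text{ in }\End(V_\C^{\otimes r})\bigr)$; your inclusion $\cB(r)\subseteq\End_{\Uq(\fo_N)}(V^{\otimes r})$ together with multiplicity-matching and the classical FFT for $\mathrm{O}_N$ gives the reverse inequality; hence equality. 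With that repair, your outline would close --- but note that it does so by importing the classical FFT, whereas the proof in \cite{LZ1} establishes the quantum statement independently of it.
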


\subsection{Noncommutative FFT of invariant theory}

The following result is a quantum analogue of the third
(coordinate ring) formulation of the first fundamental theorem for the invariant
theory of ${\mathfrak o}_{2n}$.
\begin{theorem}\label{FFT-even}\label{o-FFT}
The subalgebra $\cA_m^{\Uq(\fo_{2n})}:= \{ f\in \cA_m \mid x(f) =
\epsilon(x) f, \ \forall x\in \Uq(\fo_{2n})\}$ of
$\Uq(\fo_{2n})$-invariants in $\cA_m$ is generated by the elements
$\Psi^{(i, j)}$ ($i\le j$) and the identity.
\end{theorem}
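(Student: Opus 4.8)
The plan is to reduce the problem to the classical ($q=1$) FFT for the orthogonal group $\mathfrak o_{2n}$ by exploiting the flatness of $\cA_m$, which gives a $\Uq$-module isomorphism $\cA_m \cong S(\oplus^m V)$ preserving multidegrees. Since a $\Uq(\fo_{2n})$-module is semisimple and (being locally finite of type $(1,\dots,1)$) has the same character data as the corresponding $\fo_{2n}$-module, the dimension of the space of invariants in each multihomogeneous component $(\cA_m)_{\bf d}$ equals the dimension of the invariants in $S(\oplus^m V)_{\bf d}$. By the classical FFT (Weyl, \cite{W}), the latter space is spanned by products of the classical ``contraction'' invariants $\langle x_i, x_j\rangle$, one for each pair $i\le j$ of copies of $V$. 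Thus $\dim \left(\cA_m^{\Uq(\fo_{2n})}\right)_{\bf d}$ equals the number of monomials of multidegree ${\bf d}$ in the commuting classical generators.

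The key steps, in order, are as follows. First, establish that each $\Psi^{(i,j)}$ ($i\le j$) is $\Uq(\fo_{2n})$-invariant: this was shown for $\Uq(\so_{2n})$ in Lemma \ref{PsiX}(1), and one checks additionally that $\sigma$ acts trivially on $\Psi^{(i,j)}$, since $\Psi^{(i,j)}$ arises from the image of the invariant vector \eqref{basis-trivial} in $(V\otimes V)^{\Uq}$, on which $\sigma$ acts by $+1$ by the normalisation (i)--(ii) (the factor of $-1$ for odd $n$ on each copy of $V$ cancels). Hence the subalgebra $B$ generated by the $\Psi^{(i,j)}$ and $1$ is contained in $\cA_m^{\Uq(\fo_{2n})}$. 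Second, observe that $B$ is multigraded, with $B_{\bf d}$ spanned by the ordered products $\prod \Psi^{(i_k,j_k)}$ whose total multidegree is ${\bf d}$ (here one uses the commutation relations of Lemma \ref{relations:generators-even} to bring any product into a normal form, so that $\dim B_{\bf d}$ is at most the number of such monomials). Third, compare dimensions: by the previous paragraph $\dim (\cA_m^{\Uq})_{\bf d}$ equals exactly the number of degree-${\bf d}$ monomials in commuting symbols $\Psi^{(i,j)}$, $i\le j$; combined with $B_{\bf d}\subseteq (\cA_m^{\Uq})_{\bf d}$ and the upper bound on $\dim B_{\bf d}$, we need the reverse inequality, i.e.\ that these monomials in the $\Psi^{(i,j)}$ are linearly independent in $\cA_m$.

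The linear independence of the $\Psi$-monomials is the main obstacle. I would handle it by a degeneration argument: the structure constants of $\cA_m$ (from Lemmas \ref{polynomials-even} and \ref{relations:generators-even}) are Laurent polynomials in $q$, regular at $q=1$, and at $q=1$ the algebra specialises to the classical $S(\oplus^m V)$ with the $\Psi^{(i,j)}|_{q=1}$ going to the classical generators $\langle x_i,x_j\rangle$, which are algebraically independent up to the symmetry $\langle x_i,x_j\rangle=\langle x_j,x_i\rangle$ (this is the classical SFT content, but only the FFT plus independence of the listed generators in low degree is needed here, and indeed for $m\le 2n$ they are fully independent while for $m>2n$ the classical count already reflects this). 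Since a family of vectors that is linearly independent after specialisation $q\to 1$ is linearly independent over $\cK$, the $\Psi$-monomials are independent in $\cA_m$, giving $\dim B_{\bf d} = \dim(\cA_m^{\Uq})_{\bf d}$ and hence $B = \cA_m^{\Uq(\fo_{2n})}$. An alternative to the counting/degeneration argument, which the authors may prefer, is a direct diagrammatic one: any invariant in $(\cA_m)_{\bf d}$ is the image under $\tau^{\otimes m}$ of an invariant in the appropriate component of $V^{\otimes|{\bf d}|}$, and by Theorem \ref{FT-O} (the FFT in the first formulation, i.e.\ $\End_{\Uq(\fo_N)}(V^{\otimes r})=\cB(r)$) every such invariant is a linear combination of Brauer-type diagrams consisting of ``cups'' pairing the tensor factors; pushing these diagrams through $\tau^{\otimes m}$ and using that a cup joining the $i$th and $j$th blocks produces exactly a factor $\Psi^{(i,j)}$ (as computed in the proof of Lemma \ref{PsiX} via the maps $D_0, D_0'$, etc.) shows directly that the invariants are generated by the $\Psi^{(i,j)}$. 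I expect the diagrammatic route to be the cleaner one to write up, with the dimension count serving as a consistency check.
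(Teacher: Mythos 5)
Your second (diagrammatic) route is essentially the proof the paper gives: one identifies $B_{\bf d}=(\cT_m)_{\bf d}^{\Uq}$ with $\Hom_{\Uq(\fo_{2n})}(\cK, V^{\otimes|{\bf d}|})$, invokes Theorem \ref{FT-O} to realise every such invariant as a linear combination of $(0,|{\bf d}|)$-tangle diagrams, and then shows that pushing these diagrams through $\tau^{\otimes m}$ expresses them in the $\Psi^{(i,j)}$. You correctly flag that the cup computation in the proof of Lemma \ref{PsiX} is the local engine. What you elide, though, is the real technical content: a general tangle diagram has crossings and nested cups, and the paper does genuine work to (i) use the BMW skein relation to eliminate loops, self-crossings, and crossings between strings sharing a band, and (ii) factor the result into products of cup diagrams, handling nested cups spanning the same two bands via the fourth relation of \eqref{XPsi} and induction on the number of strings. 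Your sentence ``a cup joining the $i$th and $j$th blocks produces exactly a factor $\Psi^{(i,j)}$'' is only literally true for isolated cups; the rest of the argument is where the difficulty lives.

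Your first (counting/degeneration) route contains a genuine error. You assert that $\dim(\cA_m^{\Uq})_{\bf d}$ ``equals the number of monomials of multidegree ${\bf d}$ in the commuting classical generators''; this is false. The classical FFT only gives that the monomials in $\langle x_i,x_j\rangle$ \emph{span} the invariants, i.e.\ the monomial count is an upper bound for the classical dimension, and strictly greater whenever $m>2n$ because of the SFT relations. Consequently the ``linear independence of the $\Psi$-monomials'' you set out to prove is \emph{false} for $m>2n$ (they inherit determinantal relations at $q=1$ and hence cannot be independent over $\cK$), and the logic of the dimension comparison collapses. The salvageable version runs in the opposite direction: one needs that specialisation $q\to 1$ carries the $\cK$-span $B_{\bf d}$ of $\Psi$-monomials \emph{onto} the classical invariant space $S(\oplus^m V)^{O_{2n}}_{\bf d}$ and that $\dim_{\cK} B_{\bf d}\ge\dim_\C\bigl(B_{\bf d}\bigr)\big|_{q=1}$, whence $\dim B_{\bf d}\ge\dim (\cA_m^{\Uq})_{\bf d}$ and equality follows. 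But that requires choosing an integral form of $\cA_m$ over (a localisation of) $\Z[q,q^{-1}]$, verifying it is free in each multidegree, and checking $B_{\bf d}$ behaves well under base change — work that you neither perform nor acknowledge, and which is precisely why the paper avoids this route in favour of the diagrammatic one.
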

\begin{proof} Let $A=\cA_m^{\Uq(\fo_{2n})}$ and
$B= \cT_m^{\Uq(\fo_{2n})}$, where $\cT_m^{\Uq(\fo_{2n})}$ is the
subalgebra of $\Uq(\fo_{2n})$-invariants of $\cT_m$. Since both
$\cT_m$ and $\cA_m$ are semi-simple as $\Uq$-modules, the surjection
$\tau^{\otimes m}$ descends to $B$, giving rise to a surjective
algebra homomorphism $B\longrightarrow A$.

Let ${\bf d}=(d_1, d_2, \dots, d_m)$, and set
\[
(\cT_m)_{\bf d} = T(V)_{d_1}\otimes \cdots \otimes T(V)_{d_m},
\quad (\cA_m)_{\bf d} = S_q(V)_{d_1}\otimes \cdots \otimes S_q(V)_{d_m}.
\]
Then the homogeneous component $A_{\bf d}$ of $A$ of multi-degree ${\bf d}$ is
\[
A_{\bf d} = ((\cA_m)_{\bf d})^{\Uq} = \tau^{\otimes m}
\left(((\cT_m)_{\bf d})^{\Uq}\right)=\tau^{\otimes m}(B_{\bf d}),
\]
where $B_{\bf d}$ is the homogeneous subspace of $B$  of
multi-degree ${\bf d}$. By considering the weights of $(\cT_m)_{\bf
d}$, one sees that $B_{\bf d}=0$ if $|d|=\sum d_i$ is odd.
For $|d|=2k$, we consider the $(0, 2k)$-tangle diagrams. The image
of each diagram under the Reshetikhin-Turaev functor is an element
of $\Hom_{\Uq(\fo_{2n})}(\C(q), V^{\otimes 2k})$.  Since
(cf. \S \ref{subsec:canon})
\[
\Hom_{\Uq(\fo_{2n})}(\C(q), V^{\otimes 2k}) \cong
\End_{\Uq(\fo_{2n})}(V^{\otimes k}) \quad \text{as vector space},
\]
the images of the $(0, 2k)$-tangle diagrams under the
Reshetikhin-Turaev functor span $\Hom_{\Uq(\fo_{2n})}(\C(q),
V^{\otimes 2k})$ by Theorem \ref{FT-O}.

Note that there is a canonical isomorphism
$\Hom_{\Uq(\fo_{2n})}(\C(q), V^{\otimes 2k})
\stackrel{\sim}{\longrightarrow} B_{\bf d}$,
given by $\phi\mapsto \phi(1)$.
Thus we shall regard elements of $B$ as linear combinations
of $(0, 2r)$ tangle diagrams for $r\ge 0$. Two tangle diagrams in
$B$ may be multiplied in a natural way with the multiplication being
that inherited from $\cT_m$. We shall use this algebraic structure
of $B$  presently.

\begin{remark}
It is important to note that this multiplication is
different from the standard BMW-type multiplication obtained by composing
diagrams, which has also arisen in this work. It is the quantum analogue
of the commutative (pointwise)
multiplication of coordinate functions in the classical case (i.e. setting three
of the introduction), and changes degree. In terms of
diagrams, it corresponds to setting diagrams side by side, rather
than concatenating them.
\end{remark}

Because the $\check{R}$-matrix in the natural $\Uq$-representation
satisfies Kaufman's skein relation, all $(0, 2k)$-tangle diagrams
can be expressed as linear combinations of diagrams with the
property that any two strings cross each other at most once with an
over (positive) crossing. A loop in such a tangle diagram can be replaced by a
scalar factor equal to the quantum dimension $\dim_q V$ of $V$. Also
if a string in such a tangle diagram has self-crossings,  then the
tangle diagram is proportional to that obtained by replacing this
string by one without any self-crossing.

Given a $(0, 2k)$-tangle diagram in $B_{\bf d}$,  we shall divide the
interval $[1,2k]$ into bands of length $d_1,d_2,\dots$, with
end points $1$ to $d_1$ belonging to the first band, end points
$(d_1+1)$ to $(d_1+d_2)$ to the second band,
etc.  If two strings cross each other and each has at least one end
point belonging to the same band, we may replace the crossing (over
crossing) by
$q\hspace{.1cm} \text{\begin{picture}(15, 10) \put(0,
0){\line(0, 1){10}}\put(6, 0){\line(0, 1){10}}
\end{picture}} -\  q^{1-n}\frac{q^n - q^{-n}}{\dim_q V} \text{\hspace{.2cm}\begin{picture}(10,10)
\qbezier(-5, 10)(0,3)(5,10) \qbezier(-5, 0)(0,7)(5,0)
\end{picture}}$.
Then under the map $\tau^{\otimes m}$, the resulting linear
combination of tangle diagrams yields the same element of $A_{\bf
d}$ as the original diagram. Therefore, for the purpose of
studying $A$, we may confine attention to tangle diagrams in $B$ satisfying
the further condition that strings with end points belonging to the
same band do not cross one another.

To summarise, $A_{\bf d}$ is spanned by the elements
$\tau^{\otimes m}(\phi)$, where $\phi$ is a $(0, 2k)$-tangle diagram
in $B_{\bf d}$ which satisfies the following conditions:
\begin{quote}
(a) there are no loops; \\
(b) two strings cross each other at most once with an over crossing;\\
(c) no string crosses itself;\\
(d) strings sharing a band of end points do not cross one another.
\end{quote}
As indicated above, this statement is based on the canonical
identification $B_{\bf d}\stackrel{\sim}{\longrightarrow}
\Hom_{\Uq(\fo_{2n})}(\C(q),
V^{\otimes 2k})$.

If $\zeta$ is a tangle diagram satisfying the conditions (a) to (d),
it is evident that there exist tangle diagrams $\zeta_1, \dots,
\zeta_t$ such that $\zeta=\zeta_1\cdots \zeta_t$, where $\zeta_i$
has neither crossings nor loops, e.g., is of the form
\begin{picture}(20,10)
\qbezier(0, 10)(10,-10)(20,10)
\qbezier(3, 10)(6,0)(7,10)
\qbezier(9, 10)(11,-5)(17,10)
\qbezier(11, 10)(12,0)(15,10)
\end{picture}.
Consider, by way of example, a $(0, 4)$
tangle diagram with two strings such that the end points of one
string are in bands 2 and 4, while the end points
of the other are in bands 1 and 3. In this
case, the tangle diagram is obviously the product of two $(0,
2)$-tangle diagrams.

Recall that $\Psi^{(i, j)}$ commutes with all $X_{k a}$ for $k<i<j$
or  $i<j<k$, and all $\Psi^{(i, i)}$ are central in $\cA_m$. Thus
$\tau^{\otimes m}(\zeta_i)$ can be expressed as $\tau^{\otimes
m}(\zeta_{i, 1})\tau^{\otimes m}(\zeta_{i, 2})\cdots \tau^{\otimes
m}(\zeta_{i, s})$, where the tangle diagram $\zeta_{i, j}$ has
neither crossings nor loops, and satisfies the condition that there
are two distinct bands such that every string has an end point in
each band.

Consider such a $\zeta_{i, j}$, the end points of whose strings
belong to, say, bands $k$ and $l$. If there are  only two
strings in the diagram, that is, the diagram is of the form
\begin{picture}(20,10)
\qbezier(0, 10)(8,-10)(16,10)
\qbezier(3, 10)(8,-5)(13,10)
\end{picture},
then $\tau^{\otimes m} (\zeta_{i, j})=\sum_{a, a} C_{a b} X_{k a}
\Psi^{(k, l)} X_{l b}$,  where $\Psi^{(k, l)}=
\sum_{c, d}   C_{c d} X_{k c} X_{l d}$. By the fourth relation of
\eqref{XPsi},
\[
\tau^{\otimes m} (\zeta_{i, j})= q \left(\Psi^{(k, l)}\right)^2
-\frac{(q-q^{-1})q^{n-1}}{q^{n-1}+q^{1-n}}\Psi^{(k, k)}\Psi^{(l, l)}.
\]
By induction on the number of strings in $\zeta_{i, j}$,
$\tau^{\otimes m}(\zeta_{i, j})$ can be expressed as a
linear combination of monomials in $\Psi^{(k, l)}$, $\Psi^{(k, k)}$
and $\Psi^{(l, l)}$. This completes the proof of the theorem.
\end{proof}

\section{Invariant theory of the quantum odd orthogonal groups $\Uq(\so_{2n+1})$}\label{B}

In this section we shall consider the invariant theory of
$\Uq(\mathfrak{so}_{2n+1})$. This case may be treated in almost identical
fashion to the even orthogonal case, and we shall therefore present only the
main lines of the argument, referring for details to the last section.

It will be convenient to work with a slightly modified version
of the quantum group. If $e'_i$, $f'_i$, $k'^{\pm 1}_i$ ($1\le i\le
n$) are the generators of $\Uq(\so_{2n+1})$ in the standard
Drinfeld-Jimbo presentation, where $e'_n$ and $f'_n$ are associated
with the short simple root, we write $v=q^{\frac{1}{2}}$
and let
\begin{eqnarray*}
\begin{aligned}
&e_n = \frac{e'_n}{v+v^{-1}},  \\
& e_i = e'_i,   \quad 1\le i\le n-1;  \\
&k_j=k'_j, \ \  f_j=f'_j, \quad  1\le j\le n.
\end{aligned}
\end{eqnarray*}
Then the relations among the generators remain as in the standard presentation,
except that now
\[
e_i f_j - f_j e_i = \delta_{i j} \frac{k_i - k^{-1}_i}{q-q^{-1}},
\quad \forall i.
\]
This modified quantum group is defined over $\C(q)$, and many
explicit formulae arising in its representation theory take a
correspondingly simpler form.

In this section, $\Uq$ will denote this modified algebra.

\subsection{Braided symmetric algebra of the natural $\Uq(\mathfrak{so}_{2n+1})$-module}

Denote by $V$ the natural module for $\Uq$. As above, denote by
$\{v_a \mid a\in [1, n]\cup\{0\}\cup[-n, -1]\}$ a basis
of weight vectors, where $\wt v_a=\ep_a$. Let
$E_{a b}$ be the matrix units in $\End(V)$ relative to this basis, and
denote by $\pi: \Uq\longrightarrow\End(V)$ the irreducible
representation of $\Uq$. Then relative to the above basis of $V$, $\pi$ may be
realised as follows:
\begin{eqnarray*}
\begin{aligned}
&\pi(e_i) = E_{i, i+1} - E_{-(i+1), -i},  \qquad  \pi(f_i) =E_{i+1,
i} - E_{-i, -(i+1)},  \\
&\pi(e_n) = E_{n 0} - E_{0, -n}, \quad  \pi(f_n)=E_{0 n} -
E_{-n, 0}, \\
&\pi(k_i) = 1 + (q-1)(E_{i i} + E_{-(i+1), -(i+1)}) +
(q^{-1}-1)(E_{i+1, i+1} + E_{-i, -i}),  \\
&\pi(k_n) = 1 + (q-1)E_{n n}  + (q^{-1}-1)E_{-n, -n}.
\end{aligned}
\end{eqnarray*}

The tensor product $V\otimes V$ decomposes as $L_{2\epsilon_1}\oplus
L_{\epsilon_1+\epsilon_2}\oplus L_0$;  we have the following bases
for the irreducible submodules.

\noindent (1) Basis for $L_0$ (cf. \eqref{invvxv}):
\[
\sum_{i=1}^n\left(q^{n-i}v_i\otimes v_{-i}
+ q^{i-n-1} v_{-i}\otimes v_i\right) + v_0\otimes v_0.
\]

\noindent (2) Basis for $L_{2\epsilon_1}$:
\[
\begin{aligned}
&v_{i}\otimes v_{i}, \quad v_{-i}\otimes v_{-i}, \quad 1\le i\le
n+1,
\\
&v_i \otimes v_j +q  v_j\otimes v_i, \quad v_{-j} \otimes v_{-i}
+q v_{-i}\otimes v_{-j}, \quad i<j\le n+1, \\
&v_i \otimes v_{-j} +q  v_{-j}\otimes v_i, \quad  i\ne j, \quad i, j\ne 0,\\
&(q+1)v_0\otimes v_0 - q^{-1} v_n \otimes v_{-n} - q  v_{-n}\otimes
v_n, \\
&q^{-1} v_i \otimes v_{-i} +q  v_{-i}\otimes v_i - (v_{i+1}\otimes
v_{-(i+1)} +v_{-(i+1)}\otimes v_{i+1}), \quad  i\le n-1,
\end{aligned}
\]
where $v_{n+1}$ is to be interpreted as $v_0$.

\noindent (3) Basis for $L_{\epsilon_1+\epsilon_2}$:
\[
\begin{aligned}
&v_i \otimes v_j - q^{-1} v_j\otimes v_i, \quad
v_{-j} \otimes v_{-i} - q^{-1} v_{-i}\otimes v_{-j}, \quad i<j\le n+1, \\
&v_i \otimes v_{-j} - q^{-1} v_{-j}\otimes v_i, \quad  i\ne j, \ i,j\ne 0, \\
&(q-11)v_0\otimes v_0 - v_n \otimes v_{-n} + v_{-n}\otimes
v_n, \\
&v_i \otimes v_{-i} -  v_{-i}\otimes v_i - (q v_{i+1}\otimes
v_{-i-1} -q^{-1}v_{-i-1}\otimes v_{i+1}), \quad  i\le n-1.
\end{aligned}
\]

Let $P_s$, $P_a$ and $P_0$ respectively be the idempotent
projection of $V\otimes V$
onto the irreducible submodule with highest weight $2\epsilon_1$,
$\epsilon_1+\epsilon_2$ and $0$.  Then the $R$-matrix
of $\Uq$ acting on $V\otimes V$ is given by
\begin{eqnarray}\label{checkR-odd}
\check{R}= q P_s - q^{-1} P_a + q^{-2n}P_0.
\end{eqnarray}

We also note that the quantum dimension of $V$ is given by
\[ \dim_q V = (q^{1-2n}+1)\frac{q^{2n}-q^{-1}}{q-q^{-1}}.\]

The `alternating' subspace' of the degree
$2$ homogeneous subspace $T(V)_2$ of the tensor algebra
$T(V)$ of $V$ is in this case the submodule
$L_{\epsilon_1+\epsilon_2}= P_a(T(V)_2)$. Let $\cI_q$ be the
two-sided ideal of $T(V)$ generated by $L_{\epsilon_1+\epsilon_2}$.
Then the braided symmetric algebra of $V$ is defined by
$S_q(V)=T(V)/\cI_q$ (cf. \S \ref{R2}).

\begin{lemma}
The braided symmetric algebra $S_q(V)$ is generated by $v_i, v_{-i}$
($i\in[1, n]$) and $v_{n+1}:=v_0$ subject to the following relations
\begin{eqnarray}\label{vv-relations-odd}
\begin{aligned}
&v_i  v_j - q^{-1} v_j v_i=0, \quad i<j\le n+1,\\
&v_{-j}  v_{-i} - q^{-1} v_{-i} v_{-j}=0, \quad i<j\le n+1, \\
&v_i  v_{-j} - q^{-1} v_{-j} v_i=0, \quad  i\ne j, \ i, j\ne 0\\
&(q-1)v_0 v_0 - v_n v_{-n} + v_{-n}
v_n=0, \\
&v_i  v_{-i} -  v_{-i} v_i - (q v_{i+1} v_{-i-1} -q^{-1}v_{-i-1}
v_{i+1})=0, \quad  i\le n-1.
\end{aligned}
\end{eqnarray}
\end{lemma}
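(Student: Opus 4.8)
The plan is to mirror exactly the argument used in the even orthogonal case (the lemma following \eqref{basis-La}), where the relations for $S_q(V)$ were simply read off from an explicit basis of the ``alternating'' summand $L_{\epsilon_1+\epsilon_2}\subset V\otimes V$. Since $S_q(V)=T(V)/\cI_q$ with $\cI_q$ the two-sided ideal generated by $P_a(T(V)_2)=L_{\epsilon_1+\epsilon_2}$, every element of the basis (3) for $L_{\epsilon_1+\epsilon_2}$ displayed just above gives a defining relation in $S_q(V)$ once we pass to images of the $v_a$. So the first step is to list, term by term, what the basis vectors of $L_{\epsilon_1+\epsilon_2}$ become: the vectors $v_i\otimes v_j-q^{-1}v_j\otimes v_i$ (for $i<j\le n+1$, with the convention $v_{n+1}=v_0$) yield the first relation; $v_{-j}\otimes v_{-i}-q^{-1}v_{-i}\otimes v_{-j}$ yields the second; $v_i\otimes v_{-j}-q^{-1}v_{-j}\otimes v_i$ (for $i\ne j$, $i,j\ne 0$) yields the third; the vector $(q-1)v_0\otimes v_0-v_n\otimes v_{-n}+v_{-n}\otimes v_n$ yields the fourth; and $v_i\otimes v_{-i}-v_{-i}\otimes v_i-(qv_{i+1}\otimes v_{-i-1}-q^{-1}v_{-i-1}\otimes v_{i+1})$ for $i\le n-1$ yields the fifth. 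This shows the listed relations all hold in $S_q(V)$ and the $v_a$ generate (since $T(V)_1=V$ maps in and generates $T(V)$).

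The second, and only genuinely substantive, step is to show these relations are a \emph{complete} presentation, i.e.\ that the ideal they generate in the free associative algebra on $v_1,\dots,v_n,v_0,v_{-1},\dots,v_{-n}$ is all of $\cI_q$. The clean way, exactly as in the even case, is to appeal to flatness: by the results of \cite{Zw} (or by direct computation of $\dim S_q(V)_3$ and invoking Drinfeld's criterion recalled in \S\ref{bsa}), the natural module $V$ of $\Uq(\so_{2n+1})$ is flat, so $\dim S_q(V)_k=\dim S(V)_k=\binom{2n+k}{k}$ for all $k$. One then observes that the given relations, viewed as a rewriting system, allow any monomial in the $v_a$ to be rewritten as a $\cK$-linear combination of ordered monomials $v_{-n}^{a_{-n}}\cdots v_{-1}^{a_{-1}}v_0^{a_0}v_1^{a_1}\cdots v_n^{a_n}$ (using the first three relations to sort indices, the fourth to eliminate high powers of $v_0$ in favour of $v_nv_{-n}$ and $v_{-n}v_n$ when needed — actually it expresses $v_0^2$ in terms of $v_nv_{-n},v_{-n}v_n$ so one must be slightly careful about which normal form to choose, perhaps keeping $v_0^{\le 1}$ and trading $v_0^2$ — and the fifth to order the $v_iv_{-i}$ pairs). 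Hence the quotient of the free algebra by the listed relations has dimension \emph{at most} $\binom{2n+k}{k}$ in degree $k$; since it surjects onto $S_q(V)$ which has exactly that dimension, the surjection is an isomorphism and the relations are complete.

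I expect the main obstacle to be purely bookkeeping: verifying that the five families of relations genuinely suffice to reduce every word to the chosen normal form, in particular handling the interaction of the fourth relation (which is quadratic in $v_0$ but also involves $v_{\pm n}$) with the others, and checking there are no overlap ambiguities that produce new relations — equivalently, confirming the normal-form monomials are linearly independent, which is where flatness is doing the real work. As in the even orthogonal case, rather than running a full diamond-lemma argument one can simply cite \cite{Zw} for flatness and the explicit basis of ordered monomials of $S_q(V)$, so that the proof reduces to the dimension-count comparison above; the term-by-term identification of basis vectors of $L_{\epsilon_1+\epsilon_2}$ with the stated relations is then routine. (One should also note in passing that, just as for $\so_{2n}$, the fourth and fifth relations can be repackaged via quadratic elements $\phi^{(\pm)}_i$ and a central invariant $\Phi$, but that is not needed for the statement of the lemma itself.)
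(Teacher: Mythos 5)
Your approach mirrors the paper's exactly: the relations are read off from the explicit basis of the alternating summand $L_{\epsilon_1+\epsilon_2}\subset V\otimes V$ (which generates $\cI_q$), and completeness is obtained by appealing to the flatness of $V$, which the paper handles by citing \cite{BZ} in the theorem that immediately follows the lemma. One small correction to your normal-form aside: the ordered-monomial basis of $S_q(V)$ recorded in that theorem allows \emph{arbitrary} powers of $v_0$; the fourth relation is used to rewrite $v_{-n}v_n$ in terms of $v_nv_{-n}$ and $v_0^2$ (compare the displayed identity $v_{-i}v_i = v_iv_{-i}-(q-q^{-1})q^{i+1-n}\phi^{(+)}_{i+1}-(q-1)q^{i-n}(v_0)^2$ following the lemma), not to cap $v_0$-powers. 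Choosing a normal form with $v_0^{\le 1}$ as you tentatively suggest would yield strictly fewer than $\binom{2n+k}{k}$ monomials in degree $k$ and break the dimension comparison. Since you ultimately defer the spanning argument to \cite{Zw}/\cite{BZ} rather than carrying out that reduction yourself, the slip is not load-bearing, but the stated normal form should be corrected.
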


The defining relations are obtained using the basis of
$L_{\epsilon_1+\epsilon_2}$ given above. From the last two
relations we obtain
\begin{eqnarray}
\begin{aligned}
v_{-i} v_i &=  v_i v_{-i} -(q-q^{-1})q^{i+1-n}\phi^{(+)}_{i+1} -
(q-1) q^{i-n}(v_0)^2,  \quad \forall i\le n,
\end{aligned}
\end{eqnarray}
where $ \phi^{(+)}_i = \sum_{k=i}^n q^{n-k}v_k v_{-k}$. From this one deduces that
\[
\sum_{i=1}^n q^{i-n-1} v_{-i} v_i = q^{1-2n}\sum_{i=1}^n q^{n-i}v_i
v_{-i} +\frac{(q-1)(q^{-2n}-1)}{q-q^{-1}}(v_0)^2.
\]
Let  $\Psi=: \sum_{i=1}^n\left(q^{n-i}v_i  v_{-i} + q^{i-n-1} v_{-i}
v_i\right) + (v_0)^2$, we have
\[
 \Psi = (1+q^{1-2n})\left(\sum_{i=1}^n q^{n-j}v_i v_{-i}
+ \frac{1-q^{-1}}{q-q^{-1}} (v_0)^2 \right).
\]

In analogy with the case of the even dimensional quantum orthogonal
group, we have the following result.
\begin{theorem}
\begin{enumerate}
\item The ordered monomials in $v_i$, $v_{-i}$ ($i\in[1, n]$) and $v_0$ form a basis of $S_q(V)$.
\item The element $\Psi$ is invariant under $\Uq(\so_{2n+1})$ and belongs to the centre of $S_q(V)$.
\item The subalgebra $S_q(V)^{\Uq(\so_{2n+1})}$ of invariants is generated by $\Psi$,
and thus is isomorphic to a polynomial algebra in one variable.
\end{enumerate}
\end{theorem}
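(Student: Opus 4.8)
The plan is to mirror, as closely as possible, the treatment of the even orthogonal case in Section~\ref{D}, since the authors have explicitly said this case ``may be treated in almost identical fashion.'' Parts (1), (2) and (3) are the odd-dimensional analogues of the two unnamed theorems and Proposition~\ref{invariants-2} respectively, so I would prove them in that order, reusing the machinery already developed.

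\textbf{Part (1): the PBW-type basis.} First I would invoke the flatness of the braided symmetric algebra. By the general discussion in Section~\ref{bsa}, it suffices to check that $\dim S_q(V)_3 = \dim S(V)_3$ (Drinfeld's criterion \cite{D}), or alternatively to cite the classification of \cite{Zw}, which guarantees that the natural module of $\Uq(\so_{2n+1})$ is flat. Given flatness, the relations \eqref{vv-relations-odd} --- which express $v_{-i}v_i$ and $v_0^2$ (equivalently $v_iv_{-i}$) in terms of ordered products of higher-indexed generators --- constitute a confluent rewriting system, so the ordered monomials in $v_i, v_{-i}$ ($i\in[1,n]$) and $v_0$ span $S_q(V)$; flatness then forces them to be linearly independent by a dimension count in each graded degree, exactly as in the $\so_{2n}$ case.

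\textbf{Part (2): invariance and centrality of $\Psi$.} Invariance is immediate: $\Psi$ is the image in $S_q(V)$ of the displayed basis vector of $L_0 = (V\otimes V)^{\Uq}$ under the natural map $T(V)\to S_q(V)$ (which is injective on $V\otimes V$ modulo the ideal generated by $L_{\epsilon_1+\epsilon_2}$), so it is $\Uq$-invariant, and since $\Uq(\fo_{2n+1})$ differs from $\Uq(\so_{2n+1})$ only by the central $\sigma$, no extra check is needed. For centrality I would imitate the proof of Proposition~\ref{invariants-1}(2): using the displayed formula $\Psi = (1+q^{1-2n})\bigl(\phi^{(+)}_1 + \tfrac{1-q^{-1}}{q-q^{-1}}v_0^2\bigr)$, it suffices to show that a single explicit quadratic element, say $\tilde\phi := \phi^{(+)}_1 + c\,v_0^2$ with $c = \tfrac{1-q^{-1}}{q-q^{-1}}$, commutes with each generator. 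One computes $v_j\tilde\phi$, $v_{-j}\tilde\phi$ and $v_0\tilde\phi$ directly, moving $v_j$ (etc.) past each factor using \eqref{vv-relations-odd}, and checks that the correction terms telescope; the only new feature compared with $\so_{2n}$ is the presence of the $v_0^2$ summand, and the constant $c$ is precisely chosen so that the $v_0$-contributions cancel. This is the step most likely to require genuine care --- it is a bounded but fiddly calculation, and the choice of normalisation of $v_0^2$ is what makes it work.

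\textbf{Part (3): the invariant subalgebra is $\cK[\Psi]$.} Here I would reproduce the argument of Proposition~\ref{invariants-2} verbatim with the obvious substitutions. Let $S'_q(V)_k := \Uq\cdot(v_1)^k \subseteq S_q(V)_k$; this is the irreducible module $L_{k\epsilon_1}$, of dimension $\binom{2n+k}{k} - \binom{2n+k-2}{k-2}$. By the basis in Part~(1), $\dim S_q(V)_k = \binom{2n+k}{k} = \dim S'_q(V)_k + \dim S_q(V)_{k-2}$, and a weight argument shows $S'_q(V)_k \cap \Psi\, S_q(V)_{k-2} = 0$, so $S_q(V)_k = S'_q(V)_k \oplus \Psi\, S_q(V)_{k-2}$ as $\Uq$-modules (using that $\Psi$ is invariant, hence multiplication by $\Psi$ is a $\Uq$-map). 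Taking invariants and noting $(S'_q(V)_k)^{\Uq} = 0$ for $k>0$ (as $L_{k\epsilon_1}$ is a nontrivial irreducible), we get $(S_q(V)_k)^{\Uq} = \Psi\,(S_q(V)_{k-2})^{\Uq}$, and induction gives $(S_q(V)_k)^{\Uq} = \cK\,\Psi^{k/2}$ for $k$ even and $0$ for $k$ odd. Since by Part~(1) $\Psi$ is not nilpotent and generates no relations (its powers have strictly increasing degree), the map $\cK[t]\to S_q(V)^{\Uq}$, $t\mapsto\Psi$, is an isomorphism onto the invariant subalgebra. I do not anticipate any obstacle in Part~(3); the real content is concentrated in the centrality computation of Part~(2).
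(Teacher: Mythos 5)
Your proposal is correct and takes essentially the same approach as the paper, which simply remarks that ``the proof is essentially the same as that in the case of the even dimensional quantum orthogonal group'' and cites \cite{BZ} for part (1); you have filled in exactly the steps that the paper's terse proof leaves implicit (flatness for (1), a direct centrality computation mirroring Proposition \ref{invariants-1}(2) for (2), and the module decomposition argument of Proposition \ref{invariants-2} for (3)).
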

\begin{proof}
The proof is essentially the same as that in the case of the even
dimensional quantum orthogonal group. Note that part (1) is known from \cite{BZ}.
\end{proof}
This permits the definition of the even dimensional quantum sphere,
in analogy with Lemma \ref{odd-dim-sphere}.
\begin{corollary}\label{even-dim-sphere}
Let $\langle \Psi-1\rangle$ be the two-sided ideal of $S_q(V)$
generated by $\Psi-1$, and define the quantum sphere
$\bbS_q^{2n}=S_q(V)/\langle \Psi-1\rangle$. Then $\bbS_q^{2n}$ is a
module algebra for $\Uq(\so_{2n+1})$ and $\bbS_q^{2n} =
\bigoplus_{k=0}^\infty L_{k\epsilon_1}.$
\end{corollary}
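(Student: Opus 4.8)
The plan is to mimic, essentially verbatim, the argument that proved Lemma \ref{odd-dim-sphere} in the even case, adapting it to the present setting. First I would recall from the preceding theorem that $\Psi$ is a central element of $S_q(V)$, so that the left ideal $\langle \Psi-1\rangle$ coincides with the right ideal and is genuinely two-sided; this makes the quotient $\bbS_q^{2n}=S_q(V)/\langle\Psi-1\rangle$ an associative algebra. Since $\Psi$ is $\Uq(\so_{2n+1})$-invariant, the ideal $\langle\Psi-1\rangle$ is a $\Uq$-submodule (indeed a $\Uq$-subalgebra) of $S_q(V)$, so by Proposition \ref{prop:hom-tensor} the quotient $\bbS_q^{2n}$ inherits a $\Uq(\so_{2n+1})$-module algebra structure from $S_q(V)$. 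That disposes of the first assertion.

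For the module decomposition, the key input is the decomposition of $S_q(V)$ itself as a $\Uq$-module. As in the proof of Proposition \ref{invariants-2}, I would argue that each graded piece decomposes as $S_q(V)_k = S'_q(V)_k \oplus \Psi\, S_q(V)_{k-2}$, where $S'_q(V)_k = \Uq\cdot(v_1)^k \cong L_{k\epsilon_1}$: one checks the dimension count $\dim S_q(V)_k = \dim S'_q(V)_k + \dim S_q(V)_{k-2}$ using the flatness of $S_q(V)$ (part (1) of the Theorem), and verifies that $S'_q(V)_k \cap \Psi S_q(V)_{k-2} = 0$ by a weight argument, noting that $(v_1)^k$ has weight $k\epsilon_1$ which does not occur in $\Psi S_q(V)_{k-2}$. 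Iterating, $S_q(V) \cong \bigoplus_{k\geq 0}\bigoplus_{j\geq 0} \Psi^j L_{k\epsilon_1}$ as $\Uq$-modules. Passing to the quotient by $\langle\Psi-1\rangle$ collapses the $\Psi$-tower, identifying all the copies $\Psi^j L_{k\epsilon_1}$ for varying $j$, and leaving precisely $\bigoplus_{k=0}^\infty L_{k\epsilon_1}$; since the $L_{k\epsilon_1}$ are pairwise non-isomorphic irreducibles, no further collapsing occurs and the sum is direct.

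To make the last step rigorous I would observe that multiplication by $\Psi$ is injective on $S_q(V)$ (it is a nonzero element of a domain — the ordered monomials form a basis, so $S_q(V)$ has no zero divisors against the central element $\Psi$), hence $\Psi^j L_{k\epsilon_1} \to S_q(V)$ is injective for each $j$, and the graded structure shows the canonical map $\bigoplus_{j\geq 0}\Psi^j S_q(V) \to S_q(V)$ is a $\Uq$-module isomorphism onto its image, which is all of $S_q(V)$. The quotient map sends $\Psi^j L_{k\epsilon_1}$ isomorphically onto a fixed copy of $L_{k\epsilon_1}$ in $\bbS_q^{2n}$, and distinct $k$ give distinct isotypic components, so $\bbS_q^{2n} = \bigoplus_{k=0}^\infty L_{k\epsilon_1}$ as claimed.

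The main obstacle I anticipate is the weight-space bookkeeping needed to establish $S'_q(V)_k \cap \Psi S_q(V)_{k-2} = 0$ and the dimension identity $\dim S_q(V)_k = \dim S(V)_k = \binom{2n+1+k-1}{k}$ together with $\dim S'_q(V)_k = \dim L_{k\epsilon_1}$ for $\so_{2n+1}$; this is routine but must be done with the correct $\so_{2n+1}$ dimension formula rather than the $\so_{2n}$ one. Everything else is a direct transcription of the even-orthogonal argument, and since the preceding Theorem already asserts flatness and centrality of $\Psi$, the proof is short. Indeed, the excerpt's own remark that the result is ``analogous to Lemma \ref{odd-dim-sphere}'' signals that the authors expect exactly this transcription.
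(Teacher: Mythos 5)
Your proposal is correct and follows the same route the paper takes: Corollary~\ref{even-dim-sphere} is stated in the paper without a separate proof, being flagged as the odd-rank analogue of Lemma~\ref{odd-dim-sphere}, and the latter is proved exactly by the decomposition $S_q(V)_k = S'_q(V)_k \oplus \Phi\,S_q(V)_{k-2}$ from Proposition~\ref{invariants-2}, which is the argument you transcribe with $\Psi$ in place of $\Phi$. The only small blemish is calling the ideal $\langle\Psi-1\rangle$ a ``$\Uq$-subalgebra''; it is a $\Uq$-stable two-sided ideal, which is what is actually needed for the quotient to inherit the module-algebra structure.
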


\subsection{A $\Uq(\so_{2n+1})$-module algebra and the FFT
of invariant theory}

The quantum analogue of the coordinate ring of $\oplus^m V$
is the associative algebra $\cA_m=(S_q(V)^{\otimes m},
\mu_S)$ (see \eqref{Am}), constructed by iterating the procedure of
Theorem \ref{modulealgebra}. It will be convenient to
relabel the basis elements of $V$ as follows.
Write $v_{n+1}$ for $v_0$, and
$v_{2n+2-t}$ for $v_{-t}$ for all $t\in [1, n]$.
As in \S\ref{subsec:qcoordeven}, define elements $X_{ia}\in\cA_m$
for $i\in[1, m]$ and $a\in[1, 2n+1]$.
A presentation for $\cA_m$ may be obtained as in Lemma \ref{polynomials-even}.

\begin{lemma}\label{polynomials-odd}
The $\Uq(\so_{2n+1})$-module algebra $\cA_m$ is generated by $X_{i
a}$ with $i\in[1, m]$ and $a\in[1, 2n+1]$, subject to the following
relations:

(1) for each $i$, the elements $X_{i a}$ obey the relations
\eqref{vv-relations-odd} with $v_a$ repalced by $X_{i a}$;

(2) for $i<j$ and $a, b\in[1, 2n+1]$,
\begin{eqnarray}
\begin{aligned}
X_{j a} X_{i a} &=q  X_{i a} X_{j a}, \quad a\in[1, 2n+1]\\
X_{j b} X_{i a} &= X_{i a} X_{j b}+(q-q^{-1}) X_{i b} X_{j a}, \\
X_{j a} X_{i b} &=  X_{i b} X_{j a}, \quad a<b,\  a+b\not=2n+2, \\
X_{j, n+1}X_{i, n+1} &= X_{i, n+1}X_{j, n+1} - (q-q^{-1}) \psi^{(i,
j)}_n, \\
X_{j t} X_{i, 2n+2-t} &=  q  X_{i, 2n+2-t} X_{j t} - (q-q^{-1}) q^{n-t+1} \psi_t^{(i, j)}, \\
X_{j, 2n+2-t} X_{i t}  &=  q^{-1} X_{i t} X_{j, 2n+2-t}  -  (q-q^{-1}) X_{i, 2n+2-t} X_{j t}\\
&+(q-q^{-1})q^{t-n}\left(\bar\psi_t^{(i, j)}-\Psi^{(i, j)}\right),
\quad t\in[1, n],
\end{aligned}
\end{eqnarray}
where (for all $i, j\in[1, m]$)
\[
\begin{aligned}
\psi_t^{(i, j)} &=\sum_{s=1}^t q^{s-n-1} X_{i, 2n+2-s} X_{j s},\\
\bar\psi_t^{(i, j)} &=\sum_{s=1}^t q^{n-s} X_{i s} X_{j, 2n+2-s},\\
\Psi^{(i, j)}&= \psi_n^{(i, j)}+X_{i, n+1} X_{j, n+1} +
\bar\psi_n^{(i, j)}.
\end{aligned}
\]
\end{lemma}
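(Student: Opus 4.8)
The plan is to mimic the proof of Lemma~\ref{polynomials-even}, since the only structural novelty in the odd orthogonal case is the presence of the zero weight vector $v_0=v_{n+1}$ in the natural module $V$. For part~(1), observe that for each fixed $i$ the assignment $v_a\mapsto X_{ia}$ defines an embedding of $\Uq(\so_{2n+1})$-module algebras $S_q(V)\hookrightarrow\cA_m$, namely the inclusion of $S_q(V)$ as the $i^{\text{th}}$ tensor slot with identity elements in the other $m-1$ slots; this is immediate from the definition \eqref{Am} of $\cA_m$ by iterated twisted multiplication, because a factor multiplied with a string of identity elements is unaffected by the $R$-matrix twist. Hence the $X_{ia}$ $(a\in[1,2n+1])$ satisfy exactly the relations \eqref{vv-relations-odd} with $v_a$ replaced by $X_{ia}$.

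For part~(2), all the cross relations for $i<j$ are extracted from \eqref{Sm1}, namely $X_{ja}X_{ib}=\sum_{a',b'}\check R_{a'a,b'b}X_{ia'}X_{jb'}$, together with the explicit form \eqref{checkR-odd} of the $\check R$-matrix, $\check R=qP_s-q^{-1}P_a+q^{-2n}P_0$. Using the explicit bases listed above for $L_{2\epsilon_1}$, $L_{\epsilon_1+\epsilon_2}$ and $L_0$, one reads off the action of $\check R$ on each basis vector $v_a\otimes v_b$ and thereby rewrites \eqref{Sm1} as three families of identities among the $X$'s: a ``symmetric'' family attached to the eigenvalue $q$, a ``skew'' family attached to $-q^{-1}$, and a single ``trace'' relation attached to $q^{-2n}$, which reads $\Psi^{(j,i)}=q^{-2n}\Psi^{(i,j)}$ — precisely the analogues of \eqref{symm}, \eqref{skew} and \eqref{trace}. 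Adding and subtracting the symmetric and skew relations indexed by pairs $(a,b)$ with $a+b\neq 2n+2$ yields the first three displayed relations of part~(2); the remaining relations, involving $\psi_t^{(i,j)}$, $\bar\psi_t^{(i,j)}$ and $\Psi^{(i,j)}$, are obtained from the relations indexed by the pairs $v_t\otimes v_{2n+2-t}$ and $v_0\otimes v_0$ together with the trace relation, by the same telescoping manipulations used to deduce \eqref{Sm3} in the even case, now carrying along the extra $X_{i,n+1}X_{j,n+1}$ contributions coming from $v_0$.

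Finally, to see that these relations are a \emph{complete} set, note that they are homogeneous for the $\Z_+^m$-grading of $\cA_m$ and that the abstract algebra they present surjects onto $\cA_m$. By \cite{BZ,Zw} (equivalently, the theorem preceding this lemma), $S_q(V)$ is a flat deformation of the symmetric algebra $S(V)$, so there is a multidegree-preserving linear isomorphism $\cA_m\overset{\sim}{\lr}S(\oplus^m V)$; comparing graded dimensions shows the presented algebra has the same graded dimensions as $\cA_m$, forcing the surjection to be an isomorphism. The only real obstacle is computational rather than conceptual: the zero weight vector $v_0=v_{n+1}$ makes the $\check R$-action on the ``central'' vectors $v_0\otimes v_0$, $v_t\otimes v_{\pm t}$ and $v_0\otimes v_{\pm t}$ more intricate than in the $\so_{2n}$ case, and one must keep careful track of the resulting $(v_0)^2$-type terms when forming the combinations; but no idea beyond that of Lemma~\ref{polynomials-even} is required.
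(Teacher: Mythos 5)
Your proposal is correct and follows essentially the same route as the paper, which simply states that the relations "are obtained in the same way as in the case of $\Uq(\so_{2n})$" and thus refers the reader to the proof of Lemma~\ref{polynomials-even}: part (1) from the $i^{\text{th}}$-slot embedding of $S_q(V)$, part (2) by decomposing $\check R$ via the projectors $P_s$, $P_a$, $P_0$ and rewriting \eqref{Sm1}, and completeness from the flatness of $S_q(V)$ giving a graded linear isomorphism with $S(\oplus^m V)$. No gap.
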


These relations are obtained in the same way as in the case of
$\Uq(\so_{2n})$.  The elements $\Psi^{(i, j)}$ ($i\le j$) will play
a special role in the study of invariants below; we therefore
record the following relations.
\begin{lemma}\label{PsiX-odd}
\begin{enumerate}
\item
For all $i, j\in[1, m]$, $\Psi^{(i, j)}$ are $\Uq$-invariant and
satisfy
\begin{eqnarray}
\Psi^{(j, i)} = q^{-2n} \Psi^{(i, j)} \quad \text{if \ }\ i<j.
\end{eqnarray}
\item The elements $\Psi^{(i, j)}$ satisfy the following relations:
\begin{eqnarray}\label{X-Psi}
\begin{aligned}
&X_{k a} \Psi^{(i, i)}-  \Psi^{(i, i)}X_{k a}=0, \quad \text{for all \ } i, k, \\
&X_{k a} \Psi^{(i, j)}-  \Psi^{(i, j)}X_{k a}=0, \quad k> i, j \ \ \text{or} \ k<i, j, \\
&X_{k a} \Psi^{(i, j)}-  \Psi^{(i, j)}X_{k a} \\
&= (q-q^{-1}) \left(X_{i a} \Psi^{(k, j)}- \Psi^{(i, k)}X_{j a}\right), \quad  i<k<j,\\
&\Psi^{(i, j)} X_{i a}  - q^{-1} \Psi^{(i, j)} X_{i a}
= (q-q^{-1}) \varphi^{(i, i)} X_{j a}, \quad i<j, \\
&X_{j a} \Psi^{(i, j)} - q^{-1} \Psi^{(i, j)}X_{j a} =(q-q^{-1})
\varphi^{(j, j)}X_{i a}, \quad i<j.
\end{aligned}
\end{eqnarray}
\end{enumerate}
where
\[\varphi^{(i, i)}:=\frac{q^{2n}-q^{-1}}{q-q^{-1}}\Psi^{(i, i)}=
\bar\psi_n^{(i, i)} + \frac{1-q^{-1}}{q-q^{-1}} (X_{i, n+1})^2
\]
\end{lemma}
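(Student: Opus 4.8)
The plan is to mimic the proof of Lemma~\ref{PsiX} line by line, the only changes being the scalars, which now reflect the spectral decomposition \eqref{checkR-odd} of $\check R$ on $V\otimes V$ and the quantum dimension $\dim_q V=(q^{1-2n}+1)\frac{q^{2n}-q^{-1}}{q-q^{-1}}$ of the natural $\Uq(\so_{2n+1})$-module.

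For part~(1), I would first observe that the natural map $T(V)\to S_q(V)$ embeds $T(V)_1=V$ as a copy of the natural module inside $S_q(V)$; hence, for $i<j$, the element $\Psi^{(i,j)}$ is exactly the image in $\cA_m$ of the canonical $\Uq$-invariant of $L_0=(V\otimes V)^{\Uq}$ inserted in the $i$-th and $j$-th tensor factors of $S_q(V)^{\otimes m}$. Since that invariant is $\Uq$-fixed and the iterated map $\tau^{\otimes m}$ is a $\Uq$-algebra homomorphism, $\Psi^{(i,j)}$ is $\Uq$-invariant for all $i\le j$ (the case $i=j$ being the central invariant $\Psi$ of $S_q(V)$ sitting in one factor). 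The identity $\Psi^{(j,i)}=q^{-2n}\Psi^{(i,j)}$ for $i<j$ then follows by interchanging the two factors via the twisted multiplication \eqref{Sm1}: this introduces the scalar by which $\check R$ acts on $L_0$, namely $q^{-2n}$ by \eqref{checkR-odd}. Being a scalar multiple of an invariant, $\Psi^{(j,i)}$ is invariant too, which finishes part~(1).

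For part~(2), the $k=i$ instance of the first relation of \eqref{X-Psi} is just the centrality of $\Psi^{(i,i)}$ in $\cA_m$, inherited from the centrality of $\Psi$ in $S_q(V)$. For the other instances of the first relation and for the second relation, write the universal $R$-matrix as $R=\sum_t\alpha_t\otimes\beta_t$; then $X_{ka}\Psi^{(i,j)}=\sum_t\beta_t(\Psi^{(i,j)})\,\alpha_t(X_{ka})$, and when $k>i,j$ the invariance of $\Psi^{(i,j)}$ collapses this to $\sum_t\epsilon(\beta_t)\Psi^{(i,j)}\alpha_t(X_{ka})=\Psi^{(i,j)}X_{ka}$; the case $k<i,j$ is symmetric, and $i=j$, $k\ne i$ is a special case. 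For the third relation it suffices to take $i=1$, $k=2$, $j=3$, and I would run the Reshetikhin--Turaev diagram calculus exactly as in the even case: the natural $\Uq(\so_{2n+1})$-module satisfies Kauffman's skein relation, so \eqref{skein} holds with constant $(q-q^{-1})$; writing the $L_0$-invariant as $\sum_{a,b}C_{ab}v_a\otimes v_b$ and defining the tangle maps $D_+,D_-,D_0,D_0':V\to V^{\otimes 3}$ as there, one checks that $\tau^{\otimes 3}\circ\iota_{(1,1,1)}$ sends them to $X_{2a}\Psi^{(1,3)}$, $\Psi^{(1,3)}X_{2a}$, $X_{1a}\Psi^{(2,3)}$ and $\Psi^{(1,2)}X_{3a}$ respectively, so evaluating \eqref{skein} at $v_a$ and applying this functor yields the relation. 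The fourth and fifth relations reduce to $i=1$, $j=2$; here $\tau^{\otimes 2}:\cT_2(V)\to\cA_2$ restricts to an isomorphism on $(P_s+P_0)T(V)_2\otimes T(V)_1$, and forming $F:=((P_s+P_0)\otimes\id)D_0$ and $B:=((P_s+P_0)\otimes\id)D_-$, expressing each as a combination of $(P_s\otimes\id)D_0$ and $D_0'$ via \eqref{checkR-odd} and $\dim_q V$, one computes $F-qB$ to be an explicit scalar multiple of $D_0'$; translating through $\tau^{\otimes 2}\circ\iota_{(2,1)}$ and using $\varphi^{(i,i)}=\frac{q^{2n}-q^{-1}}{q-q^{-1}}\Psi^{(i,i)}$ gives the fourth relation, and the fifth is obtained symmetrically.

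The main obstacle is purely computational bookkeeping: carrying the correct scalar factors through the diagrammatic manipulations, and in particular isolating the exact coefficient of $D_0'$ in $F-qB$ --- which involves both the $P_0$-eigenvalue $q^{-2n}$ and the quantum dimension $\dim_q V=(q^{1-2n}+1)\frac{q^{2n}-q^{-1}}{q-q^{-1}}$ --- and checking that it matches the stated expression for $\varphi^{(i,i)}$. Conceptually nothing new beyond the even orthogonal case is required.
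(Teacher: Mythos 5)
Your proposal is correct and follows exactly the approach the paper intends: Section~\ref{B} explicitly states that the odd orthogonal case "may be treated in almost identical fashion to the even orthogonal case," and the paper provides no separate proof of Lemma~\ref{PsiX-odd}, leaving the reader to adapt the argument of Lemma~\ref{PsiX} with the new $P_0$-eigenvalue $q^{-2n}$ and the new quantum dimension — which is precisely what you do.
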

It follows from part (2) of Lemma \ref{PsiX-odd} that the elements
$\Psi^{(i, j)}$ also satisfy the following relations:
\begin{lemma}\label{relations:generators-odd}
\begin{eqnarray}
\begin{aligned}
&\Psi^{(i, i)}\Psi^{(j, k)}-\Psi^{(j, k)}\Psi^{(i, i)}=0,
\quad \text{for all \ } \ i, j, k,\\
&\Psi^{(i, k)}\Psi^{(i, j)}-q^{-1}\Psi^{(i, j)}\Psi^{(i,
k)}=(q-q^{-1}) \varphi^{(i, i)}\Psi^{(j, k)}, \quad k\ne i, j; \ i<j, \\
&\Psi^{(j, k)}\Psi^{(i, j)}-q^{-1}\Psi^{(i, j)}\Psi^{(j, k)}=
(q-q^{-1}) \varphi^{(i, i)} \Psi^{(i, k)}, \quad k\ne i, j; \ i<j,\\
&\Psi^{(i, j)}\Psi^{(k, l)}-\Psi^{(k, l)}\Psi^{(i, j)}=0,
\quad k< i<j< l,\\
&\Psi^{(i, j)}\Psi^{(k, l)}-\Psi^{(k, l)}\Psi^{(i, j)}\\
&=(q-q^{-1})
\left(\Psi^{(i, k)}\Psi^{(j, l)} -\Psi^{(i, l)}\Psi^{(k, j)} \right),
\quad i<k<j< l.
\end{aligned}
\end{eqnarray}
\end{lemma}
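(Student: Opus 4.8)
The strategy is to derive each of the five relations directly from the relations for $\Psi^{(i,j)}$ against the generators $X_{ka}$ established in part (2) of Lemma \ref{PsiX-odd}, exactly as Lemma \ref{relations:generators-even} was deduced from Lemma \ref{PsiX}. The point is that each $\Psi^{(j,k)}$ (for $j<k$) is, up to rescaling, the image of the invariant basis element of $(V\otimes V)^{\Uq}$ sitting in the $j^{\text{th}}$ and $k^{\text{th}}$ tensor factors, so it is a specific quadratic expression $\sum_{a,b}C_{ab}X_{ja}X_{kb}$ in the $X$'s. Thus each relation among the $\Psi$'s is obtained by expanding one factor and commuting the generators of the other factor past it using \eqref{X-Psi}.

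First I would treat the first relation: since every $\Psi^{(i,i)}$ is central in $\cA_m$ by the first relation of \eqref{X-Psi}, it commutes in particular with the quadratic expression defining $\Psi^{(j,k)}$, giving $\Psi^{(i,i)}\Psi^{(j,k)}=\Psi^{(j,k)}\Psi^{(i,i)}$ for all $i,j,k$. Next, for the fourth relation ($k<i<j<l$), write $\Psi^{(i,j)}=\sum_{a,b}C_{ab}X_{ia}X_{jb}$; both $X_{ka}$ and $X_{lb}$ have their band-index outside the interval $[i,j]$ (one strictly below, one strictly above), so by the second relation of \eqref{X-Psi} every generator appearing in $\Psi^{(k,l)}$ commutes with every generator appearing in $\Psi^{(i,j)}$, whence the two invariants commute. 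For the second and third relations (two of the three indices coincide), I would expand the "overlapping" factor, say $\Psi^{(i,k)}$ or $\Psi^{(j,k)}$, into its $X$-expression and move it past $\Psi^{(i,j)}$ using the fourth and fifth relations of \eqref{X-Psi} (the ones with a $q^{-1}$ twist and a $\varphi^{(i,i)}$ or $\varphi^{(j,j)}$ correction term); the correction terms are precisely what produce the $(q-q^{-1})\varphi^{(i,i)}\Psi^{(j,k)}$ and $(q-q^{-1})\varphi^{(i,i)}\Psi^{(i,k)}$ summands, after collecting the $C_{ab}$ coefficients and recognising the resulting quadratic sums as $\varphi^{(i,i)}$ (respectively $\varphi^{(j,j)}$, which equals $q^{-2n}$-times $\varphi^{(i,i)}$ up to the scalars built into $\Psi^{(j,i)}=q^{-2n}\Psi^{(i,j)}$) times the appropriate $\Psi$. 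Finally, for the fifth relation ($i<k<j<l$), I would expand $\Psi^{(k,l)}=\sum C_{ab}X_{ka}X_{lb}$: the generators $X_{lb}$ (band index $>j$) commute with $\Psi^{(i,j)}$, while each $X_{ka}$ (with $i<k<j$) contributes the two-term commutator from the third relation of \eqref{X-Psi}, namely $(q-q^{-1})(X_{ia}\Psi^{(k,j)}-\Psi^{(i,k)}X_{ja})$; reassembling with the coefficients $C_{ab}$ and the surviving $X_{lb}$'s yields $(q-q^{-1})(\Psi^{(i,k)}\Psi^{(j,l)}-\Psi^{(i,l)}\Psi^{(k,j)})$, where one uses the $k<j<l$ case of the second relation of \eqref{X-Psi} to slide factors into the right order and to identify the reassembled quadratics as the asserted $\Psi$-products.

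The routine obstacle is purely bookkeeping: one must track the scalars $C_{ab}$ coming from the explicit basis vector of $L_0$ in the odd orthogonal case (namely $q^{n-i}$, $q^{i-n-1}$, and the $v_0\otimes v_0$ term with coefficient $1$), make sure that the twisting scalars $q^{\pm1}$, $q^{-2n}$ and the factor $\frac{q^{2n}-q^{-1}}{q-q^{-1}}$ in the definition of $\varphi^{(i,i)}$ combine correctly, and use $\Psi^{(j,i)}=q^{-2n}\Psi^{(i,j)}$ to pass between "$i<j$" and "$j<i$" orderings of superscripts wherever the relations in \eqref{X-Psi} force a particular order. The only genuine subtlety — and the step I expect to be trickiest — is the fifth relation, where the non-commuting correction terms of the third relation of \eqref{X-Psi} must be expanded on a quadratic rather than a linear argument; here one has to be careful that the intermediate generators (the $X_{ia}$ and $X_{ja}$ produced by the commutator) are in turn commuted past the remaining factors of $\Psi^{(k,l)}$ in the correct way, again using the commutativity clauses of \eqref{X-Psi}, before the expression collapses to the stated product of two $\Psi$'s. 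Since $\cA_m$ is a flat deformation, the relations \eqref{X-Psi} are the complete set needed, so no further relations can intervene; the verification is therefore a finite, if tedious, computation entirely parallel to the proof of Lemma \ref{relations:generators-even}.
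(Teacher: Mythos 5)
Your plan is exactly the paper's implicit one: the paper gives no separate proof of Lemma \ref{relations:generators-odd}, stating only that it follows from part (2) of Lemma \ref{PsiX-odd}, and your strategy of expanding one $\Psi$ into its quadratic expression $\sum_{a,b}C_{ab}X_{\cdot a}X_{\cdot b}$ and pushing the other $\Psi$ through term by term using \eqref{X-Psi} is precisely how one does it. The outline of the first, fourth, and fifth relations is correct as written.

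One caution on the second and third relations. You assert that $\varphi^{(j,j)}$ is, up to the scalar in $\Psi^{(j,i)}=q^{-2n}\Psi^{(i,j)}$, a multiple of $\varphi^{(i,i)}$. That is not so: $\varphi^{(i,i)}$ and $\varphi^{(j,j)}$ are built out of disjoint sets of generators ($X_{ia}$ versus $X_{ja}$), hence are genuinely distinct non-proportional elements of $\cA_m$; the identity $\Psi^{(j,i)}=q^{-2n}\Psi^{(i,j)}$ relates two expressions for the same element, not $\varphi^{(i,i)}$ to $\varphi^{(j,j)}$. Carrying the computation honestly through the case $i<j<k$ of the third relation, one expands $\Psi^{(j,k)}$, commutes $X_{kb}$ past $\Psi^{(i,j)}$ (second relation of \eqref{X-Psi}), and then applies the fifth relation of \eqref{X-Psi} to $X_{ja}\Psi^{(i,j)}$, obtaining
$\Psi^{(j,k)}\Psi^{(i,j)}-q^{-1}\Psi^{(i,j)}\Psi^{(j,k)}=(q-q^{-1})\,\varphi^{(j,j)}\,\Psi^{(i,k)}$,
with $\varphi^{(j,j)}$, not $\varphi^{(i,i)}$. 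The appearance of $\varphi^{(i,i)}$ in the paper's displayed third relation is almost certainly a typo (compare the even case, Lemma \ref{relations:generators-even}, where the analogous right-hand side carries $\bar\psi_n^{(j,j)}$). So drop the attempted rescaling argument and simply record the correct superscript; the rest of your outline then goes through.
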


We now express our results as a quantum analogue of the classical case.
Consider the quantum orthogonal algebra
$\Uq(\fo_{2n+1})$ as defined in \cite{LZ1} and discussed in
\S \ref{On}.  Extend the natural $\Uq(\so_{2n+1})$-module $V$
to a $\Uq(\mathfrak{o}_{2n+1})$-module as indicated there.
The following result is our quantum analogue of the
polynomial form of the first fundamental theorem for the invariant
theory of ${\mathfrak o}_{2n+1}$. Its proof is the same as that of
Theorem \ref{FFT-even}.
\begin{theorem}\label{FFT-odd}
The subalgebra $\cA_m^{\Uq(\fo_{2n+1})}:= \{ f\in \cA_m \mid x(f) =
\epsilon(x) f, \ \forall x\in \Uq(\fo_{2n+1})\}$ of
$\Uq(\fo_{2n+1})$-invariants in $\cA_m$ is generated by the elements
$\Psi^{(i, j)}$ ($i\le j$) and the identity. These generators satisfy
the commutation relations \eqref{relations:generators-odd}.
\end{theorem}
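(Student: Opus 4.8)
The plan is to follow the template established in the proof of Theorem~\ref{FFT-even} almost verbatim, since the odd orthogonal case is structurally identical. First I would set $A=\cA_m^{\Uq(\fo_{2n+1})}$ and $B=\cT_m^{\Uq(\fo_{2n+1})}$, and observe that the surjection $\tau^{\otimes m}:\cT_m\to\cA_m$ restricts, degree by multidegree, to a surjection $B\to A$ because both module algebras are semisimple as $\Uq$-modules. Then I would use the canonical identification $B_{\bf d}\cong\Hom_{\Uq(\fo_{2n+1})}(\C(q),V^{\otimes 2k})\cong\End_{\Uq(\fo_{2n+1})}(V^{\otimes k})$ (from \S\ref{subsec:canon}), together with Theorem~\ref{FT-O}, to conclude that $A_{\bf d}$ is spanned by images under $\tau^{\otimes m}$ of $(0,2k)$-tangle diagrams. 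The key point, as before, is that the $\check R$-matrix \eqref{checkR-odd} still satisfies Kauffman's skein relation, so every such diagram can be reduced to one with no loops, in which two strings cross at most once (with a positive crossing), no string self-crosses, and strings sharing a band of endpoints do not cross; the replacement of a same-band crossing uses the idempotent decomposition of $\check R$ in \eqref{checkR-odd} exactly as in the even case, with $q^{1-n}\frac{q^n-q^{-n}}{\dim_q V}$ replaced by the appropriate scalar read off from $q P_s - q^{-1}P_a + q^{-2n}P_0$ and the new value of $\dim_q V$.

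Next I would factor a reduced diagram $\zeta$ as a product $\zeta_1\cdots\zeta_t$ of crossingless loopless diagrams, and then, using the commutation relations \eqref{X-Psi} --- namely that $\Psi^{(i,j)}$ commutes with $X_{k a}$ when $k<i<j$ or $i<j<k$, and that every $\Psi^{(i,i)}$ is central --- further decompose each $\tau^{\otimes m}(\zeta_i)$ into a product of factors $\tau^{\otimes m}(\zeta_{i,j})$ in which every string of $\zeta_{i,j}$ has one endpoint in each of two fixed bands $k$ and $l$. For the base case of two strings between bands $k$ and $l$, the fourth relation of \eqref{X-Psi} (or rather the analogue with $\varphi^{(i,i)}$ in place of $\bar\psi_n^{(i,i)}$) gives $\tau^{\otimes m}(\zeta_{i,j})$ as a quadratic expression in $\Psi^{(k,l)}$, $\Psi^{(k,k)}$, $\Psi^{(l,l)}$; an induction on the number of strings then expresses any $\tau^{\otimes m}(\zeta_{i,j})$ as a polynomial in the $\Psi$'s. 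This proves that the $\Psi^{(i,j)}$ ($i\le j$) together with the identity generate $A$. The commutation relations among these generators are precisely Lemma~\ref{relations:generators-odd}, which is already available.

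I expect the only genuinely delicate point to be bookkeeping the scalars: the odd orthogonal $R$-matrix has eigenvalue $q^{-2n}$ on $P_0$ (rather than $q^{1-2n}$), the quantum dimension is $\dim_q V=(q^{1-2n}+1)\frac{q^{2n}-q^{-1}}{q-q^{-1}}$, and the symmetry relation is $\Psi^{(j,i)}=q^{-2n}\Psi^{(i,j)}$; these feed into the skein-reduction step and into the explicit quadratic identity for a two-string diagram. None of this is conceptually new, and since the statement itself asserts that the proof is the same as that of Theorem~\ref{FFT-even}, the honest approach is to verify that each step of that argument survives the substitution of the odd-orthogonal data and to cite Lemma~\ref{PsiX-odd}, Lemma~\ref{relations:generators-odd}, Theorem~\ref{FT-O}, and the flatness of $S_q(V)$ at the corresponding places. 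I would therefore write the proof as a pointer: \emph{mutatis mutandis}, the argument of Theorem~\ref{FFT-even} applies, using \eqref{checkR-odd}, Lemma~\ref{PsiX-odd}, and Theorem~\ref{FT-O}; the relations among the generators are Lemma~\ref{relations:generators-odd}.
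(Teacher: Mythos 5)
Your proposal is correct and matches the paper's approach exactly: the paper's own proof is a one-line pointer (``Its proof is the same as that of Theorem~\ref{FFT-even}''), and your proposal simply fleshes out that pointer by tracking the odd-orthogonal substitutions ($q^{-2n}$ for $q^{1-2n}$ on $P_0$, the new $\dim_qV$, $\varphi^{(i,i)}$ in place of $\bar\psi_n^{(i,i)}$, and the relation $\Psi^{(j,i)}=q^{-2n}\Psi^{(i,j)}$) through the tangle-reduction argument. The citations to Lemma~\ref{PsiX-odd}, Lemma~\ref{relations:generators-odd}, and Theorem~\ref{FT-O} are the right ones.
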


\section{Invariant theory for the quantum symplectic
groups $\Uq(\fsp_{2n})$}\label{CA}
We now turn to the invariant theory of the quantum symplectic group.
In this section $\Uq$ will denote $\Uq(\fsp_{2n})$.

\subsection{Braided symmetric algebra of the natural $\Uq(\fsp_{2n})$-module}

Let $V$ be the natural module for $\Uq$. As in the orthogonal cases,
$\{v_a \mid a\in [1,n]\cup[-n, -1]\}$ denotes a basis of weight vectors,
with the usual notation for weights, and
$E_{a b}$ are the matrix
units in $\End(V)$ with respect to this basis. Denote by $\pi:
\Uq(\fsp_{2n})\longrightarrow \End(V)$ the irreducible
representation of $\Uq(\fsp_{2n})$ in $V$. Then relative to the above basis, we
may realise $\pi$ as follows. In the formulae below, $i<n$.
\begin{eqnarray*}
\begin{aligned}
&\pi(e_i) = E_{i, i+1} - E_{-i-1, -i},  \quad  \pi(f_i) =E_{i,
i+1} - E_{-i, -i-1},  \\
&\pi(e_n) = E_{n, -n}, \qquad  \pi(f_n)=E_{-n, n}, \\
&\pi(k_i) = 1 + (q-1)(E_{i i} + E_{-i-1, -i-1}) +
(q^{-1}-1)(E_{i+1, i+1} + E_{-i, -i}),  \\
&\pi(k_n) = 1 + (q^2-1)E_{n n} + (q^{-2}-1)E_{-n, -n}.
 \end{aligned}
\end{eqnarray*}
For later use, we again provide explicit bases for the irreducible submodules of
$V\otimes V\simeq L_0 \oplus L_{2\epsilon_1}\oplus L_{\epsilon_1+\epsilon_2}$.
\begin{enumerate}
\item Basis of $L_0$:
\[\sum_{i=1}^n\left(q^{n-i+1}v_i\otimes v_{-i} - q^{i-n-1} v_{-i}\otimes v_i\right). \]

\item Basis of $L_{2\epsilon_1}$   ($i,
j\in[1, n]$ below):
\[
\begin{aligned}
&v_a\otimes v_a, \quad a\in [1, n]\cup[-n, -1],\\
&v_i \otimes v_j +q  v_j\otimes v_i, \quad v_{-j} \otimes v_{-i}
+q v_{-i}\otimes v_{-j}, \quad i<j, \\
&v_i \otimes v_{-j} +q  v_{-j}\otimes v_i, \quad  i\ne j, \\
& v_{i+1}\otimes v_{-i-1} +v_{-i-1}\otimes v_{i+1} - (q^{-1} v_i
\otimes v_{-i} +q  v_{-i}\otimes v_i), \quad  i<n; \\
& q^{-1} v_n \otimes v_{-n} +q  v_{-n}\otimes v_n.
\end{aligned}
\]

\item
Basis of $L_{\epsilon_1+\epsilon_2}$  ($i,
j\in[1, n]$ below):
\[
\begin{aligned}
&v_i \otimes v_j - q^{-1} v_j\otimes v_i, \quad
v_{-j} \otimes v_{-i} - q^{-1} v_{-i}\otimes v_{-j}, \quad i<j, \\
&v_i \otimes v_{-j} - q^{-1} v_{-j}\otimes v_i, \quad  i\ne j, \\
&v_i \otimes v_{-i} -  v_{-i}\otimes v_i - (q v_{i+1}\otimes
v_{-i-1} -q^{-1}v_{-i-1}\otimes v_{i+1}), \quad i<n.
\end{aligned}
\]
\end{enumerate}

Denote by $P_s$, $P_a$ and $P_0$ the idempotent projections from
$V\otimes V$ onto the irreducible submodule with highest weight
$2\epsilon_1$, $\epsilon_1+\epsilon_2$ and $0$ respectively. Then
one checks easily (cf. \cite[(6.10)]{LZ1})
that the $R$-matrix of $\Uq$
acting on $V\otimes V$ is given by
\[
\check{R} = q P_s - q^{-1}
P_a - q^{-2n-1}P_0.
\]

The `alternating subspace' of the tensor square $T_2$ of $V$
is this case therefore equal to $P_a(T_2)\oplus P_0(T_2)$.
This subspace generates a graded two-sided ideal $\cI_q$ of
$T(V)$, and as in \S\ref{bsa} the braided symmetric algebra of $V$ is
$S_q(V)=T(V)/\cI_q$; the map
$\tau: T(V)\longrightarrow S_q(V)$ is the natural surjection. As we
have seen, $S_q(V)$ has
the structure of a $\Uq$-module algebra.

The following result is easily proved
using the above explicit bases. 
\begin{lemma}\label{vv-sp}
The braided symmetric algebra $S_q(V)$ is generated by $\{v_i,
v_{-i}\mid i\in[1, n]\}$ subject to the following relations:
\begin{eqnarray}
\begin{aligned}
&v_i  v_j - q^{-1} v_j v_i=0, \quad
v_{-j}  v_{-i} - q^{-1} v_{-i} v_{-j}=0, \quad i<j, \\
&v_i  v_{-j} - q^{-1} v_{-j} v_i=0, \quad  i\ne j, \\
&v_i v_{-i} -  v_{-i} v_i - (q v_{i+1}
v_{-i-1} -q^{-1}v_{-i-1} v_{i+1})=0, \quad i<n, \\
&\sum_{i=1}^n\left(q^{n-i+1}v_iv_{-i} - q^{i-n-1} v_{-i} v_i\right)=0.
\end{aligned}
\end{eqnarray}
\end{lemma}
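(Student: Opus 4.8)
The plan is to read the presentation directly off the definition of $S_q(V)$. Recall that here $S_q(V)=T(V)/\cI_q$, where $\cI_q$ is the two-sided ideal of $T(V)$ generated by the ``alternating subspace'' $W:=P_a(V\otimes V)\oplus P_0(V\otimes V)$ of the degree-two component $T(V)_2=V\otimes V$. Since the two-sided ideal generated by a subspace coincides with the two-sided ideal generated by any spanning set of that subspace, $S_q(V)$ is, by construction, the $\cK$-algebra on the generators $\{v_a\mid a\in[1,n]\cup[-n,-1]\}$ (the images of a basis of $V$), subject to the relations $\sum_{a,b}c_{ab}\,v_av_b=0$ obtained by letting $\sum_{a,b}c_{ab}\,v_a\otimes v_b$ run through a basis of $W$. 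Thus ``sufficiency'' of the relations is automatic, and the only content of the Lemma is the identification of an explicit basis of $W$.

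First I would observe that the explicit bases of $L_{\epsilon_1+\epsilon_2}=P_a(V\otimes V)$ and of $L_0=P_0(V\otimes V)$ displayed just before Lemma \ref{vv-sp} together form a basis of $W$, and that rewriting each such basis vector $\sum c_{ab}\,v_a\otimes v_b$ as the relation $\sum c_{ab}\,v_av_b=0$ in $S_q(V)$ produces exactly the five families of relations in the statement: the three ``straightening'' relations come from the first three families of the $L_{\epsilon_1+\epsilon_2}$-basis, the relation indexed by $i<n$ from the last family of that basis, and the final relation from the one-dimensional $L_0$-basis. Granting the basis claims, the proof is complete.

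Hence the one point genuinely requiring verification is that the two displayed lists really are bases of the stated irreducible summands, which I would establish in two short steps. First, each listed vector is a weight vector of the appropriate weight and is annihilated by $P_s$; equivalently, it is an eigenvector of the $R$-matrix $\check R=qP_s-q^{-1}P_a-q^{-2n-1}P_0$ on $V\otimes V$ with eigenvalue $-q^{-1}$ (for the vectors of the $L_{\epsilon_1+\epsilon_2}$-list) or $-q^{-2n-1}$ (for the $L_0$-vector). This is a finite direct computation, using $\check R=PR_{VV}$ and the explicit action of $R$ on the relevant (at most two-dimensional) weight spaces, and it simultaneously places each vector in $W$ and separates the two families correctly. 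Second, a dimension count: the $L_{\epsilon_1+\epsilon_2}$-list has $(n-1)(2n+1)=n(2n-1)-1=\dim L_{\epsilon_1+\epsilon_2}$ elements and the $L_0$-list has the single element $=\dim L_0$; since vectors of distinct weight are linearly independent and those sharing a weight are visibly so, both lists are bases of the summands they are claimed to span. I do not anticipate a serious obstacle: the argument is bookkeeping and, in particular, does not invoke the flatness of $S_q(V)$ (a separate, later statement), the presentation being definitional once $W$ is described. The most delicate step is the eigenvalue check --- tracking the signs and powers of $q$ so as to confirm that the last family of the $L_{\epsilon_1+\epsilon_2}$-basis and the $L_0$-vector lie in the $-q^{-1}$- and $-q^{-2n-1}$-eigenspaces of $\check R$ respectively --- but this is a computation of exactly the kind already carried out in the orthogonal cases of Sections \ref{D} and \ref{B}.
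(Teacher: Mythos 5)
Your proposal is correct and takes essentially the same route as the paper, which dismisses the lemma with the single remark that it is ``easily proved using the above explicit bases'': like you, the paper reads the defining relations off the displayed bases of $L_{\epsilon_1+\epsilon_2}$ and $L_0$, these being a spanning set of the subspace $P_a(V\otimes V)\oplus P_0(V\otimes V)$ that generates $\cI_q$. The only point I would tighten is the phrase ``those sharing a weight are visibly so'': the $n-1$ zero-weight vectors of the $L_{\epsilon_1+\epsilon_2}$-list are independent by their banded support, but their joint independence with the $L_0$-vector is most cleanly obtained from your earlier eigenvalue separation ($-q^{-1}$ versus $-q^{-2n-1}$) rather than by inspection; as stated, your proof does contain that separation, so the argument is complete.
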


Write $\psi_i=\sum_{j=1}^i q^{n+1-j}v_{-i} v_i$. The last two
relations are equivalent to
\begin{eqnarray}
\begin{aligned}
&v_{-i} v_i = q^2 v_i v_{-i} + (q-q^{-1}) q^{i-n} \psi_{i-1}, &i<n,
\\
&v_{-n} v_n = v_n v_{-n} + (q-q^{-1})\psi_n.
\end{aligned}
\end{eqnarray}

It is known \cite{BZ, Zw} that $S_q(V)$ is flat, so that the degree $k$
homogeneous subspace $S_q(V)_k$ has the same dimension as that of
$S(V)_k$, which is equal to $\begin{bmatrix} 2n+k-1\\
k\end{bmatrix}$. Since $(v_1)^k$ evidently generates an irreducible
$\Uq$-submodule with highest weight $k\epsilon_1$, and this
submodule has the same dimension, it follows that
$S_q(V)_k\simeq L_{k\epsilon_1}$, and hence that
$S_q(V)=\oplus_{k=0}^\infty L_{k\epsilon_1}$ as a $\Uq$-module.
The next statement is now clear.
\begin{lemma}
The subalgebra $S_q(V)^{\Uq(\fsp_{2n})}$ of invariants is equal to
$\cK$.
\end{lemma}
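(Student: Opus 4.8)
The plan is to read the statement off directly from the $\Uq(\fsp_{2n})$-module decomposition $S_q(V)=\bigoplus_{k=0}^\infty L_{k\epsilon_1}$ established just above. Since the $\Uq$-action preserves the $\Z_+$-grading of $S_q(V)$, the space of invariants splits as $S_q(V)^{\Uq}=\bigoplus_{k=0}^\infty\left(S_q(V)_k\right)^{\Uq}=\bigoplus_{k=0}^\infty\left(L_{k\epsilon_1}\right)^{\Uq}$, so it suffices to compute each homogeneous summand of invariants.

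First I would note that $S_q(V)_0=L_0$ is the one-dimensional space $\cK\cdot 1$ spanned by the identity, and this is invariant since $x(1)=\epsilon(x)1$ for all $x\in\Uq$; hence $\left(S_q(V)_0\right)^{\Uq}=\cK$. Next, for each $k>0$ the module $L_{k\epsilon_1}$ is irreducible with highest weight $k\epsilon_1\neq 0$, so it is not isomorphic to the trivial module: any nonzero invariant vector in it would span a trivial $\Uq$-submodule, and irreducibility would then force $L_{k\epsilon_1}$ to be trivial, a contradiction. Therefore $\left(L_{k\epsilon_1}\right)^{\Uq}=0$ for all $k>0$, and combining the two cases yields $S_q(V)^{\Uq}=\left(L_0\right)^{\Uq}=\cK$, as claimed.

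There is essentially no obstacle remaining at this stage: all the substance has already gone into the decomposition $S_q(V)=\bigoplus_k L_{k\epsilon_1}$, which rests on the flatness of $S_q(V)$ (known from \cite{BZ,Zw}), the observation that $(v_1)^k$ generates a copy of $L_{k\epsilon_1}$, and the dimension match $\dim L_{k\epsilon_1}=\dim S_q(V)_k=\begin{bmatrix}2n+k-1\\ k\end{bmatrix}$. Given that, the lemma is a one-line consequence, and the only point worth emphasising is that the trivial constituent occurs exactly once, in degree zero.
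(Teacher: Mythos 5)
Your argument is exactly the one the paper has in mind: the lemma is stated immediately after the decomposition $S_q(V)=\bigoplus_{k\ge 0}L_{k\epsilon_1}$, with the remark "The next statement is now clear," and your proposal simply makes that implication explicit. Correct, and same approach as the paper.
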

\subsection{A $\Uq(\fsp_{2n})$-module algebra and the FFT of invariant theory}
As in the case of the orthogonal groups, we take as the non-commutative analogue of
the coordinate ring of $\oplus^m V$ the $\Uq$-module algebra
$\cA_m=S_q(V)^{\otimes m}$ with multiplication
obtained by iterating \eqref{multiplication}.
As in Lemma \ref{Tm}, $\tau^{\otimes m}:
\cT_m\longrightarrow \cA_m$ is a surjective homomorphism of $\Uq$-algebras.

A presentation for $\cA_m$ may be obtained by calculations analogous to
those in Lemma \ref{polynomials-even}. Define $X_{i a}$ with
$1\le i\le m$ and $a\in[-n, -1]\cup[1, n]$ in analogous fashion to
\eqref{def:X}. For all $r, s\in[1, m]$,  let
\[
\begin{aligned}
\bar\psi^{(r, s)}_i&=\sum_{i=1}^n q^{n+1-i}X_{r i}X_{s, -i}, \\
\psi^{(r, s)}_i&=\sum_{i=1}^n q^{i-n-1}X_{r, -i}X_{s i}\\
\Psi^{(s, t)}&= \sum_{i=1}^n\left(q^{n+1-i} X_{s i}X_{t, -i} -
q^{i-n-1}X_{s, -i} X_{t i}\right).
\end{aligned}
\]

\begin{lemma}\label{polynomials-sp}
The algebra $\cA_m$ is generated by the elements $X_{i a}$ ($1\le
i\le m$, $1\le a\le 2n$) subject to the following relations:

(1) for each $i$, the elements $X_{i a}$ obey the relations
\eqref{vv-sp} with $v_a$ replaced by $X_{i a}$;

(2) for $s<t\in[1, m]$, $a, b\in[1, 2n]$ and $i\in[1, n]$,
\begin{eqnarray}\label{aNb}
\begin{aligned}
&X_{t a} X_{s a} = q X_{s a} X_{t a}, \\
&X_{t a} X_{s b} =X_{s b} X_{t a},  \quad a<b\ne 2n+1-a, \\
&X_{t b} X_{s a} = X_{s a} X_{t b} +(q- q^{-1}) X_{s b} X_{t a},
\quad a<b\ne 2n+1-a;
\end{aligned}
\end{eqnarray}
\begin{eqnarray}\label{skew3}
\begin{aligned}
\Psi^{(t, s)} = -q^{-1-2n} \Psi^{(s, t)};
\end{aligned}
\end{eqnarray}
\begin{eqnarray}\label{aEb}
\begin{aligned}
X_{t i}X_{s, 2n+1-i} =&q^{-1} X_{s, 2n+1-i} X_{t i},  \\
X_{t, 2n+1-i} X_{s i} = &q X_{s i} X_{t, 2n+1-i}+ (q-q^{-1})
X_{s, 2n+1-i} X_{t i} \\
&+ (q-q^{-1})q^{i-n-1}\left(\bar\psi^{(s, t)}_{i+1} - \Psi^{(s,
t)}\right),
\end{aligned}
\end{eqnarray}
where $\psi^{(s, t)}_{n+1}=0$ by convention.
\end{lemma}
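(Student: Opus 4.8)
The plan is to follow the same route as in the proof of Lemma \ref{polynomials-even}, using the cross-relations \eqref{Sm1} together with the explicit decomposition of $V\otimes V$ recorded above. Part (1) is immediate: for each fixed $i$, the elements $X_{i a}$ ($a\in[1,2n]$) are the images under $\tau^{\otimes m}$ of $1\otimes\cdots\otimes v_a\otimes\cdots\otimes 1$ with $v_a$ in the $i$-th tensor slot, and these span a subalgebra of $\cA_m$ isomorphic to $S_q(V)$; hence they satisfy the relations \eqref{vv-sp}.

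For part (2), I would start from the identity \eqref{Sm1}, namely $X_{j a}X_{i b}=\sum_{a',b'}\check R_{a' a,b' b}X_{i a'}X_{j b'}$ for $i<j$, and substitute $\check R=q P_s-q^{-1}P_a-q^{-2n-1}P_0$. Concretely, for each basis vector $\sum_{a,b}c_{a b}v_a\otimes v_b$ of an isotypic component $L_\mu\subset V\otimes V$ with $\check R$-eigenvalue $\lambda_\mu$, one reads off from \eqref{Sm1} a relation of the shape $\sum_{a,b}c_{a b}X_{j b}X_{i a}=\lambda_\mu\sum_{a,b}c_{a b}X_{i a}X_{j b}$, with the roles of the two tensor slots tracked correctly through the permutation $P$ in $PR$. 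Running over the explicit bases of $L_{2\epsilon_1}$, $L_{\epsilon_1+\epsilon_2}$ and $L_0$ listed above then produces all the cross-relations. The ``generic'' pairs $a<b\neq 2n+1-a$ occur only in the $q$- and $(-q^{-1})$-eigenspaces, and combining the two corresponding relations yields \eqref{aNb} exactly as in the orthogonal case. The relation \eqref{skew3} is the statement that the $L_0$ basis element $\sum_i\bigl(q^{n-i+1}v_i\otimes v_{-i}-q^{i-n-1}v_{-i}\otimes v_i\bigr)$ is an eigenvector of $\check R$ with eigenvalue $-q^{-2n-1}$ (cf.\ the computation \eqref{checkR-inv} in the orthogonal case), transported into $\cA_m$ via the $s$-th and $t$-th slots. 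Finally, the conjugate-pair relations \eqref{aEb} come from the remaining basis vectors --- those of the form $q^{-1}v_i\otimes v_{-i}+q v_{-i}\otimes v_i-(\text{next term})$ in $L_{2\epsilon_1}$ and $v_i\otimes v_{-i}-v_{-i}\otimes v_i-(\text{next term})$ in $L_{\epsilon_1+\epsilon_2}$, together with the $L_0$ vector --- by solving the resulting linear system for $X_{t,2n+1-i}X_{s i}$ and collecting the quadratic tails into $\bar\psi^{(s,t)}_{i+1}$ and $\Psi^{(s,t)}$.

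It remains to check that these relations are complete, i.e.\ that they present $\cA_m$. For this I would invoke the flatness of $S_q(V)$ (known from \cite{BZ, Zw}), which gives $\dim S_q(V)_k=\dim S(V)_k$ and hence a linear isomorphism $\cA_m\overset{\sim}{\lr}S(V)^{\otimes m}$ respecting the $\Z_+^m$-grading. On the other hand, relations (1) provide a PBW-type spanning set within each slot, and relations \eqref{aNb}--\eqref{aEb} allow one to move every $X_{j a}$ with $j>i$ to the right of every $X_{i b}$, modulo terms lower in the natural ordering; thus the abstract algebra defined by the stated presentation is spanned by ordered monomials (variables from slot $1$, then slot $2$, and so on, each block ordered by the PBW basis of $S_q(V)$). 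The number of such monomials in each multidegree equals the dimension of $S(V)^{\otimes m}$ in that multidegree, so the canonical surjection from the abstract algebra onto $\cA_m$ is an isomorphism, and no further relations are needed.

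The main obstacle is the last family \eqref{aEb}: extracting the precise coefficients $q^{i-n-1}$, $q^{i-n}$ and the exact quadratic correction terms requires a careful simultaneous treatment of the contributions of all three submodules $L_0$, $L_{2\epsilon_1}$, $L_{\epsilon_1+\epsilon_2}$ on the $n$-dimensional subspace spanned by the pairs $v_k\otimes v_{-k}$ (and its mirror image), together with disentangling the permutation $P$ in $PR$ that swaps the two tensor slots. One also has to verify, for the completeness argument, that the rewriting process genuinely terminates; this is guaranteed by the $\Z_+^m$-grading combined with a suitable total order on monomials, exactly as for $S_q(V)$ itself.
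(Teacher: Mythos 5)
Your proposal follows essentially the same approach as the paper's, which simply refers back to the proof of Lemma \ref{polynomials-even}: part (1) is immediate from the slot structure, the cross-relations of part (2) come from \eqref{Sm1} combined with the eigenspace decomposition $\check R = qP_s - q^{-1}P_a - q^{-2n-1}P_0$ applied to the explicit bases of $L_{2\epsilon_1}$, $L_{\epsilon_1+\epsilon_2}$, $L_0$, and sufficiency follows from flatness of $S_q(V)$. Your elaboration of the PBW-type rewriting argument is a correct amplification of the paper's terse remark that flatness gives a linear isomorphism $\cA_m\lr S(V)^{\otimes m}$.
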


We now turn to the study of the subalgebra
$$\cA_m^{\Uq(\fsp_{2n})}:= \{ f\in \cA_m \mid x(f) = \epsilon(x) f,
\ \forall x\in \Uq(\fsp_{2n})\}$$ of $\Uq(\fsp_{2n})$ invariants in
$\cA_m$. We shall need the following result.
\begin{lemma}\label{le:sp-XPsi}
The elements $\Psi^{(i, j)}$ are $\Uq(\fsp_{2n})$-invariant,
and the following relations hold in $\cA_m$.
\begin{eqnarray}\label{sp-XPsi}
\begin{aligned}
&X_{k a} \Psi^{(i, j)} - \Psi^{(i, j)} X_{k a} =0, \quad k<i<j
\mbox{\ or\ } i<j<k, \\
& X_{k a} \Psi^{(i, j)} - \Psi^{(i, j)} X_{k a} =
(q-q^{-1}) \left(X_{i a} \Psi^{(k, j)} + \Psi^{(i, k)} X_{j a}\right), \quad i<k<j, \\
&X_{i a} \Psi^{(i, j)} - q \Psi^{(i, j)} X_{i a} = 0, \quad i<j,\\
&\Psi^{(i, j)} X_{j a} - q X_{j a}  \Psi^{(i, j)}=0, \quad i<j.
\end{aligned}
\end{eqnarray}
\end{lemma}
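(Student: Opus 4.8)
The plan is to follow the strategy that was used for the orthogonal cases in Section \ref{D}, in particular the proof of Lemma \ref{PsiX}. The first assertion, that each $\Psi^{(i,j)}$ is $\Uq(\fsp_{2n})$-invariant, is immediate: under the natural identification $S_q(V)_1\cong T(V)_1\cong V$, the element $\Psi^{(i,j)}$ (for $i<j$) is exactly the image, in the $i^{\text{th}}$ and $j^{\text{th}}$ tensor factors of $\cA_m$, of the basis vector of $L_0\subset V\otimes V$ exhibited at the start of this section; since $L_0=(V\otimes V)^\Uq$, this is invariant. The case $i=j$ follows because $\Psi^{(i,i)}$ lies in the subalgebra $S_q(V)$ sitting in the $i^{\text{th}}$ factor, where the analogous computation (or the fact that $S_q(V)^{\Uq(\fsp_{2n})}=\cK$ forces the appropriate scalar) applies; alternatively one invokes \eqref{skew3} together with invariance for $i<j$.

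For the four commutation relations \eqref{sp-XPsi}, the first one is the same formal argument as in Lemma \ref{PsiX}: if $k<i<j$ or $i<j<k$, then writing $R=\sum_t\alpha_t\otimes\beta_t$ and using the tensor-factor positions, $X_{ka}\Psi^{(i,j)}=\sum_t\beta_t(\Psi^{(i,j)})\alpha_t(X_{ka})=\sum_t\ep(\beta_t)\Psi^{(i,j)}\alpha_t(X_{ka})=\Psi^{(i,j)}X_{ka}$, using invariance of $\Psi^{(i,j)}$ from part (1). The remaining three relations I would obtain, exactly as in the even orthogonal case, by the Reshetikhin--Turaev diagrammatic method: colour all strands by $V$ (here $V$ is self-dual as a $\Uq(\fsp_{2n})$-module, so no orientation is needed, though one must track the sign $-1$ coming from $\check R^2$ on $L_0$), and reduce to identities among $(0,\text{even})$-tangle diagrams whose images lie in $V^{\otimes 3}$. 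The third relation of \eqref{sp-XPsi} corresponds to the skein relation applied to the two diagrams $D_+,D_-$ (a cup in factors $1,3$ with the $k=2$ strand passing over versus under), after applying $\tau^{\otimes 3}\circ\iota_{(1,1,1)}$; for the symplectic Kauffman skein the right-hand side combines $X_{ia}\Psi^{(k,j)}$ and $\Psi^{(i,k)}X_{ja}$ with a \emph{plus} sign (reflecting the sign in $\check R=qP_s-q^{-1}P_a-q^{-2n-1}P_0$), which is consistent with the stated relation. The last two relations come from pre/post-composing $D_0$ and $D_-$ (equivalently the maps $F,B$ of Lemma \ref{PsiX}) with the projector $(P_s+P_0)$ onto the image of $S_q(V)_2$ inside $T(V)_2$ and reading off the coefficients; here, because $S_q(V)$ for $\fsp_{2n}$ kills both $P_a$ and $P_0$, the subspace surviving in $S_q(V)_2$ is only $P_s(T(V)_2)$, which actually simplifies the bookkeeping: the relevant linear combination $D_0-qD_-$ (or its transpose) becomes proportional to a single diagram $D_0'$, yielding $X_{ia}\Psi^{(i,j)}=q\Psi^{(i,j)}X_{ia}$ and its mirror image, with no extra $\Psi^{(i,i)}$ term because there is no $P_0$ contribution.

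The main obstacle is getting the scalars and signs exactly right: one must recompute the action of $\check R$ (with the symplectic eigenvalues $q,-q^{-1},-q^{-2n-1}$) on the explicit bases of $L_0,L_{2\ep_1},L_{\ep_1+\ep_2}$ listed above, determine the precise skein relation satisfied by $\check R$ in this representation (the symplectic Kauffman relation, with its characteristic sign on the cup--cap term), and then carefully translate $\tau^{\otimes m}\circ\iota$ of each diagram into the $X_{ia}$ and $\psi^{(r,s)}_i,\bar\psi^{(r,s)}_i,\Psi^{(s,t)}$ notation, checking against \eqref{aEb} and \eqref{skew3}. This is routine but error-prone; the conceptual content is identical to Lemma \ref{PsiX}, and once the diagrammatic dictionary is set up the four relations drop out by evaluating the skein identity and the $(P_s+P_0)$-projected identities at a basis vector $v_a$.
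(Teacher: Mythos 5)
Your proposal is correct and follows the paper's strategy in all essential respects; the invariance and the first two commutation relations are handled exactly as you describe, by translating to the basis element of $L_0$ and to the Reshetikhin--Turaev skein arguments from Lemma \ref{PsiX}. The one place the paper does something slightly different is the third and fourth relations: rather than running the diagrammatic argument with $D_0,D_-$ and a projector, the paper lifts $\Psi^{(1,2)}$ and $X_{1c}$ to $\cT_2$ and computes $\hat\Psi^{(1,2)}\hat X_{1c}$ directly, using the identity $(\Delta\otimes\id)R=R_{13}R_{23}$ together with the invariance of $\hat\Psi^{(1,2)}$ to replace $R$ by $R^{-1}$, and then applies $\tau^{\otimes 2}$ together with $\tau\circ\check R^{\pm 1}=q^{\pm 1}\tau$ on $T(V)_2$. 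This algebraic lift amounts to the same thing as your diagrammatic plan --- both hinge on the fact you correctly isolate, namely that in the symplectic case $S_q(V)_2=P_s(T(V)_2)$ (both $P_a$ and $P_0$ are killed), so $\check R$ acts as the scalar $q$ there --- but the paper's version avoids setting up the $F,B,D'_0$ machinery and gives the scalar $q$ directly. Two small things worth tidying in your write-up: you first cite the projector $(P_s+P_0)$ before correcting to $P_s$; just write $P_s$ from the start for $\fsp_{2n}$. Also your remark that ``there is no extra $\Psi^{(i,i)}$ term'' can be sharpened: in the symplectic algebra $\Psi^{(i,i)}$ is identically zero in $S_q(V)$, being precisely the image of the defining relation $\sum_i(q^{n-i+1}v_iv_{-i}-q^{i-n-1}v_{-i}v_i)=0$.
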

\begin{proof} The invariance of the elements $\Psi^{(i, j)}$ is clear.
Moreover the first and second relations are proved in exactly the same way as the
second and third relations in \eqref{XPsi}, using the skein
relations satisfied by the $R$-matrices.

To prove the other two relations, we need only consider the case
$i=1$ and $j=2$. The restrictions of $\tau^{\otimes 2}$ to
the subspaces
$1\otimes T(V)_1\oplus T(V)_1\otimes 1$ and $T(V)_1\otimes T(V)_1$
are all injective. Hence we have unique preimages
$\hat{X}_{i a}\in 1\otimes T(V)_1\oplus
T(V)_1\otimes 1$ ($i=1, 2$) and $\hat\Psi^{(1, 2)}\in T(V)_1\otimes
T(V)_1$ of $X_{i a}$ and $\Psi^{(1, 2)}$
respectively. Write $\hat\Psi^{(1,2)}=
\sum_{a,b} C_{a b} \hat X_{1 a}\hat X_{2 b}$, where $C_{a b}\in
\cK$, and let $\alpha_t,\beta_t\in\Uq$ be such that the
universal $R$-matrix $R=\sum_t
\alpha_t\otimes\beta_t$. Then
\[\hat\Psi^{(1, 2)}\hat{X}_{1 c} = \sum_{t} \sum_{a, b} C_{a b}
\hat{X}_{1 a} \beta_t(\hat X_{1 c}) \alpha_t(\hat X_{2 b}),\]
Write
$R^{-1}=\sum_t \bar\alpha_t\otimes\bar\beta_t$. Then using the
defining property $(\Delta\otimes \id)R= R_{1 3} R_{2 3}$ of $R$ and
also the fact that $\hat\Psi^{(1, 2)}$ is $\Uq$-invariant, one
shows that
\[
\hat\Psi^{(1, 2)}\hat{X}_{1 c} = \sum_{t} \sum_{a, b} C_{a b}
\bar\alpha_t(\hat X_{1 a}) \bar\beta_t(\hat X_{1 c})\hat X_{2 b}.
\]
It follows from the definition of $S_q(V)$ that $\tau \check{R}(t_2)
= q \tau(t_2) $ and $ \tau \check{R}^{-1}(t_2) = q^{-1} \tau(t_2)$
for all $t_2\in T(V)_2$. Thus
\begin{eqnarray*}
\tau^{\otimes 2} (\hat\Psi^{(1, 2)}\hat{X}_{1 c}) &= q^{-1}
\tau^{\otimes 2}( \sum_{a, b} C_{a b}\hat X_{1 c} \hat X_{1 a} \hat
X_{2 b}) &= \tau^{\otimes 2}(\hat X_{1 c} \hat\Psi^{(1, 2)}).
\end{eqnarray*}
The third relation of \eqref{sp-XPsi} follows immediately, and the
fourth relation can be proved similarly.
\end{proof}

It is straightforward to derive the following commutation relations
among the elements $\Psi^{(i, j)}$ from Lemma \ref{le:sp-XPsi}.
\begin{proposition} \label{relations:generators-C} Assume that $i<j$ and $k<l$.
\begin{enumerate}
\item If $l\ne j$, then
$\Psi^{(i, j)} \Psi^{(i, l)} = q^{-1} \Psi^{(i, l)} \Psi^{(i, j)}.$
\item \label{2} If $i<k$, then
$\Psi^{(i, j)}\Psi^{(k, j)}=q^{-1} \Psi^{(k, j)}\Psi^{(i, j)}.$
\item If $i<k$ and $j>l$, then
$\Psi^{(i, j)}\Psi^{(k, l)}= \Psi^{(k, l)}\Psi^{(i, j)}.$
\item If $i<k$ and $j<l$, then
\[\Psi^{(k, l)}\Psi^{(i, j)} - \Psi^{(i, j)}\Psi^{(k, l)} =
(q-q^{-1})\left( \Psi^{(i, l)}\Psi^{(k, j)} +\Psi^{(i, k)}\Psi^{(j,
l)}\right).\]
\end{enumerate}
\end{proposition}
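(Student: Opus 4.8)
The plan is to derive all four relations of Proposition \ref{relations:generators-C} directly from the commutation relations between the $X_{ka}$ and the $\Psi^{(i,j)}$ recorded in Lemma \ref{le:sp-XPsi}, exactly as was done in the orthogonal cases to pass from Lemma \ref{PsiX} to Lemma \ref{relations:generators-even}. The key observation is that each $\Psi^{(i,j)}$ is itself a quadratic expression in the $X$'s, namely $\Psi^{(s,t)}=\sum_i\left(q^{n+1-i}X_{si}X_{t,-i}-q^{i-n-1}X_{s,-i}X_{ti}\right)$, so one moves a whole $\Psi$ past another $\Psi$ by moving it past each $X$ occurring in the second one, one factor at a time, and collecting terms.

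First I would prove (1). Here the two theta's share the first index $i$; with $j\neq l$ we use the third relation of \eqref{sp-XPsi}, $X_{ia}\Psi^{(i,j)}=q\,\Psi^{(i,j)}X_{ia}$ for $i<j$, which holds whenever the second superscript of $\Psi$ exceeds the common first index; applying it to each of the $X_{i,\pm k}$ appearing in $\Psi^{(i,l)}$ gives $\Psi^{(i,j)}\Psi^{(i,l)}=q^{-1}\Psi^{(i,l)}\Psi^{(i,j)}$ once one is careful about which factor plays the role of ``$X_{ia}$''. (If $j<l$ one reads the relation one way, if $j>l$ the other; either way the scalar is $q^{\pm1}$, and one picks the normalisation consistent with $i<j,\ i<l$.) Statement (2), where the theta's share the second index $j$ and $i<k$, is entirely analogous but uses the fourth relation of \eqref{sp-XPsi}, $\Psi^{(i,j)}X_{ja}=q\,X_{ja}\Psi^{(i,j)}$, applied to the $X_{j,\pm\cdot}$ inside $\Psi^{(k,j)}$. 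For (3), with $i<k$, $l<j$ so that the index sets are ``nested the wrong way'' ($k,l$ strictly between $i$ and $j$ is false — rather $i<k\le l<j$), one uses the first relation of \eqref{sp-XPsi}, $X_{ka}\Psi^{(i,j)}=\Psi^{(i,j)}X_{ka}$, valid because every $X$ inside $\Psi^{(k,l)}$ has its band index strictly between $i$ and $j$; so $\Psi^{(i,j)}$ commutes with each such $X$ and hence with $\Psi^{(k,l)}$.

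The substantive case is (4), with $i<k$ and $j<l$, i.e. the ``crossing'' configuration $i<k<j<l$ wait — actually $i<k$ and $j<l$ with no assumed order between $k$ and $j$; the relevant subcase producing the commutator is the genuinely interlaced one, and there one must use the second relation of \eqref{sp-XPsi}, $X_{ka}\Psi^{(i,j)}-\Psi^{(i,j)}X_{ka}=(q-q^{-1})\left(X_{ia}\Psi^{(k,j)}+\Psi^{(i,k)}X_{ja}\right)$ for $i<k<j$. Moving $\Psi^{(i,j)}$ past $\Psi^{(k,l)}$ means moving it past the sum $\sum(q^{n+1-i}X_{ki}X_{l,-i}-\dots)$; each commutator term spawns a piece $(q-q^{-1})X_{ia}\Psi^{(k,j)}(\cdots)$ and a piece $(q-q^{-1})\Psi^{(i,k)}X_{ja}(\cdots)$, and reassembling the $X$'s with the appropriate $q$-powers into $\Psi^{(i,l)}\Psi^{(k,j)}$ and $\Psi^{(i,k)}\Psi^{(j,l)}$ respectively yields the stated identity. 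The main obstacle I anticipate is precisely this bookkeeping: tracking how the $q^{n+1-i}$ versus $q^{i-n-1}$ weights in the definition of $\Psi$ recombine, ensuring the leftover cross-terms telescope correctly, and verifying that the apparently-dangerous terms where the moving $X_{ia}$ or $X_{ja}$ meets factors with the same band index (where relations \eqref{aEb} with their extra $\bar\psi$ and $\Psi$ corrections intervene) in fact cancel or get absorbed into the quoted right-hand side. A clean way to organise this, and the one I would actually carry out, is to note that all of Lemma \ref{le:sp-XPsi} can be encoded diagrammatically — $\Psi^{(i,j)}$ is the cup joining bands $i$ and $j$, and relations \eqref{sp-XPsi} are the Reshetikhin–Turaev skein and sliding moves — so that (1)--(4) become identities among $(0,4)$-tangle diagrams already essentially verified in the proof of Theorem \ref{FFT-even}; this reduces the symplectic case to checking that the scalar factors ($q$ rather than $q^{-1}$, and the sign $-q^{-1-2n}$ in \eqref{skew3}) propagate correctly, which is routine once the diagrammatic skeleton is in place.
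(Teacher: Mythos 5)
The overall strategy — deriving all four relations from the $X$-versus-$\Psi$ commutation relations of Lemma \ref{le:sp-XPsi}, exactly as the orthogonal analogues \ref{relations:generators-even} and \ref{relations:generators-odd} were obtained from Lemmas \ref{PsiX} and \ref{PsiX-odd} — is indeed what the paper intends (it dismisses the derivation as ``straightforward''). However, your treatment of case (3) contains a genuine logical error, and case (1) glosses over a necessary step.

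For (3), with $i<k<l<j$, you propose to commute $\Psi^{(i,j)}$ past each factor $X_{k a}$, $X_{l b}$ of $\Psi^{(k,l)}$, invoking the \emph{first} relation of \eqref{sp-XPsi} ``because every $X$ inside $\Psi^{(k,l)}$ has its band index strictly between $i$ and $j$.'' But the first relation of \eqref{sp-XPsi} requires precisely the \emph{opposite}: it applies when $k<i<j$ or $i<j<k$, i.e.\ when the band index lies \emph{outside} $[i,j]$. When the band index lies strictly between $i$ and $j$, it is the \emph{second} relation that applies, and it carries nontrivial correction terms $(q-q^{-1})(X_{ia}\Psi^{(k,j)}+\Psi^{(i,k)}X_{ja})$ — so $\Psi^{(i,j)}$ does \emph{not} simply slide past $X_{k a}$. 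The fix is to move the other $\Psi$: the constituents of $\Psi^{(i,j)}$ have band indices $i$ and $j$, and since $i<k<l<j$ both lie \emph{outside} $(k,l)$, so the first relation of \eqref{sp-XPsi} really does apply to $X_{ia}\Psi^{(k,l)}$ and $X_{ja}\Psi^{(k,l)}$, giving the commutativity directly. Your conclusion is right but your argument, as written, would in fact force you into the corrected second relation.

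A related (milder) issue affects (1). You only mention moving $\Psi^{(i,j)}$ past the $X_{i,\pm r}$ factors of $\Psi^{(i,l)}$ via the third relation, but $\Psi^{(i,l)}$ also contains $X_{l,\pm r}$ factors, and you say nothing about those. When $j<l$ they commute past $\Psi^{(i,j)}$ by the first relation and your argument goes through; when $l<j$ they do \emph{not} commute and the second relation's correction terms intervene, so the naive factor-counting fails. (The clean route, parallel to the fix for (3), is to slide $\Psi^{(i,l)}$ past the constituents $X_{ir},X_{j,\mp r}$ of $\Psi^{(i,j)}$: the $X_{j\cdot}$ factors commute whenever $j\ne l$, and the $X_{i\cdot}$ factors give the single scalar $q$.) Your remarks on (2) and (4) are essentially correct, and the diagrammatic reformulation you sketch at the end is a legitimate alternative, but as stated it is only a sketch and does not substitute for the missing steps above.
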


The following result is the quantum analogue of the first
fundamental theorem of invariant theory for $\Uq(\fsp_{2n})$.
Its proof is a straightforward adaptation of that of Theorem \ref{o-FFT}.
\begin{theorem}\label{FFT-sp}
The subalgebra $\cA_m^{\Uq(\fsp_{2n})}$ of
$\Uq(\fsp_{2n})$-invariants in $\cA_m$ is generated by the elements
$\Psi^{(i, j)}$ ($i< j$) and the identity.
\end{theorem}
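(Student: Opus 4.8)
The plan is to follow the template established in the proof of Theorem~\ref{FFT-even} (the even orthogonal case), making the modifications necessitated by the symplectic $R$-matrix. First I would set $A=\cA_m^{\Uq(\fsp_{2n})}$ and $B=\cT_m^{\Uq(\fsp_{2n})}$; since both $\cT_m$ and $\cA_m$ are semisimple as $\Uq$-modules, the surjection $\tau^{\otimes m}:\cT_m\to\cA_m$ restricts to a surjection $B\to A$, and it suffices to analyse $B$ degree by degree. For a multidegree ${\bf d}=(d_1,\dots,d_m)$ with $|{\bf d}|=\sum d_i$, the weight considerations show $B_{\bf d}=0$ when $|{\bf d}|$ is odd, while for $|{\bf d}|=2k$ the canonical isomorphism $\Hom_{\Uq(\fsp_{2n})}(\cK,V^{\otimes 2k})\xrightarrow{\sim}B_{\bf d}$, $\phi\mapsto\phi(1)$, together with the identification $\Hom_{\Uq(\fsp_{2n})}(\cK,V^{\otimes 2k})\cong\End_{\Uq(\fsp_{2n})}(V^{\otimes k})$ from \S\ref{subsec:canon} and the analogue of Theorem~\ref{FT-O} for the symplectic group (the relevant endomorphism algebra is a quotient of the BMW algebra; cf.\ \cite{LZ1}), lets me regard elements of $B$ as linear combinations of $(0,2k)$-tangle diagrams.

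Next I would run the diagrammatic reduction. The symplectic $\check R$ satisfies a Kauffman-type skein relation, so every $(0,2k)$-tangle diagram may be rewritten as a linear combination of diagrams in which any two strands cross at most once (with a fixed crossing sign), no strand self-crosses, and loops are removed at the cost of a factor $\dim_q V$. Dividing $[1,2k]$ into bands of lengths $d_1,\dots,d_m$ as before, a crossing both of whose strands have an endpoint in a common band can be resolved via the skein relation into an uncrossed term plus a ``cup-cap'' term, and after applying $\tau^{\otimes m}$ this does not change the element of $A_{\bf d}$; hence I may further assume strands sharing a band do not cross. Such a diagram factorises as $\zeta_1\cdots\zeta_t$ with each $\zeta_i$ crossingless and loopless, and since (by Lemma~\ref{le:sp-XPsi}) $\Psi^{(i,j)}$ commutes with $X_{ka}$ for $k<i<j$ or $i<j<k$, each $\tau^{\otimes m}(\zeta_i)$ breaks into factors $\tau^{\otimes m}(\zeta_{i,j})$ where each $\zeta_{i,j}$ has all strand-endpoints in just two bands $k,l$.

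The core step is then an induction on the number of strands of such a $\zeta_{i,j}$: the base case is a single strand joining bands $k$ and $l$, whose image under $\tau^{\otimes m}$ is precisely $\Psi^{(k,l)}$ (up to the normalising scalar in the definition of the basis of $L_0$); the inductive step peels off the innermost strand, producing an expression of the form $\sum_{a,b}C_{ab}X_{ka}\Psi^{(k,l)}X_{lb}$ (or the mirror image), which by the third and fourth relations of \eqref{sp-XPsi} in Lemma~\ref{le:sp-XPsi} can be rewritten, modulo lower-strand-number terms, as $q^{\pm1}(\Psi^{(k,l)})^2$ plus a scalar multiple of products already known to be polynomials in the $\Psi$'s. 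I should note that, unlike the orthogonal case, zero is not a weight of $V$, so there is no central ``$\Psi^{(i,i)}$'' summand; the symplectic basis of $L_0$ has no diagonal term, and in fact $V^{\otimes 2}$ contains $L_0$ with the appropriate sign, so the induction produces only polynomials in the $\Psi^{(i,j)}$ with $i<j$, matching the statement. The main obstacle I anticipate is exactly this bookkeeping in the inductive step: one must check carefully that the ``straightening'' of $X_{ka}\Psi^{(k,l)}X_{lb}$ using \eqref{sp-XPsi} closes up within the span of monomials in the $\Psi^{(i,j)}$ — in particular that the $(q-q^{-1})\Psi^{(i,a)}\Psi^{(b,j)}$-type correction terms coming from the $i<k<j$ relation all involve strictly fewer strands or are themselves already in the target subalgebra — so that the induction is genuinely well-founded. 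Everything else is a faithful transcription of the argument for Theorem~\ref{o-FFT}.
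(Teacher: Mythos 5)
Your proposal matches the paper's intent exactly: the paper's proof of Theorem~\ref{FFT-sp} is a single sentence referring to ``a straightforward adaptation'' of the proof of Theorem~\ref{o-FFT}, and your transcription --- surjection from $\cT_m$, weight parity, identification of $B_{\bf d}$ with spans of $(0,2k)$-tangle diagrams via the symplectic BMW result, skein reduction to tangles with bands, factorisation of $\zeta_i$ into two-band pieces, and the strand induction closing up via Lemma~\ref{le:sp-XPsi} --- is precisely that adaptation, correctly carried out.

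One small correction to your side remark: the absence of $\Psi^{(i,i)}$ from the generating set is not because zero fails to be a weight of $V$ (zero is also not a weight of the natural $\so_{2n}$-module, yet there $\Psi^{(i,i)}\neq 0$). The real reason is that in the symplectic case $L_0$ sits in the eigenspace of $\check R$ with the ``alternating'' (negative) eigenvalue $-q^{-2n-1}$, so it is contained in the ideal $\cI_q$ defining $S_q(V)$; equivalently, the last relation of Lemma~\ref{vv-sp} forces $\Psi^{(i,i)}=0$ in $\cA_m$. This is also why your inductive step actually closes up more cleanly than in the orthogonal case --- the third and fourth relations of \eqref{sp-XPsi} carry no $\Psi^{(i,i)}$-correction term, and cups within a single band simply vanish.
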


\section{Invariant theory for the quantum general linear
group}\label{GL}
In this section, we study the invariant theory of the quantum
general linear group. Our first requirement is a quantum analogue of
$S(\oplus^k V\oplus^l V^*)$, where $V$ is the  natural $\gl_n$-module
and $V^*$ is its dual. There are various ways this could be approached,
all of which turn out to be equivalent.

One way is to follow the construction of Section \ref{general} to
first construct the braided symmetric algebras $S_q(V)$ and $S_q(V^*)$  over $\Uq(\gl_n)$,
and then to form the module algebras $S_q(V)^{\otimes k}$ and
$S_q(V^*)^{\otimes l}$. From these, one
constructs the module algebra $S_q(V)^{\otimes k}\otimes
S_q(V^*)^{\otimes l}$ using Theorem \ref{modulealgebra}, and
this is a quantum analogue of the symmetric algebra over $S(\oplus^k V\oplus^l V^*)$.

Another possibility is to directly construct quantum analogues of
the symmetric algebras of $\oplus^k V$ and of $\oplus^l V^*$.  In this case,
we need to consider the Lie algebras $\gl_n\times \gl_k$ and
$\gl_n\times \gl_l$. Let $\fg=\gl_n\times \gl_k$, and let
$W=V\otimes V^{(k)}$, where $V$ and $V^{(k)}$ are respectively the
natural modules for $\Uq(\gl_n)$ and $\Uq(\gl_k)$. Then $W$ is an
irreducible $\Uq(\fg)$-module, and following \S \ref{general}
we may form the braided
symmetric algebra $S_q(W)$. It is known \cite{Zw} that $W$ is one of
the extremely rare modules such that $S_q(W)$ is flat. That is,
$S_q(W)$ is (linearly) isomorphic
to $S_q(V)^{\otimes k}$. We may similarly consider the irreducible
$\Uq(\fg)$-module $W'=V^*\otimes V^{(l)}$, where now
$\fg=\gl_n\otimes\gl_l$, and construct $S_q(W')$. Then using Theorem \ref{modulealgebra}
we obtain the module algebra $S_q(W)\otimes S_q(W')$ over
$\Uq(\gl_n)$.

The above two constructions are essentially equivalent, and are
equivalent to a third construction, which is conceptually
simpler, and is the one we shall use in the present work. We
now discuss this third construction.

\subsection{Algebra of functions on the quantum general linear group}\label{sect-Uqgl}

The quantum general linear group $\Uq(\gl_N)$ is generated by
$K_b^{\pm 1}$ ($1\le b\le N$), $e_a$ and $f_a$ ($1\le a<N$). The
defining relations are essentially the same as those for
$\Uq(\fsl_N)$ except for those involving the elements $K_b^{\pm 1}$,
which are given by
\[
\begin{aligned}
&K_b e_a K_b^{-1} = (1 + (q-1)\delta_{b a} + (q^{-1}-1)\delta_{b,
a+1}) e_a, \\
&K_b f_a K_b^{-1} = (1 + (q-1)\delta_{b, a+1} +
(q^{-1}-1)\delta_{b a}) f_a,\\
&e_a f_b - f_b e_a = \delta_{a b} \frac{K_a K_{a+1}^{-1} - K_{a+1}
K_a^{-1}}{q-q^{-1}}.
\end{aligned}
\]

Let $\{v_a \mid 1\le a\le N\}$ be the standard basis of
weight vectors of the natural
module $V^{(N)}$ for $\Uq(\gl_N)$, with $\wt(v_i)=\ep_i$, and let $\pi:
\Uq(\gl_N)\longrightarrow \End(V^{(N)})$ be the usual representation of
$\Uq(\gl_N)$ relative to this basis. Define elements $t_{a b}$
($1\le a, b \le N$) of the dual vector space $\Uq(\gl_N)^*$ of
$\Uq(\gl_N)$ by $ \langle t_{a\, b}, x\rangle =\pi(x)_{a\, b}$
(the $(a,b)$ entry of the matrix $\pi(x)$),
$\forall x\in \Uq(\gl_N)$, and call these the cooordinate functions of the
representation $\pi$.

Now the dual space of $\Uq(\gl_N)$ has a natural algebraic structure
with multiplication defined as follows. For any $t, t' \in \Uq(\gl_N)^*$,
$ \langle t\,t',\,x\rangle =\sum_{(x)} \langle t\otimes t', \,
x_{(1)} \otimes x_{(2)}\rangle,$ for all $x\in \Uq(\gl_N). $
Following an idea of \cite{Z98}, we consider the subalgebra $\cM_N$ of
$\Uq(\gl_N)^*$ generated by the elements $t_{a b}$ ($a,b\in[1, N]$).
Then $\cM_N$ has a presentation with generators $t_{a b}$
($a,b\in[1, N]$) and relations
\begin{eqnarray}\label{RTT}
\sum_{a', b'} R_{a a', b b'} t_{a' c} t_{b' d} = \sum_{a', b'} t_{b
b'} t_{a a'} R_{a' c, b' d},
\end{eqnarray}
where the $R_{a a', b b'}$ are the entries of the $R$-matrix,
which in our case has the form
\be\label{glR}
R=1\otimes 1 + (q-1)\sum_{a=1}^N E_{a a}\otimes E_{a a} +
(q-q^{-1})\sum_{a< b}E_{a b}\otimes E_{b a}.
\ee

This algebra has been studied extensively in the literature
(cf., e.g. \cite{Z03,BG,GPS}). We
shall recall a few relevant facts. The algebra
$\cM_N$ is $\Z_+$-graded with $t_{a b}$ of degree $1$. It has
the structure of a bi-algebra with co-multiplication given by
\[
\Delta(t_{a b})=\sum\limits_{c=1}^Nt_{a c}\otimes t_{c b}.
\]

There exist the following left actions $\cR, \cL: \Uq(\gl_N)\otimes
\cM_N \longrightarrow \cM_N$ of $\Uq(\gl_N)$ on $\cM_N$ respectively
defined by
\begin{eqnarray}\label{LR-actions}
\begin{aligned}
\cR: x\otimes f \mapsto \sum_{(f)}  f_{(1)}\langle f_{(2)},
x\rangle, &\quad & \cL: x\otimes f \mapsto \sum_{(f)}\langle
f_{(1)}, S(x)\rangle f_{(2)},
\end{aligned}
\end{eqnarray}
where Sweedler's notation $\Delta(f)=\sum_{(f)}f_{(1)}\otimes
f_{(2)}$ is used for the co-multiplication of $f\in \cM_N$. We
refer to $\cL$ and $\cR$ as left and right translation
respectively. It is obvious but important to observe that the two
actions commute. Both actions preserve the algebraic structure of
$\cM_N$, that is, $\cM_N$ is a $\Uq(\gl_N)$-module algebra with
respect to both the $\cL$ and $\cR$ actions. However, note that
while the $\cR(\Uq(\gl_N))$-module algebra structure
is defined with respect to the co-multiplication $\Delta$,
the $\cL(\Uq(\gl_N))$-module algebra structure  is defined
with respect to the opposite
co-multiplication $\Delta'$.

Denote by $L_\lambda^{(N)}$ the irreducible $\Uq(\gl_N)$-module with
highest weight $\lambda$. The tensor powers of $V^{(N)}$ decompose into
a direct sums of simple modules with highest weight corresponding to
partitions of length less than or equal to $N$, and every
$L_\lambda^{(N)}$ with $\lambda$ such a partition appears in
some tensor power of $V^{(N)}$ as a submodule. It follows that
we have the following multiplicity free decomposition of
$\cM_N$, which may be thought as part of a quantum Peter-Weyl
theorem.
\begin{theorem}\cite{Z03}\label{Peter-Weyl}
As a $\cL\left(\Uq(\gl_N)\right)\otimes
\cR\left(\Uq(\gl_N)\right)$-module,
\[
\cM_N\cong\bigoplus_{\lambda}
\left(L_{\lambda}^{(N)}\right)^*\otimes L_\lambda^{(N)},
\]
where the sum is over all partitions of lengths $\le N$.
\end{theorem}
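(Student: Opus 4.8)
The plan is to mimic the classical Peter–Weyl argument, replacing compact-group harmonic analysis by the representation theory of $\Uq(\gl_N)$ at generic $q$, which is semisimple on the category of finite-dimensional type-$(1,\dots,1)$ modules. First I would set up the $\cL\otimes\cR$-module structure on $\cM_N$ explicitly: for $x,y\in\Uq(\gl_N)$ and $f\in\cM_N$, $(\cL(x)\otimes\cR(y))f=\sum_{(f)}\langle f_{(1)},S(x)\rangle f_{(2)}\langle f_{(3)},y\rangle$, so that $\cM_N$ is an $\Uq(\gl_N)\otimes\Uq(\gl_N)$-module via $\Delta'\otimes\Delta$. The key linear-algebra input is the matrix-coefficient map: for each finite-dimensional $\Uq(\gl_N)$-module $M$ one has a $\Uq(\gl_N)\otimes\Uq(\gl_N)$-equivariant map $c_M\colon M^*\otimes M\to\Uq(\gl_N)^*$, $c_M(\phi\otimes m)\colon x\mapsto\langle\phi,\pi_M(x)m\rangle$, whose image lands in $\cM_N$ when $M$ is a submodule of a tensor power of $V^{(N)}$ (since then $\pi_M(x)$ is expressible in the $t_{ab}$). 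I would first check that $c_M$ is injective for $M$ simple — this uses the fact that a simple module has a faithful matrix representation, i.e. the $t_{ab}^{(M)}$ span a space of dimension $(\dim M)^2$ — and that $c_M(M^*\otimes M)$ depends only on the isomorphism class of $M$.

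The second step is to show that $\cM_N=\sum_\lambda c_{L_\lambda^{(N)}}\bigl((L_\lambda^{(N)})^*\otimes L_\lambda^{(N)}\bigr)$, the sum over partitions of length $\le N$. By definition $\cM_N$ is generated as an algebra by the $t_{ab}$, which are the matrix coefficients of $V^{(N)}$; since the product of matrix coefficients of $M$ and $M'$ is a matrix coefficient of $M\otimes M'$ (using $\Delta(t)$ and the fact that $\pi_{M\otimes M'}=(\pi_M\otimes\pi_{M'})\Delta$), every element of $\cM_N$ is a matrix coefficient of some tensor power $(V^{(N)})^{\otimes r}$. By semisimplicity each such tensor power decomposes as a direct sum of the $L_\lambda^{(N)}$ with $\lambda$ a partition of length $\le N$ appearing with finite multiplicity, and $c_{M\oplus M'}=c_M\oplus c_{M'}$ under the corresponding decomposition $(M\oplus M')^*\otimes(M\oplus M')$; hence the span of all matrix coefficients equals $\sum_\lambda\operatorname{im}c_{L_\lambda^{(N)}}$. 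Conversely, every partition $\lambda$ of length $\le N$ occurs in some tensor power (this is the statement quoted just before the theorem — a consequence of the classification of polynomial-type simples and the fact that $\det_q$-twists are not needed here since $\lambda$ is a genuine partition), so the sum is over exactly this index set.

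The third step — the main obstacle — is establishing directness of the sum, i.e. that distinct $\lambda$ contribute linearly independent subspaces and that each $\operatorname{im}c_{L_\lambda^{(N)}}$ is already the full $(\dim L_\lambda^{(N)})^2$-dimensional isotypic piece. For this I would argue on the $\Uq(\gl_N)\otimes\Uq(\gl_N)$-module side: $\operatorname{im}c_{L_\lambda^{(N)}}\cong(L_\lambda^{(N)})^*\otimes L_\lambda^{(N)}$ is a simple $\Uq(\gl_N)\otimes\Uq(\gl_N)$-module (tensor product of simples over the two tensor factors, generic $q$), and non-isomorphic simples for different $\lambda$ (they have different $\cR$-highest weights, say) can only meet in $0$; moreover a simple submodule of a semisimple module that already appears must appear with its full multiplicity-space, so no cancellation across tensor powers can shrink $\operatorname{im}c_{L_\lambda^{(N)}}$. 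The delicate point is ruling out "accidental" linear relations among matrix coefficients of a fixed $L_\lambda^{(N)}$ coming from different embeddings into tensor powers; this is exactly the injectivity of $c_M$ for $M$ simple, which one proves by a weight/highest-weight-vector argument: if $\sum\phi_i\otimes m_i$ maps to $0$ then pairing against group-like and PBW elements forces all coefficients to vanish, using that $\Uq(\gl_N)$ acts faithfully on the simple module $L_\lambda^{(N)}$ (no nonzero element of $\Uq(\gl_N)$ kills all of $L_\lambda^{(N)}$ for the relevant $\lambda$, because these $\lambda$ exhaust a separating family of simples). Once injectivity and the isotypic-component identification are in hand, the decomposition $\cM_N\cong\bigoplus_\lambda(L_\lambda^{(N)})^*\otimes L_\lambda^{(N)}$ follows, and multiplicity-freeness is automatic from simplicity of the summands as $\cL\otimes\cR$-modules. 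I would cite \cite{Z03} for the parts of this that are already in the literature and only sketch the equivariance and injectivity verifications.
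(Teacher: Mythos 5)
Your proposal is correct in substance and follows the same route the paper sketches: the paper cites \cite{Z03} for this theorem and offers only the two-sentence argument immediately before the statement (tensor powers of $V^{(N)}$ decompose into $L_\lambda^{(N)}$'s, every such $L_\lambda^{(N)}$ appears, ``it follows''); your writeup fills in exactly that sketch with the matrix-coefficient map $c_M$, the closure of matrix coefficients under products via $\Delta$, and the observation that $\mathrm{im}\,c_M$ depends only on the isomorphism class of $M$ (which is what delivers multiplicity-freeness).

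One step should be tightened. Your justification of the injectivity of $c_M$ for simple $M$ appeals to ``$\Uq(\gl_N)$ acts faithfully on $L_\lambda^{(N)}$'' and to the $L_\lambda^{(N)}$ forming a ``separating family.'' Faithfulness is the wrong lever: the map $\Uq(\gl_N)\to\End(L_\lambda^{(N)})$ has an enormous kernel (the algebra is infinite-dimensional, the module is not), and the separating-family remark addresses a different question (faithfulness of the action on all of $\cM_N$, not injectivity of a single $c_{L_\lambda}$). The clean argument is one line: $c_M$ is a nonzero $\cL(\Uq)\otimes\cR(\Uq)$-equivariant map out of $(L_\lambda^{(N)})^*\boxtimes L_\lambda^{(N)}$, which is a simple $\Uq(\gl_N)\otimes\Uq(\gl_N)$-module (each factor is absolutely simple at generic $q$), hence $c_M$ is injective. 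Equivalently, Burnside/Jacobson density gives that $\Uq(\gl_N)$ surjects onto $\End(L_\lambda^{(N)})$, so the matrix coefficients of a fixed simple are already linearly independent; this is what the ``pairing against PBW elements'' intuition is really proving. With that substitution the argument is complete and matches what \cite{Z03} supplies.
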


Given any order on the pairs $(a, b)$ with $a, b\in [1, N]$, one
observes by inspection of the relations \eqref{RTT} that the ordered
monomials in $t_{a b}$ span $\cM_N$. Since $\dim L_\lambda^{(N)}$ is
just the dimension of the irreducible $\gl_N$-module with
highest weight $\lambda$, $\sum_{|\lambda|=j}\left(\dim
L_\lambda^{(N)}\right)^2 =
\begin{bmatrix}N^2+j-1\\ j\end{bmatrix}$, where $|\lambda|$ is the
sum of the parts of $\lambda$. The right side is equal to the number of
ordered monomials of degree $j$ in the $t_{a b}$. It follows that
the ordered monomials are linearly independent, and hence form a
basis of $\cM_N$, as is well known.

Let $\tau$ be the $\cK$-linear algebra anti-automorphism of
$\Uq(\gl_N)$ defined by
\begin{equation}\label{anti-auto}
\tau(e_a)=f_a, \quad \tau(e_a)=f_a, \quad \tau(K_b)=K_b, \quad
\forall a, b.
\end{equation}
Then the map $\tilde\cL: \Uq(\gl_N)\otimes \cM_N \longrightarrow
\cM_N$ defined by $x\otimes f\mapsto \sum_{(f)}\langle f_{(1)},
\tau(x)\rangle f_{(2)}$ gives rise to another action of $\Uq(\gl_N)$ on
$\cM_N$, and $\cM_N$ is a module algebra over $\Uq(\gl_N)$
under this action.

For a fixed positive integer $t\le N$,  the elements $K_j^{\pm 1}$,
$e_i$ and $f_i$ with $1\le j\le t$ and $1\le i<t$ generate a Hopf
subalgebra $\Uq(\gl_t)$ of $\Uq(\gl_N)$. Let $\Upsilon_t$ be the
subalgebra of $\Uq(\gl_N)$ generated by $K_a$ with $t+1 \le a \le
N$. Then $\Upsilon_t$ commutes with the subalgebra $\Uq(\gl_t)$. For
each pair of positive integers $t, s\le N$, we define the following
subalgebra of $\cM_N$:
\begin{eqnarray}\label{Mst}
\cM_{s, t} =
\left(\cM_N\right)^{\cL(\Upsilon_s)\otimes\cR(\Upsilon_t)}.
\end{eqnarray}
\begin{lemma}\label{Mst-structure}
The subalgebra $\cM_{s, t}$ of $\cM_N$ is generated by the elements $t_{i a}$
with $1\le i\le s$ and $1\le a\le t$, which satisfy the following
relations:
\begin{eqnarray} \label{ordered-RTT}
\begin{aligned}
& t_{j b} t_{i a} = \sum_{a', b'=1}^t t_{i b'} t_{j a'} R_{a' b, b'
a}, \quad i<j\le s, \\
& q t_{i a} t_{i b} = \sum_{a', b'=1}^t t_{i b'} t_{i a'} R_{a' a, b'
b},
\end{aligned}
\end{eqnarray}
Furthermore, $\cM_{s, t}$ has a basis consisting of ordered
monomials in these elements.
\end{lemma}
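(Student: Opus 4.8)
The plan is to read off the structure of $\cM_{s,t}$ from the known basis of $\cM_N$ by ordered monomials in the $t_{ab}$, using that the Cartan generators act diagonally. First I would compute the $\cL$- and $\cR$-actions of the $K_c$ on coordinate functions: since $\Delta(t_{ab})=\sum_c t_{ac}\otimes t_{cb}$ and $\pi(K_c)v_d=q^{\delta_{cd}}v_d$, one gets $\cR(K_c)(t_{ab})=q^{\delta_{cb}}t_{ab}$, while $S(K_c)=K_c^{-1}$ gives $\cL(K_c)(t_{ab})=q^{-\delta_{ca}}t_{ab}$. Each $K_c$ being group-like, both $\cL(K_c)$ and $\cR(K_c)$ are algebra automorphisms of $\cM_N$, so a monomial $t_{a_1b_1}\cdots t_{a_rb_r}$ is a common eigenvector: $\cR(K_c)$ scales it by $q^{\#\{j\,:\,b_j=c\}}$ and $\cL(K_c)$ by $q^{-\#\{j\,:\,a_j=c\}}$.

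It follows that the standard basis of $\cM_N$ by ordered monomials in the $t_{ab}$ consists of common eigenvectors for the automorphisms $\cL(K_c)$ ($c>s$) and $\cR(K_c)$ ($c>t$) that generate the actions of $\Upsilon_s$ and $\Upsilon_t$; such a basis monomial is fixed by all of them --- i.e. lies in $\cM_{s,t}$ --- exactly when every left index $a_j$ is $\le s$ and every right index $b_j$ is $\le t$, since $q$ is an indeterminate. Hence $\cM_{s,t}$, which is a subalgebra of $\cM_N$ as the joint fixed-point set of a family of algebra automorphisms, has $\cK$-basis the ordered monomials in the elements $t_{ia}$ with $i\le s$ and $a\le t$; this is the final assertion. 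Since each such $t_{ia}$ is itself fixed ($\cL(K_c)(t_{ia})=t_{ia}$ for $c>s\ge i$ and $\cR(K_c)(t_{ia})=t_{ia}$ for $c>t\ge a$), the subalgebra generated by these elements lies in $\cM_{s,t}$ and already contains the whole basis just described, so it equals $\cM_{s,t}$; in particular the $t_{ia}$ ($i\le s$, $a\le t$) generate $\cM_{s,t}$.

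Finally I would verify the relations \eqref{ordered-RTT} directly from the defining relations \eqref{RTT} of $\cM_N$. Taking in \eqref{RTT} the free indices $(a,b,c,d)=(j,i,b,a)$ with $i<j\le s$, the right-hand side is $\sum_{a',b'}t_{ib'}t_{ja'}R_{a'b,b'a}$ and the left-hand side is $\sum_{p,q}R_{jp,iq}\,t_{pb}t_{qa}$; inserting the explicit $R$-matrix \eqref{glR} and using $i<j$ to kill the off-diagonal contribution collapses this to $t_{jb}t_{ia}$, giving the first relation. Taking instead $(a,b,c,d)=(i,i,a,b)$, the left-hand side is $\sum_{p,q}R_{ip,iq}\,t_{pa}t_{qb}=q\,t_{ia}t_{ib}$, giving the second. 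The shape of \eqref{glR} also shows $R_{a'b,b'a}$ and $R_{a'a,b'b}$ vanish unless $\{a',b'\}\subseteq\{a,b\}\subseteq[1,t]$, so the summations may be truncated to $[1,t]$ as stated. Read as rewriting rules, these relations reorder any monomial in the $t_{ia}$ into the chosen normal form, so together with the linear independence of the ordered monomials established above they constitute a defining set of relations. The only genuine work is the index bookkeeping in this last step --- checking that the chosen instances of \eqref{RTT} reproduce exactly the displayed relations and that no coordinate functions outside the range $i\le s$, $a\le t$ intervene --- which is routine but must be carried out with care; the rest of the argument is essentially formal.
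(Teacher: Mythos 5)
Your proof is correct and follows essentially the same route as the paper: compute the eigenvalues of the Cartan generators under $\cL$ and $\cR$ on the coordinate functions (the paper uses the products $\prod_{a>s}K_a$ and $\prod_{a>t}K_a$ rather than each $K_c$ individually, but this is equivalent), observe that the ordered monomial basis of $\cM_N$ consists of joint eigenvectors, and use transcendence of $q$ to conclude that the fixed subalgebra $\cM_{s,t}$ is spanned by exactly those ordered monomials with all left indices $\le s$ and all right indices $\le t$; the relations are then extracted from the RTT relation \eqref{RTT} by specializing indices and using the explicit form \eqref{glR} of the $R$-matrix. Your extra closing remark that the relations together with the basis claim give a presentation goes slightly beyond what the lemma asserts, but it is not needed for the statement and does not affect correctness.
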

\begin{proof}
Let $\prod_s=\prod_{a=s+1}^N K_a$ and $\prod_t=\prod_{a=t+1}^N K_a$.
Then $f$ belongs to $\cM_{s, t}$ if and only if
$\cL_{\prod_s}(f)=\cR_{\prod_t}(f)=f$. Since
\[ \cL_{\prod_S}(t_{a b}) = \left\{ \begin{array}{l l}
t_{a b}, & a\le s,\\
q^{-1} t_{a b}, & a>s,
\end{array} \right.
\quad \cR_{\prod_t}(t_{a b}) = \left\{ \begin{array}{l l}
t_{a b}, & b\le t,\\
q t_{a b}, & b>t,
\end{array} \right.
\]
we have $t_{a_1 b_1}t_{a_2 b_2}\cdots t_{a_k b_k}\in\cM_{s, t}$ if
and only if $1\le a_i\le s$ and $1\le b\le t$ for all $i$. Such
elements obviously form a basis of $\cM_{s, t}$. This shows that the
subalgebra $\cM_{s, t}$ is generated by the elements $t_{i j}$ with
$1\le i\le s$ and $1\le j\le t$.

The relations \eqref{ordered-RTT} follow from \eqref{RTT} and the
explicit form of the $R$-matrix \eqref{glR}.
\end{proof}

\begin{remark} Let $V^{(t)}$ be the natural module for $\Uq(\gl_t)$
and let $S_q(V^{(t)})$ be the braided symmetric algebra over
$V^{(t)}$.  Consider the algebra $S_q(V^{(t)})^{\otimes s}$
discussed above.  The relations
\eqref{ordered-RTT} show that this algebra is isomorphic to $\cM_{s,
t}$, as was pointed out in \cite{B}.
\end{remark}

The next result is the quantum Howe duality (cf. \cite{Z03}) applied
to the subalgebra $\cM_{s, t}$ of $\cM_N$.
\begin{theorem}\label{truncate}
\begin{enumerate}
\item \label{truncate1} The subalgebra $\cM_{s, t}$ of $\cM_N$ is
a $\cL(\Uq(\gl_s))\otimes \cR(\Uq(\gl_t))$-module algebra, and
\begin{eqnarray}\label{def:Mst}
\cM_{s, t}\cong\bigoplus_{\lambda}
\left(L_{\lambda}^{(s)}\right)^*\otimes L_\lambda^{(t)},
\end{eqnarray}
where the summation is over all partitions of length $\le min(s,
t)$.

\item \label{truncate2} The subalgebra $\cM_{s, t}$ is a module
algebra over $\tilde\cL(\Uq(\gl_s))\otimes
\cR(\Uq(\gl_t))$ and
\begin{eqnarray}
\cM_{s, t}\cong\bigoplus_{\lambda} L_{\lambda}^{(s)}\otimes
L_\lambda^{(t)} \qquad \text{as  $\tilde\cL(\Uq(\gl_s))\otimes
\cR(\Uq(\gl_t))$-module},
\end{eqnarray}
where the range of the summation in $\lambda$ is the same as in part
(\ref{truncate1}).
\end{enumerate}
\end{theorem}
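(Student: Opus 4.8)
The plan is to derive Theorem \ref{truncate} by restricting the quantum Peter--Weyl decomposition of Theorem \ref{Peter-Weyl} along the two commuting torus actions that cut out $\cM_{s,t}$.

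First I would deal with the module algebra claims. By definition $\cM_{s,t}$ is the set of $f\in\cM_N$ fixed by $\cL(\Upsilon_s)$ and $\cR(\Upsilon_t)$; since $\Upsilon_s$ and $\Upsilon_t$ are generated by the grouplike elements $\prod_{a>s}K_a$ and $\prod_{a>t}K_a$, this is a subalgebra of $\cM_N$. The defining relations of $\Uq(\gl_N)$ show at once that the Hopf subalgebra $\Uq(\gl_s)$ (generated by the $K_j$ with $j\le s$ and the $e_i,f_i$ with $i<s$) commutes with $\Upsilon_s$; since $\cL$ and $\cR$ are commuting left actions, for $k\in\Upsilon_s$ and $x\in\Uq(\gl_s)$ one has $\cL_k\cL_x=\cL_{kx}=\cL_{xk}=\cL_x\cL_k$, so $\cL(\Uq(\gl_s))$ preserves $\cM_{s,t}$, and likewise $\cR(\Uq(\gl_t))$ does. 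The module algebra axioms are then inherited from $\cM_N$, giving the first assertion of part (1); for part (2) I would use the identity $\tilde\cL_x=\cL_{S^{-1}\tau(x)}$ together with the fact that $S^{-1}\tau$ is an algebra automorphism of $\Uq(\gl_N)$ restricting to one of $\Uq(\gl_s)$.

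The technical heart of the proof --- and the step I expect to be the main obstacle --- is the branching computation identifying the torus invariants. For a partition $\lambda$ with $\ell(\lambda)\le N$, let $W_\lambda\subseteq L_\lambda^{(N)}$ be the sum of the weight spaces for weights $\mu=\sum_i\mu_i\ep_i$ with $\mu_{s+1}=\dots=\mu_N=0$; this is precisely the space of $\Upsilon_s$-invariants. Because the generators $e_i,f_i$ ($i<s$) and $K_j$ only alter the first $s$ coordinates of a weight, $W_\lambda$ is a $\Uq(\gl_s)$-submodule and $L_\lambda^{(N)}=W_\lambda\oplus W_\lambda^{\perp}$ as $\Uq(\gl_s)$-modules. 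I would then compare $\Uq(\gl_s)$-characters: the tableaux of shape $\lambda$ with entries in $\{1,\dots,s\}$ contribute the Schur polynomial $s_\lambda(x_1,\dots,x_s)$, so by semisimplicity $W_\lambda\cong L_\lambda^{(s)}$ when $\ell(\lambda)\le s$ and $W_\lambda=0$ otherwise; dualising, $\big((L_\lambda^{(N)})^*\big)^{\cL(\Upsilon_s)}\cong(L_\lambda^{(s)})^*$, respectively $0$.

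With this in hand, applying $(\,\cdot\,)^{\cL(\Upsilon_s)\otimes\cR(\Upsilon_t)}$ to the isomorphism of Theorem \ref{Peter-Weyl} (the two actions living on different tensor factors) gives, as $\cL(\Uq(\gl_s))\otimes\cR(\Uq(\gl_t))$-modules,
\[
\cM_{s,t}\;\cong\;\bigoplus_\lambda\big((L_\lambda^{(N)})^*\big)^{\cL(\Upsilon_s)}\otimes\big(L_\lambda^{(N)}\big)^{\cR(\Upsilon_t)}\;\cong\;\bigoplus_{\ell(\lambda)\le\min(s,t)}(L_\lambda^{(s)})^*\otimes L_\lambda^{(t)},
\]
which is part (1). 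For part (2), passing from $\cL$ to $\tilde\cL$ twists the first tensor factor by the automorphism $\theta:=S^{-1}\tau$ of $\Uq(\gl_s)$, so $\cM_{s,t}\cong\bigoplus_\lambda\big((L_\lambda^{(s)})^*\big)^{\theta}\otimes L_\lambda^{(t)}$ as a $\tilde\cL(\Uq(\gl_s))\otimes\cR(\Uq(\gl_t))$-module. To finish I would invoke the non-degenerate contravariant form on $L_\lambda^{(s)}$ characterised by $\langle xu,v\rangle=\langle u,\tau(x)v\rangle$: it yields a $\Uq(\gl_s)$-isomorphism $L_\lambda^{(s)}\overset{\sim}{\longrightarrow}\big((L_\lambda^{(s)})^*\big)^{\theta}$, whence $\cM_{s,t}\cong\bigoplus_{\ell(\lambda)\le\min(s,t)}L_\lambda^{(s)}\otimes L_\lambda^{(t)}$, which is part (2).
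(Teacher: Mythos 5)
Your proof is correct and reaches the same decomposition, but the way you handle the branching step \eqref{Trfunctor} is genuinely different from the paper's. The paper realises $L_\lambda^{(N)}$ as the top quotient of a generalised Verma module $\cV_\lambda=\Uq(\gl_N)\otimes_{\Uq(\fp_t)}L_\lambda^{(N),0}$ for the standard parabolic $\fp_t$, observes that all weights of $\cV_\lambda$ outside $L_\lambda^{(N),0}$ differ from $\lambda$ by sums $\sum_{i\le t<\beta}k_{i\beta}(\ep_i-\ep_\beta)$ and hence cannot be killed by $\prod_t$, and concludes that the $\prod_t$-invariants are exactly the Levi piece $L_\lambda^{(N),0}$; it then runs the mirror argument with the opposite parabolic for the dual. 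You instead work directly inside $L_\lambda^{(N)}$: the $\Upsilon_s$-invariants are the sum $W_\lambda$ of weight spaces whose last $N-s$ coordinates vanish, this is visibly a $\Uq(\gl_s)$-submodule, and its $\Uq(\gl_s)$-character is $s_\lambda(x_1,\dots,x_s)$ by the semistandard tableaux description of weight multiplicities, so by semisimplicity (local finiteness, type $(1,\dots,1)$) it is $L_\lambda^{(s)}$ or zero according as $\ell(\lambda)\le s$ or not. Your argument is more elementary and self-contained (no generalised Verma modules), at the modest cost of invoking the standard weight-multiplicity formula for the type-$(1,\dots,1)$ quantum irreducibles; the paper's parabolic argument is arguably more robust in settings where the combinatorics of weight multiplicities is less explicit. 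For part (2) you make explicit what the paper leaves implicit: you identify $\tilde\cL_x=\cL_{S^{-1}\tau(x)}$ and then use the non-degenerate contravariant ($\tau$-compatible) form on $L_\lambda^{(s)}$ to produce the $\Uq(\gl_s)$-isomorphism $L_\lambda^{(s)}\cong\big((L_\lambda^{(s)})^*\big)^{\theta}$ with $\theta=S^{-1}\tau$. This is a clean and correct way to justify the paper's brief remark that part (2) ``follows from the second statement in part (1).''
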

\begin{proof}
Consider the following assertions, which we shall prove shortly. For all partitions
$\lambda$,
\begin{eqnarray}\label{Trfunctor}
\begin{aligned}
&\left(L_\lambda^{(N)}\right)^{\prod_t} = \left\{
\begin{array}{l l}
L_\lambda^{(t)}, & \text{if the length of $\lambda$ is $\le t$}, \\
0, & \text{otherwise},
\end{array}\right. \\
&\left(\left(L_\lambda^{(N)}\right)^*\right)^{\prod_s} = \left\{
\begin{array}{l l}
\left(L_\lambda^{(s)}\right)^*, & \text{if the length of $\lambda$ is $\le s$}, \\
0, & \text{otherwise}.
\end{array}\right.
\end{aligned}
\end{eqnarray}
Granted \eqref{Trfunctor}, part (\ref{truncate1}) of the theorem
follows immediately.  Part (\ref{truncate2}) then follows from the
second statement in part (\ref{truncate1}).

We turn to the proof of equation \eqref{Trfunctor}, for which we provide
details, since similar arguments will be used later. Consider the
subalgebras $\Uq(\fl_t)$ and $\Uq(\fp_t)$ of $\Uq(\gl_N)$, defined as follows.

$\bullet$ $\Uq(\fl_t)$ is generated by $K_a^{\pm 1}$ (for all $a$)
and $e_b$, $f_b$ ($b\ne t$);

$\bullet$ $\Uq(\fp_t)$ is generated by the elements of $\Uq(\fl_t)$
and $e_t$.\\
The irreducible $\Uq(\gl_N)$-module $L_\lambda^{(N)}$ with highest
weight $\lambda=(\lambda_1, \lambda_2, \dots, \lambda_N)$ may be
constructed as follows. Let $L_\lambda^{(N), 0}$ be the
irreducible $\Uq(\fp_t)$-module with highest weight $\lambda$, and
construct the generalised Verma module
$\cV_\lambda=\Uq(\gl_N)\otimes_{\Uq(\fp_t)}L_\lambda^{(N), 0}$. Then
the quotient of $\cV_\lambda$ by its unique maximal submodule is
isomorphic to $L_\lambda^{(N)}$.

If $\mu$ is a weight of the generalised Verma module but not of
$L_\lambda^{(N), 0}$, then it follows from the definition of
$\cV_\lambda$ that $\mu=\lambda-\sum_{i\le t<\beta} k_{i \beta}
(\epsilon_i-\epsilon_\beta)$ for some nonnegative integers $k_{i
\beta}$ which are not all nonzero. Assume that $\lambda\in\Z_+^N$.
Then $\left(\cV_\lambda\right)^{\prod_t}\ne 0$ if and only if
$\lambda_{t+1}= \lambda_{t+2} = \dots =\lambda_N=0$ and if this holds,
$\left(\cV_\lambda\right)^{\prod_t}=L_\lambda^{(N), 0}$. It
follows that
$\left(L_\lambda^{(N)}\right)^{\prod_t}=L_\lambda^{(N), 0}$. Note
that $L_\lambda^{(N), 0}$ is an irreducible $\Uq(\fl_t)$-module and
$e_t$ acts by $0$. It restricts to the irreducible module over
$\Uq(\gl_t)$ with highest weight $(\lambda_1, \dots,
\lambda_t)$, of which it is the inflation to $\Uq(\fp_t)$.

Now consider the subalgebras $\Uq(\fl_s)$ and
$\Uq(\bar\fp_s)$ of $\Uq(\gl_N)$ defined by

$\bullet$ $\Uq(\fl_s)$ is generated by $K_a^{\pm 1}$ (for all $a$)
and $e_b$, $f_b$ ($b\ne s$);

$\bullet$ $\Uq(\bar\fp_s)$ is generated by the elements of
$\Uq(\fl_s)$ and $f_s$.\\
Let $\Gamma_{-\lambda}^{(N), 0}$ be the irreducible
$\Uq(\fp_t)$-module with lowest weight $-\lambda$, and
$\overline{\cV}_{-\lambda}=\Uq(\gl_N)\otimes_{\Uq(\bar\fp_s)}
\Gamma_{-\lambda}^{(N),0}$ the corresponding generalised Verma module.
Then the quotient of $\overline{\cV}_{-\lambda}$ by its unique
maximal submodule is the irreducible $\Uq(\gl_N)$-module
$\Gamma_{-\lambda}^{(N)}$ with lowest weight $-\lambda$; that is,
$\Gamma_{-\lambda}^{(N)}= \left(L_\lambda^{(N)}\right)^*$.

Now if $\nu$ is a weight of $\overline{\cV}_{-\lambda}$ but not of
$\Gamma_{-\lambda}^{(N), 0}$, then $\nu= -\lambda+\sum_{i\le
s<\beta} k_{i \beta} (\epsilon_i-\epsilon_\beta)$ for some
nonnegative integers $k_{i \beta}$ which are not all nonzero. Assume
$\lambda\in\Z_+^N$ and is dominant. Then arguing as above
one sees that
$\left(\Gamma_{-\lambda}^{(N)}\right)^{\cL_{\prod_s}} \cong
\left(\bar\cV_{-\lambda}\right)^{\cL_{\prod_s}}
=\Gamma_{-\lambda}^{(N), 0}$ if $\lambda=(\lambda_1, \dots,
\lambda_s, 0, \dots, 0)$ and
$\left(\Gamma_{-\lambda}^{(N)}\right)^{\cL_{\prod_s}} =0$ otherwise.
In the former case, $\Gamma_{-\lambda}^{(N), 0}$ restricts to the
irreducible $\Uq(\gl_s)$-module with lowest weight $-(\lambda_1,
\dots, \lambda_s)$, that is, to $\left(L_{(\lambda_1, \dots,
\lambda_s)}^{(s)}\right)^*$.

This completes the proof of Theorem \ref{truncate}.
\end{proof}

Let $\bar{t}_{a b}\in \Uq(\gl_N)^*$ ($a, b\in[1, N]$) be defined by

\[
\langle \bar{t}_{a b}, x\rangle = \langle t_{b a}, S(x)\rangle,
\quad \forall x\in\Uq(\gl_N).
\]
These are the coefficient functions of the dual $\bar\pi$ of the natural
representation $\pi$ of $\Uq(\gl_N)$ in $V^*$. The subalgebra
$\overline{\cM}_N$ of $\Uq(\gl_N)^*$ generated by these elements is
also a bi-algebra with comultiplication defined by
\[ \Delta(\bar{t}_{a b}) = \sum_{c=1}^N \bar{t}_{a c}\otimes
\bar{t}_{c b}.
\]
There exist left actions \[\cL, \cR: \Uq(\gl_N)\otimes
\overline{\cM}_N \longrightarrow \overline{\cM}_N\] defined in the
same way as \eqref{LR-actions}, and with respect to each of these
actions, $\overline{\cM}_N$ is a $\Uq(\gl_N)$-module algebra.

The elements $t_{a b}$ and $\bar{t}_{a b}$ ($a, b\in[1, N]$)
together generate a subalgebra $\cK[GL_q(N)]$ of the $\Uq(\gl_N)^*$,
which is clearly a bi-algebra.  The following relations
hold in $\cK[GL_q(N)]$.
\[ \sum_{c=1}^N t_{a c} \bar{t}_{b c} = \sum_{c=1}^N \bar{t}_{a c}t_{b
c}=\delta_{a b}.
\]
It follows that $\cK[GL_q(N)]$ has the
structure of a Hopf algebra with antipode given by
\[
S(t_{a b})=\bar{t}_{b a}, \quad  S(\bar{t}_{b a})=q^{2(a-b)}t_{a b}.
\]
This Hopf algebra could be thought as the algebra of regular
functions on a quantum analogue of the general linear group.

For each pair of positive integers $t, s\le N$, define the
following subalgebra of $\overline{\cM}_N$ (cf. \eqref{Mst}):
\begin{eqnarray}\label{Mstbar}
\overline{\cM}_{s, t} =
\left(\overline{\cM}_N\right)^{\cL(\Upsilon_s)\otimes\cR(\Upsilon_t)}.
\end{eqnarray}
Note that $\overline{\cM}_{s, t} =\overline{\cM}_N$ if $s=t=N$.
Using the fact that $\bar{t}_{a b}=S(t_{b a})$ we deduce from
\eqref{RTT} the following result.
\begin{lemma}
The subalgebra $\overline{\cM}_{s, t}$ is generated by the elements
$\bar{t}_{i \alpha}$ with $1\le i\le s$ and $1\le a\le t$, which
satisfy the following relations:
\begin{eqnarray} \label{ordered-RTT-dual}
\begin{aligned}
&\bar{t}_{j b}  \bar{t}_{i a} = \sum_{a', b'=1}^t \bar{t}_{i a'}
\bar{t}_{j b'} R^{-1}_{a a', b
b'}, \quad i<j\le s, \\
& q^{-1}\bar{t}_{i b}  \bar{t}_{i a} = \sum_{a', b'=1}^t \bar{t}_{i
a'} \bar{t}_{i b'} R^{-1}_{a a', b b'}.
\end{aligned}
\end{eqnarray}
Furthermore, $\overline{\cM_{s, t}}$ has a basis consisting of
ordered monomials in these elements.
\end{lemma}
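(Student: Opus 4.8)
The plan is to follow the proof of Lemma \ref{Mst-structure} step by step, transporting each ingredient through the antipode $S$ of $\cK[GL_q(N)]$, which is an algebra anti-automorphism satisfying $S(t_{ba})=\bar t_{ab}$.

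First I would pin down the subalgebra as a subspace. Since $\bar t_{ab}$ is the $(a,b)$ coefficient function of the dual representation $\bar\pi$ of $\Uq(\gl_N)$ on $V^*$, and $K_c$ acts on the basis vector $v_a^*$ by $q^{-\delta_{ac}}$, one computes from \eqref{LR-actions} that
\[
\cL_{\prod_s}(\bar t_{ab})=
\begin{cases}\bar t_{ab}, & a\le s,\\ q\,\bar t_{ab}, & a>s,\end{cases}
\qquad
\cR_{\prod_t}(\bar t_{ab})=
\begin{cases}\bar t_{ab}, & b\le t,\\ q^{-1}\bar t_{ab}, & b>t,\end{cases}
\]
where $\prod_s=\prod_{a=s+1}^N K_a$ and $\prod_t=\prod_{a=t+1}^N K_a$ (only the fact that the scalar is $\ne 1$ precisely when the relevant index exceeds $s$, resp. $t$, will be used). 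Because $S$ restricts to an algebra anti-isomorphism $\cM_N\to\overline{\cM}_N$, the ordered monomials in the $\bar t_{ab}$ form a basis of $\overline{\cM}_N$; such a monomial lies in the joint fixed space $\overline{\cM}_{s,t}=(\overline{\cM}_N)^{\cL(\Upsilon_s)\otimes\cR(\Upsilon_t)}$ of \eqref{Mstbar} exactly when every factor $\bar t_{a_ib_i}$ satisfies $a_i\le s$ and $b_i\le t$. Hence the ordered monomials in the $\bar t_{i\alpha}$ with $1\le i\le s$, $1\le\alpha\le t$ form a basis of $\overline{\cM}_{s,t}$, and in particular these elements generate the subalgebra.

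Next I would obtain the relations \eqref{ordered-RTT-dual}. Applying the anti-automorphism $S$ to the defining relations \eqref{RTT} of $\cM_N$ and using $S(t_{ba})=\bar t_{ab}$ gives, after relabelling indices, a quadratic identity among the $\bar t_{ab}$ whose coefficients are still entries of $R$; multiplying this identity through by suitable entries of $R^{-1}$ and summing (so that $\sum R^{-1}\cdot R$ collapses to the identity) rearranges it into the form \eqref{ordered-RTT-dual}, now with entries of $R^{-1}$ as written. Because the $R$-matrix \eqref{glR} has the property that $R^{-1}_{aa',bb'}$ vanishes unless $\{a',b'\}=\{a,b\}$ as (multi)sets, restricting all free indices to $1\le i\le s$ (rows) and $1\le\alpha\le t$ (columns) keeps the relation inside the generating set, and yields precisely \eqref{ordered-RTT-dual}. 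Finally, these relations suffice: using them, every monomial in the $\bar t_{i\alpha}$ can be reduced to a $\cK$-linear combination of ordered monomials, which we have just shown to be linearly independent, so there are no further relations.

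The main obstacle I anticipate is the index bookkeeping in the third paragraph, namely carefully tracking how $R^{-1}$ appears when the relations \eqref{RTT} are pushed through the antipode, and verifying that the resulting quadratic relations, once the indices are restricted to $[1,s]\times[1,t]$, are self-contained --- i.e.\ that no $\bar t_{ab}$ with $a>s$ or $b>t$ is forced to appear. The (block-)triangular shape of $R$ in \eqref{glR} is what guarantees this closure, and the analogous phenomenon already occurred, for the $t_{ab}$, in the proof of Lemma \ref{Mst-structure}.
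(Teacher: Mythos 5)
Your proposal is correct and follows exactly the route the paper indicates. The paper gives no explicit proof of this lemma; the sentence preceding it, ``Using the fact that $\bar{t}_{ab}=S(t_{ba})$ we deduce from \eqref{RTT} the following result,'' is all that is offered, and your argument is a faithful expansion of that remark: you push the $R$-matrix relations \eqref{RTT} through the algebra anti-automorphism $S$ (producing the quadratic identities with $R$ on the ``wrong'' side), contract with $R^{-1}$ to isolate $\bar t_{jb}\bar t_{ia}$, and use the sparsity pattern of $R^{-1}$ from \eqref{glR} to see that the relations close on indices in $[1,s]\times[1,t]$. Your identification of $\overline{\cM}_{s,t}$ as the span of the appropriate monomials, via the eigenvalues of $\cL_{\prod_s}$ and $\cR_{\prod_t}$ on $\bar t_{ab}$, likewise mirrors the proof of Lemma \ref{Mst-structure}, with the expected sign change in the exponents of $q$ coming from the dual representation; and transporting the monomial basis of $\cM_N$ through the anti-isomorphism $S$ settles linear independence. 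This is the intended argument, carried out correctly and in more detail than the paper records.
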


Denote by $\overline{V}^{(t)}$ the dual of the natural module for
$\Uq(\gl_t)$, and let $S_q(\overline{V}^{(t)})$ be the braided
symmetric algebra over $\overline{V}^{(t)}$. Define algebraic
structures on tensor powers of
$\overline{S}_q:=S_q(\overline{V}^{(t)})$ recursively by
\begin{eqnarray}
\mu_{k+1}:=\mu_k\otimes PR^{-T}\otimes \mu_1:
\overline{S}_q^{\otimes k}\otimes \overline{S}_q\longrightarrow
\overline{S}_q^{\otimes k}\otimes \overline{S}_q,
\end{eqnarray}
where $\mu_1$ denotes the multiplication of $\overline{S}_q$, and
$R^{-T}=\sum_s \bar\beta_s\otimes\bar\alpha_s$ if we write the
inverse of the R-matrix as $R^{-1}=\sum_s
\bar\alpha_s\otimes\bar\beta_s$. It is easy to show that $R^{-T}$
also satisfies all the relations \eqref{R1} and \eqref{R2}. Thus
Theorem \ref{modulealgebra} and Lemma \ref{hyper-associativity} are
valid when $R$ is replaced by $R^{-T}$. It follows that
$({S_q(\overline{V}^{(t)})}^{\otimes k}, \mu_k)$ has the
structure of a module algebra for $\Uq(\gl_t)$ for any $k$.

\begin{corollary}
(1) For any fixed $i$, the subalgebra of $\overline{\cM}_{s, t}$ generated by the
elements $\bar{t}_{i a}$ ($1\le a\le t$) is
isomorphic to $S_q(\overline{V}^{(t)})$.

(2) As a $\Uq(\gl_t)$-module algebra, $\overline{\cM}_{s, t}$
is isomorphic to ${S_q(\overline{V}^{(t)})}^{\otimes s}$ with
multiplication $\mu_s$.
\end{corollary}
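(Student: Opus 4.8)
The plan is to imitate the argument for $\cM_{s,t}$ indicated in the remark following Lemma~\ref{Mst-structure}, with $R$ replaced by $R^{-T}$ throughout. For part (1), fix $i\in[1,s]$ and let $\{\bar v_a\mid a\in[1,t]\}$ be the standard weight basis of $\overline{V}^{(t)}$. I would first produce an algebra homomorphism $\phi_i\colon S_q(\overline{V}^{(t)})\lr\overline{\cM}_{s,t}$ with $\bar v_a\mapsto\bar t_{i\,a}$. That $\phi_i$ is well defined amounts to checking that the $\bar t_{i\,a}$ obey the defining relations of $S_q(\overline{V}^{(t)})$; but the braiding on $\overline{V}^{(t)}\otimes\overline{V}^{(t)}$ is given by $PR^{-T}$ (the transpose--inverse relation for $R$-matrices of dual modules, which is precisely why $\mu_k$ is built from $R^{-T}$), so the ideal defining $S_q(\overline{V}^{(t)})$ is generated by the relations obtained from the second family in \eqref{ordered-RTT-dual} upon replacing $\bar t_{i\,a}$ by $\bar v_a$, and these hold for the $\bar t_{i\,a}$ by that very formula. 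By construction $\phi_i$ maps onto the subalgebra $\langle\bar t_{i\,a}\mid a\in[1,t]\rangle$.

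To see $\phi_i$ is injective I would compare Hilbert series. The braided symmetric algebra $S_q(\overline{V}^{(t)})$ of the dual natural $\Uq(\gl_t)$-module is flat (this is elementary: the relations are $q$-commutation relations with the PBW property), so $\dim S_q(\overline{V}^{(t)})_k=\begin{bmatrix}t+k-1\\k\end{bmatrix}$ for all $k$. On the other hand, by the preceding Lemma the ordered monomials in the $\bar t_{i\,a}$ ($1\le i\le s$, $1\le a\le t$) form a basis of $\overline{\cM}_{s,t}$, so for a single fixed $i$ the ordered monomials in $\bar t_{i\,1},\dots,\bar t_{i\,t}$ are linearly independent, and there are $\begin{bmatrix}t+k-1\\k\end{bmatrix}$ of them in degree $k$. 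Hence $\phi_i$ is a degree-preserving surjection between graded vector spaces with finite-dimensional homogeneous components of equal dimension, and is therefore an isomorphism. This proves (1).

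For (2), I would assemble the $\phi_i$ into a single map $\Phi\colon\bigl(S_q(\overline{V}^{(t)})^{\otimes s},\mu_s\bigr)\lr\overline{\cM}_{s,t}$ sending $\bar v_a$ in the $i$th tensor factor to $\bar t_{i\,a}$. By Theorem~\ref{modulealgebra} and Lemma~\ref{hyper-associativity}, which are valid with $R$ replaced by $R^{-T}$ as noted in the text, the source is a $\Uq(\gl_t)$-module algebra whose defining relations are exactly the braided symmetric relations within each tensor slot together with the twisted cross-relations between slots $i<j$ coming from the factor $PR^{-T}$ in $\mu_s$ (this is \eqref{Sm1} with $\check R$ replaced by $PR^{-T}$). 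The former match the second family of \eqref{ordered-RTT-dual} by part (1), and the latter match the first family of \eqref{ordered-RTT-dual}; so $\Phi$ is a well-defined algebra homomorphism. It is surjective since the $\bar t_{i\,a}$ generate $\overline{\cM}_{s,t}$, and injective by a multigraded Hilbert series comparison as in part (1): the $\mathbf d$-component of the source has dimension $\prod_i\begin{bmatrix}t+d_i-1\\d_i\end{bmatrix}$ by flatness of each factor, which equals the number of ordered monomials of multidegree $\mathbf d$ in $\overline{\cM}_{s,t}$. Finally, $\Phi$ intertwines the $\Uq(\gl_t)$-actions: under right translation $\cR$, and using $\Delta(\bar t_{a\,b})=\sum_c\bar t_{a\,c}\otimes\bar t_{c\,b}$, the span of $\{\bar t_{i\,a}\mid a\in[1,t]\}$ is isomorphic to $\overline{V}^{(t)}$ as a $\Uq(\gl_t)$-module, and the $\cR$-module-algebra structure on $\overline{\cM}_{s,t}$ is defined with respect to $\Delta$, matching the module algebra structure of $(S_q(\overline{V}^{(t)})^{\otimes s},\mu_s)$. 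The one point requiring care — and the main thing to get right — is the bookkeeping of $R$ versus $R^{-1}$ versus $R^{-T}$: once one verifies that the braiding on $\overline{V}^{(t)}\otimes\overline{V}^{(t)}$ is governed by $PR^{-T}$ and that the first family of \eqref{ordered-RTT-dual} is indeed the cross-relation produced by $\mu_s$, everything else is formally identical to the case of $\cM_{s,t}$.
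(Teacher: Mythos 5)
Your argument is correct and follows the paper's approach — the paper's entire proof is the single sentence ``The second relation in \eqref{ordered-RTT-dual} implies (1), while the first relation implies (2),'' and you simply spell out why: the second family of relations in \eqref{ordered-RTT-dual} gives exactly the braided-symmetric-algebra relations within each fixed tensor slot, the first family gives exactly the $\mu_s$ cross-relations between slots, and the resulting surjection is an isomorphism by the PBW/Hilbert-series comparison against the ordered-monomial basis of $\overline{\cM}_{s,t}$. Your write-up is a faithful expansion of the paper's terse proof rather than a different argument.
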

\begin{proof}
The second relation in \eqref{ordered-RTT-dual} implies (1),
while the first relation implies (2).
\end{proof}

The following result is now clear.
\begin{theorem}\label{truncate:barM}
The subalgebra $\overline{\cM}_{s, t}$ is generated by the elements
$\bar{t}_{i j}$ with $1\le i\le s$ and $1\le j\le t$, and has a
basis consisting of ordered monomials in these elements.
Furthermore, $\overline{\cM}_{s, t}$ is a
$\Uq(\gl_s)\otimes \Uq(\gl_t)$-module algebra via the action
$\cL\otimes \cR$ and has module decomposition
\begin{eqnarray}\label{def:barMst}
\overline{\cM}_{s, t}\cong\bigoplus_{\lambda} L_\lambda^{(s)}\otimes
\left(L_{\lambda}^{(t)}\right)^*,
\end{eqnarray}
where the summation is over all partitions of length $\le min(s,
t)$.
\end{theorem}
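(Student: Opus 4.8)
The plan is to obtain the statement from the results already in place, essentially by running the proof of Theorem~\ref{truncate} with $\cM_N$ replaced by $\overline{\cM}_N$ and the coordinate functions $t_{ab}$ by $\bar t_{ab}$. The assertions on generators and on the ordered-monomial basis are exactly the content of the Lemma immediately preceding, so nothing new is required there.

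For the module algebra structure, I would note that $\overline{\cM}_N$ is a $\Uq(\gl_N)$-module algebra under each of the two commuting actions $\cL$ and $\cR$ of \eqref{LR-actions}. Restricting $\cL$ to the Hopf subalgebra $\Uq(\gl_s)$ and $\cR$ to $\Uq(\gl_t)$, and using that $\Upsilon_s$ commutes with $\Uq(\gl_s)$ while $\Upsilon_t$ commutes with $\Uq(\gl_t)$, the invariant subspace $\overline{\cM}_{s,t}=(\overline{\cM}_N)^{\cL(\Upsilon_s)\otimes\cR(\Upsilon_t)}$ is stable under $\cL(\Uq(\gl_s))\otimes\cR(\Uq(\gl_t))$; being also a subalgebra, it is thereby a $\Uq(\gl_s)\otimes\Uq(\gl_t)$-module algebra via $\cL\otimes\cR$.

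The substantive point is the decomposition \eqref{def:barMst}. First I would establish the analogue of Theorem~\ref{Peter-Weyl} for $\overline{\cM}_N$: it is generated by the coefficient functions of the dual representation $\bar\pi$, the tensor powers $\bigl((V^{(N)})^*\bigr)^{\otimes r}$ decompose into the modules $(L_\lambda^{(N)})^*$ with $\ell(\lambda)\le N$, each occurring with its classical multiplicity, and the dimension-counting argument used for $\cM_N$ (cf.\ \cite{Z03}) then gives
\[
\overline{\cM}_N\;\cong\;\bigoplus_{\ell(\lambda)\le N} L_\lambda^{(N)}\otimes\bigl(L_\lambda^{(N)}\bigr)^{*}
\]
as an $\cL(\Uq(\gl_N))\otimes\cR(\Uq(\gl_N))$-module, with $\cL$ acting on the first factor and $\cR$ on the second (here one uses $M^{**}\cong M$ via the $K_{2\rho}$-twist of \S\ref{subsec:canon}). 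Passing to $\cL(\Upsilon_s)\otimes\cR(\Upsilon_t)$-invariants summand by summand, the generalised Verma module computation behind \eqref{Trfunctor} --- now applied to $L_\lambda^{(N)}$ with the parabolic adjoining $e_s$, and to $(L_\lambda^{(N)})^*$ with the parabolic adjoining $f_t$ --- yields $(L_\lambda^{(N)})^{\cL(\Upsilon_s)}\cong L_\lambda^{(s)}$ when $\ell(\lambda)\le s$ (and $0$ otherwise) and $((L_\lambda^{(N)})^*)^{\cR(\Upsilon_t)}\cong(L_\lambda^{(t)})^{*}$ when $\ell(\lambda)\le t$ (and $0$ otherwise); retaining the surviving terms gives \eqref{def:barMst}.

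The only real obstacle is bookkeeping: keeping straight which parabolic --- the one adjoined by an $e_i$, governing highest weights, versus the one adjoined by an $f_i$, governing lowest weights --- controls the $\Upsilon_s$-invariants for $\cL$ as against the $\Upsilon_t$-invariants for $\cR$, and on $L_\lambda^{(N)}$ as against its dual. This is, however, the same computation already carried out in the proof of Theorem~\ref{truncate}, with the two indices and the two translation actions interchanged and everything dualised, so no new ingredient enters --- which is why the result was described as clear. As a consistency check, the ordered-monomial basis gives $\dim(\overline{\cM}_{s,t})_j=\binom{st+j-1}{j}$, which by classical $(\gl_s,\gl_t)$ Howe duality equals $\sum_{|\lambda|=j,\ \ell(\lambda)\le\min(s,t)}\dim L_\lambda^{(s)}\dim L_\lambda^{(t)}$, in agreement with \eqref{def:barMst}.
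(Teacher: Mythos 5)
Your proof is correct and follows exactly the route the paper intends: the generator/ordered-monomial claims come from the preceding lemma, the module-algebra structure is immediate from the commuting $\cL$ and $\cR$ actions, and the decomposition is obtained by establishing the Peter--Weyl decomposition $\overline{\cM}_N\cong\bigoplus_\lambda L_\lambda^{(N)}\otimes(L_\lambda^{(N)})^*$ and then re-running the $\Upsilon$-invariants argument of Theorem \ref{truncate} (via the generalised Verma modules attached to the two parabolics) with the roles of the factors interchanged. The paper offers no proof, stating only that the result ``is now clear,'' and your argument supplies precisely the details that make it so.
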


\subsection{Invariant theory for the quantum general linear group}

Fix positive integers $n, k, l\leq N$.
We shall work with the following $\Uq(\gl_n)$-module
algebra.

\begin{definition}
Recall that we have $\cR(\Uq(\gl_n))$-module algebras
$\cM_{k, n}$ and $\overline{\cM}_{l, n}$,
respectively defined in \eqref{Mst} and
\eqref{Mstbar}. If we define multiplication
 in $\cA_{k, l}: =\cM_{k,
n}\otimes \overline{\cM}_{l, n}$ by \eqref{multiplication}
(cf. Theorem \ref{modulealgebra})
$\cA_{k, l}$ becomes a $\Uq(\gl_n)$-module algebra, with
action defined for any $f\otimes g\in \cA_{k, l}$ and
$x\in\Uq(\gl_n)$ by
\[
x(f\otimes g) = \sum_{(x)} \cR_{x_{(1)}}(f)\otimes \cR_{x_{(2)}}(g).
\]
\end{definition}
We investigate the structure of $\cA_{k,l}$. Let $X_{i a}:=t_{i a}\otimes 1$ and $Y_{\beta
b}:=1\otimes \bar{t}_{\beta b}$ with $i\in[1, k]$, $a, b\in[1, n]$
and $\beta\in[1, l]$. Then we have
\begin{eqnarray}
Y_{b \beta} X_{i a} = \sum_{a', b'=1}^n (R^{-T})_{a' b, b' a} X_{i
b'} Y_{a' \beta},
\end{eqnarray}
where
$
R^{-T} =1\otimes 1 + (q^{-1}-1)\sum_{a=1}^n E_{a a}\otimes
E_{a a} - (q-q^{-1})\sum_{1\le a< b\le n}E_{b a}\otimes E_{a b}.
$
These relations can be made more explicit, as follows.
\begin{eqnarray}
\begin{aligned}
&Y_{\beta a} X_{i a} = q X_{i a} Y_{\beta a} - (q-q^{-1})
\sum_{c=a}^n X_{i c} Y_{\beta c}, \\
& Y_{\beta b} X_{i a} =  X_{i a} Y_{\beta b}, \quad  \text{if\ }
a\ne b.
\end{aligned}
\end{eqnarray}

Our purpose is to study the subalgebra of
$\cA_{k, l}$ comprising its $\Uq(\gl_n)$-invariant elements:
$$\left(\cA_{k, l}\right)^{\Uq(\gl_n)}=\left\{ f\in \cA_{k, l} \mid
x(f) = \epsilon(x) f, \forall x\in\Uq(\gl_n) \right\}.$$
\begin{lemma}\label{relations:generators-gl}
Let $\Psi_{i \beta}:=\sum_{a=1}^n X_{i a} Y_{\beta a}$ for $1\le
i\le k$ and $1\le \beta\le l.$
\begin{enumerate}
\item The elements $\Psi_{i \beta} $ ($1\le i\le k$, $1\le \beta\le
l$) belong to $\left(\cA_{k, l}\right)^{\Uq(\gl_n)}$, the invariant
subalgebra of $\cA_{k, l}$.

\item The following relations hold in $\cA_{k, l}$ among the elements
$\Psi_{i \beta}$, $X_{i a}$ and $Y_{\alpha b}$.
\begin{eqnarray}
\begin{aligned}
&\Psi_{j \beta}X_{i a} =X_{i a} \Psi_{j \beta}, &\quad i<j, \\
&X_{j a} \Psi_{i \beta} -  \Psi_{i \beta}X_{j a}= (q-q^{-1})
X_{i a} \Psi_{j \beta}, &\quad i<j,\\
&\Psi_{i \beta}X_{i a} =q^{-1} X_{i a} \Psi_{i \beta},\\
&\Psi_{j \beta}Y_{\alpha b} =Y_{\alpha b}\Psi_{j \beta}, &\quad \alpha<\beta, \\
&\Psi_{j \alpha} Y_{\beta b} -   Y_{\beta b} \Psi_{j \alpha}=
(q-q^{-1})
Y_{\alpha b} \Psi_{j \beta}, &\quad \alpha<\beta,\\
&\Psi_{i \beta}Y_{\beta b} =q Y_{\beta b} \Psi_{i \beta}.
\end{aligned}
\end{eqnarray}

\item The elements $\Psi_{i \beta}$ satisfy the following relations.
\begin{eqnarray}
\begin{aligned}
&\Psi_{j \beta} \Psi_{i \alpha} = \Psi_{i \alpha} \Psi_{j \beta}, &&
i<j, \ \alpha< \beta, \\
&\Psi_{j \alpha} \Psi_{i \beta} - \Psi_{i \beta} \Psi_{j \alpha} =
(q-q^{-1})\Psi_{i \alpha} \Psi_{j \beta}, &&
i<j, \ \alpha< \beta, \\
&\Psi_{i \beta} \Psi_{i \alpha} = q^{-1}  \Psi_{i \alpha} \Psi_{i
\beta}, && \alpha< \beta, \\
&\Psi_{j \beta} \Psi_{i \beta} = q  \Psi_{i \beta} \Psi_{j \beta},
&& i<j.
\end{aligned}
\end{eqnarray}
\end{enumerate}
\end{lemma}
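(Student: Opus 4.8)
The plan is to prove all three parts of Lemma~\ref{relations:generators-gl} by direct computation, leveraging the explicit $RTT$-type relations already recorded for $\cA_{k,l}$ together with the invariance properties established in Theorem~\ref{modulealgebra} and Lemma~\ref{subalgebra}. First I would dispose of part (1): the element $\Psi_{i\beta}=\sum_a X_{ia}Y_{\beta a}$ is, up to the canonical identifications of \S\ref{subsec:canon}, precisely the image under $\tau_1:V^*\otimes V\to\cK$ (or rather its transpose, the copairing in $(V\otimes V^*)^{\Uq}$, cf.\ \eqref{casimir}) placed in the $i^{\text{th}}$ tensor slot of $\cM_{k,n}$ and the $\beta^{\text{th}}$ slot of $\overline{\cM}_{l,n}$. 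Since this copairing is $\Uq(\gl_n)$-invariant and the twisted multiplication $\mu_{A,B}$ respects the $\Uq$-action (Theorem~\ref{modulealgebra}), $\Psi_{i\beta}$ lies in $(\cA_{k,l})^{\Uq(\gl_n)}$; alternatively one checks directly that $e_a(\Psi_{i\beta})=f_a(\Psi_{i\beta})=0$ and $K_b(\Psi_{i\beta})=\Psi_{i\beta}$ using the explicit $\pi$ and $\bar\pi$. That the invariants form a subalgebra is Lemma~\ref{subalgebra}.

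Next I would establish part (2), the mixed commutation relations between $\Psi_{i\beta}$ and the generators $X_{ja}$, $Y_{\alpha b}$. The relations among the $X$'s alone are \eqref{ordered-RTT} (with $v_a=X_{ia}$), those among the $Y$'s alone are \eqref{ordered-RTT-dual}, and the cross relations $Y_{\beta b}X_{ia}=\sum (R^{-T})_{a'b,b'a}X_{ib'}Y_{a'\beta}$ are displayed just above the lemma, in their explicit two-case form. The relations in part (2) then follow by substituting $\Psi_{j\beta}=\sum_c X_{jc}Y_{\beta c}$ and pushing a generator past it one factor at a time, using these elementary moves. For instance, for $i<j$ one moves $X_{ia}$ to the left through each $X_{jc}$ via \eqref{ordered-RTT} and through each $Y_{\beta c}$ via the cross relations; the point is that when $i<j$ the monodromy factors conspire so that $X_{ia}$ passes through cleanly (first relation), whereas for the reverse order $i<j$ with $X_{ja}$ moved past $\Psi_{i\beta}$ a single correction term $(q-q^{-1})X_{ia}\Psi_{j\beta}$ survives (second relation). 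The relation $\Psi_{i\beta}X_{ia}=q^{-1}X_{ia}\Psi_{i\beta}$ is the diagonal case and uses the $q t_{ia}t_{ib}$-relation from \eqref{ordered-RTT} plus the cross relations; the $Y$-analogues are proved symmetrically, using \eqref{ordered-RTT-dual} and the $R^{-T}$ form of the cross relations. This is essentially the same bookkeeping as in Lemma~\ref{le:sp-XPsi}, done here with the much simpler type-$A$ $R$-matrix \eqref{glR}, so no skein-theoretic input is needed.

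Finally, part (3) follows formally from part (2): writing one $\Psi_{j\beta}$ out as $\sum_c X_{jc}Y_{\beta c}$ and commuting $\Psi_{i\alpha}$ past the individual factors $X_{jc}$ and $Y_{\beta c}$ using the relations of part (2), then reassembling, yields each of the four claimed identities. For example, for $i<j$ and $\alpha<\beta$ one gets $\Psi_{i\alpha}\Psi_{j\beta}=\sum_c\Psi_{i\alpha}X_{jc}Y_{\beta c}=\sum_c X_{jc}\Psi_{i\alpha}Y_{\beta c}=\sum_c X_{jc}Y_{\beta c}\Psi_{i\alpha}=\Psi_{j\beta}\Psi_{i\alpha}$, giving the first relation, while the asymmetric choice $i<j$, $\alpha<\beta$ handled in the other order produces the correction term $(q-q^{-1})\Psi_{i\alpha}\Psi_{j\beta}$; the two remaining relations come from the diagonal cases $\Psi_{i\beta}X_{ia}=q^{-1}X_{ia}\Psi_{i\beta}$ and $\Psi_{i\beta}Y_{\beta b}=qY_{\beta b}\Psi_{i\beta}$. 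I expect the only real obstacle to be organizational: keeping track of which index ranges trigger a correction term and correctly accumulating the sums $\sum_{c\geq a}$ that appear in the cross relations, so that the telescoping in part (3) indeed collapses to the stated right-hand sides. Since everything is a finite explicit manipulation with known relations, I would present part (1) in full, indicate the one-factor-at-a-time argument for a representative relation in part (2), and note that parts (2) and (3) are then routine, as the analogous lemmas in the orthogonal and symplectic cases were handled.
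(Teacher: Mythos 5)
Your plan matches the paper's proof in all essentials: part (1) by invariance of the canonical element (or a direct check that $x\Psi_{i\alpha}=\epsilon(x)\Psi_{i\alpha}$), part (2) from the $RTT$-type relations together with the quadratic $R$-matrix identity, and part (3) as a formal consequence of part (2). The paper's treatment of part (2) is organized a bit differently from your ``one factor at a time'' plan: for the representative relation it writes both $\Psi_{j\alpha}Y_{\beta b}$ and $Y_{\beta b}\Psi_{j\alpha}$ in the common shape $\sum_{a,a',b'}X_{ja}\,M_{bb',aa'}\,Y_{\beta b'}Y_{\alpha a'}$ with $M=\check R P$ resp.\ $R^{-1}$, and then subtracts using $\check R-\check R^{-1}=(q-q^{-1})P$; this is one clean application of the Hecke skein relation rather than iterated moves. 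So, contrary to your remark, the paper does invoke a skein relation here, though of course only the trivial type-$A$ one, not the Kauffman/BMW diagrammatics of the orthogonal and symplectic sections.

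One genuine slip: your illustrative telescope in part (3), $\Psi_{i\alpha}\Psi_{j\beta}=\sum_c\Psi_{i\alpha}X_{jc}Y_{\beta c}=\sum_c X_{jc}\Psi_{i\alpha}Y_{\beta c}=\sum_c X_{jc}Y_{\beta c}\Psi_{i\alpha}$, applies the wrong relations from part (2). For $i<j$ the product $\Psi_{i\alpha}X_{jc}$ falls under the \emph{second} relation (not the first), and for $\alpha<\beta$ the product $\Psi_{i\alpha}Y_{\beta c}$ falls under the \emph{fifth} (not the fourth), so each step picks up a $(q-q^{-1})$ correction term. The two corrections do cancel, but only after invoking the second relation of part (3); the naive term-by-term commutation you display is not literally valid. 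You did flag the bookkeeping as the real obstacle, so this is a slip in the sample calculation rather than a flaw in the method, but it should be repaired in a final write-up.
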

\begin{proof} Part (1) is a special case of \eqref{casimir}, but
we provide an explicit computation. Let $\pi$ be the natural representation of
$\Uq(\gl_n)$ and $\bar\pi$ its dual. For all
$x\in\Uq(\gl_n)$, we have
\[ x\Psi_{i \alpha} =\sum_{a, a' b'=1}^n \sum_{(x)} X_{i a'} Y_{\alpha
b'} \pi(x_{(1)})_{a' a} \bar{\pi}(x_{(2)})_{b' a} =\epsilon(x)
\Psi_{i \alpha}.
\]
This proves part (1).

All the relations in part (2) are proved in a similar way by
routine calculations. We illustrate the proof by considering the
second last relation. If $\alpha<\beta$, then
$
\Psi_{j \alpha} Y_{\beta b} = \sum_{a, a', b'=1}^n X_{j a} R_{a a',
b b'} Y_{\beta b'} Y_{\alpha a'},
$
where $R_{a a', b b'}$ are the entries of the $R$-matrix acting
on the tensor square of the
{\em natural representation} of $\Uq(\gl_n)$. Let $P$ denote the
permutation map on this space.
Then the above relation may be written
\[
\Psi_{j \alpha} Y_{\beta b} = \sum_{a, a', b'=1}^n X_{j a}
(\check{R}P)_{b b', a a'} Y_{\beta b'} Y_{\alpha a'}.
\]
We also have, similarly,
\[
Y_{\beta b} \Psi_{j \alpha}  = \sum_{a, a', b'=1}^n X_{j a}
(R^{-1})_{b b', a a'} Y_{\beta b'} Y_{\alpha a'}.
\]
Using the skein relation $\check{R} - \check{R}^{-1}= q-q^{-1}$,
where $\check R=PR$, we obtain
\[
\begin{aligned}
\Psi_{j \alpha} Y_{\beta b} - Y_{\beta b} \Psi_{j \alpha} &=
(q-q^{-1}) \sum_{a, a', b'=1}^n X_{j a} P_{b b', a a'} Y_{\beta b'}
Y_{\alpha a'}\\
&= (q-q^{-1}) \Psi_{j \beta} Y_{\alpha b},
\end{aligned}
\]
which proves the second last relation of part (2) in view of the fourth relation.

Part (3) is a straightforward consequence of part (2).
\end{proof}

We shall prove the following result, which is the quantum analogue of the
first fundamental theorem of invariant theory for the quantum
general linear group.
\begin{theorem} \label{gl:main}\label{FFT-gl}  The invariant subalgebra $\left(\cA_{k,
l}\right)^{\Uq(\gl_n)}$ is generated by the elements $ \Psi_{i
\beta} $ with $ 1\le i\le k, \ 1\le \beta\le l$.
\end{theorem}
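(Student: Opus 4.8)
The plan is to mimic the strategy used in the orthogonal case (Theorem \ref{FFT-even}), but exploiting the quantum Howe duality of Theorem \ref{truncate} rather than the Reshetikhin--Turaev diagram calculus. First I would reduce to a graded statement. Both $\cM_{k,n}$ and $\overline\cM_{l,n}$ are semisimple as $\Uq(\gl_n)$-modules (they are locally finite of type $(1,\dots,1)$), hence so is $\cA_{k,l}=\cM_{k,n}\otimes\overline\cM_{l,n}$; moreover $\cA_{k,l}$ carries a $\Z_+\times\Z_+$-grading in which $X_{ia}$ has bidegree $(1,0)$ and $Y_{\beta b}$ has bidegree $(0,1)$, and the $\Uq(\gl_n)$-action is homogeneous. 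So it suffices to show that each homogeneous component $(\cA_{k,l})_{(r,s)}^{\Uq(\gl_n)}$ lies in the subalgebra generated by the $\Psi_{i\beta}$. By Lemma \ref{relations:generators-gl}(1) the $\Psi_{i\beta}$ are invariant, and by part (3) the subalgebra they generate is spanned by ordered monomials in them; so the content is a dimension (or spanning) count.

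Next I would identify the invariants representation-theoretically. As a $\cR(\Uq(\gl_n))$-module, $\cM_{k,n}\cong\bigoplus_\mu (L_\mu^{(k)})^*\otimes L_\mu^{(n)}$ by Theorem \ref{truncate}(1), and $\overline\cM_{l,n}\cong\bigoplus_\nu L_\nu^{(l)}\otimes (L_\nu^{(n)})^*$ by Theorem \ref{truncate:barM}. Since the multiplication on the tensor product $\cA_{k,l}$ comes from the $R$-matrix twist, the underlying $\Uq(\gl_n)$-module is just the ordinary tensor product, so
\[
\cA_{k,l}\cong \bigoplus_{\mu,\nu}(L_\mu^{(k)})^*\otimes L_\nu^{(l)}\otimes\bigl(L_\mu^{(n)}\otimes (L_\nu^{(n)})^*\bigr)
\]
as a $\Uq(\gl_k)\otimes\Uq(\gl_l)\otimes\Uq(\gl_n)$-module. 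Taking $\Uq(\gl_n)$-invariants and using $(L_\mu^{(n)}\otimes (L_\nu^{(n)})^*)^{\Uq(\gl_n)}=\Hom_{\Uq(\gl_n)}(L_\nu^{(n)},L_\mu^{(n)})=\delta_{\mu\nu}\cK$ (as in \S\ref{subsec:canon}, equation \eqref{homcanon}), we get
\[
(\cA_{k,l})^{\Uq(\gl_n)}\cong\bigoplus_{\mu:\ \ell(\mu)\le\min(k,l,n)}(L_\mu^{(k)})^*\otimes L_\mu^{(l)},
\]
with the length restriction coming from the ranges in Theorems \ref{truncate} and \ref{truncate:barM}. This already shows $(\cA_{k,l})^{\Uq(\gl_n)}$ has the ``right size'': the same graded dimension as the classical ring of $\GL_n$-invariants on $\oplus^k V\oplus^l V^*$, which by the classical FFT is generated by the contractions $\langle x_i,y_\beta\rangle$ — the $q=1$ limits of the $\Psi_{i\beta}$.

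Finally I would show the $\Psi_{i\beta}$ actually generate, not merely span something of the right dimension. For this I would use the subalgebra $\cB\subseteq(\cA_{k,l})^{\Uq(\gl_n)}$ generated by the $\Psi_{i\beta}$ together with $1$, observe (Lemma \ref{relations:generators-gl}(3)) that $\cB$ is spanned by ordered monomials $\prod \Psi_{i\beta}^{m_{i\beta}}$, and compare the bigraded Hilbert series of $\cB$ with that of $(\cA_{k,l})^{\Uq(\gl_n)}$ computed above; one checks the number of ordered monomials of bidegree $(r,s)$ in the $\Psi_{i\beta}$ equals $\dim(\cA_{k,l})^{\Uq(\gl_n)}_{(r,s)}=\sum_{|\mu|=r}\dim(L_\mu^{(k)})\dim(L_\mu^{(l)})$ (the number $|\{(r,s):r=s\}|$-indexed Cauchy-type identity), with equality already known classically since $q=1$ specialisation is dimension-preserving by flatness. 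Since $\cB\subseteq(\cA_{k,l})^{\Uq(\gl_n)}$ and the graded pieces have equal (finite) dimension, $\cB=(\cA_{k,l})^{\Uq(\gl_n)}$. The main obstacle is verifying that the ordered $\Psi$-monomials are genuinely linearly independent of the correct cardinality in each bidegree: the cleanest route is the flat-deformation argument (the specialisation $q\to1$ sends these monomials to the standard monomial basis of the classical invariant ring and is dimension-preserving), but one must be careful that the twisted multiplication on $\cA_{k,l}$ degenerates correctly at $q=1$ — this is where Theorem \ref{modulealgebra} and the explicit $R^{-T}$ relations above are needed.
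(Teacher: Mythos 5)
The first two steps of your outline — identifying $\cA_{k,l}$ module-theoretically via the two Howe dualities (Theorems \ref{truncate} and \ref{truncate:barM}), and extracting
\[
(\cA_{k,l})^{\Uq(\gl_n)}\cong\bigoplus_{\ell(\mu)\le\min(k,l,n)}(L_\mu^{(k)})^*\otimes L_\mu^{(l)}
\]
— are correct and agree with what the paper does. The gap is in the closing dimension count. You claim that the number of ordered monomials of bidegree $(r,s)$ in the $\Psi_{i\beta}$ equals $\dim(\cA_{k,l})^{\Uq(\gl_n)}_{(r,s)}$. Since each $\Psi_{i\beta}$ has bidegree $(1,1)$, the relevant bidegrees are $(d,d)$ and the number of ordered monomials of degree $d$ in $kl$ commuting-up-to-$q$ variables is $\binom{kl+d-1}{d}$. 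By the Cauchy identity this does equal $\sum_{|\mu|=d,\ \ell(\mu)\le\min(k,l)}\dim L_\mu^{(k)}\dim L_\mu^{(l)}$, but the invariant space is cut out by the stricter constraint $\ell(\mu)\le\min(k,l,n)$. When $n<\min(k,l)$ the count of ordered monomials is therefore strictly larger than the dimension of the invariants, so the ordered $\Psi$-monomials cannot be linearly independent, and the inequality $\dim\cB_{(d,d)}\le\#\{\text{ordered monomials}\}$ runs in the useless direction: it gives $\dim\cB\le\dim(\cA_{k,l})^{\Uq(\gl_n)}$, which is already known from $\cB\subseteq(\cA_{k,l})^{\Uq(\gl_n)}$, not the needed reverse inequality. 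The flat-deformation ($q\to1$) route you gesture at would establish that $\cB$ has the same Hilbert series as the \emph{classical} invariant ring of $\GL_n$ on $\oplus^kV\oplus^lV^*$ — but only after one knows the classical FFT \emph{and} that the quantum ordered monomials map onto a spanning set of the classical ring with the correct relations; essentially this is smuggling in a quantum SFT for small $n$, which the paper deliberately does not assume.

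The paper avoids this by a genuinely different mechanism. For $n\ge\max(k,l)$ it exhibits an injective $\cK$-linear map $\Delta_{k,l}:\cM_{k,l}\to\cA_{k,l}$ landing in the invariants (Lemma \ref{lem:injcet}), shows its image is exactly $(\cA_{k,l})^{\Uq(\gl_n)}$ (Lemma \ref{n-large}), and transports the multiplicative structure via $\Delta_{k,l}(f)\Delta_{k,l}(g)=\Delta_{k,l}\circ\nu\circ\varpi(f\otimes g)$ (Lemma \ref{f.g.-gl}), so that generation of $\cM_{k,l}$ by the $t_{i\beta}$ forces generation of the invariants by $\Psi_{i\beta}=\Delta_{k,l}(t_{i\beta})$ — no linear independence of monomials is ever invoked. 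For $n<\max(k,l)$ the paper passes to the subalgebra $\cA_m^0$ of invariants under the auxiliary torus $\tilde\cL(\Upsilon_{k,m})\otimes\cL(\Upsilon_{l,m})$ (reducing to the already-handled large-$n$ case), uses the generalised Verma module analysis from the proof of \eqref{Trfunctor} to show all $\tilde\cL(\Uq(\gl_k))\otimes\cL(\Uq(\gl_l))$-highest weight vectors already live in $\cA_m^0$, and then propagates generation through the commuting left actions. To repair your argument you would need to replace the dimension count by something like the paper's $\Delta_{k,l}$ identification, or supply an independent proof that the $\Psi$-monomials span the invariants degree by degree without relying on their linear independence.
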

The proof will be given in the next section.

\subsection{Proof of the FFT for the quantum general linear group}

This section is devoted to the proof of Theorem \ref{FFT-gl}. The techniques we use are
similar to those in \cite{GL1}. However, the object corresponding to
$\cA_{k, l}$ in \cite{GL1} is not a $\Uq(\gl_n)$-module algebra, so that our
concept of being `generated by' a subset is quite different from the set up of \cite{GL1}.

\subsubsection{The case $n\ge max(k, l)$}\label{n-ge-kl}
We consider the algebra $\cM_{s t}$ defined by \eqref{Mst} with $N=n$,
$s=k$ and $t=l$. Then $\Delta(\cM_{k,
l})\subset \cM_{k, n}\otimes \cM_{n, l}$ under the co-multiplication
on $\cM_n$, as can be seen from Theorem
\ref{truncate}(\ref{truncate1}) and  the formula
$
\Delta(t_{i \beta}) = \sum_{a=1}^n t_{i a}\otimes t_{a \beta},
$
for all $i\in[1, k]$ and $\beta\in[1, l]$.
It follows that $(\id\otimes S)\Delta(\cM_{k, l}) \subset \cA_{k,
l}$.

\begin{lemma}\label{lem:injcet}
The $\cK$-linear map
\[
\Delta_{k, l}: \cM_{k, l}
\longrightarrow \cA_{k, l}
\]
given by $\Delta_{k, l}(f) := (\id\otimes S)\Delta(f)$,
is injective and satisfies $\Delta_{k, l}(\cM_{k, l})$ $\subseteq$
$\left(\cA_{k, l}\right)^{\Uq(\gl_n)}$.
\end{lemma}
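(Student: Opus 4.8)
The plan is to verify the two assertions — injectivity of $\Delta_{k,l}$ and the inclusion into the invariant subalgebra — separately, using the Hopf-algebraic identities for $\cM_n$ together with the explicit module structures established above.

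First I would prove injectivity. The map $\Delta_{k,l}$ is the composite of the comultiplication $\Delta:\cM_{k,l}\to\cM_{k,n}\otimes\cM_{n,l}$ (which lands there by Theorem \ref{truncate}(\ref{truncate1}) and the formula $\Delta(t_{i\beta})=\sum_a t_{ia}\otimes t_{a\beta}$) followed by $\id\otimes S$. Since $S$ restricted to $\cM_n$ sends $t_{ab}$ to $\bar t_{ba}\in\overline{\cM}_n$ and is a bijection of $\cM_n$ onto $\overline{\cM}_n$ (this follows from the Hopf algebra structure of $\cK[GL_q(N)]$ recalled above, in which $S$ is invertible with $S^{-1}$ given explicitly on the $\bar t$'s), the map $\id\otimes S$ is injective on $\cM_{k,n}\otimes\cM_{n,l}$. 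Thus it suffices to show $\Delta$ itself is injective on $\cM_{k,l}$, and this is immediate: applying $\id\otimes\epsilon$ (the counit) to $\Delta(f)$ recovers $f$, and $\epsilon$ descends appropriately to the truncations, so $\Delta$ is split-injective. Hence $\Delta_{k,l}$ is injective.

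Next I would prove $\Delta_{k,l}(\cM_{k,l})\subseteq(\cA_{k,l})^{\Uq(\gl_n)}$. The action of $x\in\Uq(\gl_n)$ on $\cA_{k,l}=\cM_{k,n}\otimes\overline{\cM}_{l,n}$ is $x(f\otimes g)=\sum_{(x)}\cR_{x_{(1)}}(f)\otimes\cR_{x_{(2)}}(g)$, and $\cR_x$ is right translation, which on $\cM_n$ (resp.\ $\overline{\cM}_n$) is given by $\cR_x(h)=\sum_{(h)}h_{(1)}\langle h_{(2)},x\rangle$. For $h\in\cM_{k,l}$ write $\Delta(h)=\sum h_{(1)}\otimes h_{(2)}$ and $\Delta^{(2)}(h)=\sum h_{(1)}\otimes h_{(2)}\otimes h_{(3)}$. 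Then $\Delta_{k,l}(h)=\sum h_{(1)}\otimes S(h_{(2)})$, and applying $x$ gives
\[
x\bigl(\Delta_{k,l}(h)\bigr)=\sum_{(x)}\cR_{x_{(1)}}(h_{(1)})\otimes\cR_{x_{(2)}}(S(h_{(2)}))
=\sum h_{(1)}\otimes S(h_{(4)})\,\langle h_{(2)},x_{(1)}\rangle\,\langle S(h_{(3)}),x_{(2)}\rangle,
\]
where I have used $\cR_x(S(h_{(2)}))=\sum S(h_{(2)})_{(1)}\langle S(h_{(2)})_{(2)},x\rangle$ together with the anti-coalgebra property $\Delta\circ S=(S\otimes S)\circ\Delta'$. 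Now $\langle h_{(2)},x_{(1)}\rangle\langle S(h_{(3)}),x_{(2)}\rangle=\langle h_{(2)}\otimes S(h_{(3)}),\Delta(x)\rangle=\langle h_{(2)}S(h_{(3)}),x\rangle$ by the definition of the product on $\Uq(\gl_n)^*$, and the antipode axiom $\sum h_{(2)}S(h_{(3)})=\epsilon(h_{(2)})1$ collapses this to $\epsilon(x)\sum h_{(1)}\otimes S(h_{(2)})=\epsilon(x)\Delta_{k,l}(h)$. Hence every element of $\Delta_{k,l}(\cM_{k,l})$ is $\Uq(\gl_n)$-invariant.

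The main obstacle I anticipate is purely bookkeeping rather than conceptual: one must be careful that the abstract Hopf identities for $\cM_n$ (antipode, $\Delta\circ S=(S\otimes S)\circ\Delta'$, compatibility of product on the dual) continue to make sense after passing to the truncated subalgebras $\cM_{k,n}$, $\overline{\cM}_{l,n}$, and that the $\cR$-action used to define the module algebra $\cA_{k,l}$ is genuinely the restriction of right translation on $\cM_n$ and $\overline{\cM}_n$ — all of which is guaranteed by the constructions in Section \ref{sect-Uqgl} but should be invoked explicitly. No serious difficulty is expected beyond tracking Sweedler indices carefully through the antipode.
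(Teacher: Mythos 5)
Your proof is correct and follows essentially the same route as the paper's: injectivity via composing with the counit and using invertibility of the antipode, and invariance via the Sweedler calculation collapsing $\sum h_{(2)}S(h_{(3)})$ to $\epsilon(h_{(2)})1$. The only cosmetic difference is that the paper applies $\epsilon\otimes\id$ directly to $\Delta_{k,l}(f)$ to recover $S(f)$ in one step, whereas you split the injectivity argument into injectivity of $\Delta$ followed by injectivity of $\id\otimes S$, but these are the same idea.
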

\begin{proof}
To prove injectivity of $\Delta_{k, l}$, we note that for all $f\in
\ker\Delta_{k, l}$,
\[
(\epsilon\otimes\id)\Delta_{k, l}(f) = \sum_{(f)} 1\otimes
\epsilon(f_{(1)}) S(f_{(2)}) =S(f) =0.
\]
Thus $\ker\Delta_{k, l}=0$ since $S$ is invertible.

Now for $x\in\Uq(\gl_n)$ and $f\in\cM_{k, l}$, we have
\[
\begin{aligned}
x\Delta_{k, l}(f) &= \sum_{(f), (x)} \cR_{x_{(1)}}(f_{(1)})\otimes
\cR_{x_{(2)}}(S(f_{(2)}))\\
&= \sum_{(f), (x)} f_{(1)} \langle f_{(2)}, x_{(1)}\rangle \otimes
\langle f_{(2)}, S(x_{(2)}\rangle S(f_{(3)})\\
&= \epsilon(x)\sum_{(f), (x)} f_{(1)} \otimes S(f_{(2)}).
\end{aligned}
\]
This proves the second claim of the lemma.
\end{proof}

It is useful to note that
$
\Delta_{k, l}(t_{i \beta}) = \Psi_{i \beta}.
$

\begin{lemma} \label{n-large} If $n\ge max(k, l)$, then $\left(\cA_{k,
l}\right)^{\Uq(\gl_n)}=\Delta_{k, l}(\cM_{k, l})$.
\end{lemma}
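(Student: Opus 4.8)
The inclusion $\Delta_{k,l}(\cM_{k,l})\subseteq(\cA_{k,l})^{\Uq(\gl_n)}$ is already established in Lemma \ref{lem:injcet}, so the task is the reverse inclusion $(\cA_{k,l})^{\Uq(\gl_n)}\subseteq\Delta_{k,l}(\cM_{k,l})$. Since $\Delta_{k,l}$ is injective, it suffices to compare dimensions degree by degree. The plan is to exploit the explicit module decompositions provided by Theorem \ref{truncate} and Theorem \ref{truncate:barM}, together with the fact (from Theorem \ref{modulealgebra} and the flatness statements) that $\cA_{k,l}=\cM_{k,n}\otimes\overline{\cM}_{l,n}$ is, as a $\cR(\Uq(\gl_n))$-module, simply the tensor product of the $\cR(\Uq(\gl_n))$-modules underlying $\cM_{k,n}$ and $\overline{\cM}_{l,n}$ (the twisted multiplication changes the algebra structure but not the underlying module).

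\textbf{Key steps.} First I would record the bigraded structure: $\cA_{k,l}=\bigoplus_{p,r}(\cM_{k,n})_p\otimes(\overline{\cM}_{l,n})_r$, where the subscripts denote the natural $\Z_+$-gradings, and note that the $\Uq(\gl_n)$-action via $\cR$ preserves this bigrading. By \eqref{def:Mst} and \eqref{def:barMst}, as $\cR(\Uq(\gl_n))$-modules we have $\cM_{k,n}\cong\bigoplus_\lambda(L_\lambda^{(k)})^*\otimes L_\lambda^{(n)}$ (multiplicities indexed by partitions of length $\le k$, with $|\lambda|=p$) and $\overline{\cM}_{l,n}\cong\bigoplus_\mu L_\mu^{(l)}\otimes(L_\mu^{(n)})^*$ (length $\le l$, $|\mu|=r$). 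Hence
\[
(\cA_{k,l})^{\Uq(\gl_n)}\cong\bigoplus_{\lambda,\mu}(L_\lambda^{(k)})^*\otimes L_\mu^{(l)}\otimes\bigl(L_\lambda^{(n)}\otimes(L_\mu^{(n)})^*\bigr)^{\Uq(\gl_n)}.
\]
Since $\bigl(L_\lambda^{(n)}\otimes(L_\mu^{(n)})^*\bigr)^{\Uq(\gl_n)}=\Hom_{\Uq(\gl_n)}(L_\mu^{(n)},L_\lambda^{(n)})$ is one-dimensional when $\lambda=\mu$ and zero otherwise, this gives $\dim(\cA_{k,l})^{\Uq(\gl_n)}=\sum_\lambda\dim(L_\lambda^{(k)})^*\cdot\dim L_\lambda^{(l)}$, the sum over partitions $\lambda$ of length $\le\min(k,l)$ (which, since $n\ge\max(k,l)$, is automatically $\le n$). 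On the other side, Theorem \ref{truncate}(\ref{truncate1}) applied with $N=n$, $s=k$, $t=l$ gives $\cM_{k,l}\cong\bigoplus_\lambda(L_\lambda^{(k)})^*\otimes L_\lambda^{(l)}$ over the same range of $\lambda$, so $\dim\cM_{k,l}=\sum_\lambda\dim(L_\lambda^{(k)})^*\cdot\dim L_\lambda^{(l)}$ as well, and these dimensions match in each bidegree (matching $|\lambda|$ on each side). Combined with injectivity of $\Delta_{k,l}$ and the established inclusion $\Delta_{k,l}(\cM_{k,l})\subseteq(\cA_{k,l})^{\Uq(\gl_n)}$, the equality follows. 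Finally, since $\Delta_{k,l}(t_{i\beta})=\Psi_{i\beta}$ and $\Delta_{k,l}$ is an algebra homomorphism from $\cM_{k,l}$ (with its ordinary multiplication) whose image has the compatible twisted multiplication, the generators $t_{i\beta}$ of $\cM_{k,l}$ (Lemma \ref{Mst-structure}) map onto generators $\Psi_{i\beta}$ of the image.

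\textbf{Main obstacle.} The subtle point is to verify carefully that, as a $\cR(\Uq(\gl_n))$-module, $\cA_{k,l}$ really is the plain tensor product $\cM_{k,n}\otimes\overline{\cM}_{l,n}$ in each bidegree, so that the multiplicity computation using \eqref{def:Mst} and \eqref{def:barMst} is legitimate; this is where one must invoke that the twisted multiplication of Theorem \ref{modulealgebra} leaves the underlying $\Uq(\gl_n)$-module unchanged. A secondary delicate point is checking that $\Delta_{k,l}$ respects the bigrading — i.e. that $\Delta(t_{i\beta})=\sum_a t_{ia}\otimes t_{a\beta}$ lands in $(\cM_{k,n})_1\otimes(\cM_{n,l})_1$ and after applying $\id\otimes S$ in $(\cM_{k,n})_1\otimes(\overline{\cM}_{l,n})_1$ — so that the degreewise dimension comparison is valid. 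Both are routine once set up, but they are the load-bearing checks.
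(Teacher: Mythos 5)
Your argument is essentially the same as the paper's: both decompose $\cA_{k,l}$ as a $\Uq(\gl_n)$-module using Theorems \ref{truncate} and \ref{truncate:barM}, reduce the computation of invariants to the one-dimensionality of $\Hom_{\Uq(\gl_n)}(L_\mu^{(n)},L_\lambda^{(n)})$ for $\lambda=\mu$, and then invoke the injectivity of $\Delta_{k,l}$ together with the inclusion $\Delta_{k,l}(\cM_{k,l})\subseteq(\cA_{k,l})^{\Uq(\gl_n)}$ from Lemma \ref{lem:injcet}. The only difference is bookkeeping: the paper keeps the full $\tilde\cL(\Uq(\gl_k))\otimes\cR(\Uq(\gl_n))\otimes\cL(\Uq(\gl_l))\otimes\cR(\Uq(\gl_n))$-module structure and exhibits an isomorphism $\cM_{k,l}\cong(\cA_{k,l})^{\Uq(\gl_n)}$ as a $\tilde\cL(\Uq(\gl_k))\otimes\cR(\Uq(\gl_l))$-module, while you compare dimensions bidegree by bidegree; both are valid and rest on the same two ``load-bearing'' observations you flag (the twisted multiplication does not alter the underlying $\Uq$-module, and $\Delta_{k,l}$ is bigraded). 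One small caveat on your closing paragraph: $\Delta_{k,l}$ is not an algebra homomorphism for the ordinary multiplication on $\cM_{k,l}$ — Lemma \ref{f.g.-gl}(2) shows its multiplicativity only up to the twist $\nu\circ\varpi$ — but that remark is not needed for the present lemma, which asserts only an equality of subspaces.
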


\begin{proof} Using parts (\ref{truncate1}) and (\ref{truncate2}) of
Theorem \ref{truncate} and also Theorem \ref{truncate:barM}, we can
show that as a module over $\tilde\cL(\Uq(\gl_k))\otimes
\cR(\Uq(\gl_n))\otimes\cL(\Uq(\gl_l))\otimes \cR(\Uq(\gl_n))$,
\[
\cA_{k, l} = \bigoplus_{\lambda, \mu} L^{(k)}_\lambda\otimes
L^{(n)}_\lambda\otimes L^{(l)}_\mu
\otimes\left(L^{(n)}_\mu\right)^*,
\]
where the summation is over partitions $\lambda,\mu$ where
$\lambda$ has length $\le k$ and $\mu$ has length
$\le l$. Since $n\ge max(k, l)$, it follows that
\[
(\cA_{k, l})^{\Uq(\gl_n)} = \bigoplus_{\lambda}
L^{(k)}_\lambda\otimes  L^{(l)}_\lambda
\]
as a module for $\tilde\cL(\Uq(\gl_k))\otimes \cL(\Uq(\gl_l))$,
where the summation is over all partitions of length $\le min(k,
l)$. Thus by
Theorem \ref{truncate}(\ref{truncate2}), $\cM_{k, l}\cong (\cA_{k,
l})^{\Uq(\gl_n)}$ as $\tilde\cL(\Uq(\gl_k))\otimes
\cR(\Uq(\gl_l))$-module. The lemma now follows from
the injectivity of $\Delta_{k, l}$, proved in Lemma \ref{lem:injcet}.
\end{proof}

In the next Lemma, we use the fact that the elements of $\cM_{k,l}$
are linear functions on $\Uq(\gl_n)$.
\begin{lemma}\label{f.g.-gl}
Assume that $n\ge max(k, l)$.
\begin{enumerate}
\item There exists the $\Z_+\times\Z_+$-graded vector space bijection
\[
\begin{aligned}
&\varpi: \cM_{k, l}\otimes \cM_{k, l}\longrightarrow \cM_{k,
l}\otimes \cM_{k, l}, \\
&f\otimes g\mapsto \sum_{(f), (g)} f_{(1)}\otimes g_{(1)} \langle
f_{(2)}\otimes g_{(2)}, R^{-1}\rangle.
\end{aligned}
\]
\item Denote by $\nu$ the multiplication in $\cM_{k, l}$.
For any $f, g\in\cM_{k, l}$, the product of $\Delta_{k,
l}(f)$ and $\Delta_{k, l}(g)$ in $\cA_{k, l}$ is given by
\[\Delta_{k, l}(f) \Delta_{k, l}(g)= \Delta_{k, l}\circ\nu\circ\varpi(f \otimes g).\]
\end{enumerate}
\end{lemma}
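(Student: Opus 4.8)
The plan is to prove both parts by unwinding the definitions of the maps involved and comparing them as linear functionals on $\Uq(\gl_n)$ (or on a tensor power thereof), using the defining properties of $R$ and the compatibility of $\Delta$ with $S$. For part (1), the essential point is that $\varpi$ is invertible: since $R$ is invertible, the operator $f\otimes g\mapsto \sum_{(f),(g)} f_{(1)}\otimes g_{(1)}\langle f_{(2)}\otimes g_{(2)}, R^{-1}\rangle$ has a two-sided inverse given by the same formula with $R^{-1}$ replaced by $R$; the verification that these compose to the identity uses the coassociativity of $\Delta$ and the fact that $R R^{-1}=1$, evaluated against the relevant coproduct components. The grading statement is immediate: $\cM_{k,l}$ is $\Z_+$-graded with $t_{i\beta}$ in degree one, $\Delta$ preserves this grading in the sense that $\Delta((\cM_{k,l})_d)\subseteq \bigoplus_{d'+d''=d}(\cM_{k,l})_{d'}\otimes(\cM_{k,l})_{d''}$, but more precisely $\Delta(t_{i\beta})=\sum_a t_{ia}\otimes t_{a\beta}$ is \emph{homogeneous} of bidegree $(1,1)$, so $\Delta$ is degree-preserving on each graded piece; since $R\in\Uq(\gl_n)\otimes\Uq(\gl_n)$ pairs nontrivially only between a weight space and its negative, and acts on the natural representation as in \eqref{glR}, the pairing $\langle f_{(2)}\otimes g_{(2)},R^{-1}\rangle$ does not change the bidegree of $f_{(1)}\otimes g_{(1)}$. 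Hence $\varpi$ is a $\Z_+\times\Z_+$-graded bijection.

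For part (2), I would compute $\Delta_{k,l}(f)\Delta_{k,l}(g)$ directly using the twisted multiplication \eqref{multiplication} on $\cA_{k,l}=\cM_{k,n}\otimes\overline{\cM}_{l,n}$. Write $\Delta_{k,l}(f)=\sum_{(f)} f_{(1)}\otimes S(f_{(2)})$ and similarly for $g$; then by \eqref{multiplication} their product is $\sum (\mu_{\cM}\otimes\mu_{\overline{\cM}})(\id\otimes PR\otimes\id)$ applied to $f_{(1)}\otimes S(f_{(2)})\otimes g_{(1)}\otimes S(g_{(2)})$, i.e. it equals $\sum f_{(1)}\,\beta_t(g_{(1)})\otimes \alpha_t(S(f_{(2)}))\,S(g_{(2)})$ where $R=\sum_t\alpha_t\otimes\beta_t$. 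Now I would use that the $\cR$-action on $\cM_n$ is right translation, so $\alpha_t(S(f_{(2)}))=S(f_{(2)})_{(1)}\langle S(f_{(2)})_{(2)},\alpha_t\rangle$, together with the identity $\Delta\circ S=(S\otimes S)\circ\Delta'$ and $\langle S(a),x\rangle=\langle a,S(x)\rangle$, to move the $S$ outside and rewrite the $\alpha_t$-twist on the $\overline{\cM}$-factor as a pairing against $R^{-1}$ on the $\cM$-side. After collecting terms one should recognise the $\cM_{k,l}$-multiplication $\nu$ applied to $\varpi(f\otimes g)$, followed by $\Delta_{k,l}$; the key algebraic input here is precisely relation \eqref{R1}, $R\Delta(x)=\Delta'(x)R$, which interchanges the roles of $\Delta$ and $\Delta'$ and accounts for the appearance of $R^{-1}$ rather than $R$. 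I would organise this as a short chain of equalities between elements of $\cM_{k,n}\otimes\overline{\cM}_{l,n}$, checking it suffices to verify equality after applying $\id\otimes S^{-1}$ to land back in $\cM_{k,n}\otimes\cM_{l,n}\supseteq\Delta(\cM_{k,l})$.

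The main obstacle will be bookkeeping in part (2): keeping track of the several coproduct legs of $f$ and $g$, the antipodes, and the $R$-matrix legs simultaneously, and correctly identifying which twist becomes $R$ versus $R^{-1}$. The cleanest way around this is to work entirely with the functional description — everything in sight is a linear function on a tensor power of $\Uq(\gl_n)$ — and to evaluate both sides of the claimed identity on an arbitrary element $x\in\Uq(\gl_n)$, reducing the statement to the single operator identity $R^{-1}\Delta'(x)=\Delta(x)R^{-1}$ (equivalently \eqref{R1}) plus the standard Hopf-algebra axioms. Once phrased this way the computation is routine, if lengthy, and I would present only the decisive steps rather than every manipulation.
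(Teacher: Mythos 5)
Your treatment of part (2) is sound and follows the same route the paper takes: expand the twisted product on $\cA_{k,l}=\cM_{k,n}\otimes\overline{\cM}_{l,n}$, push the antipode through the coproduct via $\Delta\circ S=(S\otimes S)\circ\Delta'$ and $(S\otimes\id)R=R^{-1}$, and reduce the decisive step to the quasi-triangularity relation \eqref{R1}, in the form $R^{-1}\Delta'(x)=\Delta(x)R^{-1}$, by evaluating against elements of $\Uq(\gl_n)$. That is exactly how the paper proceeds.

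Part (1), however, has a real gap. You assert that
$\Delta\bigl((\cM_{k,l})_d\bigr)\subseteq\bigoplus_{d'+d''=d}(\cM_{k,l})_{d'}\otimes(\cM_{k,l})_{d''}$,
but this is false: from $\Delta(t_{i\beta})=\sum_{a=1}^n t_{ia}\otimes t_{a\beta}$ one sees only that $\Delta(\cM_{k,l})\subseteq\cM_{k,n}\otimes\cM_{n,l}$, with the intermediate index $a$ ranging over all of $[1,n]$. Hence for $f,g\in\cM_{k,l}$ the factors $f_{(1)}\otimes g_{(1)}$ a priori lie only in $\cM_{k,n}\otimes\cM_{k,n}$, and the genuinely nontrivial content of part (1) is precisely that the pairing $\langle f_{(2)}\otimes g_{(2)},R^{-1}\rangle$ annihilates every term whose $f_{(1)}$ or $g_{(1)}$ carries a column index outside $[1,l]$. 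Your remark that $R$ ``pairs nontrivially only between a weight space and its negative'' is the right seed, but you use it only to conclude that $\varpi$ preserves the $\Z_+\times\Z_+$-degree --- which is automatic, since each pairing with $R^{-1}$ is a degree-zero scalar --- rather than to establish that $\varpi$ lands in $\cM_{k,l}\otimes\cM_{k,l}$ at all. The paper supplies exactly this missing step: from the explicit $R$-matrix \eqref{glR} one checks that $\langle t_{a\alpha}\otimes t_{b\beta},R^{-1}\rangle=0$ for $\alpha,\beta\in[1,l]$ unless both $a,b\in[1,l]$, and one propagates this to higher-degree ordered monomials using the coproduct identities \eqref{R2}. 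Only with that vanishing in hand does invertibility of $R$ give $\varpi$ a two-sided inverse \emph{as a self-map of} $\cM_{k,l}\otimes\cM_{k,l}$, which is what the lemma asserts.
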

\begin{proof}
To prove part (1), we need only show that the image of
$\varpi$ is in $\cM_{k, l}\otimes \cM_{k, l}$, since then the
existence of the inverse map of $\varpi$ follows from the
invertibility of $R$. Note that for all $\alpha, \beta\in[1, l]$, we
have $\langle t_{a \alpha}\otimes t_{b \beta}, R^{-1}\rangle=0$
unless both $a$ and $b$ belong to $[1, l]$. This can be seen from
the explicit formula for the $R$-matrix in the tensor square of the
natural representation of $\Uq(\gl_n)$. More generally, for any
ordered monomials $f_1=t_{a_1 \alpha_1}\cdots t_{a_r \alpha_r}$ and
$f_2=t_{b_1 \beta_1}\cdots t_{b_1 \beta_1}$, one shows, using
the defining properties of $R$ under co-multiplication that
$\langle f_1\otimes f_2, R^{-1}\rangle=0$ unless all $a_i$ and $b_j$
belong to $[1, l]$. It follows from this that if
$f, g\in\cM_{k, l}$ then $\sum_{(f), (g)} f_{(1)}\otimes g_{(1)}
\langle f_{(2)}\otimes g_{(2)}, R^{-1}\rangle$ belongs to $\cM_{k,
l}\otimes \cM_{k, l}$.

Part (2) is proved as follows:
\begin{eqnarray}\label{inDeltaMkl}
\begin{aligned}
\Delta_{k, l}(f) \Delta_{k, l}(g)
&=\sum_{(f), (g)} f_{(1)}
g_{(1)}\otimes S(g_{(3)} f_{(3)}) \langle f_{(2)}\otimes g_{(2)},
R^{-1}\rangle\\
&=\sum_{(f), (g)} f_{(1)} g_{(1)}\otimes S(f_{(2)} g_{(2)}) \langle
f_{(3)}\otimes g_{(3)}, R^{-1}\rangle \\
 &= \Delta_{k, l}\left(\sum_{(f), (g)} f_{(1)}g_{(1)} \langle
f_{(2)}\otimes g_{(2)}, R^{-1}\rangle\right)\\
&=\Delta_{k, l}\circ\nu\circ\varpi(f\otimes g).
\end{aligned}
\end{eqnarray}
This completes the proof of the lemma.
\end{proof}

\begin{proof}[Proof of Theorem \ref{gl:main} when $n\ge max(k, l)$]
By Theorem \ref{truncate}(\ref{truncate1}), $\cM_{k, l}$ is
generated by the elements $t_{i \beta}$, and we have seen
that $\Delta_{k, l}(t_{i \beta})$ $ =$ $ \Psi_{i \beta}$. Now the
degree $i$ homogeneous subspace $(\cM_{k, l})_i$ ($i\ge 1$) of
$\cM_{k, l}$ is equal to $\nu\circ\varpi((\cM_{k, l})_{i-1}\otimes
(\cM_{k, l})_1)$ by Lemma \ref{f.g.-gl}(1). Using Lemma
\ref{f.g.-gl}(2), one can show by induction on the degree of
$(\cM_{k, l})_i$ that every element in $\Delta_{k, l}(\cM_{k, l})_i$
can be expressed as a linear combination of products of $ \Psi_{i
\beta}$. Now Theorem \ref{gl:main} follows from Lemma \ref{n-large}.
\end{proof}

\subsubsection{The case $n < max(k, l)$}
\begin{proof}[Proof of Theorem \ref{gl:main} when $n< max(k, l)$]
We shall treat separately the situations $min(k, l)\le n <
max(k, l)$ and $n <min(k, l)$. In both situations, we have
the following $\tilde\cL(\Uq(\gl_k))\otimes \cL(\Uq(\gl_l))$-module
decomposition of $(\cA_{k, l})^{\Uq(\gl_n)}$.

\[
(\cA_{k, l})^{\Uq(\gl_n)} = \bigoplus_{\lambda}
L^{(k)}_\lambda\otimes  L^{(l)}_\lambda,
\]
where the summation is over all partitions of length $\le m:=
min(k, l, n)$.

Let $\Upsilon_{k, m}$ be the subalgebra of $\Uq(\gl_k)$ generated by
the elements $K^{\pm 1}_s$ with $m+1\le s\le k$, and similarly
define the subalgebra $\Upsilon_{l, m}$ of $\Uq(\gl_l)$.
Using the fact that $\tilde\cL(\Uq(\gl_k))\otimes
\cL(\Uq(\gl_l))$ commutes with the $\Uq(\gl_n)$-action on $\cA_{k,
l}$ we may also define the
subalgebra
\[
\cA_m^0:= \left((\cA_{k,
l})^{\Uq(\gl_n)}\right)^{\tilde\cL(\Upsilon_{k, m})\otimes
\cL(\Upsilon_{l, m})} = \left((\cA_{k, l})^{\tilde\cL(\Upsilon_{k,
m})\otimes \cL(\Upsilon_{l, m})} \right)^{\Uq(\gl_n)},
\]
Then
$\cA_m^0$ is generated by the elements
\[ \Psi_{i j} =\sum_{c=1}^n t_{i c}\otimes
\bar{t}_{j c}, \quad i, j\in[1, m]
\]
by Section \ref{n-ge-kl}.

As a $\tilde\cL(\Uq(\gl_m))\otimes \cL(\Uq(\gl_m))$-module,
$\cA_m^0$ has the following decomposition into simples,
\[
\cA_m^0 = \bigoplus_{\lambda} L^{(m)}_\lambda\otimes
L^{(m)}_\lambda,
\]
where the summation is over all partitions of length $\le m$.
Following the proof of equation \eqref{Trfunctor}, one now shows that
the $\tilde\cL(\Uq(\gl_k))\otimes \cL(\Uq(\gl_l))$ highest weight
vectors of weight $\lambda$ in $(\cA_{k, l})^{\Uq(\gl_n)}$ are
precisely the $\tilde\cL(\Uq(\gl_m))\otimes \cL(\Uq(\gl_m))$ highest
weight vectors of the same weight in $\cA_m^0$.  Hence $\cA_m^0$
generates $(\cA_{k, l})^{\Uq(\gl_n)}$ as a
$\tilde\cL(\Uq(\gl_m))\otimes \cL(\Uq(\gl_m))$-module. That is,
\[
(\cA_{k, l})^{\Uq(\gl_n)} =\left(
\tilde\cL(\Uq(\gl_k))\otimes\cL(\Uq(\gl_l))\right)\cA_m^0.
\]

To complete the proof of Theorem \ref{gl:main} in this
case, we now need to show that every element of $\left(
\tilde\cL(\Uq(\gl_k))\otimes\cL(\Uq(\gl_l))\right)\cA_m^0$ can be
expressed as a linear combination of products of $\Psi_{i \beta}$
($i\le k$, $\beta\le l$). We shall prove this by induction on the
bi-degree in the $\Z_+\times \Z_+$-grading of $(\cA_{k,
l})^{\Uq(\gl_n)}$. It is clear for the subspaces of bi-degrees
$(0, 0)$, $(1, 0)$ and $(0, 1)$.

Now if $\zeta=\sum_r f_r\otimes g_r$ and $\zeta'=\sum_s f'_s\otimes
g'_s$ are elements of $(\cA_{k, l})^{\Uq(\gl_n)}$, it follows from
the commutativity of the left and right translations, that for
all $x\in\Uq(\gl_k)$ and $y\in\Uq(\gl_l)$,
\[
(\tilde\cL_x\otimes\cL_y)(\zeta\zeta') =\sum_{(x), (y)}
\left(\sum_{r} \tilde\cL_{x_{(1)}}(f_r)\otimes
\cL_{y_{(1)}}(g_r)\right) \left(\sum_{s}
\tilde\cL_{x_{(2)}}(f'_s)\otimes \cL_{y_{(2)}}(g'_s)\right).
\]
This formula allows us to complete the induction step. The
proof of Theorem \ref{gl:main} is now complete.
\end{proof}

\subsection{The braided exterior algebra}\label{AA}

Let $V^{(m)}$ and $V^{(n)}$ be the natural modules for
$\Uq(\mathfrak{gl}_m)$ and $\Uq(\gl_n)$ respectively. Then
$V^{(m)}\otimes V^{(n)}$ is an irreducible
$\Uq(\gl_m\times \gl_n)$-module, and
in this section, we shall study the braided exterior
algebra of this module. Throughout
this section, we denote $\gl_m\times \gl_n$ by $\fg$ and
$\Uq(\gl_m\times \gl_n)$ by $\Uq(\fg)$.

Denote the standard weight basis of the natural
$\Uq(\mathfrak{gl}_m)$-module $V^{(m)}$ by
$\{v_i^{(m)} \mid i=1, 2, \dots, m\}$. The
tensor square $V^{(m)}\otimes V^{(m)}$ has two irreducible
submodules, which we denote by $L^{(m, s)}$ and $L^{(m, a)}$
respectively, and which have the following bases.
\begin{enumerate}
\item basis for $L^{(m, s)}$:
\[ v^{(m)}_i\otimes v^{(m)}_i  (\text{for all $i$}), \quad v^{(m)}_i\otimes
v^{(m)}_j+q\otimes v^{(m)}_j \otimes v^{(m)}_i  (i<j);
\]
\item basis for $L^{(m, a)}$:
\[  v^{(m)}_i\otimes v^{(m)}_j - q^{-1} \otimes v^{(m)}_j \otimes v^{(m)}_i  (i<j).
\]
\end{enumerate}
Denote by $P^{(m, s)}$ and $P^{(m, a)}$ respectively the idempotent projections
mapping $V^{(m)}\otimes V^{(m)}$ onto the irreducible modules
$L^{(m, s)}$ and $L^{(m, a)}$ respectively. Then the $R$-matrix of
$\Uq(\gl_m)$ acting on $V^{(m)}\otimes V^{(m)}$ is given by
\[
\check{R}^{(\gl_m)} = q P^{(m, s)}- q^{-1} P^{(m, a)}.
\]

Similarly we have the natural $\Uq(\mathfrak{gl}_n)$-module $V^{(n)}$,
with standard basis $\{v^{(n)}_i \mid
i=1, 2, \dots, n\}$, and an analogous decomposition of its tensor
square $V^{(n)}\otimes V^{(n)}\cong L^{(n, s)} \oplus L^{(n, a)}$.
Let $P^{(n, s)}$ and $P^{(n, a)})$ be the corresponding idempotent projections
as above.

Write $V=V^{(m)}\otimes V^{(n)}$; this space has basis
\[
\{X_{i j}: = v_i^{(m)}\otimes v_j^{(n)} \mid i=1, 2, \dots, m, \
j=1, 2, \dots, n\}.
\]
Define the map $\sigma: V\otimes V \longrightarrow V^{(m)}\otimes
V^{(m)} \otimes V^{(n)} \otimes V^{(n)}$, $v_i^{(m)}\otimes
v_j^{(n)}\otimes v_s^{(m)}\otimes v_t^{(n)}\mapsto v_i^{(m)}\otimes
v_s^{(m)}\otimes v_j^{(n)}\otimes v_t^{(n)}$. Then the $R$-matrix of
$\Uq(\fg)=\Uq(\gl_m\times \gl_n)$ acting on $V\otimes V$ can be
expressed as
\[
\check{R}= \sigma^{-1}\left(q^2 P^{(m, s)}\otimes P^{(n, s)} +
q^{-2} P^{(m, a)} P^{(n, a)} - P^{(m, s)}\otimes P^{(n, a)} - P^{(m,
a)}\otimes P^{(n, s)} \right)\sigma.
\]
It follows that
$ (\check{R}-q^2)(\check{R}-q^{-2}) (\check{R}+1) =0. $
Let
\[
\begin{aligned}
a_2&= (\check{R}-q^2)(\check{R}-q^{-2})(V\otimes V)=\sigma(L^{(m,
a)}\otimes L^{(n, s)})
+ \sigma(L^{(m, s)}\otimes L^{(n, a)}) , \\
s_2&=(\check{R}+1)(V\otimes V) = \sigma(L^{(m, s)}\otimes L^{(n,
s)}) + \sigma(L^{(m, a)}\otimes L^{(n, a)}).
\end{aligned}
\]

Now the braided symmetric algebra $S_q(V)$ of $V$ is known to
be isomorphic to $\cM_{m, n}$, which has been
studied extensively \cite{Z03, BZ}.  Here we consider the
braided exterior algebra $\Lambda_q(V)$ defined
(in analogy with the definition of $S_q(V)$) by

\be\label{defn:exterior}
\Lambda_q(V) = T(V)/ \langle s_2\rangle,
\ee
where $\langle s_2\rangle$ is the two-sided ideal in the tensor
algebra $T(V)$ of $V$ generated by $s_2$.
\begin{proposition}\label{Lambda-def}
The associative algebra $\Lambda_q(V)$ is generated by
$$\{X_{i j} \mid i=1, 2, \dots, m, \;\;j=1, 2, \dots, n\},$$
subject to the following relations:
\begin{eqnarray}
\begin{aligned} \label{relations-aass}
&X_{i l} X_{j k} + X_{j k} X_{i l} + (q-q^{-1}) X_{j l} X_{i k}=0,  \\
&X_{i k} X_{j l} + X_{j l} X_{i k}=0,  \qquad i<j, \ k<l,
\end{aligned}\\
\begin{aligned}\label{relations-ss2}
&(X_{i k})^2=0,  \quad \forall i, k,  \\
&X_{i l}  X_{i k} + q^{-1} X_{i k} X_{i l}=0, \quad k<l, \text{ \
all
\ } i, \\
&X_{j k} X_{i k} +q^{-1} X_{i k} X_{j k}=0, \quad i<j, \text{ \ all
\ } k.
\end{aligned}
\end{eqnarray}
\end{proposition}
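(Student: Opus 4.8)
The plan is to read the defining relations straight off the quadratic subspace $s_2\subset V\otimes V$. By \eqref{defn:exterior}, $\Lambda_q(V)$ is the quotient of the tensor algebra $T(V)$ — which is freely generated by the $X_{ij}$ — by the two-sided ideal generated by $s_2$, so $\Lambda_q(V)$ is presented by the generators $X_{ij}$ together with the relations $w=0$, $w$ ranging over a spanning set of $s_2$, where we identify $X_{ab}X_{cd}\in T(V)_2$ with $X_{ab}\otimes X_{cd}\in V\otimes V$. Hence it suffices to show that the left-hand sides of \eqref{relations-aass} and \eqref{relations-ss2} span $s_2$.

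From the description of $s_2$ obtained above, $s_2$ is spanned by the images under $\sigma^{-1}$ of the pure-tensor products of a basis vector of $L^{(m,s)}$ with one of $L^{(n,s)}$, and of a basis vector of $L^{(m,a)}$ with one of $L^{(n,a)}$; here I use the explicit bases of $L^{(m,s)},L^{(m,a)},L^{(n,s)},L^{(n,a)}$ displayed above and the fact that $\sigma^{-1}$ carries $v^{(m)}_a\otimes v^{(m)}_b\otimes v^{(n)}_c\otimes v^{(n)}_d$ to $X_{ac}\otimes X_{bd}$. Applying $\sigma^{-1}$ to the $L^{(m,s)}\otimes L^{(n,s)}$ basis vectors yields the four families $(X_{ik})^2$; $X_{ik}X_{il}+qX_{il}X_{ik}$ with $k<l$; $X_{ik}X_{jk}+qX_{jk}X_{ik}$ with $i<j$; and $w_1:=X_{ik}X_{jl}+q(X_{il}X_{jk}+X_{jk}X_{il})+q^2X_{jl}X_{ik}$ with $i<j$, $k<l$. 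Applying $\sigma^{-1}$ to the $L^{(m,a)}\otimes L^{(n,a)}$ basis vectors yields only $w_2:=X_{ik}X_{jl}-q^{-1}(X_{il}X_{jk}+X_{jk}X_{il})+q^{-2}X_{jl}X_{ik}$ with $i<j$, $k<l$. The first three families are, up to an invertible scalar, exactly the three relations of \eqref{relations-ss2}. For a fixed pair $i<j$, $k<l$ one computes $\tfrac{1}{q+q^{-1}}(w_1-w_2)=X_{il}X_{jk}+X_{jk}X_{il}+(q-q^{-1})X_{jl}X_{ik}$ and $\tfrac{1}{1+q^2}(w_1+q^2w_2)=X_{ik}X_{jl}+X_{jl}X_{ik}$, which are precisely the two relations of \eqref{relations-aass}; this change of basis on the plane $\langle w_1,w_2\rangle$ is invertible over $\cK$, so $\{w_1,w_2\}$ and the pair from \eqref{relations-aass} span the same plane. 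Thus each $\sigma^{-1}$-image of a basis vector is a combination of listed relations and conversely, so the left-hand sides of \eqref{relations-aass}--\eqref{relations-ss2} span exactly $s_2$, as required.

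As a consistency check one may count dimensions: $\dim L^{(m,s)}=\binom{m+1}{2}$, $\dim L^{(m,a)}=\binom{m}{2}$, and similarly for $n$, so $\dim s_2=\binom{m+1}{2}\binom{n+1}{2}+\binom{m}{2}\binom{n}{2}=\binom{mn+1}{2}$, while the number of relations listed in \eqref{relations-aass}, \eqref{relations-ss2} is $mn+m\binom{n}{2}+n\binom{m}{2}+2\binom{m}{2}\binom{n}{2}$, which is also $\binom{mn+1}{2}$. Moreover these relations are linearly independent: those with a repeated row or column index have pairwise disjoint monomial supports, disjoint also from the "generic'' quadruple relations, while for each fixed $i<j$, $k<l$ the two relations of \eqref{relations-aass} visibly involve a monomial not occurring in the other, and relations from distinct quadruples involve disjoint monomials. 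Hence the listed relations form a basis of $s_2$, completing the proof.

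The middle step is routine but bookkeeping-heavy; the only point that demands care is the matching of the "mixed'' relations for a generic quadruple $i<j$, $k<l$ — one must verify that the pair $\{w_1,w_2\}$, coming respectively from $L^{(m,s)}\otimes L^{(n,s)}$ and $L^{(m,a)}\otimes L^{(n,a)}$, is carried onto the stated pair in \eqref{relations-aass} by an invertible $2\times2$ matrix over $\cK$, so that nothing is lost or gained. This, together with the dimension count guaranteeing completeness, is the crux.
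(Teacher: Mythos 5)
Your proof is correct and follows essentially the same route as the paper's: you identify the quadratic relation space $s_2$ via the explicit bases of $L^{(m,s)}$, $L^{(m,a)}$, $L^{(n,s)}$, $L^{(n,a)}$, apply $\sigma^{-1}$, and show the resulting quadratic elements span the same space as the listed relations. Where the paper simply asserts that the two ``mixed'' relations $w_1$ and $w_2$ (its (5.18) and (5.20)) are equivalent to \eqref{relations-aass}, you explicitly exhibit the invertible $2\times 2$ change of basis, and you add a dimension count and linear-independence check as corroboration; these are welcome extra details but not a different argument.
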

\begin{proof}
Note that from $\sigma(L^{(m, a)}\otimes L^{(n, a)})$, we obtain the
relation
\begin{eqnarray} \label{relations-aa}
X_{i k} X_{j l} +q^{-2} X_{j l} X_{i k} - q^{-1}
(X_{j k} X_{i l} +
 X_{i l} X_{j k}) =0, \quad i<j, \ k<l.
\end{eqnarray}
The subspace $\sigma(L^{(m, s)}\otimes L^{(n, s)})$ of the space
of relations includes the relations
\eqref{relations-ss2}, as well as
\begin{eqnarray}\label{relations-ss1}
X_{i k} X_{j l} +q^2 X_{j l} X_{i k} + q (X_{j k} X_{i l} +
 X_{i l} X_{j k}) =0, \quad i<j, \ k<l.
\end{eqnarray}
The relations \eqref{relations-aa} and \eqref{relations-ss1} are
equivalent to \eqref{relations-aass}.
\end{proof}

\begin{proposition}\label{Lambda-gl} The set $\{X(\ep)\}$
constitutes a basis of the braided exterior algebra $\Lambda_q(V)$, where
$\ep$ runs over sequences of the form $\ep=(\ep_1,\ep_2,\dots,\ep_{mn})$,
with $\ep_i=0 \text{ or }\pm 1$ for $i=1,\dots, mn$ and
$X(\ep)=\\(X_{1 1})^{\epsilon_{1 1}} (X_{1 2})^{\epsilon_{1 2}} \dots (X_{1
n})^{\epsilon_{1 n}} (X_{2 1})^{\epsilon_{21}} (X_{2
2})^{\epsilon_{2 2}} \dots (X_{2 n})^{\epsilon_{2 n}}
 \dots (X_{m 1})^{\epsilon_{m 1}}
\dots (X_{m n})^{\epsilon_{m n}}.$

\end{proposition}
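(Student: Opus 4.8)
\emph{The plan.} I would prove the proposition in two stages: (i) the monomials $X(\ep)$ span $\Lambda_q(V)$, and (ii) they are linearly independent, the latter being where the real content lies. For (i), work with the presentation of $\Lambda_q(V)$ recorded in Proposition \ref{Lambda-def} and fix the lexicographic order on index pairs, so that $X_{1\,1}<X_{1\,2}<\dots<X_{m\,n}$. Each relation in \eqref{relations-aass}--\eqref{relations-ss2} may be read as a rewrite rule turning a \emph{descending} product of two generators into a combination of other products: $X_{i\,l}X_{i\,k}=-q^{-1}X_{i\,k}X_{i\,l}$ ($k<l$), $X_{j\,k}X_{i\,k}=-q^{-1}X_{i\,k}X_{j\,k}$ ($i<j$), $X_{j\,l}X_{i\,k}=-X_{i\,k}X_{j\,l}$ ($i<j,\ k<l$), and $X_{j\,k}X_{i\,l}=-X_{i\,l}X_{j\,k}+(q-q^{-1})X_{i\,k}X_{j\,l}$ (the latter obtained by combining the first line of \eqref{relations-aass} with the previous rule), together with $(X_{i\,k})^2=0$. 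In every case except the last, the \emph{leading generator} of each term on the right is strictly smaller (in the lexicographic order) than the leading generator on the left, while $(X_{i\,k})^2=0$ decreases the length; hence, ordering words of the free algebra first by length and then lexicographically by their generator sequences, each rewrite strictly decreases the word, so iterated reduction terminates. The irreducible words are precisely the strictly ascending square-free monomials, i.e.\ the $X(\ep)$, which therefore span $\Lambda_q(V)$; in degree $d$ there are at most $\binom{mn}{d}$ of them.

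\emph{Linear independence.} It remains to show $\dim\Lambda_q(V)_d=\binom{mn}{d}$. Since $\check R$ acts semisimply on $V\otimes V$ with eigenvalues $q^2,q^{-2},-1$, we have the $\Uq(\fg)$-module decomposition $V\otimes V=a_2\oplus s_2$, where $a_2$ (the $(-1)$-eigenspace) is the relation space of $S_q(V)$ and $s_2$ (the sum of the $q^{\pm2}$-eigenspaces) is the relation space of $\Lambda_q(V)=T(V)/\langle s_2\rangle$. Thus $S_q(V)$ and $\Lambda_q(V)$ are complementary quadratic algebras. Writing $H_B$ for the Poincar\'e series of a graded algebra $B$, recall from \cite{Z03,BZ} that $S_q(V)\cong\cM_{m,n}$ is a flat deformation of the polynomial ring on $mn$ generators, so $H_{S_q(V)}(t)=(1-t)^{-mn}$; moreover this quantum matrix algebra is Koszul. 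Consequently $H_{S_q(V)}(t)\,H_{\Lambda_q(V)}(-t)=1$, which forces $H_{\Lambda_q(V)}(t)=(1+t)^{mn}$, i.e.\ $\dim\Lambda_q(V)_d=\binom{mn}{d}$; combined with the spanning statement, the $X(\ep)$ form a basis. (A self-contained alternative is to run Bergman's Diamond Lemma with the reduction system above: one then has to verify that all degree-three overlap ambiguities --- which involve triples $X_{i_1j_1},X_{i_2j_2},X_{i_3j_3}$ and reduce to a short list of configuration types according to the coincidences and orderings among the indices --- are resolvable.)

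\emph{Main obstacle.} The spanning step is elementary; the substance is step (ii). In the route via complementary quadratic algebras the only care needed is to invoke both the flatness \emph{and} the Koszulity of $\cM_{m,n}$ (both known), so that the reciprocal Hilbert-series identity applies. In the Diamond Lemma route the obstacle is instead the purely combinatorial, but somewhat lengthy, confluence check for the triple overlaps of the straightening rules. I would present the first route as the main argument and sketch the second as a remark, since the structure of $\cM_{m,n}$ is already in play in this section.
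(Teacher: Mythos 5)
Your route is genuinely different from the paper's. The paper proves linear independence by explicitly constructing a $\Lambda_q(V)$-module: it defines, by recursion on degree, an action of $\Lambda_q(V)$ on the classical Grassmann algebra $\cK[\theta]$, verifies that the defining relations are satisfied, and then observes that $X(\ep)(1)=\theta(\ep)$, so independence of the $X(\ep)$ follows from independence of the classical $\theta(\ep)$. This is a concrete faithful-module argument, not a Hilbert-series argument. Your spanning step is essentially the same as the paper's (read the relations as straightening rules), and your Diamond Lemma alternative is sound in principle, though its confluence check is comparable in effort to the paper's verification that the constructed action respects the relations.

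There is, however, a gap in your main route. You claim that, because $S_q(V)$ is Koszul and $a_2\oplus s_2=V\otimes V$, the Hilbert-series reciprocity $H_{S_q(V)}(t)\,H_{\Lambda_q(V)}(-t)=1$ holds. But Koszulity of $A=T(V)/\langle a_2\rangle$ gives $H_A(t)\,H_{A^!}(-t)=1$ for the \emph{quadratic dual} $A^!=T(V^*)/\langle a_2^\perp\rangle$, which is an algebra on $V^*$, not on $V$. Being the ``complementary quadratic algebra'' $T(V)/\langle s_2\rangle$ is a different thing: $a_2^\perp$ and $s_2$ have the same dimension, but they sit inside $V^*\otimes V^*$ and $V\otimes V$ respectively, and $V=V^{(m)}\otimes V^{(n)}$ is not self-dual as a $\Uq(\gl_m\times\gl_n)$-module. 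So you would need to exhibit an algebra isomorphism $\Lambda_q(V)\cong S_q(V)^!$ (or at least a graded dimension count) before the reciprocity applies; as written, this step does not follow. (It is in fact true here, essentially by Manin's description of the quantum exterior algebra dual to a quantum matrix algebra, but you must say so.) Separately, the Koszulity of $\cM_{m,n}$ is not in the cited [Z03, BZ]; you would want to derive it from the PBW basis of Lemma \ref{Mst-structure} via Priddy's theorem, or give an explicit reference. Once both of these points are filled in, your approach does give a correct, and arguably more structural, alternative to the paper's explicit-module proof.
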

\begin{proof}
The fact that these elements span $\Lambda_q(V)$ is a
consequence of Proposition \ref{Lambda-def}. To prove that they are
linearly independent, we consider the Grassmann algebra
$\cK[\theta]$ over $\cK$. This is generated by $\{\theta_{i j}$ ($1\le i\le m;
1\le j\le n\}$) with relations
\[
\theta_{i j} \theta_{k l} + \theta_{k l} \theta_{i j}=0, \quad
\text{for all \ } i, j, k ,l.
\]
Given an $mn$ tuple
$\epsilon=(\epsilon_{1 1}, \dots, \epsilon_{1 n}, \epsilon_{21},
\dots, \epsilon_{2n}, \dots, \epsilon_{m1}, \dots, \epsilon_{nm})$
with $\epsilon_{i j}=0$ or $1$, write
\[
\theta(\epsilon) = (\theta_{1 1})^{\epsilon_{1 1}} \dots (\theta_{1
n})^{\epsilon_{1 n}} (\theta_{2 1})^{\epsilon_{21}} \dots (\theta_{2
n})^{\epsilon_{2 n}} \dots (\theta_{m 1})^{\epsilon_{m 1}} \dots
(\theta_{m n})^{\epsilon_{m n}}.
\]
It is well known that the elements $\theta(\epsilon)$ form a
basis of $\cK[\theta]$.

Define an action of $\Lambda_q(V)$ on $\cK[\theta]$ by recursion
on degree as follows.
First we specify the action on lower degree subspaces in the
following way.
\begin{itemize}
\item Let
\[ X_{i k}(1) = \theta_{i k}, \quad \forall i, k;
\]
\item then
define the action of $X_{j k}$ on $\theta(\epsilon)$ as
follows:
\begin{enumerate}
\item If for all $a<j$ and $t>k$, we have $\epsilon_{a t}=0$, then
\[ X_{j k} \theta(\epsilon) = (-1)^{\sum_{a=1}^{j}\sum_{t=1}^k \epsilon_{a
t}} q^{-\sum_{a=1}^{j-1} \epsilon_{a k}}\theta(\epsilon+\1_{j k}),
\]
where $\1_{i k}$ denotes the $mn$ tuple whose $(i, k)$ entry
is $1$ and all of whose other entries are zero.
\item Otherwise, let $i<j$ be the smallest integer such that there
exists some $t>k$ with $\epsilon_{i t}=1$, and let $l$ be the
smallest such $t$. Let
\[
\begin{aligned}
\epsilon_{<}&= (\epsilon_{1 1}, \dots, \epsilon_{1 n}, \epsilon_{2
1}, \dots \epsilon_{2 n}, \dots, \epsilon_{i 1}, \dots, \epsilon_{i
l}, 0, \dots,
0), \\
 \epsilon_>&= \epsilon-\epsilon_<.
\end{aligned}
\]
Define
\[
\begin{aligned}
X_{j k} \theta(\epsilon) &= (-1)^{\sum_{a=1}^{i}\sum_{t=1}^k
\epsilon_{a t}} q^{-\sum_{a=1}^{i} \epsilon_{a k}}
[\theta(\epsilon_<)
(X_{j k}\theta(\epsilon_>)) \\
&+ (q-q^{-1}) \theta(\epsilon_<+\1n_{i k}-\1_{i l})(X_{j
l}\theta(\epsilon_>) )].
\end{aligned}
\]
Note that the degree of $\theta(\epsilon_>)$ is smaller than that of
$\theta(\epsilon)$, so that by induction, the action of $X_{j k}$ and
$X_{j l}$ on it are already defined. Therefore $X_{j
k}\theta(\epsilon_>)$ and $X_{j l}\theta(\epsilon_>)$ are well
defined elements in $\cK[\theta]$.
\end{enumerate}
\end{itemize}
The construction guarantees that the defining relations of
$\Lambda_q(V)$ are satisfied. But clearly
\[
X(\ep)(1) = \theta(\epsilon),
\]
and the linear independence of the elements $X(\ep)$ follows from
that of the $\theta(\epsilon)$ in the
Grassmann algebra $\cK[\theta]$.
\end{proof}

The following result is a generalisation to the quantum group
setting of \cite[Theorem 4.1.1]{H} known as `skew $(GL_m, GL_n)$
duality' in the terminology of Howe.
\begin{theorem}\label{skewduality}
Let $V^{(m)}$ and $V^{(n)}$ be the natural modules for
$\Uq(\mathfrak{gl}_m)$ and $\Uq(\mathfrak{gl}_n)$ respectively, and
set $V=V^{(m)}\otimes V^{(n)}$. Then as a
$\Uq(\mathfrak{gl}_m)\otimes\Uq(\mathfrak{gl}_n)$-module, the
braided exterior algebra $\Lambda_q(V)$ decomposes into a
multiplicity free direct sum of irreducibles as follows.
\begin{eqnarray*}
\Lambda_q(V) = \bigoplus_{\lambda} L^{(m)}_\lambda\otimes
L^{(n)}_{\lambda'},
\end{eqnarray*}
where $\lambda'$ denotes the conjugate of the partition $\lambda$,
and the sum is over all partitions $\lambda=(\lambda_1,
\lambda_2, \dots, \lambda_m)$ such that $\lambda_1\le n$.
\end{theorem}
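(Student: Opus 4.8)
The plan is to prove the claimed decomposition of $\Lambda_q(V)$ by combining the combinatorial basis from Proposition \ref{Lambda-gl} with a dimension/character count, then identifying the constituents by locating highest weight vectors. First I would compute the graded dimension of $\Lambda_q(V)$: by Proposition \ref{Lambda-gl} the degree-$d$ component has dimension $\binom{mn}{d}$, so the Poincar\'e polynomial of $\Lambda_q(V)$ is $(1+t)^{mn}$, exactly as in the classical exterior algebra $\Lambda(\C^m\otimes\C^n)$. Since $\Uq(\fg)$-modules in our category are semisimple (local finiteness), it suffices to match characters. The classical skew Howe duality \cite[Theorem 4.1.1]{H} gives $\Lambda(\C^m\otimes\C^n)=\bigoplus_\lambda L^{(m)}_\lambda\otimes L^{(n)}_{\lambda'}$ over partitions $\lambda$ with $\lambda_1\le n$ (equivalently $\lambda'$ has at most $n$ parts and $\lambda$ has at most $m$ parts). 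So the plan reduces to showing that $\Lambda_q(V)$ has the same formal character as a $\Uq(\gl_m)\otimes\Uq(\gl_n)$-module as its classical analogue.

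The character statement can be established as follows. The $\Z_+^{m}\times\Z_+^{n}$-multigrading on $T(V)$ (weight of $X_{ij}$ being $(\ep_i,\ep_j)$) descends to $\Lambda_q(V)$, because the ideal $\langle s_2\rangle$ is generated by the weight-homogeneous submodule $s_2$. The monomial basis $\{X(\ep)\}$ of Proposition \ref{Lambda-gl} consists of weight vectors, and for each multiweight the number of basis monomials of that weight equals the corresponding classical multiplicity in $\Lambda(\C^m\otimes\C^n)$ — this is a purely combinatorial identity (both count $0/1$ fillings of the $m\times n$ array with prescribed row and column sums). Hence $\Lambda_q(V)$ and $\Lambda(\C^m\otimes\C^n)$ have identical formal characters as $\fh_m\oplus\fh_n$-modules, where $\fh_m,\fh_n$ are the Cartan subalgebras. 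Since the highest weights appearing in a semisimple module are determined by its character, and multiplicities likewise, we conclude $\Lambda_q(V)\cong\bigoplus_\lambda L^{(m)}_\lambda\otimes L^{(n)}_{\lambda'}$ with the stated range of $\lambda$.

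An alternative, more structural route — which I would mention as a cross-check — is to exhibit the highest weight vectors explicitly. For a partition $\lambda$ with $\lambda'=(\lambda'_1\ge\cdots\ge\lambda'_n)$, the candidate highest weight vector is a ``quantum Pl\"ucker'' product: for each column $k\in[1,n]$ form the $q$-antisymmetrised product of $X_{1k},X_{2k},\dots,X_{\lambda'_k,k}$ (using the relations \eqref{relations-ss2}, which already encode $q$-exterior behaviour within a fixed second index), and multiply these column-vectors together; the relations \eqref{relations-aass} govern how the columns interact. One checks that $e_i^{(\gl_m)}$ and $e_j^{(\gl_n)}$ annihilate this element using the defining relations of Proposition \ref{Lambda-def} together with the coproduct $\Delta(e_i)=e_i\otimes k_i+1\otimes e_i$; its weight is $(\lambda,\lambda')$. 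Counting these vectors against the monomial basis then forces the decomposition to be multiplicity-free with exactly the asserted summands.

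The main obstacle is the bookkeeping in either approach. In the character approach, one must be careful that the monomial basis of Proposition \ref{Lambda-gl} is genuinely weight-homogeneous and that the combinatorial count of $0/1$-matrices with given margins reproduces the Littlewood–Richardson-type multiplicities in classical skew Howe duality — this is standard (RSK/Lindstr\"om–Gessel–Viennot) but needs to be invoked correctly, especially the constraint $\lambda_1\le n$ coming from each column index ranging only over $[1,n]$. In the highest-weight-vector approach, the hard part is verifying that the explicit Pl\"ucker-type elements are nonzero in $\Lambda_q(V)$ (which follows from Proposition \ref{Lambda-gl}) and that they are actually killed by all the raising operators of both quantum groups simultaneously; the $q$-commutation relations mixing the two families of indices in \eqref{relations-aass} make this computation delicate, so I would favour the character argument as the primary proof and relegate the explicit vectors to a remark.
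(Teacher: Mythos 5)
Your primary (character) argument is correct, and it takes a genuinely different route from the paper. The paper proves the theorem in two steps: first, Lemma \ref{h.w.-vectors} exhibits explicit highest weight vectors $\Pi_\lambda$ for both quantum groups simultaneously, which shows that $\bigoplus_\lambda L^{(m)}_\lambda\otimes L^{(n)}_{\lambda'}$ is a \emph{sub}module of $\Lambda_q(V)$; second, the degree-filtered dimension count from Proposition \ref{Lambda-gl} together with classical skew Howe duality forces this submodule to be all of $\Lambda_q(V)$. You instead observe that the monomial basis of Proposition \ref{Lambda-gl} is weight-homogeneous and that the multiplicity of each weight is the number of $0/1$-matrices with prescribed row and column sums, exactly as in $\Lambda(\C^m\otimes\C^n)$, so the full multigraded $\fh_m\oplus\fh_n$-character of $\Lambda_q(V)$ coincides with the classical one; since type $(1,\dots,1)$ quantum irreducibles share characters with their classical counterparts and the category is semisimple, the decomposition follows from the character alone. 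This bypasses Lemma \ref{h.w.-vectors} entirely and is arguably cleaner, though it loses the explicit description of the highest weight vectors which may be of independent interest. What both arguments require is Proposition \ref{Lambda-gl} (the flatness of $\Lambda_q(V)$) and the classical skew $(GL_m,GL_n)$ duality as input. Your alternative route via ``quantum Pl\"ucker'' products is essentially the paper's actual proof, though note that the paper's $\Pi_\lambda$ is a single ordered monomial rather than a $q$-antisymmetrised sum --- the $q$-anticommutation in \eqref{relations-ss2} makes a $q$-antisymmetrised column product a scalar multiple of the plain ordered one, so the two candidates agree up to normalisation, but the paper's formulation avoids the sums and lets the verification of the highest-weight property proceed by the sliding identity \eqref{left-maximal}.
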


The proof of the theorem will make use of the following result.
\begin{lemma}\label{h.w.-vectors}
For any partition $\lambda=(\lambda_1, \lambda_2, \dots,
\lambda_m)$ with $\lambda_1\le n$, let
\[
\Pi_\lambda:=X_{1 1} \dots X_{1 \lambda_1} X_{2 1}  \dots X_{2
\lambda_2}\dots X_{m 1}  \dots X_{m \lambda_m} \in \Lambda_q(V).
\]
Then
\begin{enumerate}
\item The element $\Pi_\lambda$ is  a highest weight
vector with respect to the actions of $\Uq(\mathfrak{gl}_m)$ and of
$\Uq(\mathfrak{gl}_n)$.
\item The $\Uq(\mathfrak{gl}_m)$
weight of $\Pi_\lambda$ is $\lambda$, while the $\Uq(\mathfrak{gl}_n)$
weight is $\lambda'$.
\end{enumerate}
\end{lemma}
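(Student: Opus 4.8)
The strategy is to verify both assertions directly from the presentation of $\Lambda_q(V)$ given in Proposition \ref{Lambda-def}, together with the explicit formulae for the action of the Chevalley generators $e_i,f_i,K_b$ of $\Uq(\gl_m)$ and $\Uq(\gl_n)$ on $V=V^{(m)}\otimes V^{(n)}$ and their extension to $T(V)$ (hence to $\Lambda_q(V)$) via the comultiplication. Recall that $X_{ij}=v_i^{(m)}\otimes v_j^{(n)}$ has $\Uq(\gl_m)$-weight $\ep_i$ and $\Uq(\gl_n)$-weight $\ep_j$; since the action of each $K_b$ is the obvious diagonal one and $\Pi_\lambda$ is a product of the $X_{ij}$, assertion (2) is essentially immediate: the $\Uq(\gl_m)$-weight of $\Pi_\lambda$ is $\sum_{i=1}^m \lambda_i \ep_i=\lambda$, and the $\Uq(\gl_n)$-weight is $\sum_{i=1}^m(\ep_1+\dots+\ep_{\lambda_i})=\sum_{k\ge 1}(\#\{i:\lambda_i\ge k\})\ep_k=\sum_k \lambda'_k\ep_k=\lambda'$. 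This disposes of (2) and also shows that whatever highest weights arise in (1) must be $\lambda$ and $\lambda'$ respectively, so it remains only to prove the ``highest weight'' part of (1), i.e.\ that $e_i^{(m)}\Pi_\lambda=0$ for $1\le i<m$ and $e_j^{(n)}\Pi_\lambda=0$ for $1\le j<n$.

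For this I would proceed as follows. Using the coproduct $\Delta(e_i)=e_i\otimes K_i+1\otimes e_i$ and the fact that $e_i^{(m)}$ acts on $V$ (in the factored basis $X_{ab}$) by $e_i^{(m)}:X_{i+1,b}\mapsto X_{i,b}$ and kills all other basis vectors, one computes $e_i^{(m)}$ on a monomial $X_{a_1b_1}\cdots X_{a_rb_r}$ as a sum over the positions where an index $a_s=i+1$ occurs, of terms obtained by replacing that factor by $X_{i,b_s}$ and inserting appropriate powers of $q$ coming from the $K_i$'s acting on the earlier factors. Apply this to $\Pi_\lambda=\prod_{a=1}^m(X_{a1}\cdots X_{a\lambda_a})$. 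Only the block $X_{i+1,1}\cdots X_{i+1,\lambda_{i+1}}$ contains second-index $i{+}1$, so $e_i^{(m)}\Pi_\lambda$ is a sum of $\lambda_{i+1}$ monomials, the $c$-th of which (for $c=1,\dots,\lambda_{i+1}$) has the block $X_{i1}\cdots X_{i\lambda_i}$ from row $i$ immediately followed by $X_{i1}\cdots X_{i,c-1}\,X_{ic}\,X_{i,c+1}\cdots X_{i,\lambda_{i+1}}$, where the factor $X_{i,b_c}=X_{ic}$ has been created in row $i{+}1$. Each such monomial is of the form $(\cdots)\,X_{i1}\cdots X_{i\lambda_i}\,X_{i1}\cdots X_{ic}\cdots$ — that is, it contains the product $X_{i,1}X_{i,1}$ (two equal factors from the same ``row'' $i$) after one uses the commutation relations \eqref{relations-ss2} (the relations $X_{il}X_{ik}+q^{-1}X_{ik}X_{il}=0$ and $(X_{ik})^2=0$) to reorder the second-index entries of row $i$ and bring two equal ones together; more precisely, since $\lambda_i\ge\lambda_{i+1}\ge c$, the index $c$ already appears in the first block $X_{i1}\cdots X_{i\lambda_i}$, so after reordering via \eqref{relations-ss2} the monomial acquires a factor $(X_{ic})^2=0$. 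Hence every term vanishes and $e_i^{(m)}\Pi_\lambda=0$. The argument for $e_j^{(n)}$ is completely symmetric under interchanging the roles of the two indices: $e_j^{(n)}$ replaces a factor $X_{a,j+1}$ by $X_{a,j}$, but by construction $\Pi_\lambda$ only contains $X_{a,j+1}$ when $\lambda_a\ge j+1$, in which case $X_{a,j}$ already occurs in the same row-block, and the relations $(X_{ik})^2=0$, $X_{jk}X_{ik}+q^{-1}X_{ik}X_{jk}=0$ again force the resulting monomial to zero.

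The main obstacle I anticipate is bookkeeping: carefully tracking the powers of $q$ and the signs/coefficients produced by the $K_i$-factors in $\Delta(e_i)$ as they pass the earlier tensor factors, so that one is certain the cancellation is exact and not merely ``up to a nonzero scalar on each term'' (which is in fact all that is needed, since each term vanishes on its own). The key simplification making this manageable is that one never has to add distinct monomials: each of the finitely many monomials produced by $e_i^{(m)}$ (resp.\ $e_j^{(n)}$) is individually zero in $\Lambda_q(V)$ by Proposition \ref{Lambda-def}, because it contains a repeated generator within a single row-block. Once Lemma \ref{h.w.-vectors} is established, Theorem \ref{skewduality} follows by a standard character/dimension count: the elements $\Pi_\lambda$ (for $\lambda_1\le n$, at most $m$ parts) generate pairwise non-isomorphic irreducible $\Uq(\gl_m)\otimes\Uq(\gl_n)$-submodules $L^{(m)}_\lambda\otimes L^{(n)}_{\lambda'}$, and comparing graded dimensions with the basis $\{X(\ep)\}$ of Proposition \ref{Lambda-gl} (equivalently, with the classical skew duality at $q=1$, to which $\Lambda_q(V)$ is a flat deformation) shows this exhausts $\Lambda_q(V)$.
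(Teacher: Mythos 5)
Your overall plan matches the paper's: prove part (2) by inspecting weights (trivial), then prove part (1) by showing each Chevalley generator $e_i^{(m)}$, $e_j^{(n)}$ annihilates $\Pi_\lambda$, with each monomial produced by applying $e_i$ vanishing individually. However, there is a genuine gap in the argument that each such monomial vanishes.

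After applying $e_i^{(m)}$ at the $c$-th position of the row-$(i{+}1)$ block, the resulting monomial is (up to a nonzero scalar)
\[
\Pi_\nu\, X_{i1}\cdots X_{i\lambda_i}\, X_{i+1,1}\cdots X_{i+1,c-1}\, X_{ic}\, X_{i+1,c+1}\cdots X_{i+1,\lambda_{i+1}}\cdots.
\]
You claim this vanishes because $X_{ic}$ appears twice and ``after reordering via \eqref{relations-ss2}'' one obtains the factor $(X_{ic})^2=0$. But the two occurrences of $X_{ic}$ are separated by the string $X_{i,c+1}\cdots X_{i\lambda_i}\,X_{i+1,1}\cdots X_{i+1,c-1}$, and to bring them together one must commute the new $X_{ic}$ past the factors $X_{i+1,j}$ with $j<c$. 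This is neither a same-row nor a same-column swap, so the relations \eqref{relations-ss2} do not apply; one is forced to use \eqref{relations-aass}, which produces \emph{additional} terms (e.g.\ $X_{i+1,j}X_{ic} = -X_{ic}X_{i+1,j} + (q-q^{-1})X_{ij}X_{i+1,c}$). Your sketch silently drops these extra terms. The paper handles exactly this issue by proving, by induction on $s$, the auxiliary identity $X_{i1}\cdots X_{i\lambda_i}\,X_{i+1,1}\cdots X_{i+1,s}\,X_{it}=0$ whenever $\lambda_i\ge t>s$: the base case $s=0$ is the same-row reordering you describe giving $(X_{it})^2=0$, and the inductive step uses \eqref{relations-aass} and re-invokes the identity with smaller $s$ to kill both resulting terms. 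Without some argument of this shape (controlling the cross-terms that \eqref{relations-aass} generates), the claim that each monomial vanishes individually is unjustified. The remainder of your proposal (weight computation for (2), symmetry for the $\Uq(\gl_n)$ action, and the deduction of Theorem \ref{skewduality} from the lemma plus the dimension count from Proposition \ref{Lambda-gl}) is correct and consistent with the paper.
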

\begin{proof}
It is understood that if $\lambda_i=0$, then $\Pi_\lambda$ contains
no $X_{i t}$ as a factor.  This also implies that there is no $X_{j
t}$ factor in $\Pi_\lambda$ for all $j\ge i$.

If $\lambda_{i+1}=0$, then the Chevalley generator
$e_i\in\Uq(\gl_m)$ acts on $\Pi_\lambda$ as $0$. Assume
$\lambda_{i+1}>0$. Then $e_i\Pi_\lambda$ is a linear combination of
terms of the form
\[
\Pi_\nu X_{i 1} \dots X_{i \lambda_i} X_{i+1, 1} \dots X_{i+1, t-1}
X_{i t} X_{i+1, t+1}\dots X_{i+1, \lambda_{i+1}} \dots  X_{m 1}
\dots X_{m \lambda_m}
\]
where $\nu=(\lambda_1, \dots, \lambda_{i-1}, 0, \dots, 0)$ and $t\le
\lambda_{i+1}$. We prove that this is equal to zero by proving
the more general result that for all $t$ such that $\lambda_i\ge t
> s$,
\begin{eqnarray}\label{left-maximal}
X_{i 1}X_{i 2} \dots X_{i \lambda_i} X_{i+1, 1}X_{i+1, 2} \dots
X_{i+1, s} X_{i, t}=0.
\end{eqnarray}
If $s=0$, then using the second relation of \eqref{relations-ss2} we
can shift $X_{i t}$ to the left to arrive at the expression
\[
(-q)^{\lambda_i-t} X_{i 1}X_{i 2}\dots X_{i t} X_{i t} X_{i, t+1}
\dots X_{i \lambda_i},
 \]
which is zero by the first relation of
\eqref{relations-ss2}. We now use induction on $s$. By
\eqref{relations-aass}, we may write
$
X_{i+1, s} X_{i, t}= -  X_{i, t} X_{i+1, s} +(q-q^{-1}) X_{i s}
X_{i+1, t}.
$
Thus
\[
\begin{aligned}
&X_{i 1}X_{i 2} \dots X_{i \lambda_i} X_{i+1, 1}X_{i+1, 2} \dots
X_{i+1, s} X_{i, t} \\
&= X_{i 1}X_{i 2} \dots X_{i \lambda_i} X_{i+1, 1}X_{i+1, 2} \dots
X_{i+1, s-1} (-  X_{i, t} X_{i+1, s} +(q-q^{-1}) X_{i s} X_{i+1,
t}),
\end{aligned}
\]
from which  \eqref{left-maximal} follows by the induction
hypothesis.

The fact that $\Pi_\lambda$ is also a $\Uq(\gl_n)$ highest weight
vector is proved similarly, by applying the Chevalley generators
$e_i\in\Uq(\gl_n)$ to $\Pi_\lambda$ and using the first and second
relations of \eqref{relations-ss2}.

The second part of the lemma is obvious.
\end{proof}

\begin{proof}[Proof of Theorem \ref{skewduality}]
For any $\Z_{\geq 0}$-graded module $M$, let $M_{\leq k}$ denote the 
sum of the graded components of degree at most $k$.
It follows from Lemma \ref{h.w.-vectors} that $\bigoplus_{\lambda}
L^{(m)}_\lambda\otimes L^{(n)}_{\lambda'}$ is a $\Uq(\fg)$-submodule
of $\Lambda_q(V)$, where $\fg=\gl_m\times\gl_n$. Let
$\Lambda(\C^m\otimes\C^n)$ be the exterior algebra of
$\C^m\otimes\C^n$ over $\C$, and
denote by $|\lambda|$ the size of the partition $\lambda$, that
is, the sum of its parts (recall we always assume that
$\lambda_1\leq n$). Then by skew
$(GL_m, GL_n)$ duality \cite[Theorem 4.1.1]{H}, we have
\[
\sum_{|\lambda|\le k} \dim_{\cK}\left(L^{(m)}_\lambda\otimes
L^{(n)}_{\lambda'} \right) = \dim_\C \Lambda(\C^m\otimes\C^n)_{\le k}
\]
since irreducible modules of $\fg$ and of $\Uq(\fg)$ with the same highest weight
have the same dimension.  By
Proposition \ref{Lambda-gl}, $\dim_\C\Lambda(\C^m\otimes\C^n)_{\le k} =
\dim_{\cK} \Lambda_q(V)_{\le k}$ for all $k$. Hence
\[
\dim_{\cK} \Lambda_q(V)_{\le k} = \sum_{|\lambda|\le k}
\dim_{\cK}\left(L^{(m)}_\lambda\otimes L^{(n)}_{\lambda'} \right),
\quad \forall k.
\]
This completes the proof.
\end{proof}

\section*{Acknowledgement}
The authors thank the Australian Research Council
and National Science Foundation of China for their financial support.

\end{document}